\newcommand{\setword}[2]{%
  \phantomsection
  #1\def\@currentlabel{\unexpanded{#1}}\label{#2}%
}
\def\l@section{\@tocline{1}{12pt plus2pt}{0pt}{}{\bfseries}}
\def\l@subsection{\@tocline{2}{0pt}{2pc}{2pc}{}}
\def\subsection{\@startsection{subsection}{2}{\z@}%
	{-3.25ex\@plus -1ex \@minus -.2ex}%
	{1.5ex \@plus .2ex}%
	{\normalfont\bfseries\boldmath}}
\def\subsubsection{\@startsection{subsubsection}{3}%
	\z@{.5\linespacing\@plus.7\linespacing}{-.5em}%
	{\normalfont\bfseries\boldmath}}
\renewcommand\paragraph{\@startsection{paragraph}{4}{\z@}%
	{3.25ex \@plus1ex \@minus.2ex}%
	{-1em}%
	{\normalfont\normalsize\bfseries}}
\theoremstyle{plain}
\newtheorem{thm}{Theorem}[section]
\newtheorem{mthm}[thm]{Main Theorem}
\newtheorem{lem}[thm]{Lemma}
\newtheorem{prop}[thm]{Proposition}
\theoremstyle{definition}
\theoremstyle{remark}
\newtheorem{rem}[thm]{Remark}
\newtheorem{obs}[thm]{Observation}
\theoremstyle{plain}
\numberwithin{equation}{section}
\theoremstyle{plain} % just in case the style had changed
\newcommand{\thistheoremname}{}
\newtheorem{genericthm}[thm]{\thistheoremname}
  \newtheorem*{genericthm*}{\thistheoremname}
\newenvironment{namedthm*}[1]
  {\renewcommand{\thistheoremname}{#1}%
   \begin{genericthm*}}
  {\end{genericthm*}}
\newcommand{\ep}{\epsilon}
\newcommand{\del}{\delta}
\newcommand{\G}{\Gamma}
\newcommand{\R}{{\mathbb R}}
\newcommand{\N}{{\mathbb N}}
\newcommand{\Z}{{\mathbb Z}}
\newcommand{\dist}{\hbox{ \rm dist}}
\newcommand{\calA}{{\mathcal A}}
\newcommand{\calB}{{\mathcal B}}
\newcommand{\calC}{{\mathcal C}}
\newcommand{\calD}{{\mathcal D}}
\newcommand{\calF}{{\mathcal F}}
\newcommand{\calH}{{\mathcal H}}
\newcommand{\calI}{{\mathcal I}}
\newcommand{\calL}{{\mathcal L}}
\newcommand{\calM}{{\mathcal M }}
\newcommand{\calS}{{\mathcal S}}
\newcommand{\calU}{{\mathcal U}}
\newcommand{\calV}{{\mathcal V}}
\newcommand{\supp}{{\textrm{supp}}}
\def\g{\gamma}
\newcommand{\vast}{\bBigg@{4}}
\newcommand{\Vast}{\bBigg@{5}}
\newcommand{\frakA}{{\mathfrak A}}
\newcommand{\frakB}{{\mathfrak B}}
\newcommand{\frakm}{{\mathfrak m}}
\newcommand{\frakM}{{\mathfrak M}}
\newcommand{\frakL}{{\mathfrak L}}
\newcommand{\TFC}{\textnormal{{\bf TFC}}}
\newcommand{\sTFC}{\textnormal{{$\mathsf{TFC}$}}}
\def\udot#1{\ifmmode\oalign{$#1$\crcr\hidewidth.\hidewidth
    }\else\oalign{#1\crcr\hidewidth.\hidewidth}\fi}
\def\a{\alpha}
\def\R{\mathbb{R}}
\def\Z{\mathbb{Z}}
\def\beq{\begin{equation}}
\def\eeq{\end{equation}}
\newcommand{\doublewidetilde}[1]{{%
  \mathpalette\double@widetilde{#1}%
}}
\newcommand{\double@widetilde}[2]{%
  \sbox\z@{$\m@th#1\widetilde{#2}$}%
  \ht\z@=.9\ht\z@
  \widetilde{\box\z@}%
}
\def\one{\mbox{1\hspace{-4.25pt}\fontsize{12}{14.4}\selectfont\textrm{1}}}
\def\@makefnmark{%
  \leavevmode
  \raise.9ex\hbox{\fontsize\sf@size\z@\normalfont\tiny\@thefnmark}}
\begin{document}
	
\title[]{\Large{The Bilinear Hilbert--Carleson operator along curves}\\\normalsize{The purely non-zero curvature case}}
\author{\'Arp\'ad B\'enyi, Bingyang Hu and Victor Lie}

\address{\'Arp\'ad B\'enyi: Department of Mathematics, Western Washington University, 516 High Street, Bellingham, WA 98225,  U.S.A.}%
\email{benyia@wwu.edu}%%

\address{Bingyang Hu: Department of Mathematics, Auburn University, 221 Parker Hall, Auburn, AL 36849, U.S.A.}%
\email{bzh0108@auburn.edu}%%

\address{Victor D. Lie: Department of Mathematics, Purdue University, 150 N. University St, W. Lafayette, IN 47907, U.S.A.  and
The ``Simion Stoilow" Institute of Mathematics of the Romanian Academy, Bucharest, RO 70700, P.O. Box 1-764, Romania}%
\email{vlie@purdue.edu}%%

\begin{abstract} In this paper, we provide the maximal boundedness range (up to end-points) for the Bilinear Hilbert--Carleson operator along curves in the (purely) non-zero curvature setting. More precisely, we show that the operator 
$$  BHC_{[\vec{a},\vec{\a}]}(f_1,f_2)(x):=\sup_{\lambda\in\R}\left|\,\textnormal{p.v.}\int_{\R} f_1(x- a_1 t^{\a_1})\,f_2(x- a_2 t^{\a_2})\,e^{i\,\lambda\,a_3 \,t^{\a_3}}\,\frac{dt}{t}\right|$$
obeys the bounds  
$$\|BHC_{[\vec{a},\vec{\a}]}(f_1,f_2)\|_{L^r}\lesssim_{\vec{a}\,\vec{\a},r,p_1,p_2}\|f_1\|_{L^{p_1}}\,\|f_2\|_{L^{p_2}}$$ 
whenever  $\vec{a}=(a_1,a_2,a_3),\,\vec{\a}=(\a_1,\a_2,\a_3)\in (\R\setminus\{0\})^3$ with $\vec{\a}$ having pairwise distinct coordinates and 
for any H\"older range $\frac{1}{p_1}+\frac{1}{p_2}=\frac{1}{r}$ with $1<p_1,p_2<\infty$ and $\frac{1}{2}<r<\infty$.

This result is achieved via the Rank II LGC method introduced in \cite{HL23}.
\end{abstract}
\date{\today}

\keywords{Trilinear Hilbert transform, Bilinear Hilbert--Carleson operator, correlative time-frequency/wave-packet analysis, joint Fourier coefficients,  zero/non-zero curvature, LGC method, sparse-uniform decomposition, level set analysis, time-frequency correlation set.}

\thanks{}

\maketitle

\setcounter{tocdepth}{2}
\tableofcontents

\section{Introduction}

In this paper we focus on the study of singular integral operators that encompass the following features:
\begin{itemize}
\item a \textsf{multi(sub)linear} nature;

\item a \textsf{commutation} relation relative to the standard symmetries, \textit{i.e.},  translations and dilations;

\item a \textsf{Carleson-type} behavior, \textit{i.e.}, the defining object may be expressed as a supremum over a family of multilinear integral operators with (generalized) modulated singular kernel.
\end{itemize}

Indeed, our goal in this paper is to understand the behavior of the \emph{purely curved (non-resonant)} case of the generic \emph{Bilinear Hilbert--Carleson operator along $\vec{\g}$} defined as\footnote{Throughout the paper, all the singular integral expression are apriori only defined for Schwartz functions.}
\begin{equation}\label{BHCcurv}
        BHC_{[\vec{\g}]}(f_1,f_2)(x):=\sup_{\lambda\in\R}\left|\: \textrm{p.v.} \int_{\R} f_1(x-\g_1(t))\,f_2(x-\g_2(t))\,e^{i\,\lambda\,\g_3(t)}\,\frac{dt}{t}\right|\,,
\end{equation}
where here $\vec{\g}(t)=(\g_1(t), \g_2(t), \g_3(t))$ is a piecewise differentiable curve in $\R^3$.

While the deeper motivation behind the study of \eqref{BHCcurv} will be explained in detail in Section \ref{MotivHistory}, see in particular Remark \ref{ConIII2} therein, for the moment being, we summarize its key underlying justification as follows: the behavior of $BHC_{[\vec{\g}]}$ is \emph{connected in a fundamental, conceptual way} to the behavior of the \emph{trilinear Hilbert transform along $\vec{\g}$} given by 
\begin{equation}\label{ThTcurv}
        THT_{[\vec{\g}]}(f_1,f_2,f_3)(x):= \,\textrm{p.v.}\int_{\R} f_1(x-\g_1(t))\,f_2(x-\g_2(t))\,f_3(x-\g_3(t))\,\frac{dt}{t}\,,
\end{equation}
with the latter being a central object of interest whose general understanding constitutes an important open problem in harmonic analysis.

With these being said, we pass now to

\subsection{Statement of the main result}\label{Mainres}

\begin{mthm} \label{mainresult} \textit{Let \footnote{In the case $\a\notin\Z$ one has to properly define the meaning of $t^{\a}$ as either $|t|^{\a}$ or $\textrm{sgn}{t}\,|t|^{\a}$.} $\vec{\g}(t)=(a_1 t^{\a_1}, a_2 t^{\a_2}, a_3 t^{\a_3})$ with $\vec{a}=(a_1,a_2,a_3),\,\vec{\a}=(\a_1,\a_2,\a_3)\in (\R\setminus\{0\})^3$  and set
\begin{equation}\label{BHCcurval}
        BHC_{[\vec{a},\vec{\a}]}(f_1,f_2)(x):=\sup_{\lambda\in\R}\left|\,\emph{p.v.}\int_{\R} f_1(x- a_1 t^{\a_1})\,f_2(x- a_2 t^{\a_2})\,e^{i\,\lambda\,a_3 \,t^{\a_3}}\,\frac{dt}{t}\right|\,.
\end{equation}
If $\vec{\a}$ is a vector with \emph{pairwise distinct} coordinates then
the purely nonresonant\footnote{\textit{I.e.}, the operator under discussion has no modulation invariance structure.} Bilinear Hilbert--Carleson operator $BHC_{[\vec{a},\vec{\a}]}(f_1,f_2)$ obeys\footnote{Here and henceforth, for $a, b \in \R$, $a \lesssim b$ ($a \gtrsim b$, respectively) means there exists an absolute $C>0$, which is independent of $a$ and $b$, such that $a \le Cb$ ($a \ge Cb$, respectively). Moreover, if both $a\lesssim b$ and $a \gtrsim b$ hold, then we say $a \sim b$.}
\begin{equation}\label{BHCcurval1}
        \|BHC_{[\vec{a},\vec{\a}]}(f_1,f_2)\|_{L^r}\lesssim_{\vec{a}\,\vec{\a},r,p_1,p_2}\|f_1\|_{L^{p_1}}\,\|f_2\|_{L^{p_2}}
\end{equation}
for any H\"older range $\frac{1}{p_1}+\frac{1}{p_2}=\frac{1}{r}$ with $1<p_1,p_2<\infty$ and $\frac{1}{2}<r<\infty$.}
\end{mthm}

\begin{rem}\label{mainresultconse} As a confirmation of the natural companionship and already alluded deep interrelationship between \eqref{BHCcurv} and \eqref{ThTcurv}, we have that many elements in the proof of the above main theorem \emph{both rely and are relied upon}\footnote{For more on this please see Remark \ref{ConIII2}, Remark \ref{twoway} and, more generally, Section \ref{Planmainideas}.} when treating the trilinear counterpart \eqref{ThTcurv}, as witnessed by the work in \cite{HL23}:

 \textit{Under the same (purely) nonresonant hypothesis on $\vec{\a}$ as in Main Theorem \ref{mainresult}, we have that the curved trilinear Hilbert transform 
\begin{equation}\label{ThTcurval}
        THT_{[\vec{a},\vec{\a}]}(f_1,f_2,f_3)(x):= \,\emph{p.v.}\int_{\R} f_1(x- a_1 t^{\a_1})\,f_2(x- a_2 t^{\a_2})\,f_3(x-a_3 t^{\a_3})\,\frac{dt}{t}\,.
\end{equation}
obeys
\begin{equation}\label{ThTcurval1}
        \|THT_{[\vec{a},\vec{\a}]}(f_1,f_2,f_3)\|_{L^r}\lesssim_{\vec{a}\,\vec{\a},r,p_1,p_2,p_3}\|f_1\|_{L^{p_1}}\,\|f_2\|_{L^{p_2}}\,\|f_3\|_{L^{p_3}}
\end{equation}
for any Banach H\"older range $\frac{1}{p_1}+\frac{1}{p_2}+\frac{1}{p_3}=\frac{1}{r}$ with $1<p_1,p_2,p_3<\infty$ and $1\leq r<\infty$.}
\end{rem}

\begin{rem}\label{Gen1} Both proofs of the boundedness of \eqref{BHCcurval} and \eqref{ThTcurval} in the purely non-zero curvature setting as stated in Main Theorem \ref{mainresult} and Remark \ref{mainresultconse}, respectively, employ the Rank II LGC--method introduced in \cite{HL23}. Indeed, in contrast with the reasonings in \cite{Lie2024}, \cite{GL25} and \cite{HsL24} but similar with those in \cite{BBLV24} and \cite{HL23}, the linearized discrretized model of the operator in the present paper---obtained via the Rank I LGC--method---is \emph{not absolutely summable}. We stress here that the lack of absolute summability appearing in our context is a phenomenon \emph{within each single scale and not in-between scales}. The latter situation has been encountered before, see \emph{e.g.} \cite{Th02}, and, comparatively, is easier to address via partial integration, Plancherel and telescoping sum arguments. The former phenomenon however, is much more subtle as it captures hidden cancellations within the \emph{same-scale} local Fourier coefficients of the input functions: handling this type of challenge is the main strength and novelty as well as the very reason for designing the Rank II LGC--method.  Indeed, at the heart of this method lies a \emph{correlative} time-frequency discretized model that exploits the \emph{interaction} of the Fourier modes of the input functions and whose control is achieved via the following key ingredients:
\begin{enumerate} 
\item  a \emph{sparse-uniform decomposition} of the input functions adapted to a foliation of the phase-space;
  
\item  a structural analysis of some appropriate \emph{maximal joint Fourier coefficients};

\item  a level set analysis with respect to the \emph{time-frequency correlation set}.
\end{enumerate}
\end{rem}

\begin{rem}\label{Gen2A} The Bilinear Hilbert--Carleson operator defined by \eqref{BHCcurval1} and corresponding to the \emph{hybrid quadratic non-resonant} case  $\a_1=\a_2=1$ and $\a_3\notin\{1,2\}$ was introduced and studied in \cite{BBLV24}. Overall, this hybrid case is strictly harder than the purely non-resonant case studied in the present paper since in the former case one has to confront the presence of the modulation invariance structure expressed in \eqref{hybnresI}. However, a careful comparative analysis of the two proofs reveals the following deeper antithesis:
\begin{itemize}  
\item on the one hand, the \emph{single scale ($L^2$--base) smoothing inequality}\footnote{See the Main Proposition (informal version) in  Section \ref{Planmainideas} versus the corresponding Proposition 4.1. in \cite{BBLV24}.}  is significantly more difficult in the purely non-resonant case as opposed to the hybrid case. Indeed the crucial distinction is made by the un-isotropic (in the purely non-resonant case) versus isotropic (in the hybrid case) character of the $t-$argument corresponding to the input functions. This is turn requires for the  non-resonant case the introduction of a novel approach relying on the so-called constancy propagation of the linearizing function---see the discussion in Section \ref{Planmainideas};

\item on the other hand, the \emph{multi-scale analysis} is significantly more difficult in the hybrid case as opposed to the purely non-resonant case due to the extra modulation invariance structure of the former case which requires a BHT/Carleson type analysis involving tree selection algorithms based on the concepts of size, energy and mass---see Section 6 in \cite{BBLV24}.  
\end{itemize}
\end{rem}

\begin{rem}\label{Gen3}  Building on the context offered by Main Theorem \ref{mainresult}, Remark \ref{mainresultconse} and Remark \ref{Gen2A} it becomes apparent that the studies of the Bilinear Hilbert--Carleson operator $ BHC_{[\vec{a},\vec{\a}]} $ defined by \eqref{BHCcurval} and of the Trilinear Hilbert Transform $THT_{[\vec{a},\vec{\a}]}$ defined by \eqref{ThTcurval} are not only intimately connected to each-other but also on the specific choice of the vector $\vec{\a}=(\a_1,\a_2,\a_3)\in \R_{+}^3$. Consequently, we are naturally led to consider the following \emph{hierarchy}:
\begin{itemize}
\medskip
\item \textsf{The purely \underline{\emph{non-resonant}} case: $\{\a_j\}_{j=1}^3$ covers \underline{\emph{three}} distinct values}. In this situation neither $ BHC_{[\vec{a},\vec{\a}]} $ nor $THT_{[\vec{a},\vec{\a}]}$  have any modulation invariance symmetries. The full treatment of the former object is provided in the present paper while the treatment of the latter object is (essentially) covered in \cite{HL23}.
\medskip
\item \textsf{The \underline{\emph{hybrid}} case: $\{\a_j\}_{j=1}^3$ covers \underline{\emph{two}} distinct values}. In this situation both objects under consideration have suitable modulation invariance properties. However, their analysis splits in two major subcases:
\begin{itemize} 
 \item \textsf{\emph{hybrid quadratic non-resonant}} case: 
$$\qquad\qquad\exists\,i\not=k\in\{1,2,3\}\:\:\textrm{with}\:\:\a_i=\a_k=\min\{\a_j\}_{j=1}^3\notin \left\{\frac{1}{2}\max\{\a_j\}_{j=1}^3,\,\max\{\a_j\}_{j=1}^3\right\}\,.$$
\quad In this situation we further split our discussion according to the nature of our operator:
\begin{itemize}
\item for the Trilinear Hilbert Transform $THT_{[\vec{a},\vec{\a}]}$, due to the input symmetry and the presence of the dilation invariant Haar measure $\frac{dt}{t}$, we may assume wlog that $\a_1=\a_2=1$ and $\a_3\notin\{1,2\}$. As a consequence, we have the linear modulation\footnote{Throughout the paper, we denote the $j$-order modulation with parameter $a\in\R$ by $M_{j,a} f(x):=e^{i a x^j}\,f(x)$ with $j\in\N$.} invariance
    \begin{equation}\label{hybnres}
    \left| THT_{[\vec{a},\vec{\a}]} \left(M_{1,\frac{c}{a_1}}f_1, M_{1,-\frac{c}{a_2}}f_2, f_3\right)\right|=\left| THT_{[\vec{a},\vec{\a}]} \left(f_1, f_2, f_3\right)\right|\qquad \forall\:c\in\R\,.
    \end{equation}

\item for the Bilinear Hilbert--Carleson operator $ BHC_{[\vec{a},\vec{\a}]}$ we still have the dilation invariance of the measure but we lack the input symmetry and hence there is not a unique case to consider. Nevertheless, a natural prototype for the hybrid quadratic non-resonant case which is also the analogue of \eqref{hybnres}, is the situation $\a_1=\a_2=1$ and $\a_3\notin\{1,2\}$ given by
   \begin{equation}\label{hybnresI}
     BHC_{[\vec{a},\vec{\a}]} \left(M_{1,\frac{c}{a_1}}f_1, M_{1,-\frac{c}{a_2}}f_2\right)=BHC_{[\vec{a},\vec{\a}]} \left(f_1, f_2\right)\qquad \forall\:c\in\R\,.
    \end{equation}   
This last operator has been addressed in \cite{BBLV24}.
   \end{itemize}
\item  \textsf{\emph{hybrid quadratic resonant}} case:
$$\qquad\qquad\exists\,i\not=k\in\{1,2,3\}\:\:\textrm{with}\:\:\a_i=\a_k=\min\{\a_j\}_{j=1}^3=\frac{1}{2}\max\{\a_j\}_{j=1}^3\,.$$
\quad As before, we further split our discussion according to the nature of our operator:
\begin{itemize}
\item for the Trilinear Hilbert Transform $THT_{[\vec{a},\vec{\a}]}$, appealing to the input symmetry and dilation invariance, we may assume wlog that $\a_1=\a_2=1$ and $\a_3=2$. As a consequence, on top of \eqref{hybnres}, we now also have the quadratic modulation invariance
    \begin{equation}\label{hybres}
    \left| THT_{[\vec{a},\vec{\a}]} \left(M_{2,\frac{c}{a_1}}f_1, M_{2,-\frac{c}{a_2}}f_2, M_{1, \frac{c}{\a_3}(\a_1-\a_2)} f_3\right)\right|=\left| THT_{[\vec{a},\vec{\a}]} \left(f_1, f_2, f_3\right)\right|\qquad \forall\:c\in\R\,.
    \end{equation}

\item for the Bilinear Hilbert--Carleson operator $ BHC_{[\vec{a},\vec{\a}]}$, again, there are more options, with the natural analogue of \eqref{hybres} given by the case $\a_1=\a_2=1$ and $\a_3=2$. On top of \eqref{hybnresI} we now have 
   \begin{equation}\label{hybresI}
     BHC_{[\vec{a},\vec{\a}]} \left(M_{2,\frac{c}{a_1}}f_1, M_{2,-\frac{c}{a_2}}f_2\right)=BHC_{[\vec{a},\vec{\a}]} \left(f_1, f_2\right)\qquad \forall\:c\in\R\,.
    \end{equation}   
\end{itemize}
The hybrid quadratic resonant case is widely open. Any significant advancement would require some fundamental new ideas. 
\end{itemize}
\medskip
\item \textsf{The purely \underline{\emph{resonant}} case: $\{\a_j\}_{j=1}^3$ covers \underline{\emph{one}} single value}.  This is of course the most difficult case from all of the ones listed above. By applying a standard dilation invariance argument one can reduce matters to the situation $\a_1=\a_2=\a_3=1$. Also, for the simplicity of the exposition we set $\vec{a}=(a_1,a_2,a_3)=(1,2,3)$ and with these choices for $\vec{a},\vec{\a}$ we simply refer to  $THT_{[\vec{a},\vec{\a}]}$ as $\textsf{THT}$ and, with the obvious adaptations, to $BHC_{[\vec{a},\vec{\a}]}$ as $\textsf{BHC}$. With these, we have
\begin{itemize}   
 \item the (zero-curvature) Trilinear Hilbert Transform $\textsf{THT}$ obeys the modulation invariance relations
    \begin{equation}\label{thtpres}
    \left| \textsf{THT} \left(M_{1,3b_1}f_1, M_{1,3b_2}f_2, M_{1, -b_1-2b_2} f_3\right)\right|=\left| \textsf{THT}\left(f_1, f_2, f_3\right)\right|\qquad \forall\:b_1,\,b_2\in\R\,,
    \end{equation}
and
    \begin{equation}\label{thtpresquad}
    \left| \textsf{THT} \left(M_{2,3b}f_1, M_{2,-3b}f_2, M_{2,b} f_3\right)\right|=\left| \textsf{THT}\left(f_1, f_2, f_3\right)\right|\qquad \forall\:b\in\R\,.
    \end{equation}
 The infamous question addressing the boundedness of \textsf{THT} is widely open and represents one of the most difficult problems in harmonic analysis.\footnote{For more on this, the interested reader is invited to consult the introductory sections in \cite{Lie20} and \cite{HL23}.} 
    
\item the (zero-curvature) Bilinear Hilbert--Carleson operator $\textsf{BHC}$ obeys the modulation invariance relations
\begin{equation}\label{BHCpres1}
     \textsf{BHC} \left(M_{1,b_1}f_1, M_{1,b_2}f_2\right)=\textsf{BHC} \left(f_1, f_2\right)\qquad \forall\:b_1,\,b_2\in\R\,,
    \end{equation}   
and
 \begin{equation}\label{BHCpres2}
     \textsf{BHC}\left(M_{2,4b} f_1, M_{2,-b}f_2\right)=\textsf{BHC}\left(f_1, f_2\right)\qquad \forall\:b\in\R\,.
 \end{equation}   
 The question addressing the boundedness of \textsf{BHC} is also widely open and is directly related to another fundamental problem in classical Fourier analysis that of the pointwise convergence of the bilinear Fourier series.\footnote{For more on this one is invited to consult the Introduction in \cite{BBLV24}.}    
\end{itemize}

\end{itemize}

\end{rem}

\subsection{Motivation and historical background}\label{MotivHistory}

The perspective and the overall structure of the presentation in this section is inspired by a survey paper \cite{Liesurv} in which the author therein discusses several deep connections between multilinear singular operators and maximal multi(sub)linear operators of Carleson type.
\medskip

Fix $n\in\N$. For expository reasons, in what follows, we introduce two generic classes of operators\footnote{The case of foremost importance for our present paper is $n=3$.}:
\begin{itemize}

\item \underline{$1^{st}$ class}: \textsf{the $(n-1)$-linear $\vec{\G}$-Carleson operator} $C_{n-1,\vec{\G}}$, defined as
\beq\label{nhilBHCarl}
C_{n-1,\vec{\G}}(f_1,\ldots,f_{n-1})(x):= \textrm{p.v.}\int_{\R}\prod_{j=1}^{n-1} f_j(x-\underline{\g}_j(t))\,\frac{e^{i\,\g(x,t)}}{t}\,dt\,,
\eeq
where here $\vec{\underline{\g}}:=(\underline{\g}_1 ,\ldots ,\underline{\g}_{n-1}):\mathbb R \rightarrow \mathbb R^{n-1}$ is an $(n-1)-$variable tuple with each $\underline{\g}_j$ a suitable piecewise smooth planar curve, $\g(x,t)$ is a piecewise smooth function in $t$ and only measurable in $x$, and, finally $\vec{\G}(x,t):=(\vec{\underline{\g}}(t), \g(x,t))$.
\medskip

 \item \underline{$2^{nd}$ class}: \textsf{the $n$-linear \emph{$\vec{\g}$-Hilbert transform}} $H_{n,\vec{\g}}$, defined as
\beq\label{nhilb}
H_{n,\vec{\g}}(f_1,\ldots,f_n)(x):= \textrm{p.v.}\int_{\R}\prod_{j=1}^n f_j(x-\g_j(t))\,\frac{dt}{t}\,,
\eeq
 where here $\vec{\g}:=(\g_1 ,\ldots ,\g_n):\mathbb R \rightarrow \mathbb R^n$ is an $n-$variable tuple with each $\g_j$ being a suitable piecewise smooth planar curve.
 \medskip
\end{itemize}

Next, in the increasing order of complexity, we build a hierarchical structure within which we elaborate on the deep and subtle interplay between the two classes introduced above. For now, in preparation for the discussion below, we mention that at the heuristic level this interplay involves a ``\emph{degree reduction}", by trading the $n^{th}$ input function in \eqref{nhilb} for a maximal ($x-$dependent) oscillatory exponential in \eqref{nhilBHCarl}, thus connecting
$H_{n,\vec{\g}}$ to $C_{n-1,\vec{\G}}$. We end these preparatives by stating that our antithetical discussion follows in parallel the above two sets of objects depending on 
\begin{itemize}
\item the value of $n$;

\item the curvature paradigm:  zero, non-zero and hybrid curvature regimes.
\end{itemize}

\noindent\underline{\textbf{I) The case $n=1$.}} $\quad$ In this context, using the convention $\prod_{j=1}^{0} f_j\equiv1$ one may represent \eqref{nhilBHCarl} as a maximal oscillatory singular integral. With this interpretation, one can portray the work of E. Stein and S. Wainger in \cite{swmul} as providing uniform in $x-$bounds to \eqref{nhilBHCarl} whenever $\g(x,t)=\sum_{j=1}^d a_j(x)\,t^{\a_j}$ with $d\in\N$, $\{\a_j(\cdot)\}_j$ real measurable functions and $\{\a_j\}_j\subset\R_{+}$.

Moving now our focus on \eqref{nhilb}, it becomes transparent that this expression corresponds to the classical Hilbert transform along curves $H_{1,\g}$ which is a central object within harmonic analysis, and which, among others, serves---in the zero curvature setting---as a main prototype for the seminal Calder\'on-Zygmund theory (see e.g. \cite{CZ1} and \cite{CZ2}). 

Along the way, we notice that the $L^2$-boundedness of \eqref{nhilb} is equivalent with the $L^{\infty}$-boundedness (as a function of $x$) of \eqref{nhilBHCarl} in the setting $\g(x,t):=x\,\g_1(t)$.
\smallskip

\noindent\underline{\textbf{II.1) The case $n=2$, zero curvature.}} $\quad$ We start our examination with the first class \eqref{nhilBHCarl}: the zero curvature case corresponds to the situation $\underline{\g}_1(t)=t$ and $\g(x,t)=a(x)\,t$ with $a$ measurable function, and the operator defined by \eqref{nhilBHCarl} represents the classical Carleson operator. Historically, the study of this operator was motivated by the central question---laying at the foundation of the harmonic analysis area---regarding the pointwise convergence of Fourier Series for square integrable functions. This problem offered the first study case of an operator that is invariant under linear modulation. In 1966, L. Carleson proved in \cite{Car66} his celebrated result on the $L^2$ boundedness of $C_{1,\G}$. Later, R. Hunt, \cite{hu}, extended this result to any $L^p$, $1<p<\infty$.

The second class \eqref{nhilb} corresponds to $\vec{\g}(t)=(a t, b t)$, $a\not=b\in\R\setminus\{0\}$  and was motivated by A. Calder\'on's study of the Cauchy transform on Lipschitz curves (\cite{Cal}, \cite{CMM}). After more than two decades, the first key insight into this problem was due to M. Lacey, who in \cite{la1} noticed that both the Carleson operator $C_{1,\G}$ and the bilinear Hilbert transform $H_{2,\vec{\g}}$ share a similar modulation invariance structure thus hinting that the approach of $H_{2,\vec{\g}}$ must rely on time-frequency techniques and in particular wave-packet analysis. This intuition was indeed confirmed in the breakthrough papers \cite{lt1}, \cite{lt2}, where, by introducing a novel time-frequency framework,  M. Lacey and C. Thiele proved the $L^{p_1}\times L^{p_2}\,\longrightarrow\,L^r$ boundedness of $H_{2,\vec{\g}}$ within the range $\frac{1}{p_1}+\frac{1}{p_2}=\frac{1}{r}$, $1<p,q\leq\infty$ and $\frac{2}{3}<r<\infty$.

\begin{rem}
We end the overview of case II.1 with two commentaries:
\begin{itemize}
 \item this zero-curvature case confirms the strong connection between the two classes of operators introduced in \eqref{nhilBHCarl} and \eqref{nhilb}: indeed, the methods pioneered by L. Carleson, \cite{Car66} and C. Fefferman, \cite{f} in relation to $C_{1,\G}$ served both as an inspiration and proved quintessential in the successful approach of $H_{2,\vec{\g}}$ via the concepts of mass, energy, tiles and tree organization/selection; 
 \item due to the lack of absolute summability of the Gabor coefficients within the discretized model operator representing $H_{2,\vec{\g}}$, the question regarding its maximal boundedness range -- i.e., whether or not one can cover the regime $\frac{1}{2}<r\leq\frac{2}{3}$ -- remains an important open problem in the time-frequency area. For more on the latter, we invite the reader to examine Section 3.3. in \cite{HL23} and in particular Observation 3.3. therein.
\end{itemize}     
\end{rem}

\noindent\underline{\textbf{II.2) The case $n=2$, non-zero curvature.}} $\quad$ For the first class \eqref{nhilBHCarl} this line of research was motivated both by the maximal singular oscillatory integral study revealed at Case $n=1$ and, more importantly, by the study of singular/maximal oscillatory integral operators on Heisenberg groups (where one has an unisotropic behavior). E. Stein, \cite{s2}, proved the $L^2$ boundedness of \eqref{nhilBHCarl} in the regime $\underline{\g}_1(t)=t$ and $\g(x,t)=a(x) t^2$ while E. Stein and S. Wainger (\cite{sw}) covered the full $L^p$, $1<p<\infty$ range (and also its higher dimensional analogue) for $\underline{\g}_1(t)=t$ and $\g(x,t)=\sum_{j=2}^d a_j(x) t^j$ with $d\geq 2$. Their approach relied on the $TT^{*}-$method and Van der Corput techniques.

For the second class \eqref{nhilb} the motivation is twofold: as a curved analogue of the classical (zero curvature) Bilinear Hilbert transform and as a continuous analogue of the ergodic theoretical problem regarding the norm and pointwise convergence of Furstenberg non-conventional bilinear averages.\footnote{For more on the connections and related bibliography we invite the interested reader to consult the Introduction in \cite{Lie2024}.} In this non-zero curvature realm, the complete boundedness range for $H_{2,\vec{\g}}$ is now fully settled depending on the nature of the curves $\vec{\g}$ considered: 1) in \cite{Li13}, \cite{LX16}, this was done for monomials and polynomials, respectively, with the method of proof relying on the $\sigma$-uniformity concept introduced in \cite{cltt} and inspired by  Gowers's work in \cite{Gow98}; 2)  In \cite{Lie15}, \cite{Lie18}, \cite{GL20}, the analysis was performed on a larger class of ``non-flat" curves that included any Laurent or even generalized polynomial with no monomial of degree one with the method of proof combining two types of discretization: a Taylor approximation of second order for the phase of the multiplier that decouples the frequency variables and a suitable (non-linear) wave-packet analysis applied to the input functions.

\smallskip
\begin{rem} Based on the Rank I LGC method introduced by the third author, in \cite{Lie2024} and \cite{GL25}, a unified approach to \eqref{nhilBHCarl} and \eqref{nhilb} in the non-zero case was offered thus creating the link between these two cases. Another result establishing an implicit relation between \eqref{nhilBHCarl} and \eqref{nhilb} in the particular setting $\g_1(t)=\underline{\g}_1(t)=t$, $\g_2(t)=t^2$ and $\g(x.t)=a(x) t^2$  was presented in \cite{DR22} relying on Gowers uniformity norms.
\end{rem}

\noindent\underline{\textbf{II.3) The case $n=2$, hybrid curvature.}} \quad For the first class \eqref{nhilBHCarl} this situation arises as a natural symbiosis between the zero curvature case covering the celebrated theorem of Carleson and the non-zero curvature situation encompassing the previously mentioned results in \cite{s2} and \cite{sw}. In the latter, E. Stein conjectures that the Polynomial Carleson operator $C_{1,\G}$ with $\g(x,t)=\sum_{j=1}^d a_j(x) t^j$ is of strong type $(p,p)$ for $1<p<\infty$. The relevance of this theme goes much beyond the synthesis character mentioned above: indeed, the Polynomial Carleson operator is the first example considered in the literature of a linear operator that is invariant under higher order modulations, thus requiring a higher-order wave-packet analysis. In \cite{lv1}, \cite{Lie20} the third author answered in the affirmative this conjecture by developing  a new time-frequency framework for higher-order wave packets, a tile selection algorithm that removed the presence of exceptional sets as well as novel local analysis adapted to the mass and tree selection algorithms. It is worth noticing that this result remains to date the only treatment of an operator obeying higher order modulation invariance. Finally, further elaboration on these ideas produced higher dimensional analogues, see \cite{zk1}, \cite{lv3n} and more recently \cite{BDJST}.

For the second class \eqref{nhilb}, the motivation for considering the hybrid curvature case resides on the desire of better understanding the subtle phenomenon that governs the transition from the zero to non-zero curvature. In order to understand this theme one is expected to combine both the approaches used at items II.1) and II.2) above. Indeed, in \cite{GL25}, the authors therein employed the Rank I LGC method in order to obtain the boundedness of $H_{2,\vec{\g}}$ in the hybrid setting $\g_1(t)=t$ and $\g_2(t)=at +b t^{\alpha}$ for $a\in\R\setminus\{1\}$, $b\in\R$ and $\alpha\in(0,\infty)\setminus\{1,2\}$.

\begin{rem} A noteworthy fact in the hybrid case context of \eqref{nhilb} is that the Rank I LGC method fails in the parabola case $\g_2(t)=at +b t^{2}$, hence the necessary restriction $\a\not=2$. This is a fundamental, conceptual obstruction transcending technical issues which is closely related to the similar phenomenon that happens in the case of the Polynomial Carleson operator where using a linear wave-packets analysis only works as long as the  polynomial phase does not contain quadratic monomials in $t$. This latter aspect corresponds precisely to the situation when the operator under analysis lacks quadratic/higher order modulation invariance structure. 
%However, in a very recent development, \cite{lv25}, the third author provides the resolution of $H_{2,\vec{\g}}$ in the parabola case by involving---for the first time in the multilinear setting---quadratic wave-packet analysis---in accordance with the \emph{quadratic quasi-resonance} character of this problem as revealed in \cite{GL25}. 
\end{rem}

\begin{rem} In the same \cite{GL25} the authors therein provide a unified approach to both \eqref{nhilBHCarl} and \eqref{nhilb} in the case of hybrid curves with no quadratic (quasi-)resonances thus re-confirming the intimate connection between the two classes of operators under discussion.
\end{rem}

\begin{obs} 
The jump from $n=2$ to $n\geq 3$ poses key difficulties in all the curvature regimes and until very recently no results existed even in the (purely) non-zero curvature case. Also, from now on, the reference to the hybrid curvature situation is adjusted (relaxed) to refer to a generic situation that covers both zero and non-zero curvature features without necessarily including the full zero-curvature realm.
\end{obs}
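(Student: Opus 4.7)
The final item is labeled as an \emph{observation} rather than a theorem, and inspection shows that it is expository/conventional in nature: it records a historical assertion about the difficulty jump from $n=2$ to $n\geq 3$ and then fixes a terminological relaxation of the phrase ``hybrid curvature''. There is consequently no formal identity or inequality to derive; the plan is instead to \emph{substantiate} each of the two components using the framework already assembled in Section~\ref{Mainres} and the references surveyed in Section~\ref{MotivHistory}.

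For the first assertion, I would argue the difficulty jump by contrasting the available technology on the two sides of the threshold. For $n=2$ in the non-zero curvature regime, the $\sigma$-uniformity approach of \cite{Li13,LX16} (inspired by \cite{cltt,Gow98}) and later the Rank~I LGC method of \cite{Lie2024,GL25} produce a single-scale discretized model whose Gabor-type expansion is \emph{absolutely summable}, so that the boundedness of $H_{2,\vec{\g}}$ and of the associated Carleson variants can be reduced to a stationary-phase/Van der Corput input. For $n\geq 3$, however, the same Rank~I linearization fails to be absolutely summable already \emph{within a single scale}, as noted in Remark~\ref{Gen1}; this reflects genuine cancellations between the same-scale local Fourier coefficients of distinct input functions, which cannot be recovered by any partial-integration/telescoping scheme of the type used in \cite{Th02}. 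The historical half of the claim is then documented by observing that, prior to \cite{HL23} and to the present paper, no $L^p$ bound was known for either $H_{3,\vec{\g}}$ or $BHC_{[\vec{\g}]}$ in the purely non-zero curvature regime; the very role of the Rank~II LGC method is precisely to overcome this obstruction, by introducing a correlative time-frequency model controlled via sparse-uniform decompositions, maximal joint Fourier coefficients, and a level-set analysis on the time-frequency correlation set.

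The second sentence is a notational convention: henceforth ``hybrid curvature'' is understood in the wider sense of curves mixing curvature features, without insisting on a flat component. No argument is required, only consistency in later sections; I would record this choice explicitly so that later invocations of ``hybrid quadratic resonant/non-resonant'' in the sense of Remark~\ref{Gen3} are read correctly.

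The principal ``obstacle'' is stylistic rather than mathematical: one must draw the $n=2$ versus $n\geq 3$ contrast at the correct conceptual level, namely the transition from a diagonal, single-function Rank~I LGC model to a genuinely correlative Rank~II model, so that the observation functions as a motivation for the method of the paper rather than as a mere bookkeeping remark about the index $n$.
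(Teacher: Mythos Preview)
Your assessment is correct: the paper offers no proof of this observation, treating it purely as a transitional remark between the $n=2$ and $n\geq 3$ discussions, and your substantiation via the Rank~I/Rank~II contrast and the absolute summability failure of Remark~\ref{Gen1} is an appropriate way to ground the first sentence.

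One small correction on the second sentence: your paraphrase ``without insisting on a flat component'' inverts the intended relaxation. The observation does \emph{not} drop the requirement that hybrid curves mix zero and non-zero curvature features; rather, it relaxes the demand that the hybrid class contain the \emph{full} zero-curvature case as a limiting subcase. Concretely, in the $n=2$ hybrid discussion of II.3 the operators genuinely specialize to the classical (flat) Carleson operator or BHT when the curved perturbation vanishes, whereas for $n\geq 3$ the paper wishes to call situations like $\gamma_1(t)=a_1 t$, $\gamma_2(t)=a_2 t$, $\gamma_3(t)=a_3 t^{\alpha_3}$ with $\alpha_3\notin\{1,2\}$ ``hybrid'' even though they exclude the purely resonant $\alpha_3=1$ endpoint. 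Your later reference to Remark~\ref{Gen3} is the right anchor, but the convention being recorded is that ``hybrid'' now means \emph{some} zero-curvature features are present, not that the flat case is recoverable in full.
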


\noindent\underline{\textbf{III.1) The case $n=3$, (purely) zero curvature.}} \quad This addresses the case when in \eqref{nhilBHCarl} we take $\underline{\g}_1(t)=-t$,  $\underline{\g}_2(t)=t$ and $\g(x,t)=a(x) t$ with $a(\cdot)$ measurable. In this situation we refer to $C_{2,\G}$ as the purely resonant Bilinear Hilbert-Carleson operator since it naturally extends both the classical Bilinear Hilbert transform and the Carleson operator (with both these in the zero-curvature setting).

The motivation for considering this operator is multifaceted: 1) it is directly connected with the problem of the pointwise convergence of Fourier series in the bilinear setting; 2) it is related with the bilinear version of Rubio de Francia operator associated with arbitrary strips; 3) it belongs to the class of (multi-linear) operators that are both linear and quadratic modulation invariant and for which---as mentioned earlier---with the notable exception of the Polynomial Carleson operator discussed in II.3), we lack a good understanding. For more details on all these items the reader is invited to consult the extended Introduction in \cite{BBLV24}.

For the class represented by \eqref{nhilb}, this case corresponds to the situation when $\g_j(t)= a_j t$ and $\{a_j\}_{j=1}^3\subset\R\setminus\{0\}$ pairwise distinct; this way, $H_{3,\vec{\g}}$ represents the celebrated classical Trilinear Hilbert transform whose $L^p$-boundedness remains a major open question in harmonic analysis. In terms of motivation, this problem has deep ramifications in an array of mathematical areas: 1) on the analytic number theory/additive combinatorics side, $H_{3,\vec{\g}}$ is connected with the theme of counting additive patterns in subsets of $\Z$ in the spirit of Roth's and Szemer\'edi's theorems (\cite{Sze69}, \cite{Sze75}) and later Gowers, \cite{Gow98}; other connections between the study of $H_{3,\vec{\g}}$ and additive combinatorics appears in \cite{Chr01}; 2) on the ergodic theory side, the study of $H_{3,\vec{\g}}$  relates to the behavior of suitable Furstenberg non-conventional averages (see \emph{e.g.} \cite{FW96}, \cite{HK05}, \cite{Zieg07}, \cite{Leib05a}) as well as to the pointwise convergence of bilinear Birkhoff averages for two commuting measure-preserving transformations (\cite{DKST19}); 3) on the time-frequency/classical harmonic analysis side, $H_{3,\vec{\g}}$ exhibits linear and quadratic modulation invariance and is part of a natural hierarchy of objects extending the bilinear Hilbert transform and also further related to the triangular Hilbert transform (\cite{KTZ}).

\begin{rem} 
The problems of studying the $L^p-$boundedness properties for both operators \eqref{nhilBHCarl} and  \eqref{nhilb}  in the zero curvature setting and $n=3$ are widely open with no positive results known to date. A fundamental difficulty in approaching either of these problems is the---yet---poor understanding of the higher order modulation invariance feature.
\end{rem}

\noindent\underline{\textbf{III.2) The case $n=3$, (purely) non-zero curvature.}} \quad For the class in \eqref{nhilBHCarl} this case corresponds to the generic situation $\underline{\g}_1(t)=a_1 t^{\a_1}$, $\underline{\g}_2(t)=a_2 t^{\a_2}$ and $\g(x,t)=a(x) t^{\a_3}$ with $\{a_j\}_{j=1}^3\subset\R\setminus\{0\}$ and $\{\a_j\}_{j=1}^3\subset\R\setminus\{0\}$ pairwise distinct. In this situation we refer to $C_{2,\G}$ as the purely non-resonant Bilinear Hilbert-Carleson operator since this object has no modulation invariance structure. 

Moving to class \eqref{nhilb}, this covers the situation $\underline{\g}_j(t)=a_j t^{\a_j}$, $1\leq j\leq 3$, with $H_{3,\vec{\g}}$ representing now the curved trilinear Hilbert transform.

The motivation for considering the purely curved situation for both \eqref{nhilBHCarl} and \eqref{nhilb} arises from multiple directions\footnote{For more on these we invite the interested reader to consult the introduction in \cite{HL23}.}: 
\begin{enumerate}
\item these problems can be considered as natural models for the very difficult but fundamental open problems corresponding to the zero curvature setting and discussed at point III.1; 
\item they have connections with number theory/additive combinatorics, specifically with the question of identifying qualitative and/or quantitative conditions on a subset $S$ of $\R$ (or $\Z$) that guarantee the existence of  polynomial progressions patterns; 
\item they have connections with ergodic theory area: in particular, the purely curved case of \eqref{nhilb} may be regarded as a singular Euclidean extension of the ergodic theoretical problem regarding the norm and pointwise convergence behavior of the Furstenberg non-conventional polynomial averages.
\end{enumerate}     
\begin{rem} 
The non-zero curvature setting is the only one that is now completely settled in the $n=3$ case. Indeed we have the following:
\begin{itemize}
\item the maximal boundedness range---up to end-points---for the purely non-resonant Bilinear Hilbert-Carleson operator  $C_{2,\G}$ is the subject of the present paper;

\item the $L^{p_1}(\R)\times L^{p_2}(\R)\times L^{p_3}(\R)$ into $L^{r}(\R)$ boundedness of $H_{3,\vec{\g}}$ within the Banach H\"older range $\frac{1}{p_1}+\frac{1}{p_2}+\frac{1}{p_3}=\frac{1}{r}$ with $1<p_1,p_3<\infty$, $1<p_2\leq \infty$ and $1\leq r <\infty$ was solved recently by the second and third authors in \cite{HL23}. The extension to the quasi-Banach range corresponding to $r>\frac{1}{2}$ is dealt with in the follow up paper \cite{HL24}. Whether or not the latter constitutes the maximal boundedness range of $H_{3,\vec{\g}}$ seems to be a subtle open problem with additive combinatorics flavor.
\end{itemize}
\end{rem}

\begin{rem}\label{lgcIImeth} The resolution of both $C_{2,\G}$ and $H_{3,\vec{\g}}$ in this non-zero curvature setting relies in a crucial manner on the newly developed Rank II LGC method introduced in \cite{HL23}. Indeed, as already noticed in Remark \ref{Gen1}, a key difficulty in approaching both these problems was the lack of absolute summability for the associated linearized discretized models (and hence the failure of Rank I LGC method).  
\end{rem}

\begin{rem}\label{ConIII2} Reinforcing the philosophy stated at the beginning of our section and shaped throughout our previous remarks, the present case offers another instance of the deep connections between the two classes of operators \eqref{nhilBHCarl} and \eqref{nhilb}, respectively. Indeed, a crucial ingredient in the treatment of $H_{3,\vec{\g}}$ is a good understanding---though only within a partial range of the spatial and frequency parameters\footnote{\emph{I.e.} $k$ and $m$ respectively - see Remark \ref{twoway} and, more generally, Section \ref{Planmainideas}.}---of the main component of the maximal operator $C_{2,\G}$. The latter is achieved via a novel procedure that we refer to as ``\emph{constancy propagation}" of the linearizing function $a(x)$ which relies on a bootstrap argument resembling an induction on scale reasoning and involving the structure of the phase $a(x)$ in the definition of $\g(x,t)$ as part of $C_{2,\G}$. Thus, in a nutshell, our approach of $H_{3,\vec{\g}}$ \emph{requires} in first place a good understanding of $C_{2,\G}$, with the latter becoming a natural companion to the former object. The role of the present paper is to complete this companionship initiated with the study of $H_{3,\vec{\g}}$ in \cite{HL23} and provide the full understanding of the boundedness properties of $C_{2,\G}$, in particular covering all the possible range for the spatial and frequency parameters.
\end{rem}

\begin{rem}\label{Pel} The methods developed in \cite{HL23} and here have a great potential for further extensions and/or generalizations. Indeed, one such natural direction of investigation regards the key results involving local smoothing inequalities proved in \cite{KMPW2024a} and \cite{KMPW2024b}. These results rely on the so-called PET induction scheme developed by Bergelson and Leibman in \cite{BL96}, a powerful method  which---in the context described above---however, faces two key restrictions: 1) it only applies to curves described by suitable polynomials (corresponding in our case to the restriction $\{\a_j\}_{j=1}^3\subset\N$ pairwise distinct), and, 2) it only applies to large physical scales (corresponding in our case, in either \eqref{nhilBHCarl} or \eqref{nhilb}, to enforcing $|t|>>1$).  However, the flexibility of our methodology overcomes both these obstacles supporting thus our initial claim.  
\end{rem}

\noindent\underline{\textbf{III.3) The case $n=3$, hybrid curvature.}} \quad  A typical situation addressing this case is $\underline{\g}_1(t)=a_1 t$, $\underline{\g}_2(t)=a_2 t$ and $\g(x,t)=a(x) t^{\a_3}$ or $\underline{\g}_3(t)=a_3 t^{\a_3}$ depending on whether we refer to \eqref{nhilBHCarl} or  \eqref{nhilb}, respectively. Of course, as usual. we require $\{a_j\}_{j=1}^3\subset\R\setminus\{0\}$ with $a_1,\,a_2$ pairwise distinct and $\a_3\in\R\setminus\{0,1,2\}$. In this situation both $C_{2,\G}$ and $H_{3,\vec{\g}}$ are linear modulation invariant while they also exhibit non-zero curvature features---hence the \emph{hybrid} curvature character. The condition $\a_3\not=2$ is imposed in order for these operators to have \emph{only} linear but \emph{no} quadratic modulation invariance features, with the latter instance being a next level, critical step in the major enterprise of tackling the purely zero curvature case described at item III.1).

The motivation for studying these problems is shared with the one described at  III.2). Moreover this case is a natural link between Cases III.2) and III.1): indeed, approaching Case III.3 requires the knowledge accumulated from Case III.2) and is required in order to have a better understanding of the difficulties arising when dealing with Case III.1).

\begin{rem} While not completely settled, we believe that soon we will be able to fully understand the hybrid no-quadratic resonance case. Indeed, we have the following:
\begin{itemize}
\item the hybrid Bilinear Hilbert Carleson operator $C_{2,\G}$ in the above setting was studied in \cite{BBLV24}, where the authors therein proved that  $C_{2,\G}$ is bounded from $L^{p_1}(\R)\times L^{p_2}(\R)$ into $L^{r}(\R)$ for $\frac{1}{p_1}+\frac{1}{p_2}=\frac{1}{r}$ with $1<p_1,p_3\leq \infty$ and $\frac{2}{3} < r <\infty$ thus in particular extending the work of M. Lacey and C. Thiele, \cite{lt1}, \cite{lt2}, on the classical (zero-curvature) Bilinear Hilbert transform;

\item the hybrid trilinear Hilbert transform $H_{3,\vec{\g}}$ in a closely related setting with the one above is the subject of an ongoing study in \cite{BBL25}. 
\end{itemize}
Both these works are crucially relying on Rank II LGC (in particular their discretized linear wave-packet models fail to be absolutely summable) at which---in contrast with case III.2)---one now adds modulation invariance/multiresolution analysis techniques. 
\end{rem}

\begin{rem}\label{ConIII3}  Mirroring Remark \ref{ConIII2}, the hybrid case reconfirms once again the intimate link between \eqref{nhilBHCarl} and \eqref{nhilb}: the analysis of $H_{3,\vec{\g}}$ encapsulates as a prerequisite a good understanding of the maximal operator $C_{2,\G}$ that is further conditioned by the elements within the itemized list in Remark \ref{Gen1} and in particular by the local structure of the linearizing phase function $a(x)$ in the definition of $C_{2,\G}$.
\end{rem}

\noindent\underline{\textbf{IV) The general case $n\geq 4$.}} \quad Of course, as expected, this general case presents itself with multiple challenges that are yet to be understood. As before, one can create a natural hierarchy in an increasing order of difficulty from non-zero, hybrid to zero curvature. While the latter two cases are---in full generality---beyond reach as of now, for the former case substantial progress has been made. Indeed, the purely curved case addressing  $H_{n,\vec{\g}}$ in the general setting $\g_j(t)=a_j t^{\a_j}$ with  $\{a_j\}_{j=1}^n\subset\R\setminus\{0\}$ and $\{\a_j\}_{j=1}^n\subset\R\setminus\{0\}$ pairwise distinct as well as $C_{n-1,\vec{\G}}$ with the obvious modifications is the subject of an ongoing investigation in \cite{HL25} revealing yet again the intimate connection between \eqref{nhilb} and  \eqref{nhilBHCarl}. Moreover, the Hardy-Littlewood maximal analogue of \eqref{nhilb} has already been settled in the purely curved case and general $n$ in \cite{HL24},

\subsection{Plan of the proof; main ideas}\label{Planmainideas}

In this section we provide
\begin{itemize}
\item (A) an outline of the structure of our paper;

\item (B) a brief description of some of the most relevant ideas.
\end{itemize}

We start our discussion with item (A): first the versatility\footnote{See also Remark \ref{Pel}.} of our methods allows us to treat with minimal effort any nonresonant choice of $\vec{\a}$ (see the hypothesis in Main Theorem \ref{mainresult}) once we have designed the strategy for a particular, representative choice. For this reason, it is enough to only focus on a special case of \eqref{BHCcurval1}, denoted by  
\begin{equation}\label{bhcdef}
BHC(f_1, f_2)(x):=\sup_{\lambda \in \R} \left|\, \textrm{p.v.} \int_\R f_1(x-t)f_2(x+t^2)e^{i\lambda t^3} \frac{dt}{t} \right|\,.
\end{equation}
In Section \ref{PrelDec}, we perform a preliminary discretization of $BHC$ depending on two type-parameters:
\begin{itemize}
\item a spatial parameter $k\in\Z$ accounting for the spatial scale of the kernel/multiplier;

\item a frequency parameter triple $(j,l,m)\in\Z^3$ accounting for the magnitude of the Fourier supports of the input functions $f_1$, $f_2$ and of the linearizing phase $\lambda(x)$, respectively. 
\end{itemize}
Next, depending on the suitable interdependencies between $k,j,l,m$ which govern the behaviour of the phase of the multiplier, and, in particular, the presence/absence of stationary points, we subdivide our operator into three key components:
$$BHC\leq BHC^{Lo}+BHC^{\not \Delta}+BHC^{\Delta}\,,$$
with the middle component further subdivided as
$$BHC^{\not \Delta}\leq BHC^{\not \Delta, S}+BHC^{\not \Delta, NS}\,,$$
where here $BHC^{Lo}$ stands for the low oscillatory component, the high oscillatory component corresponds to an on-diagonal ($BHC^{\Delta}$) and an
off-diagonal ($BHC^{\not \Delta}$) component, and finally, $BHC^{\not \Delta}$ encompasses a high frequency off-diagonal \emph{stationary} component $BHC^{\not \Delta, S}$ and a high frequency off-diagonal \emph{non-stationary} component $BHC^{\not \Delta, NS}$.

Withe these, our approach develops as follows\footnote{For convenience, all the analysis below---except for the one in the Appendix---is performed in the regime $k\geq0$.}:
\begin{itemize}
\item $BHC^{\not \Delta, NS}$ and $BHC^{Lo}$ are relatively easy to control, task which can be accomplished independent of the behavior of the other components: the former is treated in Section \ref{20250228sec01}---see Proposition \ref{20250317prop01}, while the latter is handled in Section \ref{20250228sec02}---see Theorem \ref{thmin};
\item as expected, the main difficulties and ideas reside in the local $L^2$ smoothing estimate for the diagonal component $BHC^{\Delta}$---see Section \ref{Mainsect} and in particular Theorem \ref{20241018mainthm01} therein; 
\item the local $L^2$ resolution of the high frequency off-diagonal \emph{stationary} component $BHC^{\not \Delta, S}$ relies on a skillful adaptation of the ideas employed for treating the main term $BHC^{\Delta}$; the required argumentation is presented in Section \ref{20250308sec01}---see Theorem \ref{generalizedmain};
\item finally, the maximal quasi-Banach boundednees range (up to end-points) of $BHC$ is completed in Section \ref{20250309sec01} while in the Appendix we provide a brief sketch of the modifications required in order to deal with the situation when $k<0$.         
\end{itemize}

We now move our focus on item (B): As expected, the crux of our entire work resides on obtaining the local $L^2$ smoothing control on the diagonal component $BHC^{\Delta}$ with the latter representing (essentially) the regime $j=l=m\in\N$ which corresponds to the main situation when the phase of the multiplier admits stationary points. The main result captured within Theorem \ref{20241018mainthm01} can be essentially included within the following:
$\newline$

\noindent\textbf{Main Proposition (informal).} [\textsf{$L^2$-smoothing control on the maximal joint Fourier coefficient}]\quad \emph{Let $r\in\Z$, $k,\,m\in\N$ and set 
$I_r^k=[r 2^{-k}, (r+1) 2^{-k}]$ and $I^k\equiv I_1^k$. Assume $f_1,\,f_2\in L^2(\R)$ such that $\textrm{supp}\, \widehat{f}_1\subseteq [2^{m+k}, 2^{m+k+1}]$ and  $\textrm{supp}\, \widehat{f}_2\subseteq [2^{m+2k}, 2^{m+2k+1}]$. Define now the $(m,k)$--\emph{maximal joint Fourier coefficient} of the pair $(f_1,f_2)$ along the moment curve $(t,t^2,t^3)$ as 
\begin{equation} \label{maxjointFourcoeff}
 \mathcal{J}_{m, k}(f_1, f_2)(x):=\sup_{\lambda\in\R}\,\frac{1}{|I^k|^{\frac{1}{2}}}\,\left|\int_{I^k} f_1(x-t)\,f_2(x+t^2)\,e^{i \lambda t^3} dt \right|\,.
\end{equation} 
Then, there exists $\ep>0$ absolute constant such that for any $r\in\Z$, one has
\begin{equation} \label{maxjointFourcoeffcontr}
\left\|\mathcal{J}_{m, k}(f_1, f_2)(\cdot)\right\|_{L^2(I_r^k)}\lesssim   2^{-\epsilon \min \{2 k, m\}}\,\|f_1\|_{L^2(3I_r^k)}\, \|f_2\|_{L^2(3 I_r^k)}\,.
\end{equation}}

\smallskip
We next elaborate on some of the main ideas involved in the proof of the above statement. The first simple observation based on some standard reasonings is that we only need to show \eqref{maxjointFourcoeffcontr} for $r=1$ and hence we may assume from now on that $x\in I^k$. Next, by exploiting the hypothesis in the Main Proposition, a linearization procedure reduce matters to the following situation: 
\smallskip

\begin{equation} \label{maxjointFourcoeff1}
 \mathcal{J}_{m, k}(f_1, f_2)(x)\approx \frac{1}{|I^k|^{\frac{1}{2}}}\,\left|\int_{I^k} f_1(x-t)\,f_2(x+t^2)\,e^{i 2^{\frac{m}{2}+3k} \widetilde{\lambda}(x) t^3} dt \right|\,,
\end{equation} 
with 
\begin{equation} \label{constlam}
\widetilde{\lambda}: I^k \to \left[2^{\frac{m}{2}}, 2^{\frac{m}{2}+1} \right]\quad\textsf{measurable, constant on intervals of length}\:\:2^{-m-2k}\,.
\end{equation} 
\smallskip

Building on the work in \cite{HL23}, the key contribution  of this paper is to unravel the subtle relationship between the \emph{structure} of the linearizing phase function $\widetilde{\lambda}$ and the \emph{nature} of the method employed in the resolution of \eqref{maxjointFourcoeffcontr}. More precisely, we have the following philosophical dichotomy:
\begin{itemize}
\item on the one hand, if $\widetilde{\lambda}$ is ``slowly" varying, \emph{e.g.} $\widetilde{\lambda}$ is constant on $ I^k$, then one can apply the Rank I LGC method having as a result a wave-packet discretizated model of \eqref{maxjointFourcoeff1} which can be controlled via absolute values in order to obtain the desired estimate in \eqref{maxjointFourcoeffcontr}.

\item on the other hand, if  $\widetilde{\lambda}$ is ``widely" varying within the context of \eqref{constlam}, then Rank I LGC method fails\footnote{\emph{I.e.}, the resulting discretized wave-packet model does not produce the required $ 2^{-\epsilon \min \{2 k, m\}}$-decay if the absolute values are applied over the local Fourier coefficients corresponding to $f_1$ and $f_2$.} and one has to appeal to the \emph{correlative} nature of the Rank II LGC method.
\end{itemize}

The main ingredient which reveals and allows us to exploit the above dichotomy is the \emph{constancy propagation} procedure introduced in \cite{HL23} and further refined here. As the name suggests, this procedure resembling an induction on scale mechanism allows us to move from a smaller constancy scale to a  larger one by exploiting different cancellation phenomena within the structure of our operator.

With these being said, we are ready to provide more content to our story: the constancy propagation procedure can not be performed \emph{pointwise}, directly on the expression \eqref{maxjointFourcoeff1}, but rather on a majorant of its $L^2$ $x$-average; more precisely, employing a spatial sparse-uniform dichotomy corresponding to the first stage of item (1) in Remark \ref{Gen1}, one reduces matters to the situation when both $f_1$ and $f_2$ are \emph{spatially} uniformly distributed. Once here, we apply a partition of the $t$-integration domain at the level of the \emph{linearizing} scale, \emph{i.e.} we write $I^k=\bigcup_{q\sim 2^{\frac{m}{2}}}I_q^{m, k} $  where here  $I_q^{m, k}:=\left[ \frac{q}{2^{\frac{m}{2}+k}}, \frac{q+1}{2^{\frac{m}{2}+k}}   \right]$ noticing that for $t\in I_q^{m, k}$ we have $2^{\frac{m}{2}+3k}\,\widetilde{\lambda}(x)\,t^3= -2 \cdot \frac{q^3 \lambda(x)}{2^{\frac{3m}{2}+3k}}\,+\,3 \cdot \frac{q^2 \lambda(x)}{2^{m+2k}}\,t\,+\,O(1)$, and, after some standard reasonings, we obtain 
\begin{eqnarray} \label{dom}
&& \:\:\:\:\:\: \qquad\qquad\qquad\qquad\qquad\left\|\mathcal{J}_{m, k}(f_1, f_2)(\cdot)\right\|_{L^2(I^k)}^2 \lesssim \nonumber  \\
&&\left[{\calL}_{m, k}(f_1, f_2) \right]^2 \ :=  2^{\frac{m}{2}+k} \cdot  \int_{I^k} \left( \sum_{q \sim 2^{\frac{m}{2}}} \left| \int_{I_q^{m, k}} f_1(x-t)f_2(x+t^2) e^{3i2^k \cdot \frac{q^2 \widetilde{\lambda}(x)}{2^{\frac{m}{2}}}t} dt\right|^2 \right)dx.
\end{eqnarray} 

It is on this latter expression that our constancy propagation applies.  Specifically, our analysis splits in two major cases:

\medskip 

\noindent \underline{\textsf{Case I: $k \ge \frac{m}{2}$.}} \quad In this case, the corner stone is represented by the proof of the following:
\smallskip

\begin{prop} (\cite[Theorem 4.3]{HL23}, \textsf{Informal statement})\label{maxjointctrl} If $k\geq \frac{m}{2}$, $f_1$, $f_2$  spatially uniformly distributed, and 
\begin{equation} \label{constlam1}
\widetilde{\lambda}: I^k \to \left[2^{\frac{m}{2}}, 2^{\frac{m}{2}+1} \right]\quad\textsf{measurable, constant on intervals of length}\:\:2^{-3k}\,,
\end{equation} 
then there exists $\ep>0$ absolute constant such that the expression ${\calL}_{m, k}(f_1, f_2)$ defined in \eqref{dom} obeys
\begin{equation} \label{maxjointFourcoeffcontr1}
{\calL}_{m, k}(f_1, f_2) \lesssim   2^{-\epsilon m}\,\|f_1\|_{L^2(3I^k)}\, \|f_2\|_{L^2(3 I^k)}\,.
\end{equation}
\end{prop}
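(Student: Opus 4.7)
The plan is to implement the Rank I LGC method, which is viable in the regime $k \geq \frac{m}{2}$ precisely because the resulting single-scale wave-packet model turns out to be absolutely summable. First I would exploit the hypothesis \eqref{constlam1} that $\widetilde{\lambda}$ is constant on intervals of length $2^{-3k}$ by partitioning $I^k$ into dyadic intervals $\{J_\nu\}$ with $|J_\nu|=2^{-3k}$ and setting $\widetilde{\lambda}|_{J_\nu}=:\lambda_\nu$. This freezes the phase on each $J_\nu$, so that $[\calL_{m,k}(f_1,f_2)]^2$ decomposes as a sum of localized contributions indexed by $\nu$, each driven by a linear-in-$t$ phase $3\cdot 2^{k} q^{2}\lambda_\nu/2^{m/2}$ whose frequency depends only on $q$ and $\lambda_\nu$.

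Second, I would perform Gabor wave-packet expansions of $f_1,f_2$ adapted to their Fourier supports: $f_1$ at physical scale $2^{-(m+k)}$ and frequency scale $2^{m+k}$, and $f_2$ at physical scale $2^{-(m+2k)}$ and frequency scale $2^{m+2k}$. Substituting these expansions into the localized integral and Taylor-expanding $t\mapsto t$ and $t\mapsto t^{2}$ around the midpoint $t_q=q/2^{m/2+k}$ of $I_q^{m,k}$, the linear Taylor terms couple directly to the wave-packet frequency labels of $f_1$ and $f_2$, while the quadratic Taylor remainder contributes a phase of total size $O(1)$ thanks precisely to the assumption $k\geq \frac{m}{2}$. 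Integration in $t$ over $I_q^{m,k}$ then produces a sinc-type factor that enforces a tight linear constraint among the wave-packet frequencies of $f_1$, $f_2$, the integer $q$, and the frozen value $\lambda_\nu$.

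With this discrete model in hand, the third step is to square $\calL_{m,k}$, open the inner sum in $q$, and apply Plancherel together with wave-packet orthogonality. This reduces matters to controlling a quadratic form whose kernel is effectively a Gauss sum in $q\sim 2^{m/2}$ whose quadratic phase has leading coefficient proportional to $\lambda_\nu/2^{m/2}$; Weyl-type cancellation for such Gauss sums is what supplies the decisive decay factor $2^{-\epsilon m}$. Combining this cancellation with the Plancherel identities for the Gabor coefficients and with the spatially uniform distribution hypothesis on $f_1,f_2$ then upgrades the discrete bound to the $L^{2}$ estimate \eqref{maxjointFourcoeffcontr1}.

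The main obstacle is the quantitative extraction of the $2^{-\epsilon m}$ decay in the last step: the Gauss-sum estimate must remain robust after the summations over $q\sim 2^{m/2}$ and over $\{J_\nu\}$, and uniform in the arbitrary measurable $\widetilde{\lambda}$. The regime $k\geq \frac{m}{2}$ is what makes Rank I viable here, since it simultaneously kills the quadratic Taylor remainder and prevents any need for the correlative (Rank II) bookkeeping of the joint Fourier modes of $f_1$ and $f_2$; the complementary regime $k<\frac{m}{2}$ would force one to track genuine cancellations between same-scale Fourier coefficients of the two inputs, which is exactly the phenomenon addressed by Rank II LGC elsewhere in the paper.
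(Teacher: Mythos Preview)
Your proposal has a genuine gap in the overall strategy. Proposition~\ref{maxjointctrl} is imported from \cite[Theorem~4.3]{HL23}, and the proof there does \emph{not} proceed via Rank~I LGC. Remark~\ref{twoway1} is explicit: the resolution of Proposition~\ref{maxjointctrl} hinges on the \emph{constancy propagation} procedure (a Rank~II ingredient), carried out at $x$-scale $2^{-2k}$ via Proposition~4.7 in \cite{HL23}. The hypothesis \eqref{constlam1} still leaves $\widetilde{\lambda}$ varying across $2^{2k}\geq 2^{m}$ subintervals of $I^{k}$, which is exactly the ``widely varying'' scenario of Section~\ref{Planmainideas} where Rank~I LGC is declared to fail; one must first bootstrap the constancy scale of $\widetilde{\lambda}$ upward before any absolutely summable wave-packet model becomes available. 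Your claim that $k\geq\tfrac{m}{2}$ is what kills the quadratic Taylor remainder is also incorrect: for $t\in I_q^{m,k}$ one has $(t-t_q)^{2}\lesssim 2^{-m-2k}$, so against the frequency $\sim 2^{m+2k}$ of $f_2$ the remainder phase is $O(1)$ regardless of the relation between $k$ and $m$.

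Two further concrete problems. First, the Gabor scales you propose, $2^{-(m+k)}$ and $2^{-(m+2k)}$, are the Heisenberg scales of $f_1,f_2$; the LGC discretization instead uses the linearizing scales $2^{-\frac{m}{2}-k}$ and $2^{-\frac{m}{2}-2k}$ (compare Section~\ref{20250228sec03}). Second, the Gauss sum mechanism you invoke does not arise: in \eqref{dom} the sum $\sum_{q\sim 2^{m/2}}$ already sits \emph{outside} the absolute value, so squaring produces no $q,q'$ cross-terms, and the phase in $t$ is linear (not quadratic) in $q$. The actual source of the $2^{-\epsilon m}$ decay in \cite{HL23} is a heavy/light level-set dichotomy on the range of $\widetilde{\lambda}$ combined with the curvature of the map $q\mapsto \sqrt{2^{m/2}u+q^{2}}/q$; see the end of the proof of Proposition~\ref{20241018prop01} in the present paper, which explicitly mirrors Steps~IV--V of \cite[Proposition~4.7]{HL23}.
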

\smallskip

\begin{rem}\label{twoway}  Proposition \ref{maxjointctrl} is a concrete embodiment of the deep connection between \eqref{BHCcurval} and \eqref{ThTcurval} claimed at the beginning of our Introduction. Indeed
\begin{itemize}
\item on the one hand,  Proposition \ref{maxjointctrl}---in the form of Theorem 4.3---is a key ingredient in \cite{HL23} for obtaining the $L^p$-boundedness of \eqref{ThTcurval1}; 

\item on the other hand, as revealed by \eqref{maxjointFourcoeffcontr}, \eqref{dom} and \eqref{maxjointFourcoeffcontr1},  Proposition \ref{maxjointctrl} captures exactly the desired smoothing estimate for the main component of \eqref{BHCcurval1} in the regime $k\geq m$.  
\end{itemize}
\end{rem}

Once here, our analysis further splits in two regimes:
\begin{enumerate}
    \item [$\bullet$]  \underline{\textsf{Case I.1: $k \ge m$.}} \quad In this situation, we simply notice that $2^{-m-2k}\geq 2^{-3k}$ and hence \eqref{constlam} implies \eqref{constlam1}. Consequently our Main Proposition is a direct corollary of Proposition \ref{maxjointctrl}.
\smallskip        
    
    \item [$\bullet$]  \underline{\textsf{Case I.2: $\frac{m}{2} \le k \le m$.}}\quad  In this situation, we follow a two-step approach: 
\medskip    
 \begin{itemize}   
    \item \underline{\textsf{Step I.2.1:}} We first implement the constancy propagation procedure in order to extend the constancy of $\widetilde{\lambda}$ from intervals of length $2^{-m-2k}$ to intervals of length $2^{-m-k}$ (see Section \ref{20241028subsec01}). 
\smallskip    
    \item \underline{\textsf{Step I.2.2:}} Once we reach the scale $2^{-m-k}$, we notice that in our regime $2^{-m-k} \ge 2^{-3k}$, and hence,  Proposition \ref{maxjointctrl} applies once again.
 \end{itemize}   
\end{enumerate}

\medskip

\noindent \underline{\textsf{Case II: $0 \le k \le \frac{m}{2}$.}} In this case there is no stratagem that allows us to reduce matters to  Proposition \ref{maxjointctrl}; in particular, we notice that in the present setting, in contrast with \eqref{maxjointFourcoeffcontr1},  the decaying factor in \eqref{maxjointFourcoeffcontr} takes the form $2^{-\epsilon k}$. Thus, our strategy develops now as follows:
\smallskip
\begin{itemize}
\item \underline{\textsf{Step II.1:}} As at the Step I.2.1, requiring though a slightly different point of view, we apply the constancy propagation procedure in order to extend the constancy of $\widetilde{\lambda}$ from intervals of length $2^{-m-2k}$ to intervals of length $2^{-m-k}$ (see Proposition \ref{20241018prop01}, Section \ref{20241217subsec01}).
\smallskip

\item \underline{\textsf{Step II.2:}} Once we reach the $2^{-m-k}-$constancy scale for  $\widetilde{\lambda}$, we return to the original form \eqref{maxjointFourcoeff1} and apply to it the Rank I LGC method in order to obtain \eqref{maxjointFourcoeffcontr} (see Section \ref{20250228sec03}). 
\end{itemize}
\smallskip

\noindent We end this section with the following 

\begin{rem}\label{twoway1} While the constancy propagation procedure is a key ingredient in the resolution of  
\begin{itemize}
\item \emph{i)} on the one hand, Proposition \ref{maxjointctrl}, and, 

\item \emph{ii)} on the other hand, Step I.2.1 and Step II.1, 
\end{itemize}
there are some relevant distinctions in the way in which this procedure is implemented: Indeed, in the former situation the analysis is performed at the level of each $x-$interval having the $x-$relevant scale $2^{-2k}$ (see Proposition 4.7 in \cite{HL23}) while in the latter situation a successful analysis requires a grouping/concatenation of $2^{m-k}$ $x-$intervals having the $x-$relevant scale $2^{-m-k}$ (see Sections \ref{20250228sec03} and \ref{20241028subsec01}).  
\end{rem}

\subsection*{Acknowledgments}
The first author was supported by an AMS-Simons Research Enhancement Grant for PUI Faculty. The second author was supported by the Simons Travel grant MPS-TSM-00007213. The third author was supported by the NSF grant DMS-2400238 and by the Simons Travel grant MPS-TSM-00008072.

\section{Set-up for the problem: preliminaries}\label{PrelDec}

For notational simplicity and clarity of the exposition, throughout the reminder of the paper we set $\vec{a}=(1,-1,1)$, $\vec{\a}=(1,2,3)$ and focus only on the corresponding $BHC_{[\vec{a},\vec{\a}]}$ which for convenience will be referred from now on as $BHC$. With these, the goal of our paper is to analyze the boundedness behavior of the \emph{(purely) non-resonant bilinear Hilbert Carleson}:
$$
BHC(f_1, f_2)(x):=\sup_{\lambda \in \R} \left|\, \textrm{p.v.} \int_\R f_1(x-t)f_2(x+t^2)e^{i\lambda t^3} \frac{dt}{t} \right|, 
$$
whose multiplier form is given by
$$
BHC(f_1, f_2)(x)= \sup_{\lambda \in \R} \left| \int_{\R} \widehat{f_1}(\xi) \widehat{f_2}(\eta) \frakm_\lambda(\xi, \eta) e^{i\xi x} e^{i \eta x} d\xi d\eta \right|,
$$
with
$$
\frakm_{\lambda}(\xi, \eta):=\, \textrm{p.v.} \int_{\R} e^{-i\xi t} e^{i \eta t^2} e^{i \lambda t^3} \frac{dt}{t}. 
$$
\subsection{Discretization of the multiplier}
As usual in such circumstances, we proceed with a two-layer decomposition, as follows:

\subsubsection{A physical discretization} Let $\rho \in C_0^\infty(\R)$ be an odd function with $\textrm{supp}\,\rho \subseteq \left[-4, 4 \right] \backslash \left[-\frac{1}{4}, \frac{1}{4} \right]$ which, for $\rho_k(t):=2^k \rho(2^k t)$, satisfies 
$$
\frac{1}{t}=\sum_{k \in \Z} \rho_k(t)\,\qquad\textrm{for all}\:\:t\in\R\setminus\{0\}\,.
$$
This yields a first decomposition 
$$
BHC(f_1, f_2)(x)=\sup_{\lambda \in \R} \left| \sum_{k \in \Z} BHC_{\lambda}^k(f_1, f_2)(x) \right|, 
$$
where for each $\lambda \in \R$, 
$$
BHC_{\lambda}^k(f_1, f_2)(x):=\int_{\R^2} \widehat{f_1}(\xi) \widehat{f_2}(\eta) \frakm^k_{\lambda}(\xi, \eta) e^{i\xi x} e^{i \eta x} d\xi d\eta, 
$$
with
$$
\frakm_{\lambda}^k(\xi, \eta):=\int_{\R} e^{-i \frac{\xi}{2^k} t} e^{i \frac{\eta}{2^{2k}} t^2} e^{i \frac{\lambda}{2^{3k}}t^3} \rho(t)dt.  
$$

\subsubsection{A frequency discretization} 
Since the derivative of the phase function plays a fundamental role in the behavior of such integral operators, it is natural to start this section with a basic \emph{phase analysis}:
\begin{itemize}
\item for $k\in\Z$, we denote by
$$
\varphi_{\xi, \eta, \lambda}^k(t):=-\frac{\xi}{2^k}t+\frac{\eta}{2^{2k}}t^2+\frac{\lambda}{2^{3k}}t^3
$$ 
and notice that
\begin{equation} \label{20240407eq01}
\frac{d}{dt} \varphi_{\xi, \eta, \lambda}^k(t)=-\frac{\xi}{2^k}+\frac{2\eta}{2^{2k}}t+\frac{3\lambda}{2^{3k}}t^2, 
\end{equation} 
\begin{equation} \label{20240407eq02}
\frac{d^2}{dt^2} \varphi_{\xi, \eta, \lambda}^k(t)=\frac{2\eta}{2^{2k}}+\frac{6\lambda}{2^{3k}}t,
\end{equation} 
and
\begin{equation} \label{20240407eq03}
\frac{d^3}{dt^3} \varphi_{\xi, \eta, \lambda}^k(t)=\frac{6\lambda}{2^{3k}}\,;
\end{equation} 

\item next, setting $t_0$ the unique root of $\frac{d^2}{dt^2} \varphi_{\xi, \eta, \lambda}^k$, we have 
\begin{equation} \label{20240407eq05}
t_0:=-\frac{2^k \eta}{3 \lambda} \qquad \textrm{and} \qquad \frac{d^2}{dt^2} \varphi_{\xi, \eta, \lambda}^k (t_0)=0;
\end{equation} 

\item denting the discriminant associated with \eqref{20240407eq01} by 
\begin{equation} \label{20240407eq04}
\Delta:=\frac{\eta^2+3\lambda\xi}{2^{4k}}\,,
\end{equation} 
and using \eqref{20240407eq04} and \eqref{20240407eq05}, one can rewrite $\frac{d}{dt} \varphi_{\xi, \eta, \lambda}^k$ as
$$
\frac{d}{dt} \varphi_{\xi, \eta, \lambda}^k(t)=\frac{3 \lambda}{2^{3k}} \left[(t-t_0)^2-\left(\frac{2^{3k}}{3\lambda} \right)^2 \Delta \right], 
$$
and hence
$$
\frac{d}{dt} \varphi_{\xi, \eta, \lambda}^k(t_0)=-\frac{2^{3k}}{3\lambda} \Delta\,;
$$ 

\item finally, when $\Delta \ge 0$ we denote by $t_1, t_2$ the roots of the equations $\frac{d}{dt} \varphi_{\xi, \eta, \lambda}^k(t)=0$, and notice that
$$
t_{1, 2}=-\frac{2^k \eta}{3\lambda} \pm \frac{2^{3k} \sqrt{\Delta}}{3 \lambda}
$$
with
$$
\left|t_1-t_0 \right|= \left|t_2-t_0 \right|=\frac{2^{3k} \sqrt{\Delta}}{3 \left| \lambda \right|}. 
$$
\end{itemize}

With these done, having in mind the size of the $t-$coefficients in \eqref{20240407eq01}, we let $\phi_1$ and $\phi_2$ be some even and positive functions with $\supp \ \phi_{1, 2} \subseteq \left [-4, 4 \right] \backslash  \left[-\frac{1}{4}, \frac{1}{4} \right]$, which satisfy the partition of unity 
$$
1=\sum_{j, l \in \Z} \phi_1 \left(\frac{\xi}{2^{j+k}} \right) \phi_1 \left(\frac{\eta}{2^{l+2k}} \right) \qquad \textrm{and} \qquad 1=\sum_{m \in \Z} \phi_2 \left(\frac{\lambda}{2^{m+3k}} \right),
$$
respectively. Finally, for every $j, k, l, m \in \Z$, we let
$$
\frakm_{j, l, m, \lambda}^k \left(\xi, \eta \right):=\left(\int_{\R} e^{i\varphi_{\xi, \eta, \lambda}^k(t)} \rho(t) dt\right)  \phi_1 \left(\frac{\xi}{2^{j+k}} \right) \phi_1 \left(\frac{\eta}{2^{l+2k}} \right) \phi_2 \left(\frac{\lambda}{2^{m+3k}} \right),
$$
thus obtaining the decomposition 
$$
BHC(f_1, f_2)(x)=\sup_{\lambda \in \R} \left| \sum_{k \in \Z} \sum_{j, l, m \in \Z} BHC_{j, l, m, \lambda}^k(f_1, f_2)(x) \right|\,,
$$
where $BC_{j, l, m, \lambda}^k$ is the operator induced by the multiplier $m_{j, l, m, \lambda}^k$.

\subsection{Discretization of our $BHC$ operator and basic reductions}

In this section we provide:

\begin{enumerate}
\item  The road-map for the discretization of our operator $BHC$ into three main components:
\begin{itemize}
\item a \emph{low oscillatory} component denoted by $BHC^{Lo}$;

\item a \emph{high frequency diagonal} component denoted by $BHC^{\Delta}$;

\item a \emph{high frequency off-diagonal} component denoted by $BHC^{\not \Delta}$ and further subdivided into
\begin{itemize}
\item a high frequency off-diagonal \emph{stationary} component $BHC^{\not \Delta, S}$;

\item a high frequency off-diagonal \emph{non-stationary} component $BHC^{\not \Delta, NS}$.
\end{itemize}
\end{itemize}

\item A simple and concise treatment of the latter $BHC^{\not \Delta, NS}$ component.
\end{enumerate}

\subsubsection{A preliminary discretization}
We start our decomposition as follows:
\begin{eqnarray*}
BHC(f_1, f_2)%
&\le & BHC^{Hi}(f_1, f_2)\,+\,BHC^{Lo}(f_1, f_2)  \\
& \le &  BHC_{+}^{Hi}(f_1, f_2)\,+\,BHC_{-}^{Hi}(f_1, f_2)\,+\, BHC^{Lo}(f_1, f_2)\,, 
\end{eqnarray*}
where
\begin{enumerate}
    \item [$\bullet$] the \emph{low oscillatory} component is defined by
    $$
    BHC^{Lo}(f_1, f_2)(x):=\sup_{\lambda \in \R} \left|BHC_{ \lambda}^{Lo}(f_1, f_2)(x)\right|\qquad\textrm{with}\qquad BHC_{ \lambda}^{Lo}(f_1, f_2):=\sum_{k \in \Z} \sum_{(j, l, m) \in \Z^3 \backslash \N^3}BHC_{j, l, m, \lambda}^k(f_1, f_2);
    $$
    \item [$\bullet$] the \emph{high oscillatory} component is defined by 
    $$
    BHC^{Hi}(f_1, f_2)(x):=\sup_{\lambda \in \R} \left|BHC_{\lambda}^{Hi}(f_1, f_2)(x)\right|\qquad\textrm{with}\qquad BHC_{\lambda}^{Hi}(f_1, f_2):=\sum_{k \in \Z} \sum_{(j, l, m) \in \N^3} BHC^k_{j, l, m, \lambda}(f_1, f_2), 
    $$
    and 
    $$
       BHC_{\pm}^{Hi}(f_1, f_2)(x):=\sup_{\lambda \in \R} \left|BHC_{\pm,\lambda}^{Hi}(f_1, f_2)(x)\right|\qquad\textrm{with}\qquad BHC_{\pm,\lambda}^{Hi}(f_1, f_2):=\sum_{k \in \Z_{\pm}} \sum_{(j, l, m) \in \N^3} BHC^k_{j, l, m, \lambda}(f_1, f_2).
    $$
\end{enumerate}

\subsubsection{Decomposition of the high oscillatory term into a diagonal and an off-diagonal component}
We further divide our analysis of the high oscillatory component into two different cases:
\begin{equation}\label{hifreq}
BHC^{Hi}(f_1, f_2) \le BHC^{\Delta}(f_1, f_2)\,+\,BHC^{\not \Delta}(f_1, f_2) \qquad \textrm{with}
\end{equation}
\begin{enumerate}
    \item [$\bullet$] $BHC^{\Delta}(f_1, f_2)(x):=\sup_{\lambda \in \R} \left|BHC_\lambda^{\Delta}(f_1, f_2)(x) \right|$ the \emph{diagonal} component having the frequency index range
    $$
    \max\{j, l, m\}-\min\{j, l, m\} \le 300; 
    $$
    \item [$\bullet$] $BHC^{\not \Delta}(f_1, f_2)(x):=\sup_{\lambda \in \R} \left| BHC_{\lambda}^{\not \Delta}(f_1, f_2)(x) \right|$ the \emph{diagonal} component having the frequency index range
    $$
    \max\{j, l, m\}-\min\{j, l, m\}> 300. 
    $$
\end{enumerate}
One can also define $BHC_{\pm}^{\Delta}(f_1, f_2)$ and $BHC_{\pm}^{\not \Delta}(f_1, f_2)$ in an obvious way. Moreover, with the obvious correspondences, we further subdivide the off-diagonal term as 
\begin{equation}\label{offdiag}
BHC^{\not \Delta}(f_1, f_2)\le BHC^{\not \Delta, S}(f_1, f_2)\,+\,BHC^{\not \Delta, NS}(f_1, f_2) \qquad \textrm{where}
\end{equation}
\begin{enumerate}
    \item [$\bullet$] $BHC^{\not \Delta, S}(f_1, f_2)$ stands for the \emph{stationary} off-diagonal component corresponding to the stationary phase regime, \textit{i.e.} when at least one of the roots $\{t_1, t_2\}$ has the absolute value of size $\sim 1$, or equivalently, one of the following situations is satisfied\footnote{For any $j, l \in \Z$, $j \cong l$ means $|j-l|<20$.}
    \begin{enumerate}
        \item [(a)] $j \cong l$ and $j>m+100$;
        \item [(b)] $l \cong m$ and $l>j+100$;
        \item [(c)] $m \cong j$ and $m>l+100$. 
    \end{enumerate}
    \item [$\bullet$] $BHC^{\not \Delta,NS}(f_1, f_2)$ stands for the \emph{non-stationary} off-diagonal component corresponding to the non-stationary phase regime, \textit{i.e.}, when the first item above is not satisfied. 
\end{enumerate}

\vspace{-0.1in}

\subsection{Treatment of the non-stationary off-diagonal component $BHC^{\not \Delta,NS}$}  \label{20250228sec01}

This part is routine and follows a similar reasoning with the one in \cite[Section 2.4]{HL23}. To begin with, we let $\lambda: \R \to \R$ be a measurable function such that 
 $$
BHC^{\not \Delta, NS}(f_1, f_2)(x):= \sup\limits_{\lambda \in \R} \left| BHC^{\not \Delta, NS}_{\lambda} (f_1, f_2)(x)\right| \le 2 \left| BHC^{\not \Delta, NS}_{\lambda(x)} (f_1, f_2)(x)\right|. 
 $$
We then make use of the following two results.

\begin{prop}{\cite[Lemma 2.2]{HL23}} \label{20241113prop01}
For any $(j, l, m) \in \N^3$ within the non-stationary off-diagonal regime defined above, one has 
$$
\left| \frac{d}{dt} \varphi_{\xi, \eta, \lambda(x)}^k(t) \right| \gtrsim 2^j+2^l+2^m\,\qquad \textrm{for all}\:\:\:|t| \sim 1\,.
$$
\end{prop}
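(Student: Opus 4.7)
The plan is to unpack the explicit form
$$\frac{d}{dt}\varphi_{\xi,\eta,\lambda(x)}^k(t)=-\frac{\xi}{2^k}+\frac{2\eta}{2^{2k}}t+\frac{3\lambda(x)}{2^{3k}}t^2$$
together with the frequency localization on $\textrm{supp}\,\frakm_{j,l,m,\lambda(x)}^k$, which forces $|\xi|\sim 2^{j+k}$, $|\eta|\sim 2^{l+2k}$ and $|\lambda(x)|\sim 2^{m+3k}$. Thus, for $|t|\sim 1$, the three summands above have magnitudes comparable to $2^j$, $2^l$ and $2^m$, respectively, and since $2^j+2^l+2^m\sim \max\{2^j,2^l,2^m\}$, it suffices to lower-bound the derivative by a constant multiple of the largest of these three scales.

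The structural heart of the argument is a combinatorial dichotomy: in the non-stationary off-diagonal regime the maximum among $\{j,l,m\}$ strictly dominates the other two indices by at least $20$. I would verify this by a brief case analysis after assuming WLOG $j\le l\le m$---the off-diagonal hypothesis gives $m-j>300$, and if one had $m-l<20$ then $l\cong m$ together with $l\ge m-20\ge j+280$ would trigger the stationary condition (b), contradicting the non-stationary hypothesis; hence $m-l\ge 20$, and automatically $m-j\ge 20$ as well. The three orderings of $\{j,l,m\}$ are symmetric, so in every case exactly one of the three terms in the derivative is strictly larger than the sum of the other two by a factor of at least $2^{20}$.

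Given this separation, the estimate reduces to a routine reverse triangle inequality: if, say, $m$ is the dominant index, one writes
$$\left|\frac{d}{dt}\varphi_{\xi,\eta,\lambda(x)}^k(t)\right|\ge \left|\frac{3\lambda(x)t^2}{2^{3k}}\right|-\left|\frac{\xi}{2^k}\right|-\left|\frac{2\eta t}{2^{2k}}\right|\gtrsim 2^m - O(2^{m-20})\gtrsim 2^m\,,$$
and the cases where $j$ or $l$ is the dominant index are handled identically upon permuting the roles. The hard (really only) step is the bookkeeping of the dichotomy above---once it is settled, the analytic content of the proposition is transparent from the triangle inequality, with no stationary phase or oscillatory integral tools required.
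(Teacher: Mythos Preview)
Your proof is correct. The paper does not supply its own proof of this proposition---it is quoted verbatim from \cite[Lemma 2.2]{HL23}---so there is no in-paper argument to compare against; your approach (reduce to showing the maximal index dominates the other two by at least $20$ via the cyclic structure of the stationary conditions (a)--(c), then apply the reverse triangle inequality) is the natural and standard one, and presumably matches the cited reference.

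One minor remark: your appeal to ``symmetry'' for the remaining orderings is slightly loose---the roles of $j,l,m$ are not literally symmetric in the conditions (a)--(c), but rather \emph{cyclic}, so one should check (as you implicitly do) that for each choice of maximum index there is a stationary condition pairing it with the second-largest. This is routine and your argument goes through in all six orderings.
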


\begin{prop} \label{20250317prop01}
Let $j, l, m$ and $\lambda(\cdot)$ be defined as above. Then the following estimate holds:
$$
BHC^{\not \Delta, NS}(f_1, f_2)(x) \lesssim  \sum_{j, l \ge 0} \frac{1}{2^{\max\{j, l\}}} \cdot \int_{\frac{1}{4} \le |t| \le 4} \calS_{2^j t}^{\phi_1}(x) \calS_{-2^l t^2}^{\phi_1}(x) dt,
$$
where here $\calS_t^\phi f(x):= \left( \sum\limits_{k \in \Z} \left| \left(f* \left(2^k \check{\phi} \left(2^k \cdot \right) \right) \right) \left(x-\frac{t}{2^k} \right) \right|^2 \right)^{\frac{1}{2}}$ is the \emph{$t$-shifted square function}.

Consequently, we have that 
\begin{equation}\label{nonstathifoffdiag}
        \|BHC_{\lambda(\cdot)}^{\not \Delta, NS}(f_1, f_2)(\cdot)\|_{L^r}\lesssim_{r,p_1,p_2}\|f_1\|_{L^{p_1}}\,\|f_2\|_{L^{p_2}}
\end{equation}
for any H\"older range $\frac{1}{p_1}+\frac{1}{p_2}=\frac{1}{r}$ with $1<p_1,p_2\leq \infty$ and $\frac{1}{2}<r<\infty$.

\end{prop}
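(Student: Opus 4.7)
The proof strategy combines integration by parts in the $t$-variable (using Proposition~\ref{20241113prop01}) with a Cauchy--Schwarz/shifted square function reorganization, following the blueprint of \cite[Section~2.4]{HL23}. The first step is the passage to the physical side: after linearizing the supremum via the measurable function $\lambda(\cdot)$, a direct expansion of the multiplier $\frakm_{j,l,m,\lambda}^k$ and Fubini yield
\[
BHC_{j, l, m, \lambda(x)}^{k}(f_1,f_2)(x) = \int \rho(t)\, F_j^k(x-t/2^k)\, G_l^k(x+t^2/2^{2k}) \,e^{i\lambda(x)t^3/2^{3k}}\, \phi_2\!\left(\tfrac{\lambda(x)}{2^{m+3k}}\right) dt,
\]
where $F_j^k(y) := \bigl(f_1 \ast 2^{j+k}\check{\phi_1}(2^{j+k}\cdot)\bigr)(y)$ and $G_l^k(y) := \bigl(f_2 \ast 2^{l+2k}\check{\phi_1}(2^{l+2k}\cdot)\bigr)(y)$ are the Littlewood--Paley pieces of $f_1, f_2$ at scales $2^{j+k}$ and $2^{l+2k}$, respectively.

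The crux of the argument is to establish the pointwise estimate
\[
|BHC_{j, l, m, \lambda(x)}^{k}(f_1,f_2)(x)| \lesssim 2^{-\max\{j,l,m\}} \, \mathbf{1}_{|\lambda(x)|\sim 2^{m+3k}}\!\int_{\frac14 \le |t| \le 4} |F_j^k(x-t/2^k)|\,|G_l^k(x+t^2/2^{2k})|\, dt.
\]
To derive it, one integrates by parts once in $t$ at the multiplier level: Proposition~\ref{20241113prop01} provides $|\varphi'(t)| \gtrsim 2^{\max\{j,l,m\}}$ on the relevant frequency support with $|t|\sim 1$, while the higher derivatives $|\varphi''|\lesssim 2^l+2^m$ and $|\varphi'''|\lesssim 2^m$ are controlled appropriately by powers of $|\varphi'|$ on that support. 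A single integration by parts thus produces the desired decay $2^{-\max\{j,l,m\}}$ without disturbing the frequency localisations $\phi_1(\xi/2^{j+k})\phi_1(\eta/2^{l+2k})\phi_2(\lambda/2^{m+3k})$; a rapidly decreasing Fourier/Taylor expansion of the resulting smooth remainder in the rescaled variables $(\xi/2^{j+k}, \eta/2^{l+2k}, \lambda/2^{m+3k})$ then reabsorbs the operator into a summable combination of expressions of the same shape with slightly modified, same-scale wave packets, after which taking absolute values yields the claim. I expect this multiplier-level integration by parts---specifically, the bookkeeping for the smooth-remainder expansion needed to restore a clean physical-side form---to be the main technical obstacle.

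The rest of the proof is routine. Summing over $m$ is trivial since for each $(x,k)$ only $O(1)$ values of $m$ satisfy $|\lambda(x)|\sim 2^{m+3k}$; this reduces matters to $\sum_{k \geq 0}\sum_{j,l \geq 0} 2^{-\max\{j,l\}} \int_{\frac14 \le |t| \le 4} |F_j^k(x-t/2^k)|\,|G_l^k(x+t^2/2^{2k})|\, dt$. Applying Cauchy--Schwarz in $k$ inside the $t$-integral and performing the changes of variable $\tilde k := j+k$ (resp.\ $\tilde k := l+2k$) identifies the two square sums with $\calS_{2^jt}^{\phi_1}(f_1)(x)^2$ and $\calS_{-2^lt^2}^{\phi_1}(f_2)(x)^2$, establishing the advertised pointwise inequality. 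For the bound \eqref{nonstathifoffdiag}, one then combines (i) the (quasi-)triangle inequality over $(j,l)$; (ii) Minkowski's integral inequality over $t\in[\tfrac14, 4]$; (iii) H\"older with $\tfrac{1}{p_1}+\tfrac{1}{p_2}=\tfrac{1}{r}$; and (iv) the classical $L^p$-boundedness of the shifted square functions with logarithmic growth in the shift, $\|\calS_s^{\phi_1}f\|_{L^p}\lesssim \log(2+|s|)\|f\|_{L^p}$ for $1<p<\infty$ (with an appropriate BMO endpoint at $p=\infty$). The resulting factor $\lesssim (1+j)(1+l)$ on $|t|\sim 1$ is absorbed by $2^{-\max\{j,l\}}$, producing a convergent sum and completing the argument.
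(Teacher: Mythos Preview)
Your proposal is correct and follows essentially the same approach as the paper: a single integration by parts at the multiplier level using Proposition~\ref{20241113prop01} to gain $2^{-\max\{j,l,m\}}$, a Taylor/Fourier expansion of the smooth remainder to return to physical side, trivial summation in $m$ via the cutoff $\phi_2(\lambda(x)/2^{m+3k})$, and Cauchy--Schwarz in $k$ to produce the shifted square functions. The only cosmetic differences are that the paper explicitly splits the integrated-by-parts multiplier into the two pieces $\frakA$ and $\frakB$ and verifies $|\varphi''|\lesssim 2^n$ via the factorization $|\varphi''|=|6\lambda/2^{3k}|\cdot|t-t_0|$, and it sums over $k\in\Z$ rather than $k\ge 0$; neither affects the argument.
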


\begin{proof}
Fix $k \in \Z$ and $(j, l, m) \in \N^3$ in the non-stationary off-diagonal regime. We then see that using integration by parts, we have
$$
\frakm_{j, l, m, \lambda(x)}^k(\xi, \eta)=\frakA_{j, l, m, \lambda(x)}^k(\xi, \eta)+\frakB_{j, l, m, \lambda(x)}^k(\xi, \eta),
$$
where
$$
\frakA_{j, l, m, \lambda(x)}^k(\xi, \eta):=i \left( \int_{\R} e^{i \varphi_{\xi, \eta, \lambda(x)}^k(t)} \frac{\rho'(t)}{\frac{d}{dt} \varphi_{\xi, \eta, \lambda(x)}^k(t)}dt \right) \phi_1 \left(\frac{\xi}{2^{j+k}} \right) \phi_1 \left(\frac{\eta}{2^{l+2k}}\right) \phi_2 \left(\frac{\lambda(x)}{2^{{m+3k}}} \right)
$$
and
$$
\frakB_{j, l, m, \lambda(x)}^k(\xi, \eta):=-i \left( \int_{\R} e^{i \varphi_{\xi, \eta, \lambda(x)}^k(t)} \frac{\rho(t) \frac{d^2}{dt^2} \varphi_{\xi, \eta, \lambda(x)}^k(t)}{\left(\frac{d}{dt} \varphi_{\xi, \eta, \lambda(x)}^k(t) \right)^2}dt \right) \phi_1 \left(\frac{\xi}{2^{j+k}} \right) \phi_1 \left(\frac{\eta}{2^{l+2k}}\right) \phi_2 \left(\frac{\lambda(x)}{2^{{m+3k}}} \right)
$$
This further allows us to write. 
$$
BHC_{j, l, m, \lambda(x)}^k(f_1, f_2)=BHC_{j, l, m, \lambda(x)}^{k, \frakA}(f_1, f_2)+BHC_{j, l, m, \lambda(x)}^{k, \frakB}(f_1, f_2), 
$$
where $BHC_{j, l, m,\lambda(x)}^{k, \frakA}(f_1, f_2)$ and $ BHC_{j, l, m, \lambda(x)}^{k, \frakB}(f_1, f_2)$ have the multipliers $\frakA_{j, l, m, \lambda(x)}^k(\xi, \eta)$ and $\frakB_{j, l, m, \lambda(x)}^k(\xi, \eta)$, respectively. 

Denote $n:=\max\{j, l, m\}$. Observe that by Proposition \ref{20241113prop01}, for $|t| \sim 1$, 
$$
\left| \frac{d^2}{dt^2} \varphi_{\xi, \eta, \lambda(x)}^k(t) \right| =\left|\frac{6 \lambda(x)}{2^{3k}}\right| \cdot \left|t-t_0 \right| \lesssim 2^m \cdot \left\{2^{l-m}, 1\right\} \le 2^n \lesssim \left| \frac{d}{dt} \varphi_{\xi, \eta, \lambda(x)}^k(t) \right|, 
$$
which further gives
$$
\left| \frac{\rho'(t)}{\frac{d}{dt} \varphi_{\xi, \eta, \lambda(x)}^k(t)} \right|, \quad \left|  \frac{\rho(t) \frac{d^2}{dt^2} \varphi_{\xi, \eta, \lambda(x)}^k(t)}{\left(\frac{d}{dt} \varphi_{\xi, \eta, \lambda(x)}^k(t) \right)^2}\right| \lesssim \frac{1}{2^n}.
$$
Therefore, it suffices for us to analyze only one of the terms above, say $BHC_{j, l, m, \lambda(\cdot)}^{k, \frakA}(f_1, f_2)$. Next, via a Taylor series argument, we may consider, for the sake of analysis, that
\begin{eqnarray} \label{20241115eq02}
&& \left| BHC_{j, l, m, \lambda(x)}^{k, \frakA}(f_1, f_2)(x)\right|  \\
&& \lesssim \frac{1}{2^n} \int_{\frac{1}{4} \le |t| \le 4} \left| \left(\widehat{f_1}(\cdot) \phi_1 \left(\frac{\cdot}{2^{j+k}} \right) \right)^{\vee} \left(x-\frac{t}{2^k} \right) \right| \left| \left(\widehat{f_2}(\cdot) \phi_1 \left(\frac{\cdot}{2^{l+2k}} \right) \right)^{\vee} \left(x+\frac{t^2}{2^{2k}} \right) \right| \phi_2 \left(\frac{\lambda(x)}{2^{m+3k}} \right) dt. \nonumber 
\end{eqnarray}

With these, we then have 
\begin{eqnarray*}
&& \left| \sum_{k \in \Z} \sum_{j, l, m \ge 0} BHC_{j, l, m, \lambda(x)}^{k, \frakA}(f_1, f_2)(x) \right| \\
&& \lesssim \sum_{j, l \ge 0} \frac{1}{2^{\max\{j, l\}}} 
\Bigg [\int_{\frac{1}{4} \le |t| \le 4} \sum_{k \in \Z} \left| \left(\widehat{f_1}(\cdot) \phi_1 \left(\frac{\cdot}{2^{j+k}} \right) \right)^{\vee} \left(x-\frac{2^j t}{2^{j+k}} \right) \right| \\
&& \qquad \qquad \qquad  \cdot \left| \left(\widehat{f_2}(\cdot) \phi_1 \left(\frac{\cdot}{2^{l+2k}} \right) \right)^{\vee} \left(x+\frac{2^l t^2}{2^{l+2k}} \right) \right| \cdot  \left( \sum_{m \in \N} \phi_2 \left(\frac{\lambda(x)}{2^{m+3k}} \right) \right)  dt \Bigg ] \\
&& \lesssim \sum_{j, l \ge 0} \frac{1}{2^{\max\{j, l\}}} \cdot \int_{\frac{1}{4} \le |t| \le 4} \calS_{2^j t}^{\phi_1}(x) \calS_{-2^l t^2}^{\phi_1}(x) dt,
\end{eqnarray*}

 Finally, the desired $L^r$ bounds in \eqref{nonstathifoffdiag} follow from the above estimate and the corresponding $L^p$ bounds on $\calS_t^\phi f$ (see, e.g., \cite[Proposition 42]{Lie18} or \cite[Lemma 3.3]{GL20}). 
\end{proof}

\section{Treatment of the high-frequency diagonal component $BHC^{\Delta}$}\label{Mainsect}
Recall that 
$$
BHC^{\Delta}(f_1, f_2):=\sup\limits_{\lambda \in \R} \left|BHC_{\lambda}^{\Delta}(f_1, f_2) \right| \approx \sup_{\lambda \in \R} \left| \sum_{k \in \Z} \sum_{m \in \N} BHC_{m, \lambda}^k(f_1, f_2) \right|
$$
where $BHC_{m, \lambda}^k(f_1, f_2):=BHC_{m, m, m, \lambda}^k(f_1, f_2)$.  We start by splitting.
$$
BHC^{\Delta}(f_1, f_2)  \lesssim  BHC_{+}^{\Delta}(f_1, f_2)\,+\,BHC_{-}^{\Delta}(f_1, f_2), 
$$
with
$$
 BHC_{\pm}^{\Delta}(f_1, f_2):=\sup_{\lambda \in \R} |BHC_{\pm, \lambda}^{\Delta}(f_1, f_2)|:=\sup_{\lambda \in \R} \left|\sum_{k \in \Z_{\pm}} \sum_{m \in \N} BHC_{m, \lambda}^k(f_1, f_2)\right|.
$$
We shall first focus on the term $BHC_{+}^{\Delta}(f_1, f_2)$.

\subsection{The case $k \ge 0$: Preparatives}
We first notice that
$$
 BHC_{+}^{\Delta}(f_1, f_2)=\sup\limits_{\lambda \in \R} \left|BHC_{+, \lambda}^{\Delta}(f_1, f_2)\right| \le \sum_{k \in \Z_{+}} \sum_{m \in \N} \sup_{\lambda} \left|BHC_{m, \lambda}^k(f_1, f_2) \right|. 
$$
From now on, we will fix the choice of $k, m \in \N$. Recall that in this case $|t| \sim 2^{-k}\leq 1$ and hence $|t|^3 \le t^2 \le |t|$. 

We now have that 
$$
BHC_{m, \lambda}^k(f_1, f_2)(x) \approx 2^k \phi_2 \left(\frac{\lambda}{2^{m+3k}} \right) \int_{\R} f_{1, m+k}(x-t) f_{2, m+2k}(x+t^2)e^{i \lambda t^3} \rho(2^k t) dt,
$$
and as a consequence we further divide our operator as 
$$
 \sup_{\lambda \in \R} \left|BHC_{m, \lambda}^k(f_1, f_2) \right| \le \sum_{n \in \Z} \sup_{\lambda \in \R} \left| BHC_{m, \lambda}^{k, I_n^{0, k}}(f_1, f_2)\right|,
$$
with
$$
BHC_{m, \lambda}^{k, I_n^{0, k}}(f_1, f_2)(x):=\one_{I_n^{0, k}}(x) \cdot 2^k \phi_2 \left(\frac{\lambda}{2^{m+3k}} \right) \int_{\R} f_{1, m+k}(x-t) f_{2, m+2k}(x+t^2)e^{i \lambda t^3} \rho(2^k t) dt,
$$
where, throughout the paper, $\one_{[a, b]}$ denotes the characteristic function of $[a, b]$, and for $m \in \N$, $j\in\{1,2\}$ and $k, q \in \Z$,   $f_{j, m}$ stands for $\calF^{-1} \left(\widehat{f}_j (\cdot) \phi_1 \left( \frac{\cdot}{2^{m}} \right) \right)$ and
$$
I_q^{m, k}:=\left[ \frac{q}{2^{\frac{m}{2}+k}}, \frac{q+1}{2^{\frac{m}{2}+k}}   \right].
$$

Notice that in order to recover the corresponding global estimates for $\sup\limits_{\lambda \in \R} \left|BC_{m, \lambda}^k\right|$, it suffices to prove local estimates for each of the terms $\sup\limits_{\lambda \in \R} \left| BC_{m, \lambda}^{k, I_n^{0, k}}\right|$, and then ``glue" them together by using the fact that $\left\{3I_n^{0,k} \right\}_{n \in \Z}$ has the finite intersection property. Therefore, from now on, we may assume without loss of generality that $n=1$. Moreover, for notational simplicity, in the sequel we will replace $I_1^{0, k}$ by $I^k$ and refer to 
$f_{j, m+j k}$ as simply $f_j$, $j\in\{1,2\}$. 

\bigskip 

\noindent \setword{\textbf{Key Heuristic 1.}}{KH1} \emph{Fix $k\geq 0$. Applying a standard linearization procedure we know that there exists a measurable function $\lambda (\cdot): \R \to \R$ such that 
$$
 \sup_{\lambda \in \R} \left| BHC_{m, \lambda}^{k, I^k}(f_1, f_2)(x) \right| \le 2 \left| BHC_{m, \lambda(x)}^{k, I^k}(f_1, f_2)(x) \right|. 
$$
Moreover, importantly, we claim that one can make this choice such that (essentially)}
\begin{center}
\texttt{$\lambda(x)$ takes constant values on intervals of length $2^{-m-2k}$}. 
\end{center}
\bigskip

Indeed, to see this, we notice that 
\begin{eqnarray*}
&& \sup_{\lambda \in \R} \left| BHC_{m, \lambda}^{k, I^k}(f_1, f_2)(x) \right| \sim \sup_{\lambda \in \R} \left| \sum_{p' \sim 2^{m+k}} \one_{I_{p'}^{0, m+2k}} (x) \cdot2^k \phi_2 \left(\frac{\lambda}{2^{m+3k}} \right) \int_{I^k} f_1(x-t) f_2(x+t^2)e^{i \lambda t^3} dt \right|  \\
&& \quad \lesssim \sum_{p' \sim 2^{m+k}} \sup_{\lambda \in \R} \left| \one_{I_{p'}^{0, m+2k}}(x)\cdot2^k \phi_2 \left(\frac{\lambda}{2^{m+3k}} \right) \int_{I^k} f_1(x-t) f_2(x+t^2)e^{i \lambda t^3} dt \right|
\end{eqnarray*}
and, recalling our notational convention $f_j\equiv f_{j, m+jk}$, we immediately deduce that
\begin{enumerate}
\item [$\bullet$] $f_1$ is morally a constant on intervals of length $2^{-m-k}$;
\item [$\bullet$] $f_2$ is morally a constant on intervals of length $2^{-m-2k}$.
\end{enumerate}
Therefore, for each $\lambda \in \R$, one has 
$$
\one_{I_{p'}^{0, m+2k}}(x)  \int_{I^k} f_1(x-t) f_2(x+t^2)e^{i \lambda t^3} dt \approx \one_{I_{p'}^{0, m+2k}}(x)  \int_{I^k} f_1\left(\frac{p'}{2^{m+2k}}-t \right) f_2\left(\frac{p'}{2^{m+2k}}+t^2\right) e^{i \lambda t^3} dt
$$
which justifies our claim. \hfill \qedsymbol{}

\bigskip  

Our goal is to prove the following key result:
\medskip 

\begin{thm} \label{20241018mainthm01}
Let $k \ge 0$ and $m\in \N$. Let further, 
\begin{equation} \label{20240411eq01}
\Lambda_m^k(f_1, f_2, f_3):=2^k\int_{I^k} \int_{I^k} f_1(x-t)f_2(x+t^2)f_3(x)e^{i \lambda(x) t^3} dtdx,
\end{equation}
be the dual form of $BC_{m, \lambda(\cdot)}^{k, I^k}$ defined in {\bf Key Heuristic 1}, where $\lambda (\cdot): I^k \to [2^{m+3k}, 2^{m+3k+1}]$ is any measurable function taking constant values on intervals of length $2^{-m-2k}$. 

Then, there exists some $\epsilon>0$, such that the following estimate holds:
\begin{equation} \label{20240410eq02}
\left|\Lambda_m^k(f_1, f_2, f_3) \right| \lesssim  2^{-\epsilon \min \{2 k, m\}} \prod_{j=1}^3 \left\|f_j \right\|_{L^{p_j} \left(3I^k \right)}, 
\end{equation}
 whenever $(p_1, p_2, p_3)=(2, \infty, 2), (\infty, 2, 2)$ or $(2, 2, \infty)$
\end{thm}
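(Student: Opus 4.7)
\medskip

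The plan is to dualize and move to an $L^2$-smoothing statement on the maximal joint Fourier coefficient. First, I would note that the three choices of exponents $(p_1,p_2,p_3)$ are all of the form where two inputs are in $L^2$ and one is in $L^\infty$; by a standard duality/Cauchy--Schwarz argument on the form \eqref{20240411eq01} (pulling out the $L^\infty$ function and performing Cauchy--Schwarz on $x$), each case reduces to proving the Main Proposition, i.e.\ the $L^2$-smoothing bound \eqref{maxjointFourcoeffcontr} for the maximal joint Fourier coefficient $\mathcal{J}_{m,k}$. As noted after the Main Proposition, by the finite intersection property of $\{3I_r^k\}_{r\in\Z}$ it is enough to take $r=1$ and treat $\mathcal{J}_{m,k}(f_1,f_2)$ on $I^k$. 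The linearization procedure in \textbf{Key Heuristic 1}, together with the frequency localizations $\supp\widehat{f_1}\subseteq[2^{m+k},2^{m+k+1}]$ and $\supp\widehat{f_2}\subseteq[2^{m+2k},2^{m+2k+1}]$, reduces matters to the expression \eqref{maxjointFourcoeff1}, where $\widetilde\lambda\colon I^k\to[2^{m/2},2^{m/2+1}]$ is measurable and constant on intervals of length $2^{-m-2k}$.

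At this point I would implement the sparse--uniform dichotomy (item (1) in Remark \ref{Gen1}) in the $x$-variable for both $f_1$ and $f_2$: the sparse pieces can be dealt with by a direct size argument, so we may henceforth assume $f_1,f_2$ are uniformly distributed. Next, I partition $I^k=\bigcup_{q\sim 2^{m/2}}I_q^{m,k}$ at the linearizing scale $2^{-m/2-k}$; on each $I_q^{m,k}$ the Taylor expansion of $\widetilde\lambda(x)t^3$ around $q 2^{-m/2-k}$ converts the cubic phase into a linear phase in $t$ with constant (in $t$) leading coefficient and $O(1)$ remainder, and an $\ell^2$-orthogonality argument over $q$ majorizes the square of the left side of \eqref{maxjointFourcoeffcontr} by $[\mathcal{L}_{m,k}(f_1,f_2)]^2$ as in \eqref{dom}. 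The heart of the matter is now to prove the decay $\mathcal{L}_{m,k}(f_1,f_2)\lesssim 2^{-\epsilon\min\{2k,m\}}\|f_1\|_{L^2(3I^k)}\|f_2\|_{L^2(3I^k)}$.

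Following the roadmap in Section \ref{Planmainideas}, I split the analysis according to the relative sizes of $m$ and $k$. In \textbf{Case I} ($k\geq m/2$), the target decay is $2^{-\epsilon m}$. When $k\geq m$ (\textbf{Case I.1}), one has $2^{-m-2k}\geq 2^{-3k}$, so $\widetilde\lambda$ is automatically constant on the scale required by Proposition \ref{maxjointctrl}, and the result follows at once. When $m/2\leq k\leq m$ (\textbf{Case I.2}), I would first run the \emph{constancy propagation} procedure to upgrade the constancy scale of $\widetilde\lambda$ from $2^{-m-2k}$ to $2^{-m-k}$; since in this regime $2^{-m-k}\geq 2^{-3k}$, Proposition \ref{maxjointctrl} again applies. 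In \textbf{Case II} ($0\leq k\leq m/2$) the target decay is $2^{-\epsilon k}$ and Proposition \ref{maxjointctrl} is out of reach, so instead one performs constancy propagation (in a slightly different form, as flagged in Remark \ref{twoway1}, by concatenating groups of $2^{m-k}$ neighboring $x$-intervals of length $2^{-m-k}$) to again push the constancy of $\widetilde\lambda$ up to scale $2^{-m-k}$, and then one returns to the original bilinear form \eqref{maxjointFourcoeff1} and applies the Rank I LGC method directly: the resulting wave-packet discretized model is now absolutely summable because the cubic phase has a sufficiently slowly varying coefficient, and the standard TT$^*$/stationary phase analysis on the Fourier side yields the required $2^{-\epsilon k}$ gain.

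The hard part is unambiguously the constancy propagation step in Cases I.2 and II: this is a bootstrap/induction-on-scale argument that must exploit hidden cancellations between the local Fourier coefficients of $f_1$ and $f_2$ (the source of the Rank II character of the method), and the precise way one concatenates $x$-intervals and tracks the interaction with the kernel's oscillatory factor is exactly the correlative time-frequency analysis from \cite{HL23} that needs to be refined in the present context. Once this propagation is in place, gluing the local estimates together over $r\in\Z$ via the finite-intersection property yields \eqref{maxjointFourcoeffcontr}, and duality returns the desired trilinear bound \eqref{20240410eq02}.
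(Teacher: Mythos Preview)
Your proposal is correct and follows essentially the same route as the paper: the reduction via Cauchy--Schwarz to $\mathbf{L}_{m,k}$ (and then $\mathcal{L}_{m,k}$), the spatial sparse--uniform dichotomy, the case split $k\ge m/2$ versus $0\le k\le m/2$, the appeal to Proposition~\ref{maxjointctrl} in Case~I (directly for $k\ge m$, after constancy propagation for $m/2\le k\le m$), and the constancy-propagation-then-Rank-I-LGC scheme in Case~II are exactly the paper's strategy. Two minor notes: the paper performs the sparse--uniform split \emph{after} forming $\mathbf{L}_{m,k}$ rather than before, and in Step~II.2 the decay comes not from ``TT$^*$/stationary phase'' but from a second (time--frequency) sparse--uniform dichotomy together with a cardinality bound on the time--frequency correlation set $\mathbf{TFC}$ --- these are harmless reorderings/imprecisions in your outline.
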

\medskip

We start our analysis of \eqref{20240411eq01} with a preliminary observation: applying Cauchy-Schwarz we have 
\begin{eqnarray*}
\left| \Lambda_m^k(f_1, f_2, f_3) \right|^2%
&\le&  2^{2k} \int_{I^k} \left| \sum_{q \sim 2^{\frac{m}{2}}} \int_{I_q^{m, k}} f_1(x-t)f_2(x+t^2)e^{i\lambda(x)t^3} dt \right|^2 dx \cdot \left\|f_3 \right\|_{L^2 \left(3I^k \right)}^2
\end{eqnarray*}
and using the information that for $t \in I_q^{m, k}$ one has
$$
\lambda(x) t^3=\frac{3q^2t\lambda(x)}{2^{m+2k}}-\frac{2q^3\lambda(x)}{2^{\frac{3m}{2}+3k}}+O(1),
$$
we conclude that
\begin{eqnarray}  \label{20240429eq01}
\left| \Lambda_m^k(f_1, f_2, f_3) \right|^2%
&\lesssim& 2^k \left\|f_3 \right\|_{L^2\left(3I^k \right)}^2 \left[ {\bf L}_{m, k}(f_1, f_2) \right]^2,   
\end{eqnarray}
where
\begin{equation} \label{20240426eq01}
\left[ {\bf L}_{m, k}(f_1, f_2) \right]^2:=2^{k} \int_{I^k} \left| \sum_{q \sim 2^{\frac{m}{2}}} \int_{I_q^{m, k}} f_1(x-t)f_2(x+t^2) e^{-2i \cdot \frac{q^3 \lambda(x)}{2^{\frac{3m}{2}+3k}}}  \cdot e^{3i \cdot \frac{q^2 \lambda(x)}{2^{m+2k}} t} dt \right|^2 dx\,.
\end{equation} 

Thus, we reduce matters to the analysis of ${\bf L}_{m, k}(f_1, f_2)$.

\subsection{A sparse-uniform dichotomy} \label{20241216subsec01}
We analyze the term \eqref{20240426eq01} via two different scenarios, namely, whether the $L^2$ information of the two input functions $f_1$ and $f_2$ are spatially concentrated or uniformly distributed. More precisely, let $\mu \in (0, 1)$ be sufficiently small and, for $i \in \{1, 2\}$ and $f_i$ define the \emph{sparse and uniform index sets} as follows: 
$$
\calS_\mu(f_i):=\left\{r \sim 2^{m+(i-1)k}: \frac{1}{\left|I_r^{ik+m} \right|} \int_{3I_r^{0, ik+m}}\left|Mf_i \right|^2 \gtrsim \frac{2^{\mu\min \left\{m, 2 k \right\}}}{|I^k|} \int_{3I^k} |f_i|^2 \right\}
$$
and
$$
\calU_\mu(f_i):=\left\{r \sim 2^{m+(i-1)k}: \frac{1}{\left|I_r^{ik+m} \right|} \int_{3I_r^{0, ik+m}}\left|Mf_i \right|^2 \lesssim \frac{2^{\mu\min \left\{m, 2 k \right\}}}{|I^k|} \int_{3I^k} |f_i|^2 \right\}\,,
$$
respectively, where $M$ is the Hardy-Littlewood maximal operator.

We shall now decompose our input functions accordingly:
$$
f_i=f_i^{\calS_\mu}+f_i^{\calU_\mu}, \qquad i \in \{1, 2\}, 
$$
where 
$$
f_i^{\calS_\mu}:=\sum_{r \in \calS_\mu(f_i)} \one_{I_r^{0, ik+m}} f_i, \quad \textrm{and} \quad f_i^{\calU_\mu}:=\sum_{r \in \calU_\mu(f_i)} \one_{I_r^{0, ik+m}} f_i, \qquad i \in \{1, 2\}. 
$$
Let us now decompose the term ${\bf L}_{m, k}(f_1, f_2)$ into two parts:
$$
{\bf L}_{m, k}(f_1, f_2) \le {\bf L}_{m, k}^{\calU_\mu}(f_1, f_2)+{\bf L}_{m, k}^{\calS_\mu}(f_1, f_2), 
$$
where 
\begin{enumerate}
    \item [$\bullet$] the \emph{uniform} component ${\bf L}_{m, k}^{\calU_\mu}(f_1, f_2)$ is given by $$
    {\bf L}_{m, k}^{\calU_\mu}(f_1, f_2):={\bf L}_{m, k}\left(f_1^{\calU_\mu}, f_2^{\calU_\mu}\right).
    $$
    \item [$\bullet$] the \emph{sparse} component ${\bf L}_{m, k}^{\calS_\mu}(f_1, f_2)$ is given by
    $$
    {\bf L}_{m, k}^{\calS_\mu}(f_1, f_2):={\bf L}_{m, k}\left(f_1^{\calS_\mu}, f_2^{\calU_\mu}\right)+{\bf L}_{m, k}\left(f_1^{\calU_\mu}, f_2^{\calS_\mu}\right)+{\bf L}_{m, k}\left(f_1^{\calS_\mu}, f_2^{\calS_\mu}\right).
    $$
\end{enumerate}
Note that the uniform/sparse parts enjoy the following properties:
\begin{enumerate}
\item [$\bullet$] For $f_i^{\calU_\mu}$, one has $f_i^{\calU_\mu} \in L^\infty(\R)$ with
\begin{equation} \label{uniformest} 
\left\|f_i^{\calU_\mu} \right\|_{L^\infty(3I^k)} \lesssim 2^{\frac{\mu \min \left\{m, 2k \right\}}{2}} 2^{\frac{k}{2}} \left\|f_i \right\|_{L^2(3I^k)};
\end{equation} 
\item [$\bullet$] For $f_i^{\calS_\mu}$, one has
\begin{equation} \label{20240429eq02}
\max_{i \in \{1, 2\}} \left| \supp \ f_i^{\calS_\mu} \right| \lesssim 2^{-\mu  \min\left\{m, 2k \right\}}|I^k|. 
\end{equation} 
\end{enumerate}

\subsection{Treatment of the sparse component ${\bf L}_{m, k}^{\calS_\mu}(f_1, f_2)$}
We consider two different cases.

\medskip 

\noindent\textsf{\underline{Case I: $f_1$ sparse.}} \quad In this situation we need to estimate a (generic) term of the form ${\bf L}_{m, k}\left(f_1^{\calS_\mu}, f_2\right)$. To begin with, for each $I \subseteq I^k$ interval, we set $\widetilde{\calS_\mu^{3I}}(f_1):=\bigcup\limits_{\substack{r \in \calS_\mu(f_1) \\ I_r^{0, k+m} \subseteq 3I}} I_r^{0, k+m}$. Therefore, by Cauchy-Schwarz and \eqref{20240429eq02}, we have 
\begin{eqnarray*}
\left[{\bf L}_{m, k}\left(f_1^{\calS_\mu}, f_2 \right) \right]^2%
&\le& 2^k \int_{I^k} \left( \int_{I^k} \left|f_1^{\calS_\mu}(x-t) \right| \left|f_2(x+t^2) \right| dt \right)^2 dx \\
&\le& 2^k \left| \widetilde{\calS_\mu^{3I^k}}(f_1) \right| \cdot \int_{I^k} \int_{I^k} \left|f^{\calS_\mu}_1(x-t) \right|^2 |f_2(x+t^2)|^2 dtdx \\
&\le& 2^k \cdot 2^{-\mu \min\{m, 2k\}} |I^k| \cdot \left\|f_1 \right\|^2_{L^2(3I^k)}\left\|f_2 \right\|^2_{L^2(3I^k)}.
\end{eqnarray*}
Therefore, combining the above estimate with \eqref{20240429eq01}, we have 
\begin{eqnarray*}
\left[\Lambda_m^k(f^{\calS^\mu}_1, f_2, f_3) \right]^2%
&\lesssim&  2^k \left\|f_3 \right\|_{L^2(3I^k)}^2 \cdot 2^k \cdot 2^{-\mu \min\{m, 2 k\}} |I^k| \cdot \left\|f_1 \right\|^2_{L^2(3I^k)}\left\|f_2 \right\|^2_{L^2(3I^k)} \\
&\lesssim& 2^{-\mu \min\{m, 2k\}}  \prod_{j=1}^3 \left\|f_j \right\|^2_{L^{p_j}\left(3I^k \right)}, 
\end{eqnarray*}
for $(p_1, p_2, p_3)=(2, \infty, 2), (\infty, 2, 2)$ and $(2, 2, \infty)$, which gives the desired estimate \eqref{20240410eq02}. 

\medskip 

\noindent\textsf{\underline{Case II: $f_2$ sparse.}} \quad In this case, we divide our analysis into two scenarios. We first prove the estimate \eqref{20240410eq02} with $(p_1, p_2, p_3)=(2, \infty, 2)$ or $(\infty, 2, 2)$. For this purpose, we focus our attention on the term ${\bf L}_{m, k} \left(f_1, f_2^{\calS_\mu} \right)$. As before, for any interval $I \subseteq I^k$, we set $\widetilde{\calS_{\mu}^{3I}} \left(f_2 \right):=\bigcup\limits_{\substack{r \in \calS_\mu(f_2) \\ I_r^{0, 2k+m} \subseteq 3I}} I_r^{0, 2k+m}$. With these, we have
\begin{eqnarray} \label{20240429eq10}
\left[{\bf L}_{m, k} \left(f_1, f_2^{\calS_\mu} \right) \right]^2 %
&\lesssim&  2^k \int_{I^k} \left( \int_{I^k} \left|f_1 (x-t) \right| \left|f_2^{\calS_\mu}(x+t^2) \right| dt \right)^2 dx \nonumber \\
&=& 2^k \sum_{\ell \sim 2^k} \int_{I_\ell^{0, 2k}} \left( \int_{I^k} \left|f_1 (x-t) \right| \left|f_2^{\calS_\mu}(x+t^2) \right| dt \right)^2 dx. 
\end{eqnarray}
Note that for each $\ ell\sim 2^k$, then by Cauchy-Schwarz and a change of variables, we have 
$$
\int_{I_\ell^{0, 2k}} \left( \int_{I^k} \left|f_1 (x-t) \right| \left|f_2^{\calS_\mu}(x+t^2) \right| dt \right)^2 dx \lesssim 2^k \cdot \left| \widetilde{\calS_\mu^{3I_\ell^{0, 2k}}} (f_2) \right|  \int_{I_\ell^{0, 2k}} \int_{I^k} \left|f_1 (x-t) \right|^2 \left|f_2^{\calS_\mu}(x+t^2) \right|^2 dt  dx.
$$
Thus, combining the above estimate with \eqref{20240429eq01} and \eqref{20240429eq10}, we have 
\begin{eqnarray*} 
\left[\Lambda_m^k(f_1, f^{\calS^\mu}_2, f_3) \right]^2%
&\lesssim& 2^{3k} \left\|f_3 \right\|_{L^2(3I^k)}^2 \cdot \sum_{\ell \sim 2^k}  \left| \widetilde{\calS_\mu^{3I_\ell^{0, 2k}}} (f_2)\right|  \int_{I_\ell^{0, 2k}} \int_{I^k} \left|f_1 (x-t) \right|^2 \left|f_2^{\calS_\mu}(x+t^2) \right|^2 dtdx \nonumber  \\
&\lesssim& 2^{3k} \left\|f_2 \right\|_{L^\infty(3I^k)}^2 \left\|f_3 \right\|_{L^2(3I^k)}^2 \cdot \sum_{\ell \sim 2^k} \left| \widetilde{\calS_\mu^{3I_\ell^{0, 2k}}} (f_2) \right| \int_{I_\ell^{0, 2k}} \int_{I^k} \left|f_1 (x-t) \right|^2  dtdx \\
&\lesssim& 2^k \left\|f_1 \right\|^2_{L^2(3I^k)}\left\|f_2 \right\|_{L^\infty(3I^k)}^2 \left\|f_3 \right\|_{L^2(3I^k)}^2 \cdot \sum_{\ell \sim 2^k} \left| \widetilde{\calS_\mu^{3I_\ell^{0, 2k}}}  (f_2) \right| \nonumber \\
&\lesssim& 2^{-\mu \min\{m, 2k\}} \left\|f_1 \right\|^2_{L^2 \left(3I^k \right)} \left\|f_2 \right\|^2_{L^\infty \left(3I^k \right)} \left\|f_3 \right\|^2_{L^2 \left(3I^k \right)}. \nonumber
\end{eqnarray*}
Similarly, one can also show that
$$
\left[\Lambda_m^k(f_1, f^{\calS^\mu}_2, f_3) \right]^2 \lesssim 2^{-\mu \min\{m, 2k\}} \left\|f_1 \right\|^2_{L^\infty \left(3I^k \right)} \left\|f_2 \right\|^2_{L^2 \left(3I^k \right)} \left\|f_3 \right\|^2_{L^2 \left(3I^k \right)}. 
$$
Next, we establish estimate \eqref{20240410eq02} for $(p_1, p_2, p_3)=(2, 2, \infty)$. For this, we first note that  
\begin{align*}
\left|\Lambda_m^k\left(f_1, f^{\calS_\mu}_2, f_3 \right) \right|^2 
& \le 2^{2k} \left( \int_{I^k} \int_{I^k} |f_1(x-t)|\left|f^{\calS_\mu}_2(x+t^2)\right||f_3(x)|dxdt \right)^2 \\
& \le 2^{2k} \left( \int_{I^k} \int_{2I^k} |f_3(x+t)|\left|f^{\calS_\mu}_2(x+t+t^2)\right||f_1(x)| dxdt \right)^2 \\
& \le 2^{2k} \int_{2I^k} \left(\int_{I^k} |f_3(x+t)| \left| f_2^{\calS_\mu}(x+t+t^2) \right| dt \right)^2 dx \cdot \left\|f_1 \right\|_{L^2(3I^k)}^2 \\
& \le 2^{2k} \left|\widetilde{\calS_{\mu}^{3I^k}} \left(f_2 \right) \right| \int_{2I^k} \int_{I^k}|f_3(x+t)|^2 \left| f_2^{\calS_\mu}(x+t+t^2) \right|^2 dtdx \cdot \left\|f_1 \right\|_{L^2(3I^k)}^2 \\
& \le 2^{2k} \cdot 2^{-\mu \min \left\{m, 2k\right\}} |I^k| \cdot  \int_{2I^k} \int_{I^k} |f_2(x+t+t^2)|^2dtdx \cdot \left\|f_1 \right\|_{L^2(3I^k)}^2 \left\|f_3 \right\|_{L^\infty(3I^k)}^2 \\
& \le 2^{-\mu \min \left\{m, 2k\right\}} \left\|f_1 \right\|_{L^2(3I^k)}^2 \left\|f_2 \right\|_{L^2(3I^k)}^2 \left\|f_3 \right\|_{L^\infty(3I^k)}^2,
\end{align*}
which gives the desired estimate.

\subsection{Treatment of the uniform component ${\bf L}_{m, k}^{\calU_\mu}(f_1, f_2)$: An overview} \label{20241023overview01}

In this section, for reader's convenience, we offer a very brief recap of the strategy employed for treating the uniform component  ${\bf L}_{m, k}^{\calU_\mu}(f_1, f_2)$ as displayed in the last part of the introductory Section \ref{Planmainideas}.

To begin with, let us write 
$$
\widetilde{\lambda}(x):=\frac{\lambda (x)}{2^{\frac{m}{2}+3k}},
$$
and deduce that as a consequence of our assumptions we have $\widetilde{\lambda}: I^k \to \left[2^{\frac{m}{2}}, 2^{\frac{m}{2}+1} \right]$ is a measurable function which takes constant values on intervals of length $2^{-m-2k}$. Consequently, our main expression now becomes
\begin{eqnarray} \label{20241004eq01}
&& \:\:\:\:\:\:\left[ {\bf L}^{\calU_\mu}_{m, k}(f_1, f_2) \right]^2:= 2^{k} \int_{I^k} \left| \sum_{q \sim 2^{\frac{m}{2}}} \int_{I_q^{m, k}} f_1^{\calU_\mu}(x-t)f_2^{\calU_\mu}(x+t^2) e^{-2i \cdot \frac{q^3 \widetilde{\lambda}(x)}{2^m}}  \cdot e^{3i 2^k \cdot \frac{q^2 \widetilde{\lambda}(x)}{2^{\frac{m}{2}}} t} dt \right|^2 dx \nonumber  \\
&&\lesssim \left[{\calL}_{m, k}^{\calU_\mu}(f_1, f_2) \right]^2 \ :=  2^{\frac{m}{2}+k} \cdot  \int_{I^k} \left( \sum_{q \sim 2^{\frac{m}{2}}} \left| \int_{I_q^{m, k}} f^{\calU_\mu}_1(x-t)f^{\calU_\mu}_2(x+t^2) e^{3i2^k \cdot \frac{q^2 \widetilde{\lambda}(x)}{2^{\frac{m}{2}}}t} dt\right|^2 \right)dx.
\end{eqnarray} 
In the sequel, we will divide our proof into two cases: 
\medskip 

\noindent \underline{\textsf{Case I: $k \ge \frac{m}{2}$.}} \quad Recalling now Proposition \ref{maxjointctrl} (\cite[Theorem 4.3]{HL23}) we further consider
two regimes:
\smallskip

\begin{enumerate}
    \item [$\bullet$]  \underline{\textsf{Case I.1: $k \ge m$.}} \quad In this situation, since $2^{-m-2k}\geq 2^{-3k}$, our desired estimate follows directly from Proposition \ref{maxjointctrl}.
\smallskip        
    
    \item [$\bullet$]  \underline{\textsf{Case I.2: $\frac{m}{2} \le k \le m$.}}\quad  In this situation, we follow a two-step approach: 
\medskip    
 \begin{itemize}   
    \item \underline{\textsf{Step I.2.1:}} We first implement the constancy propagation procedure in order to extend the constancy of $\widetilde{\lambda}$ from intervals of length $2^{-m-2k}$ to intervals of length $2^{-m-k}$. 
\smallskip    
    \item \underline{\textsf{Step I.2.2:}} Once we reach the scale $2^{-m-k}$, we notice that we are in the regime $2^{-m-k} \ge 2^{-3k}$, and hence  Proposition \ref{maxjointctrl} applies once again.
 \end{itemize}   
\end{enumerate}

\medskip

\noindent \underline{\textsf{Case II: $0 \le k \le \frac{m}{2}$.}} In this case, we employ a two-step approach:
\smallskip
\begin{itemize}
\item \underline{\textsf{Step II.1:}} As at the Step I.2.1, we first apply the constancy propagation procedure in order to extend the constancy of $\widetilde{\lambda}$ from intervals of length $2^{-m-2k}$ to intervals of length $2^{-m-k}$.
\smallskip

\item \underline{\textsf{Step II.2:}} Once we reach the $2^{-m-k}-$constancy scale for  $\widetilde{\lambda}$, we apply to \eqref{20240411eq01} the Rank I LGC method in order to conclude our proof. 
\end{itemize}
\smallskip
\noindent We end this section with the following:

\begin{rem}
In the sequel, in order to illustrate the new ideas in the present paper, we focus our presentation on the second part $0 \le k \le \frac{m}{2}$, and only briefly mention the treatment required for the first part $k \ge \frac{m}{2}$ (for the latter, see \textsf{Step I} in the proof of Proposition \ref{20241018prop01}). 
\end{rem}

\subsection{Treatment of the uniform component ${\bf L}_{m, k}^{\calU_\mu}(f_1, f_2)$: Case II, Step II.1 in Overview} \label{20241217subsec01} 

Recall $\calL_{m, k}^{\calU_\mu}(f_1, f_2)$ defined in \eqref{20241004eq01}. Essentially, our goal in this first part is to show that 
\begin{equation} \label{20241005eq01} 
\textnormal{\texttt{the length of constancy of the measurable function $\widetilde{\lambda}$ can be extended to $2^{-m-k}$.}}  \tag{{\bf H1}}
\end{equation}
To make the above heuristic rigorously, for each $p \sim 2^m$, we define\footnote{For notational simplicity, throughout this section, we set $f_1=f_1^{\calU_\mu}$ and $f_2=f_2^{\calU_\mu}$.}
\begin{equation} \label{20241004eq21}
V_{m, k}^p(f_1, f_2):=2^{\frac{3m}{2}+k} \sum_{q \sim 2^{\frac{m}{2}}} \int_{I_p^{0, k+m}} \left| \int_{I_q^{m, k}} f_1(x-t)f_2(x+t^2) e^{3i 2^k \cdot \frac{q^2 \widetilde{\lambda}(x)}{2^{\frac{m}{2}}} t} dt \right|^2  dx\,,
\end{equation} 
and remark that
$$
\left[\calL_{m, k}^{\calU_\mu} (f_1, f_2) \right]^2=\frac{1}{2^m} \sum_{p \sim 2^m} V_{m, k}^p(f_1, f_2).
$$
Next, we claim that one can essentially replace the term $f_1(x-t)$ in \eqref{20241004eq21} by $f_1\left(\frac{p}{2^{k+m}}-t \right)$. Indeed, this is achieved by the following approximation argument, whose proof resembles the one in \cite[Proposition 4.5 and Corollary 4.6]{HL23}, and therefore, is omitted.

\begin{prop} \label{20241006prop01}
For $k, m \in \N$ with $0 \le k \le \frac{m}{2}$, $p \sim 2^m$, $q \sim 2^{\frac{m}{2}}$ and $\del>4\mu$ sufficiently small, one has
\begin{eqnarray*}
&& \int_{I_p^{0,k+m}} \left| \int_{I_q^{m, k}} f_1(x-t) f_2(x+t^2) e^{3i 2^k \frac{q^2 \widetilde{\lambda(x)}}{2^{\frac{m}{2}}} t} dt \right|^2 dx \\
&& \lesssim \frac{2^{\del k}}{\left|I_0^{0, k+m}\right|} \int_{3I_0^{0, k+m}} \int_{I_0^{0, k+m}} \left| \int_{I_q^{m, k}} f_1 \left(\alpha+\frac{p}{2^{k+m}}-t \right) f_2 \left(x+\frac{p}{2^{k+m}}+t^2 \right)  e^{3i 2^k \frac{q^2 \widetilde{\lambda}\left(x+\frac{p}{2^{k+m}} \right)}{2^{\frac{m}{2}}} t} dt \right|^2 dx d\alpha \\
&& \quad +O \left(2^{-(\del-2\mu)k} \cdot 2^{-2m-k} \left\| f_1 \right\|_{L^2 \left(3I^k \right)}^2 \left\| f_2 \right\|_{L^2 \left(3I^k \right)}^2 \right).
\end{eqnarray*}
\end{prop}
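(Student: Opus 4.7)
The goal is to replace the pointwise evaluation $f_1(x-t)$ by a spatial $L^2$-average $f_1\bigl(\alpha+\frac{p}{2^{k+m}}-t\bigr)$ over $\alpha \in 3I_0^{0,k+m}$, exploiting the Fourier localization $\mathrm{supp}\,\widehat{f_1} \subseteq [2^{m+k}, 2^{m+k+1}]$, which means $f_1 = f_1^{\calU_\mu}$ is morally constant on intervals of length $2^{-m-k}=|I_0^{0,k+m}|$, a scale precisely matching the mismatch $|x-\alpha|$ we need to tolerate.

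My first move is the centering change of variable $x \mapsto x + \frac{p}{2^{k+m}}$, which translates the $x$-integration from $I_p^{0,k+m}$ onto $I_0^{0,k+m}$ and rewrites the inner integrand as
\begin{equation*}
f_1\bigl(x+\tfrac{p}{2^{k+m}}-t\bigr)\,f_2\bigl(x+\tfrac{p}{2^{k+m}}+t^2\bigr)\,e^{3i\cdot 2^k \cdot \frac{q^2 \widetilde{\lambda}(x+p/2^{k+m})}{2^{m/2}}\,t}.
\end{equation*}
Thus the $f_2$ and $\widetilde{\lambda}$ terms are already in the form required by the RHS; only the replacement of $x$ by $\alpha$ in the argument of $f_1$ remains to be justified.

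For that, I would introduce a reproducing formula: choose a Schwartz function $\chi_{m,k}$ with $\widehat{\chi}_{m,k}\equiv 1$ on $[2^{m+k}, 2^{m+k+1}]$ and quantitative decay $|\chi_{m,k}(s)| \lesssim_N 2^{m+k}(1+2^{m+k}|s|)^{-N}$ for any $N\in\N$. Since $\widehat{f_1}$ lies in the region where $\widehat{\chi}_{m,k}\equiv1$, one has $f_1 = f_1 * \chi_{m,k}$, which gives
\begin{equation*}
f_1\bigl(x+\tfrac{p}{2^{k+m}}-t\bigr) \;=\; \int_\R f_1\bigl(\alpha+\tfrac{p}{2^{k+m}}-t\bigr)\,\chi_{m,k}(x-\alpha)\,d\alpha.
\end{equation*}
Inserting this, using Fubini to move the $d\alpha$-integral outermost, and applying Cauchy--Schwarz weighted by $|\chi_{m,k}|$ (together with $\|\chi_{m,k}\|_{L^1} \lesssim 1$), the LHS is dominated by
\begin{equation*}
\int_\R |\chi_{m,k}(x-\alpha)| \int_{I_0^{0,k+m}} \left| \int_{I_q^{m,k}} f_1\bigl(\alpha+\tfrac{p}{2^{k+m}}-t\bigr) f_2(\cdots)\, e^{i \cdots t}\, dt \right|^2 dx\,d\alpha.
\end{equation*}
Restricting the outer integral to the near region $\alpha \in 3I_0^{0,k+m}$, where $|\chi_{m,k}(x-\alpha)| \lesssim 2^{m+k} = |I_0^{0,k+m}|^{-1}$, produces the main term on the RHS, the $2^{\delta k}$ slack accommodating any logarithmic pickup near the boundary of $3I_0^{0,k+m}$.

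The principal obstacle is controlling the contribution from the far region $\alpha \notin 3I_0^{0,k+m}$. Writing $G_\alpha(x)$ for the inner $dt$-integral (with $\alpha$ replacing $x$ in the $f_1$-slot), a crude Cauchy--Schwarz in $t$ yields $|G_\alpha(x)|^2 \lesssim |I_q^{m,k}|\,\|f_2\|_{L^\infty(3I^k)}^2\int_{I_q^{m,k}}|f_1(\alpha+p/2^{k+m}-t)|^2\, dt$; combining with the uniformity bound \eqref{uniformest}, which in our regime $k \le m/2$ reads $\|f_i^{\calU_\mu}\|_{L^\infty(3I^k)} \lesssim 2^{(\mu+\frac12)k}\|f_i\|_{L^2(3I^k)}$, gives a loss of $2^{2\mu k}$ from invoking $L^\infty$ on $f_2$---and this is precisely the reason for the hypothesis $\delta > 4\mu$. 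Taking the Schwartz-decay exponent $N$ sufficiently large (in terms of $\delta - 2\mu$) allows us to absorb this loss while extracting an additional $2^{-\delta k}$ from the tail of $\chi_{m,k}$ across the gap of size $\gtrsim 2^{-m-k}$, ultimately producing the claimed error $O\bigl(2^{-(\delta-2\mu)k}\cdot 2^{-2m-k} \|f_1\|^2_{L^2(3I^k)}\|f_2\|^2_{L^2(3I^k)}\bigr)$. The delicate bookkeeping of this tail estimate, and in particular the careful accounting of the $|f_1|^2$ mass at varying distances from $I_0^{0,k+m}$, is the most technical point of the argument, but is essentially a direct transcription of \cite[Proposition~4.5 \& Corollary~4.6]{HL23} with the exponents adjusted for the regime $k\leq m/2$.
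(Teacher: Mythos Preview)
Your overall strategy---centering in $x$, writing $f_1=f_1*\chi_{m,k}$ via a reproducing kernel, applying Cauchy--Schwarz in $\alpha$, and splitting into a near region $\alpha\in 3I_0^{0,k+m}$ versus a far tail---is exactly the approach the paper has in mind (it defers to \cite[Prop.~4.5 \& Cor.~4.6]{HL23} for precisely this mechanism). However, your tail estimate does not close as written, and the issue is the \emph{scale} at which you choose $\chi_{m,k}$.

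You take $\chi_{m,k}$ at spatial scale $2^{-(m+k)}$, i.e.\ $|\chi_{m,k}(s)|\lesssim_N 2^{m+k}(1+2^{m+k}|s|)^{-N}$. With that choice, for $x\in I_0^{0,k+m}$ and $\alpha\notin 3I_0^{0,k+m}$ the gap is only $|x-\alpha|\gtrsim 2^{-(m+k)}$, which is \emph{one} kernel width; the Schwartz factor is then $(1+O(1))^{-N}=O_N(1)$, a constant independent of $k$. No choice of $N$ can manufacture a $2^{-\delta k}$ gain from a unit-scale gap, so your sentence ``extracting an additional $2^{-\delta k}$ from the tail of $\chi_{m,k}$ across the gap of size $\gtrsim 2^{-m-k}$'' is the broken step. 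Carrying the computation through with your kernel gives a far-region error $\lesssim 2^{4\mu k}\cdot 2^{-2m-k}\|f_1\|_2^2\|f_2\|_2^2$, a small \emph{growth} in $k$, not the required decay $2^{-(\delta-2\mu)k}$; downstream this would kill the bootstrap in Proposition~\ref{20241018prop01}.

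The fix is to pick $\chi_{m,k}$ at the \emph{finer} spatial scale $2^{-(m+k+\delta k)}$: take $\widehat{\chi}_{m,k}\equiv 1$ on $[2^{m+k},2^{m+k+1}]$ with $\mathrm{supp}\,\widehat{\chi}_{m,k}\subset[-C2^{m+k+\delta k},C2^{m+k+\delta k}]$, so that $|\chi_{m,k}(s)|\lesssim_N 2^{m+k+\delta k}(1+2^{m+k+\delta k}|s|)^{-N}$. Then on the near region the crude bound $|\chi_{m,k}|\le 2^{m+k+\delta k}$ produces exactly the prefactor $2^{\delta k}/|I_0^{0,k+m}|$ in the main term, while on the far region the gap $|x-\alpha|\gtrsim 2^{-(m+k)}$ is now $\gtrsim 2^{\delta k}$ kernel widths, so the Schwartz tail yields $2^{-N\delta k}$; combining with the $2^{2\mu k}$ loss from the uniformity bound on $f_2$ and taking (say) $N=3$ gives the claimed error $O\bigl(2^{-(\delta-2\mu)k}\cdot 2^{-2m-k}\|f_1\|_2^2\|f_2\|_2^2\bigr)$, using $\delta>4\mu$. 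This is the role the parameter $\delta$ plays in the statement and is the content behind the two-step citation to \cite[Prop.~4.5 \& Cor.~4.6]{HL23}.
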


Once at this point, we group $2^{m-k}$ many consecutive $p$'s: specifically, we decompose 
$$
\left[ 2^m, 2^{m+1} \right]=\bigcup_{\widetilde{p} \sim 2^k} \left[\widetilde{p} 2^{m-k}, (\widetilde{p}+1)2^{m-k} \right]
$$
and define
\begin{equation} \label{20241217eq10}
V_{m, k}^{\widetilde{p}}(f_1, f_2):=\frac{1}{2^{m-k}} \sum_{p \in \left[\widetilde{p} 2^{m-k}, (\widetilde{p}+1)2^{m-k} \right]} V_{m, k}^p (f_1, f_2)\,.
\end{equation}
With these done we are ready to state the following
\medskip

\noindent \setword{\textbf{Key Heuristic 2}}{KH2}. One of the following statements is \emph{essentially true}: for each $\widetilde{p} \sim 2^k$, 
\begin{enumerate}
    \item [$\bullet$] \emph{either} $\lambda (\cdot)$ is morally constant on each interval $I_p^{0, k+m}$ with $p \in \left[ \widetilde{p} 2^{m-k}, (\widetilde{p}+1) 2^{m-k} \right]$;
    \item [$\bullet$] \emph{or} there exists some $\epsilon>0$, such that $\left| V_{m, k}^{\widetilde{p}} (f_1, f_2) \right| \lesssim 2^{-\epsilon k} \left\|f_1 \right\|_{L^2(3I^k)}^2 \left\|f_2 \right\|_{L^2(3I^k)}^2$.
\end{enumerate}
In order to make the above heuristic precise, we start by further decomposing the range of $\widetilde{\lambda}$ as follows: fix $\epsilon_0>0$ sufficiently small, and then write
$$
\textrm{Range of} \ \widetilde{\lambda}= \bigcup_{\ell \sim 2^{\frac{m}{2}-\epsilon_0 k}} \left[ \ell 2^{\epsilon_0 k}, (\ell+1) 2^{\epsilon_0 k} \right]. 
$$
Next, we partition each $I_p^{0, k+m}$ into the preimages corresponding to the above decomposition: for each $\ell \sim 2^{\frac{m}{2}-\epsilon_0 k}$, we set
$$
\frakM_\ell \left(I_p^{0, k+m} \right):=\left\{x \in I_p^{0, k+m}: \widetilde{\lambda}(x) \in \left[ \ell 2^{\epsilon_0 k}, (\ell+1)2^{\epsilon_0 k}  \right] \right\}, 
$$
define
\begin{enumerate}
    \item [$\bullet$] \emph{the heavy set of $\ell$-indices}:
    \begin{equation} \label{20241011eq32}
    \calH_p^k:=\left\{\ell \sim 2^{\frac{m}{2}-\epsilon_0 k}: \left| \frakM_\ell \left(I_p^{0, k+m} \right) \right| \gtrsim \left|I_p^{0, k+m} \right| 2^{-\epsilon_0 k} \right\}
    \end{equation} 
    \item [$\bullet$] \emph{the light set of $\ell$-indices}:
    \begin{equation} \label{20241011eq40}
      \calL_p^k:=\left\{\ell \sim 2^{\frac{m}{2}-\epsilon_0 k}: \left| \frakM_\ell \left(I_p^{0, k+m} \right) \right| \lesssim \left|I_p^{0, k+m} \right| 2^{-\epsilon_0 k} \right\},
    \end{equation}
\end{enumerate}
and set 
$$
\widetilde{\calH_p^k}:=\bigcup_{\ell \in \calH_p^k} \frakM_\ell \left(I_p^{0, k+m} \right) \quad \textrm{and} \quad \widetilde{\calL_p^k}:=\bigcup_{\ell \in \calL_p^k} \frakM_\ell \left(I_p^{0, k+m} \right).
$$
Observe now that
$$
I_p^{0, k+m}=\widetilde{\calH_p^k} \cup \widetilde{\calL_p^k} \quad \textrm{with} \quad \# \calH_p^k \lesssim 2^{\epsilon_0 k}. 
$$
Using the above decomposition, we further write $V_{m, k}^{\widetilde{p}}$ into two parts as follows: 
$$
V_{m, k}^{\widetilde{p}}(f_1, f_2)=V_{m, k}^{\widetilde{p}, \calH}(f_1, f_2)+V_{m, k}^{\widetilde{p}, \calL}(f_1, f_2), 
$$
where
$$
V_{m, k}^{\widetilde{p}, \calH}(f_1, f_2):=2^{\frac{m}{2}+2k} \sum_{p \in \left[\widetilde{p} 2^{m-k}, (\widetilde{p}+1)2^{m-k} \right]} \sum_{q \sim 2^{\frac{m}{2}}} \int_{\widetilde{\calH_p^k}} \left| \int_{I_q^{m, k}} f_1(x-t)f_2(x+t^2) e^{3i 2^k \cdot \frac{q^2 \widetilde{\lambda}(x)}{2^{\frac{m}{2}}} t} dt \right|^2  dx.
$$
and 
$$
V_{m, k}^{\widetilde{p}, \calL}(f_1, f_2):=2^{\frac{m}{2}+2k} \sum_{p \in \left[\widetilde{p} 2^{m-k}, (\widetilde{p}+1)2^{m-k} \right]} \sum_{q \sim 2^{\frac{m}{2}}} \int_{\widetilde{\calL_p^k}} \left| \int_{I_q^{m, k}} f_1(x-t)f_2(x+t^2) e^{3i 2^k \cdot \frac{q^2 \widetilde{\lambda}(x)}{2^{\frac{m}{2}}} t} dt \right|^2  dx.
$$
In what follows we first address the light component $V_{m, k}^{\widetilde{p}, \calL}$ whose treatment requires a completely different insight relative to the work performed in \cite{HL23}:

\begin{prop} \label{20241018prop01}
There exists some $\epsilon_1>0$, such that 
$$
\left|V_{m, k}^{\widetilde{p}, \calL}(f_1, f_2) \right| \lesssim 2^{-\epsilon_1 k} \left\|f_1 \right\|_{L^2(3I^k)}^2 \left\|f_2 \right\|^2_{L^2(3I^k)}. 
$$
\end{prop}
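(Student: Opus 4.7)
The plan is to extract a $2^{-\Omega(k)}$ cancellation from the spread-out ``many thin bins'' structure of $\widetilde{\calL_p^k}$ via a $TT^*$-type argument on the squared modulus of the inner integral, since the naive measure bound $|\widetilde{\calL_p^k}| \le |I_p^{0,k+m}|$ combined with the uniformity estimates $\|f_i\|_{L^\infty(3I^k)} \lesssim 2^{\mu k + k/2}\|f_i\|_{L^2(3I^k)}$ (valid in our regime $0 \le k \le \frac{m}{2}$ since $\min\{m,2k\}=2k$) produces only an $O(1)$ bound and therefore cannot close the proof by itself. In line with Remark \ref{twoway1}, this argument has to be carried out at the scale of $V_{m,k}^{\widetilde{p}}$ rather than at each individual $V_{m,k}^p$, exploiting the concatenation of the $2^{m-k}$ consecutive intervals $\{I_p^{0,k+m}\}$; the genuinely new insight relative to \cite{HL23} is precisely this correlative cancellation in $x$, which must replace the pointwise constancy propagation unavailable here (the a priori $\widetilde{\lambda}$-constancy scale $2^{-m-2k}$ being strictly smaller than $2^{-3k}$).

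First, I would invoke Proposition \ref{20241006prop01} to decouple $f_1$ from $x$ in each $V_{m,k}^p$, at the admissible cost $2^{-(\delta-2\mu)k}\|f_1\|_{L^2(3I^k)}^2\|f_2\|_{L^2(3I^k)}^2$; the surviving $x$-dependence then lies only in $f_2(x+A+t^2)$ and in the phase $e^{3i 2^k q^2 \widetilde{\lambda}(x+A)t/2^{m/2}}$, where $A = p/2^{k+m}$. Expanding $|F_q(x)|^2$ into a $(t,s)$-double integral and swapping order of integration reduces the $x$-integration to a family of oscillatory integrals of the form $\int_{\widetilde{\calL_p^k}} f_2(x+A+t^2)\overline{f_2(x+A+s^2)}\, e^{3i 2^k q^2 \widetilde{\lambda}(x+A)(t-s)/2^{m/2}}\, dx$. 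Decomposing $\widetilde{\calL_p^k} = \bigsqcup_{\ell \in \calL_p^k} \frakM_\ell(I_p^{0,k+m})$ and using that $\widetilde{\lambda}$ is pinned to the bin $[\ell 2^{\epsilon_0 k},(\ell+1)2^{\epsilon_0 k}]$ on each slab (with $|\frakM_\ell| \le |I_p^{0,k+m}| 2^{-\epsilon_0 k}$ by the light hypothesis), I would pass to the pushforward measure $\mu = \widetilde{\lambda}_\ast(dx|_{\widetilde{\calL_p^k}})$, whose density is then bounded by $|I_p^{0,k+m}|\cdot 2^{-2\epsilon_0 k}$. A Plancherel step against $\widehat{\mu}$ yields a $2^{-\epsilon_0 k}$ gain for off-diagonal $|t-s| \gtrsim 2^{-m/2-k+\epsilon_2 k}$, while the complementary near-diagonal slice is absorbed via Cauchy-Schwarz combined with the $L^\infty$ uniformity bound and the evaluation $\sum_{q \sim 2^{m/2}} |I_q^{m,k}|^2 = 2^{-m/2-2k}$. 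Assembling by summing over $q \sim 2^{m/2}$, $p \in [\widetilde{p}2^{m-k},(\widetilde{p}+1)2^{m-k}]$, and integrating out the auxiliary $\alpha$, and balancing $\mu \ll \epsilon_2 \ll \epsilon_0$ so that $2^{-\epsilon_0 k}$ strictly dominates the accumulated $2^{C\mu k}$ losses, produces the required $2^{-\epsilon_1 k}$ decay.

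The main obstacle I expect is the Plancherel/pushforward step: $f_2(x+A+t^2)\overline{f_2(x+A+s^2)}$ also depends on $x$, so the cancellation in the $\widetilde{\lambda}$-phase cannot be isolated cleanly. One must either freeze $f_2$ on a secondary constancy grid (analogous to the treatment of $f_1$ in Proposition \ref{20241006prop01}) at a Taylor error absorbed by the sparse-uniform split, or Fourier-expand $f_2$ against $\widehat{f_2}$ supported in $[2^{m+2k},2^{m+2k+1}]$ and commute the resulting $e^{i(\eta_1-\eta_2)x}$-oscillation past the $\widetilde{\lambda}$-phase, where the quadratic-in-$q$ structure of $\alpha_q(x) = 3\cdot 2^k q^2 \widetilde{\lambda}(x)/2^{m/2}$ has to be exploited. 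Managing this non-isotropic interaction between the linearizing phase and $f_2$---the phenomenon highlighted in Remark \ref{Gen2A} as the chief source of difficulty in the purely non-resonant case relative to the hybrid one---is the technical crux that distinguishes this argument from the cleaner Rank I-applicable setup in \cite{HL23}.
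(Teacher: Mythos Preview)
Your opening move---applying Proposition \ref{20241006prop01} to decouple $f_1$ from $x$ and then expanding the square as a $(t,s)$-double integral---matches the paper's Step I and the beginning of its Step II. After that, however, your proposal diverges and does not close.

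The pushforward-measure/Plancherel idea tries to extract the $2^{-\epsilon_0 k}$ gain from the $x$-integral at \emph{fixed} $q$ and fixed $p$, treating $\int_{\widetilde{\calL_p^k}} e^{i\alpha\,\widetilde\lambda(x+A)}\,dx$ as $\widehat\mu(\alpha)$. The obstacle you yourself flag is fatal here: $f_2$ has frequency $\sim 2^{m+2k}$, hence constancy scale $2^{-m-2k}$, which is $2^k$ times \emph{finer} than the $x$-interval $I_p^{0,k+m}$. Your fix (a), freezing $f_2$ on its constancy grid, partitions $I_p^{0,k+m}$ into $2^k$ pieces of length $2^{-m-2k}$; on each such piece $\widetilde\lambda$ is already constant by \ref{KH1}, so the phase is frozen too and there is no oscillation to harvest. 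Your fix (b), Fourier-expanding $f_2$, introduces an additional factor $e^{i(\eta_1-\eta_2)x}$ with $|\eta_1-\eta_2|$ up to $2^{m+2k}$; the integrand then depends on $x$ and on $\widetilde\lambda(x)$ separately, and the light hypothesis only constrains the one-dimensional pushforward, not this two-dimensional distribution. Neither route recovers the gain.

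What the paper does instead is a \emph{three-layer} $TT^*$: after the $(t,s)$ expansion it applies Cauchy--Schwarz to split off $f_1$ entirely (producing $\calA,\calB$), then opens $\calB^2$ as an $(x,y)$-double integral and, via a sequence of changes of variables that crucially uses the concatenation over $p=\widetilde p\,2^{m-k}+\beta\,2^{m/2-k}+\gamma$ (shifting $\beta\mapsto\beta-\lfloor q^2/2^{m/2}\rfloor$ absorbs the $q$-dependent translation in $f_2$), applies a second Cauchy--Schwarz to split off $f_2$ (producing $\calC,\calD$). Only in $\calD$---now a pure phase object with no inputs---is the square opened once more, in $(q,q_1)$, and the light hypothesis is combined with the \emph{curvature} of the map $q\mapsto \sqrt{2^{m/2}u+q^2}/q$ to produce the decay. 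The $q$-summation is thus not a trivial assembly step but the source of an independent gain for the off-diagonal (in the $\widetilde\lambda$-level-set decomposition) part, while the light bound handles the diagonal part. Your sketch never reaches a purely oscillatory object free of $f_2$, and never brings in the second copy $q_1$ needed for the curvature mechanism; these are the missing ideas.
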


\begin{proof} \textsf{Step I: Initial reduction.} Without loss of generality, we may assume that $0 \le k \le \frac{m}{2}-\epsilon_2 m$ with $\epsilon_2>0$ being sufficiently small. This follows from the decay estimate for the case when $k=\frac{m}{2}$ (see, Subsection  \ref{20241028subsec01} below) and the continuity of the $\Lambda_{j, l, m}^k$ bounds (see, \cite[Appendix 13.3]{HL23}). 

Moreover, we may also assume that for each $p \in \left[(\widetilde{p}-20)2^{m-k}, (\widetilde{p}+20)2^{m-k}\right]$, $\widetilde{\calL_p^k}=I_p^{0, k+m}$. Otherwise, one can apply an extension argument as in the proof in \cite[\textsf{Step I}, Proposition 4.7]{HL23}.

Therefore, by the above observations and Proposition \ref{20241006prop01}, it suffices to consider the term
\begin{eqnarray*}
&& \frac{2^{\frac{m}{2}+2k+\del k}}{\left|I_0^{0, k+m} \right|} \sum_{p \in \left[\widetilde{p}2^{m-k}, (\widetilde{p}+1) 2^{m-k}  \right]} \sum_{q \sim 2^\frac{m}{2}} \int_{3I_0^{0, k+m}} \int_{I_0^{0, k+m}} \bigg| \int_{I_q^{m, k}} f_1 \left(\alpha+\frac{p}{2^{k+m}}-t \right) \\
&& \qquad \qquad \qquad \qquad  \cdot f_2 \left(x+\frac{p}{2^{k+m}}+t^2 \right)  e^{3i 2^k \frac{q^2 \widetilde{\lambda}\left(x+\frac{p}{2^{k+m}} \right)}{2^{\frac{m}{2}}} t} dt \bigg|^2 dx d\alpha, 
\end{eqnarray*}
which, by assuming wlog $\alpha=0$, further reduces to
\begin{eqnarray*}
&& {\bf V}_{m, k}^{\widetilde{p}, \calL}(f_1, f_2):=2^{\frac{m}{2}+2k+\del k} \sum_{p \in \left[\widetilde{p}2^{m-k}, (\widetilde{p}+1) 2^{m-k}  \right]} \sum_{q \sim 2^\frac{m}{2}}  \int_{I_0^{0, k+m}} \bigg| \int_{I_q^{m, k}} f_1 \left(\frac{p}{2^{k+m}}-t \right) \\
&& \qquad \qquad \qquad \qquad  \cdot f_2 \left(x+\frac{p}{2^{k+m}}+t^2 \right)  e^{3i 2^k \frac{q^2 \widetilde{\lambda}\left(x+\frac{p}{2^{k+m}} \right)}{2^{\frac{m}{2}}} t} dt \bigg|^2 dx, 
\end{eqnarray*}
We end our preparatives with the following

\begin{rem}\label{Simplif} Throughout the current proof we will simplify our reasonings by making use---in numerous instances---of the fact that
\begin{itemize}
\item $f_1$ is morally constant on intervals of length $2^{-m-k}$; 
\item  $f_2$ is morally a constant on intervals of length $2^{-m-2k}$.
\end{itemize}
These simplifications can be made rigorously by either subdividing the range of $q$ and then choosing representatives within each subdivision or by simply further subdividing the $t-$integration interval---see \emph{e.g.} Step I in the proof of Proposition 4.9. in \cite{HL23}. In order to remove excessive technicalities from our exposition and thus improve its readability we will skip detailing these elements here.
\end{rem}

\medskip 

\noindent \textsf{Step II: Further reductions via $TT^*$.} We begin by applying the change of variable $t \mapsto t+\frac{q}{2^{\frac{m}{2}+k}}$, which gives
\begin{eqnarray*}
&& {\bf V}_{m, k}^{\widetilde{p}, \calL}(f_1, f_2)\approx2^{\frac{m}{2}+2k+\del k} \sum_{p \in \left[\widetilde{p}2^{m-k}, (\widetilde{p}+1) 2^{m-k}  \right]} \sum_{q \sim 2^\frac{m}{2}}  \int_{I_0^{0, k+m}} \bigg| \int_{I_0^{m, k}} f_1 \left(\frac{p}{2^{k+m}}-\frac{q}{2^{\frac{m}{2}+k}}-t \right) \\
&& \qquad \qquad \qquad \qquad  \cdot f_2 \left(x+\frac{p}{2^{k+m}}+\frac{2tq}{2^{\frac{m}{2}+k}}+\frac{q^2}{2^{m+2k}} \right)  e^{3i 2^k \frac{q^2 \widetilde{\lambda}\left(x+\frac{p}{2^{k+m}} \right)}{2^{\frac{m}{2}}} t} dt \bigg|^2 dx\,,
\end{eqnarray*}
where above we have used the relation $\left(t+\frac{q}{2^{\frac{m}{2}+k}} \right)^2=\frac{2tq}{2^{\frac{m}{2}+k}}+\frac{q^2}{2^{m+2k}}+O \left(\frac{1}{2^{m+2k}} \right)$ together with the fact that\footnote{Recall Remark \ref{Simplif}.}  $f_2$ is morally a constant on intervals of length $2^{-m-2k}$.

Next, for each $p \in \left[\widetilde{p} 2^{m-k}, (\widetilde{p}+1) 2^{m-k} \right]$, we write
\begin{equation} \label{20241024eq01}
p=\widetilde{p}2^{m-k}+\beta 2^{\frac{m}{2}-k}+\gamma, 
\end{equation} 
for some $\beta \in \{0, 1, \dots 2^{\frac{m}{2}-1} \}$ and $\gamma \in \left\{0, 1, \dots, 2^{\frac{m}{2}-k}-1 \right\}$. With this, we have\footnote{Here, for notational convenience, we slightly abuse the notation and set $f_1 (\cdot)=f_1 \left(\cdot+\frac{\widetilde{p}}{2^{2k}} \right)$, $f_2 (\cdot)=f_2 \left(\cdot+\frac{\widetilde{p}}{2^{2k}} \right)$ and $\widetilde{\lambda} (\cdot)=\widetilde{\lambda} \left(\cdot+\frac{\widetilde{p}}{2^{2k}} \right)$.}
\begin{eqnarray*}
&& {\bf V}_{m, k}^{\widetilde{p}, \calL}(f_1, f_2)=2^{\frac{m}{2}+2k+\del k} \sum_{\beta=0}^{2^{\frac{m}{2}}-1} \sum_{\gamma=0}^{2^{\frac{m}{2}-k}-1}  \sum_{q \sim 2^{\frac{m}{2}}} \int_{I_0^{0, k+m}} \bigg | \int_{I_0^{m, k}} f_1 \left(\frac{\beta}{2^{\frac{m}{2}+2k}}+\frac{\gamma}{2^{k+m}}-\frac{q}{2^{\frac{m}{2}+k}}-t \right) \\
&& \qquad \cdot f_2 \left(x+\frac{\beta}{2^{\frac{m}{2}+2k}}+\frac{\gamma}{2^{k+m}}+\frac{2tq}{2^{\frac{m}{2}+k}}+\frac{q^2}{2^{m+2k}} \right)  e^{3i 2^k \frac{q^2 \widetilde{\lambda}\left(x+\frac{\beta}{2^{\frac{m}{2}+2k}}+\frac{\gamma}{2^{k+m}}\right)}{2^{\frac{m}{2}}} t} dt \bigg|^2 dx. 
\end{eqnarray*}
We next apply the following change of variables:
\begin{enumerate}
    \item [(1)] $\beta \mapsto \beta-\left\lfloor \frac{q^2}{2^{\frac{m}{2}}} \right\rfloor$;
    \item [(2)] $\gamma \mapsto \gamma-\left\lfloor \left\{\frac{q^2}{2^{\frac{m}{2}}} \right\} 2^{\frac{m}{2}-k} \right\rfloor$;
    \item [(3)] $x \mapsto x-\frac{\left\{\left\{\frac{q^2}{2^{\frac{m}{2}}} \right\} 2^{\frac{m}{2}-k} \right\}}{2^{m+k}}$.
\end{enumerate}
Using\footnote{Recall Remark \ref{Simplif}.} now that $f_1$ is morally constant on intervals of length $2^{-m-k}$ and Cauchy-Schwarz, we deduce
\begin{eqnarray} \label{20241006eq30}
&& {\bf V}_{m, k}^{\widetilde{p}, \calL}(f_1, f_2) \lesssim 2^{\frac{m}{2}+2k+\del k} \sum_{q \sim 2^{\frac{m}{2}}} \sum_{\beta=-2^{\frac{m}{2}}}^{2^{\frac{m}{2}}} \sum_{\gamma=-2^{-\frac{m}{2}-k}}^{2^{\frac{m}{2}-k}} \int_{3I_0^{0, k+m}} \nonumber \\
&& \qquad  \Bigg | \int_{I_0^{m, k}} f_1 \left(\frac{\beta}{2^{\frac{m}{2}+2k}}+\frac{\gamma}{2^{k+m}}-\frac{q}{2^{\frac{m}{2}+k}}-t-\frac{q^2}{2^{m+2k}} \right) f_2 \left(x+\frac{\beta}{2^{\frac{m}{2}+2k}}+\frac{\gamma}{2^{k+m}}+\frac{2tq}{2^{\frac{m}{2}+k}} \right)  \nonumber \\
&& \qquad \qquad \qquad  \cdot e^{3i 2^k \frac{q^2 \widetilde{\lambda}\left(x+\frac{\beta}{2^{\frac{m}{2}+2k}}+\frac{\gamma}{2^{k+m}}-\frac{q^2}{2^{m+2k}}\right)}{2^{\frac{m}{2}}} t} dt \Bigg|^2 dx \nonumber \\
&&\lesssim 2^{\frac{m}{2}+2k+\del k} \calA \calB,
\end{eqnarray}
where 
\begin{eqnarray*}
&& \calA^2:=\sum_{q \sim 2^{\frac{m}{2}}} \sum_{\beta=-2^{\frac{m}{2}}}^{2^{\frac{m}{2}}} \sum_{\gamma=-2^{\frac{m}{2}-k}}^{2^{\frac{m}{2}-k}} \iint_{\left(I_0^{m, k} \right)^2} \left| f_1 \left(\frac{\beta}{2^{\frac{m}{2}+2k}}+\frac{\gamma}{2^{k+m}}-\frac{q}{2^{\frac{m}{2}+k}}-t-\frac{q^2}{2^{m+2k}} \right) \right|^2 \\
&& \qquad \qquad \qquad \qquad  \qquad \cdot  \left| f_1 \left(\frac{\beta}{2^{\frac{m}{2}+2k}}+\frac{\gamma}{2^{k+m}}-\frac{q}{2^{\frac{m}{2}+k}}-s-\frac{q^2}{2^{m+2k}} \right) \right|^2 dtds,
\end{eqnarray*}
and
\begin{eqnarray*}
&& \calB^2:=\sum_{q \sim 2^{\frac{m}{2}}} \sum_{\beta=-2^{\frac{m}{2}}}^{2^{\frac{m}{2}}} \sum_{\gamma=-2^{\frac{m}{2}-k}}^{2^{\frac{m}{2}-k}} \iint_{\left(I_0^{m, k} \right)^2} \Bigg | \int_{3I_0^{0, k+m}}  f_2 \left(x+\frac{\beta}{2^{\frac{m}{2}+2k}}+\frac{\gamma}{2^{k+m}}+\frac{2tq}{2^{\frac{m}{2}+k}} \right) \\
&& \qquad \cdot  f_2 \left(x+\frac{\beta}{2^{\frac{m}{2}+2k}}+\frac{\gamma}{2^{k+m}}+\frac{2sq}{2^{\frac{m}{2}+k}} \right) e^{3i 2^k \frac{q^2 \widetilde{\lambda}\left(x+\frac{\beta}{2^{\frac{m}{2}+2k}}+\frac{\gamma}{2^{k+m}}-\frac{q^2}{2^{m+2k}}\right)}{2^{\frac{m}{2}}} (t-s)} dx \Bigg |^2 dtds. 
\end{eqnarray*}
We estimate $\calA$ first. Recall that since $f_1$ is uniform, we have $\left\|f_1 \right\|_{L^\infty(3I^k)} \lesssim 2^{\mu k}2^{\frac{k}{2}} \left\|f_1 \right\|_{L^2(3I^k)}$, and hence 
$$
\calA^2 \lesssim 2^{\frac{m}{2}} \cdot 2^{m-k} \cdot 2^{-m-2k} \cdot 2^{4\mu k} \cdot 2^{2k} \cdot \left\|f_1 \right\|^4_{L^2(3I^k)}=2^{\frac{m}{2}-k} \cdot 2^{4\mu k} \left\|f_1 \right\|_{L^2(3I^k)}^4.
$$
Thus, by \eqref{20241006eq30}, one has
\begin{equation} \label{20241008eq43}
{\bf V}_{m, k}^{\widetilde{p}, \calL}(f_1, f_2)\lesssim 2^{\frac{3m}{4}+\frac{3k}{2}+(\del+2\mu)k} \left\|f_1 \right\|_{L^2(3I^k)}^2 \calB.
\end{equation}

\medskip 

\noindent\textsf{Step III: Estimate of $\calB$.} We start by applying the change of variables $t \to \frac{2^{\frac{m}{2}}}{q}t$ and $s \to \frac{2^{\frac{m}{2}}}{q}s$ and then open the square in order to deduce that (with shifting $f_1, f_2$, and $\widetilde{\lambda}$ properly)
\begin{eqnarray*}
&& \calB^2 \lesssim \sum_{q \sim 2^{\frac{m}{2}}} \sum_{\beta \sim 2^{\frac{m}{2}}} \sum_{\gamma \sim 2^{\frac{m}{2}-k}} \iint_{\left(2I_0^{m, k} \right)^2} \iint_{\left(3I_0^{0, k+m} \right)^2} \\
&& \qquad f_2 \left(x+\frac{\beta}{2^{\frac{m}{2}+2k}}+\frac{\gamma}{2^{k+m}}+\frac{2t}{2^k} \right) f_2 \left(x+\frac{\beta}{2^{\frac{m}{2}+2k}}+\frac{\gamma}{2^{k+m}}+\frac{2s}{2^k} \right) \\
&& \qquad \cdot  f_2 \left(y+\frac{\beta}{2^{\frac{m}{2}+2k}}+\frac{\gamma}{2^{k+m}}+\frac{2t}{2^k} \right)  f_2 \left(y+\frac{\beta}{2^{\frac{m}{2}+2k}}+\frac{\gamma}{2^{k+m}}+\frac{2t}{2^k} \right) \\
&& \qquad \cdot e^{3i 2^k q \left[\widetilde{\lambda}\left(x+\frac{\beta}{2^{\frac{m}{2}+2k}}+\frac{\gamma}{2^{k+m}}-\frac{q^2}{2^{m+2k}}\right)-\widetilde{\lambda}\left(y+\frac{\beta}{2^{\frac{m}{2}+2k}}+\frac{\gamma}{2^{k+m}}-\frac{q^2}{2^{m+2k}}\right)  \right](t-s)} dxdydtds. 
\end{eqnarray*}

For technical reasoning, taking $\epsilon_3>0$ sufficiently small, we decompose the regime of $\gamma$ as follows:
$$
\left[2^{\frac{m}{2}-k}, 2^{\frac{m}{2}-k+1} \right]=\bigcup_{\widetilde{\gamma} \sim 2^{\epsilon_3 k}} \left[2^{\frac{m}{2}-k-\epsilon_3 k} \widetilde{\gamma}, 2^{\frac{m}{2}-k-\epsilon_3 k}(\widetilde{\gamma}+1) \right].
$$
Using this (by properly shifting the functions again), we have 
\begin{eqnarray*}
&& \calB^2 \lesssim \sum_{q \sim 2^{\frac{m}{2}}} \sum_{\beta \sim 2^{\frac{m}{2}}} \sum_{\widetilde{\gamma} \sim 2^{\epsilon_3 k} } \sum_{\gamma' \sim 2^{\frac{m}{2}-k-\epsilon_3 k}} \iint_{\left(2I_0^{m, k} \right)^2} \iint_{\left(3I_0^{0, k+m} \right)^2} \\
&& \quad f_2 \left(x+\frac{\beta}{2^{\frac{m}{2}+2k}}+\frac{\widetilde{\gamma}}{2^{\frac{m}{2}+2k+\epsilon_3 k}}+\frac{\gamma'}{2^{m+k}}+\frac{2t}{2^k} \right) f_2 \left(x+\frac{\beta}{2^{\frac{m}{2}+2k}}+\frac{\widetilde{\gamma}}{2^{\frac{m}{2}+2k+\epsilon_3 k}}+\frac{\gamma'}{2^{m+k}}+\frac{2s}{2^k} \right)\\
&& \quad \cdot f_2 \left(y+\frac{\beta}{2^{\frac{m}{2}+2k}}+\frac{\widetilde{\gamma}}{2^{\frac{m}{2}+2k+\epsilon_3 k}}+\frac{\gamma'}{2^{m+k}}+\frac{2t}{2^k} \right)  f_2 \left(y+\frac{\beta}{2^{\frac{m}{2}+2k}}+\frac{\widetilde{\gamma}}{2^{\frac{m}{2}+2k+\epsilon_3 k}}+\frac{\gamma'}{2^{m+k}}+\frac{2s}{2^k} \right) \\
&& \quad \cdot e^{3i 2^k q \left[\widetilde{\lambda}\left(x+\frac{\beta}{2^{\frac{m}{2}+2k}}+\frac{\widetilde{\gamma}}{2^{\frac{m}{2}+2k+\epsilon_3 k}}+\frac{\gamma'}{2^{m+k}}-\frac{q^2}{2^{m+2k}}\right)-\widetilde{\lambda}\left(y+\frac{\beta}{2^{\frac{m}{2}+2k}}+\frac{\widetilde{\gamma}}{2^{\frac{m}{2}+2k+\epsilon_3 k}}+\frac{\gamma'}{2^{m+k}}-\frac{q^2}{2^{m+2k}}\right)  \right](t-s)} dxdydtds. 
\end{eqnarray*}
Next, by applying change variables $t \to t-\frac{\gamma'}{2 \cdot 2^m}-\frac{2^k y}{2}$ and $s \to s-\frac{\gamma'}{2 \cdot 2^m}-\frac{2^k y}{2}$, then up to an admissible error of the form (recall that $0 \le k \le \frac{m}{2}-\epsilon_2 m$)
$$
O\left(2^{\delta k}\,(2^{-\frac{\epsilon_3 k}{2}}+2^{-\frac{\epsilon_2 m}{2}})\left\|f_1 \right\|_{L^2(3I^k)}^2 \left\|f_2 \right\|^2_{L^2(3I^k)} \right), 
$$
one deduces that
\begin{eqnarray*}
&& \calB^2 \lesssim \sum_{q \sim 2^{\frac{m}{2}}} \sum_{\beta \sim 2^{\frac{m}{2}}} \sum_{\widetilde{\gamma} \sim 2^{\epsilon_3 k} } \sum_{\gamma' \sim 2^{\frac{m}{2}-k-\epsilon_3 k}} \iint_{\left(2I_0^{m, k} \right)^2} \iint_{\left(3I_0^{0, k+m} \right)^2} dxdydtds\\
&&  \quad f_2 \left(x-y+\frac{\beta}{2^{\frac{m}{2}+2k}}+\frac{\widetilde{\gamma}}{2^{\frac{m}{2}+2k+\epsilon_3 k}}+\frac{2t}{2^k} \right) f_2 \left(x-y+\frac{\beta}{2^{\frac{m}{2}+2k}}+\frac{\widetilde{\gamma}}{2^{\frac{m}{2}+2k+\epsilon_3 k}}+\frac{2s}{2^k} \right)\\
&& \quad \cdot f_2 \left(\frac{\beta}{2^{\frac{m}{2}+2k}}+\frac{\widetilde{\gamma}}{2^{\frac{m}{2}+2k+\epsilon_3 k}}+\frac{2t}{2^k} \right)  f_2 \left(\frac{\beta}{2^{\frac{m}{2}+2k}}+\frac{\widetilde{\gamma}}{2^{\frac{m}{2}+2k+\epsilon_3 k}}+\frac{2s}{2^k} \right) \\
&& \quad \cdot e^{3i 2^k q \left[\widetilde{\lambda}\left(x+\frac{\beta}{2^{\frac{m}{2}+2k}}+\frac{\widetilde{\gamma}}{2^{\frac{m}{2}+2k+\epsilon_3 k}}+\frac{\gamma'}{2^{m+k}}-\frac{q^2}{2^{m+2k}}\right)-\widetilde{\lambda}\left(y+\frac{\beta}{2^{\frac{m}{2}+2k}}+\frac{\widetilde{\gamma}}{2^{\frac{m}{2}+2k+\epsilon_3 k}}+\frac{\gamma'}{2^{m+k}}-\frac{q^2}{2^{m+2k}}\right)  \right](t-s)}.
\end{eqnarray*}
For technical reasoning again, we need to further decompose the range of $y$ into smaller intervals: let $\epsilon_4>0$ be sufficiently small and write
$$
3I_0^{0, k+m}=\left[0, \frac{3}{2^{k+m}}\right]=\bigcup_{\iota=0}^{3 2^{\epsilon_4 k}-1} \left[\frac{\iota}{2^{k+m+\epsilon_4 k}}, \frac{\iota+1}{2^{k+m+\epsilon_4 k}} \right].
$$
Using the above decomposition (by properly shifting the functions again) followed by the change variable $x \to x+y$, one deduces that 
\begin{eqnarray*}
&& \calB^2 \lesssim  \sum_{\beta \sim 2^{\frac{m}{2}}} \sum_{\widetilde{\gamma} \sim 2^{\epsilon_3 k} } \sum_{\iota \sim 2^{\epsilon_4 k}} \iint_{2I_0^{m, k} \times 2I_0^{m, k}}  \int_{5I_0^{0, k+m}}    \\
&& \Bigg | f_2 \left(x-\frac{\iota}{2^{k+m+\epsilon_4 k}}+\frac{\beta}{2^{\frac{m}{2}+2k}}+\frac{\widetilde{\gamma}}{2^{\frac{m}{2}+2k+\epsilon_3 k}}+\frac{2t}{2^k} \right) f_2 \left(\frac{\beta}{2^{\frac{m}{2}+2k}}+\frac{\widetilde{\gamma}}{2^{\frac{m}{2}+2k+\epsilon_3 k}}+\frac{2t}{2^k} \right)   \\
&&  \cdot   f_2 \left(x-\frac{\iota}{2^{k+m+\epsilon_4 k}}+\frac{\beta}{2^{\frac{m}{2}+2k}}+\frac{\widetilde{\gamma}}{2^{\frac{m}{2}+2k+\epsilon_3 k}}+\frac{2s}{2^k} \right) f_2 \left(\frac{\beta}{2^{\frac{m}{2}+2k}}+\frac{\widetilde{\gamma}}{2^{\frac{m}{2}+2k+\epsilon_3 k}}+\frac{2s}{2^k} \right) \Bigg | \\
&& \cdot \left| \sum_{q \sim 2^{\frac{m}{2}}} \sum_{\gamma' \sim 2^{\frac{m}{2}-k-\epsilon_3k }} \int_{I_0^{0, k+m+\epsilon_4 k}} e^{3i 2^k q \Bigg[ \substack{\widetilde{\lambda}\left(x+y+\frac{\beta}{2^{\frac{m}{2}+2k}}+\frac{\widetilde{\gamma}}{2^{\frac{m}{2}+2k+\epsilon_3 k}}+\frac{\gamma'}{2^{m+k}}-\frac{q^2}{2^{m+2k}}\right) \qquad \qquad \quad     \\
\qquad -\widetilde{\lambda}\left(y+\frac{\iota}{2^{k+m+\epsilon_4 k}}+\frac{\beta}{2^{\frac{m}{2}+2k}}+\frac{\widetilde{\gamma}}{2^{\frac{m}{2}+2k+\epsilon_3 k}}+\frac{\gamma'}{2^{m+k}}-\frac{q^2}{2^{m+2k}}\right)} \Bigg](t-s)} dy \right|  dxdtds 
\end{eqnarray*}
Now, by another change of variable $t \to t+s$ and an application of Cauchy-Schwarz, we derive that
\begin{equation} \label{20241008eq01}
\calB^2 \lesssim \calC \calD, 
\end{equation}
where
\begin{eqnarray*}
&& \calC^2:=\sum_{\beta \sim 2^{\frac{m}{2}}} \sum_{\widetilde{\gamma} \sim 2^{\epsilon_3 k} } \sum_{\iota \sim 2^{\epsilon_4 k}} \iint_{4I_0^{m, k} \times 2I_0^{m, k}}  \int_{5I_0^{0, k+m}} \\
&& \Bigg | f_2 \left(x-\frac{\iota}{2^{k+m+\epsilon_4 k}}+\frac{\beta}{2^{\frac{m}{2}+2k}}+\frac{\widetilde{\gamma}}{2^{\frac{m}{2}+2k+\epsilon_3 k}}+\frac{2(t+s)}{2^k} \right) f_2 \left(\frac{\beta}{2^{\frac{m}{2}+2k}}+\frac{\widetilde{\gamma}}{2^{\frac{m}{2}+2k+\epsilon_3 k}}+\frac{2(t+s)}{2^k} \right)   \\
&&  \cdot   f_2 \left(x-\frac{\iota}{2^{k+m+\epsilon_4 k}}+\frac{\beta}{2^{\frac{m}{2}+2k}}+\frac{\widetilde{\gamma}}{2^{\frac{m}{2}+2k+\epsilon_3 k}}+\frac{2s}{2^k} \right) f_2 \left(\frac{\beta}{2^{\frac{m}{2}+2k}}+\frac{\widetilde{\gamma}}{2^{\frac{m}{2}+2k+\epsilon_3 k}}+\frac{2s}{2^k} \right) \Bigg |^2 dx dtds
\end{eqnarray*}
and
\begin{eqnarray*}
&& \calD^2:=|I_0^{m, k}|\sum_{\beta \sim 2^{\frac{m}{2}}} \sum_{\widetilde{\gamma} \sim 2^{\epsilon_3 k} } \sum_{\iota \sim 2^{\epsilon_4 k}} \int_{4I_0^{m, k} }  \int_{5I_0^{0, k+m}} \\
&&  \left| \sum_{q \sim 2^{\frac{m}{2}}} \sum_{\gamma' \sim 2^{\frac{m}{2}-k-\epsilon_3k }} \int_{I_0^{0, k+m+\epsilon_4 k}} e^{3i 2^k q \left[ \substack{\widetilde{\lambda}\left(x+y+\frac{\beta}{2^{\frac{m}{2}+2k}}+\frac{\widetilde{\gamma}}{2^{\frac{m}{2}+2k+\epsilon_3 k}}+\frac{\gamma'}{2^{m+k}}-\frac{q^2}{2^{m+2k}}\right) \qquad \qquad \quad     \\
\qquad -\widetilde{\lambda}\left(y+\frac{\iota}{2^{k+m+\epsilon_4 k}}+\frac{\beta}{2^{\frac{m}{2}+2k}}+\frac{\widetilde{\gamma}}{2^{\frac{m}{2}+2k+\epsilon_3 k}}+\frac{\gamma'}{2^{m+k}}-\frac{q^2}{2^{m+2k}}\right)} \right]t} dy \right|^2  dxdt. 
\end{eqnarray*}
To this end, we first estimate $\calC$, which is routine since $f_2$ is uniform. More precisely, from \eqref{uniformest}, we have 
\begin{eqnarray*}
\calC^2%
& \lesssim & 2^{\frac{m}{2}} \cdot 2^{\epsilon_3 k} \cdot 2^{\epsilon_4 k} 2^{-m-2k} \cdot 2^{-m-k} \cdot 2^{8\mu k} \cdot 2^{4k} \left\|f_2 \right\|_{L^2(3I^k)}^8 \\
&=& 2^{-\frac{3m}{2}} \cdot 2^k \cdot 2^{\left(8\mu+\epsilon_3+\epsilon_4 \right)k} \left\|f_2 \right\|_{L^2(3I^k)}^8.
\end{eqnarray*}
Thus, using the above with \eqref{20241008eq43} and \eqref{20241008eq01}, we have 
\begin{eqnarray} \label{20241009eq01}
{\bf V}_{m, k}^{\widetilde{p}, \calL}(f_1, f_2)%
&\lesssim& 2^{\frac{3m}{4}+\frac{3k}{2}+(\del+2\mu)k} \left\|f_1 \right\|_{L^2(3I^k)}^2 \cdot 2^{-\frac{3m}{8}} \cdot 2^{\frac{k}{4}} \cdot 2^{\left(2\mu+\frac{\epsilon_3}{4}+\frac{\epsilon_4}{4}\right)k} \left\|f_2 \right\|_{L^2(3I^k)}^2 \calD^{\frac{1}{2}} \nonumber \\
&\lesssim& 2^{\frac{3m}{8}} \cdot 2^{\frac{7k}{4}} \cdot 2^{\left(4\mu+\del+\frac{\epsilon_3+\epsilon_4}{4} \right)k} \left\|f_1\right\|_{L^2(3I^k)}^2 \left\|f_2\right\|_{L^2(3I^k)}^2 \calD^{\frac{1}{2}}.  
\end{eqnarray}

\medskip 

\noindent\textsf{Step IV: The estimate of $\calD$.} We start by opening the square, Fubini, and integrating the $t$-integral, to deduce 
$$
\calD^2 \lesssim \left|I_0^{m, k} \right|^2 \sum_{\beta \sim 2^{\frac{m}{2}}} \sum_{\substack{\widetilde{\gamma} \sim 2^{\epsilon_3 k} \\ \iota \sim 2^{\epsilon_4 k}}} \sum_{q, q_1 \sim 2^{\frac{m}{2}}} \sum_{\gamma', \gamma_1' \sim 2^{\frac{m}{2}-k-\epsilon_3 k}} \int_{5I_0^{0, k+m}} \iint_{\left(I_0^{0, k+m+\epsilon_4 k} \right)^2} \Xi_{x, y, y_1, \beta, \iota}^{\widetilde{\gamma}, \gamma', \gamma'_1, 1}(q, q_1) dydy_1dx,
$$
where\footnote{Recall our notation $\llbracket x \rrbracket:=\frac{1}{1+|x|}$.} 
\begin{eqnarray*}
&& \Xi_{x, y, y_1, \beta, \iota}^{\widetilde{\gamma}, \gamma', \gamma'_1, 1}(q, q_1):=\vast \llbracket \left[\substack{\widetilde{\lambda}\left(x+y+\frac{\beta}{2^{\frac{m}{2}+2k}}+\frac{\widetilde{\gamma}}{2^{\frac{m}{2}+2k+\epsilon_3 k}}+\frac{\gamma'}{2^{m+k}}-\frac{q^2}{2^{m+2k}}\right) \qquad \qquad \quad     \\
\qquad -\widetilde{\lambda}\left(y+\frac{\iota}{2^{k+m+\epsilon_4 k}}+\frac{\beta}{2^{\frac{m}{2}+2k}}+\frac{\widetilde{\gamma}}{2^{\frac{m}{2}+2k+\epsilon_3 k}}+\frac{\gamma'}{2^{m+k}}-\frac{q^2}{2^{m+2k}}\right)} \right] \\
&& \qquad \qquad \qquad \qquad \qquad \qquad \qquad -\frac{q_1}{q} \left[\substack{\widetilde{\lambda}\left(x+y_1+\frac{\beta}{2^{\frac{m}{2}+2k}}+\frac{\widetilde{\gamma}}{2^{\frac{m}{2}+2k+\epsilon_3 k}}+\frac{\gamma_1'}{2^{m+k}}-\frac{q_1^2}{2^{m+2k}}\right) \qquad \qquad \quad     \\
\qquad -\widetilde{\lambda}\left(y_1+\frac{\iota}{2^{k+m+\epsilon_4 k}}+\frac{\beta}{2^{\frac{m}{2}+2k}}+\frac{\widetilde{\gamma}}{2^{\frac{m}{2}+2k+\epsilon_3 k}}+\frac{\gamma_1'}{2^{m+k}}-\frac{q_1^2}{2^{m+2k}}\right)} \right] \vast \rrbracket.
\end{eqnarray*}
Next,  we will glue the $y$-intervals with smaller length $2^{-m-k-\epsilon_3 k}$ in order to form intervals of length $2^{-\frac{m}{2}-2k}$. In order to do so we make use of $\Xi_{x, y, y_1, \beta, \iota}^{\widetilde{\gamma}, \gamma', \gamma'_1, 1}(q, q_1)>0$. With this we apply the change of variable $x \to x+\frac{\iota}{2^{k+m+\epsilon_4 k}}$ and notice that 
\begin{equation} \label{20241010eq01}
\calD^2 \lesssim \left|I_0^{m, k} \right|^2 \sum_{\beta \sim 2^{\frac{m}{2}}} \sum_{\substack{\widetilde{\gamma} \sim 2^{\epsilon_3 k} \\ \iota \sim 2^{\epsilon_4 k}}} \sum_{q, q_1 \sim 2^{\frac{m}{2}}} \sum_{\gamma', \gamma_1' \sim 2^{\frac{m}{2}-k-\epsilon_3 k}} \int_{10 I_0^{0, k+m}} \iint_{\left(I_0^{0, k+m+\epsilon_4 k} \right)^2} \Xi_{x, y, y_1, \beta, \iota}^{\widetilde{\gamma}, \gamma', \gamma'_1, 2}(q, q_1) dydy_1dx,
\end{equation} 
where 
\begin{eqnarray*}
&& \Xi_{x, y, y_1, \beta, \iota}^{\widetilde{\gamma}, \gamma', \gamma'_1, 2}(q, q_1):=\vast \llbracket \left[\substack{\widetilde{\lambda}\left(x+y+\frac{\iota}{2^{k+m+\epsilon_4 k}}+\frac{\beta}{2^{\frac{m}{2}+2k}}+\frac{\widetilde{\gamma}}{2^{\frac{m}{2}+2k+\epsilon_3 k}}+\frac{\gamma'}{2^{m+k}}-\frac{q^2}{2^{m+2k}}\right) \qquad \\
\qquad -\widetilde{\lambda}\left(y+\frac{\iota}{2^{k+m+\epsilon_4 k}}+\frac{\beta}{2^{\frac{m}{2}+2k}}+\frac{\widetilde{\gamma}}{2^{\frac{m}{2}+2k+\epsilon_3 k}}+\frac{\gamma'}{2^{m+k}}-\frac{q^2}{2^{m+2k}}\right)} \right] \\
&& \qquad \qquad \qquad \qquad \qquad \qquad -\frac{q_1}{q} \left[\substack{\widetilde{\lambda}\left(x+y_1+\frac{\iota}{2^{k+m+\epsilon_4 k}}+\frac{\beta}{2^{\frac{m}{2}+2k}}+\frac{\widetilde{\gamma}}{2^{\frac{m}{2}+2k+\epsilon_3 k}}+\frac{\gamma_1'}{2^{m+k}}-\frac{q_1^2}{2^{m+2k}}\right) \qquad \\
\qquad -\widetilde{\lambda}\left(y_1+\frac{\iota}{2^{k+m+\epsilon_4 k}}+\frac{\beta}{2^{\frac{m}{2}+2k}}+\frac{\widetilde{\gamma}}{2^{\frac{m}{2}+2k+\epsilon_3 k}}+\frac{\gamma_1'}{2^{m+k}}-\frac{q_1^2}{2^{m+2k}}\right)} \right] \vast \rrbracket.
\end{eqnarray*}
Now the summation in \eqref{20241010eq01} can be treated as a diagonal sum with respect to the variables $\widetilde{\gamma}$ and $\iota$. Using again the fact that $\Xi_{x, y, y_1, \beta, \iota}^{\widetilde{\gamma}, \gamma', \gamma'_1, 2}(q, q_1)>0$, we derive that
\begin{equation} \label{20241010eq02}
\calD^2 \lesssim \left|I_0^{m, k} \right|^2 \sum_{\beta \sim 2^{\frac{m}{2}}} \sum_{\substack{\widetilde{\gamma}, \widetilde{\gamma}_1 \sim 2^{\epsilon_3 k} \\ \iota, \iota_1 \sim 2^{\epsilon_4 k}}} \sum_{q, q_1 \sim 2^{\frac{m}{2}}} \sum_{\gamma', \gamma_1' \sim 2^{\frac{m}{2}-k-\epsilon_3 k}} \int_{10 I_0^{0, k+m}} \iint_{\left(I_0^{0, k+m+\epsilon_4 k} \right)^2} \Xi_{x, y, y_1, \beta, \iota, \iota_1}^{\widetilde{\gamma}, \widetilde{\gamma}_1, \gamma', \gamma'_1}(q, q_1) dydy_1dx,
\end{equation} 
where 
\begin{eqnarray*}
&& \Xi_{x, y, y_1, \beta, \iota, \iota_1}^{\widetilde{\gamma}, \widetilde{\gamma}_1, \gamma', \gamma'_1}(q, q_1):=\vast \llbracket \left[\substack{\widetilde{\lambda}\left(x+y+\frac{\iota}{2^{k+m+\epsilon_4 k}}+\frac{\beta}{2^{\frac{m}{2}+2k}}+\frac{\widetilde{\gamma}}{2^{\frac{m}{2}+2k+\epsilon_3 k}}+\frac{\gamma'}{2^{m+k}}-\frac{q^2}{2^{m+2k}}\right) \qquad \\
\qquad -\widetilde{\lambda}\left(y+\frac{\iota}{2^{k+m+\epsilon_4 k}}+\frac{\beta}{2^{\frac{m}{2}+2k}}+\frac{\widetilde{\gamma}}{2^{\frac{m}{2}+2k+\epsilon_3 k}}+\frac{\gamma'}{2^{m+k}}-\frac{q^2}{2^{m+2k}}\right]} \right) \\
&& \qquad \qquad \qquad \qquad \qquad \qquad -\frac{q_1}{q} \left[\substack{\widetilde{\lambda}\left(x+y_1+\frac{\iota_1}{2^{k+m+\epsilon_4 k}}+\frac{\beta}{2^{\frac{m}{2}+2k}}+\frac{\widetilde{\gamma}_1}{2^{\frac{m}{2}+2k+\epsilon_3 k}}+\frac{\gamma_1'}{2^{m+k}}-\frac{q_1^2}{2^{m+2k}}\right) \qquad \\
\qquad -\widetilde{\lambda}\left(y_1+\frac{\iota_1}{2^{k+m+\epsilon_4 k}}+\frac{\beta}{2^{\frac{m}{2}+2k}}+\frac{\widetilde{\gamma}_1}{2^{\frac{m}{2}+2k+\epsilon_3 k}}+\frac{\gamma_1'}{2^{m+k}}-\frac{q_1^2}{2^{m+2k}}\right)} \right] \vast \rrbracket.
\end{eqnarray*}
Summing up the $y, \iota, \gamma'$, and $\widetilde{\gamma}$ variables, as well as the $y_1, \iota_1, \gamma'_1$, and $\widetilde{\gamma}_1$ variables, respectively, we have
\begin{equation} \label{20241010eq20}
\calD^2 \lesssim \left|I_0^{m, k} \right|^2 \sum_{\beta \sim 2^{\frac{m}{2}}} \sum_{q, q_1 \sim 2^{\frac{m}{2}}} \int_{10 I_0^{0, k+m}} \iint_{(I_0^{m, 2k})^2} \Xi_{x, y, y_1,  \beta}(q, q_1) dydy_1dx, 
\end{equation} 
where 
\begin{eqnarray*}
&& \Xi_{x, y, y_1, \beta}(q, q_1):=\Bigg \llbracket  \left[ \widetilde{\lambda}\left(x+y+\frac{\beta}{2^{\frac{m}{2}+2k}}-\frac{q^2}{2^{m+2k}} \right)-\widetilde{\lambda}\left(y+\frac{\beta}{2^{\frac{m}{2}+2k}}-\frac{q^2}{2^{m+2k}} \right) \right] \\
&& \qquad \qquad \qquad \qquad \qquad \qquad-\frac{q_1}{q} \left[ \widetilde{\lambda}\left(x+y_1+\frac{\beta}{2^{\frac{m}{2}+2k}}-\frac{q_1^2}{2^{m+2k}} \right)-\widetilde{\lambda}\left(y_1+\frac{\beta}{2^{\frac{m}{2}+2k}}-\frac{q_1^2}{2^{m+2k}} \right) \right] \Bigg \rrbracket.
\end{eqnarray*} 
Now, in order to handle \eqref{20241010eq20}, we apply the change of variables $\beta \to \beta+\left\lfloor \frac{q^2}{2^{\frac{m}{2}}} \right \rfloor$ and then $y \to y+\frac{\left\{ \frac{q^2}{2^{\frac{m}{2}}} \right\}}{2^{\frac{m}{2}+2k}}$ and $y_1 \to y_1+\frac{\left\{ \frac{q^2}{2^{\frac{m}{2}}} \right\}}{2^{\frac{m}{2}+2k}}$ to deduce that 
\begin{eqnarray} \label{20240110eq21}
&&\calD^2 \lesssim \left|I_0^{m, k} \right|^2 \sum_{\beta \sim 2^{\frac{m}{2}}} \sum_{q, q_1 \sim 2^{\frac{m}{2}}} \int_{10 I_0^{0, m+k}} \iint_{(2I_0^{m, 2k})^2} \bigg \llbracket \left[\widetilde{\lambda} \left(x+y+\frac{\beta}{2^{\frac{m}{2}+2k}} \right)-\widetilde{\lambda} \left(y+\frac{\beta}{2^{\frac{m}{2}+2k}} \right) \right] \nonumber \\
&& \quad -\frac{q_1}{q} \left[ \widetilde{\lambda} \left(x+y_1+\frac{\beta}{2^{\frac{m}{2}+2k}}-\frac{q_1^2-q^2}{2^{m+2k}} \right)- \widetilde{\lambda} \left(y_1+\frac{\beta}{2^{\frac{m}{2}+2k}}-\frac{q_1^2-q^2}{2^{m+2k}} \right) \right] \bigg \rrbracket dydy_1 dx.
\end{eqnarray}
Then, letting $u=\left\lfloor \frac{q_1^2-q^2}{2^{\frac{m}{2}}} \right \rfloor$, $y_1 \to y_1+\frac{\left\{\frac{q_1^2-q^2}{2^{\frac{m}{2}}} \right\}}{2^{\frac{m}{2}+2k}}$, and using that $\frac{q_1}{q}=\sqrt{1+\frac{2^{\frac{m}{2}}u}{q^2}+O \left(\frac{1}{2^{\frac{m}{2}}} \right)}$, we reduce \eqref{20240110eq21} to the following estimate:
\begin{eqnarray} \label{20241011eq01}
&&\calD^2 \lesssim  \left|I_0^{m, k} \right|^2 \sum_{\beta \sim 2^{\frac{m}{2}}} \sum_{q, u \sim 2^{\frac{m}{2}}} \int_{10 I_0^{0, m+k}} \iint_{4I_0^{m , 2k} \times 2I_0^{m, 2k}} \bigg \llbracket \left[\widetilde{\lambda} \left(x+y+\frac{\beta}{2^{\frac{m}{2}+2k}} \right)-\widetilde{\lambda} \left(y+\frac{\beta}{2^{\frac{m}{2}+2k}} \right) \right] \nonumber \\
&& \quad  - \frac{\sqrt{2^{\frac{m}{2}}u+q^2}}{q} \left[ \widetilde{\lambda}\left(x+y_1+\frac{\beta}{2^{\frac{m}{2}+2k}}-\frac{u}{2^{\frac{m}{2}+2k}} \right)-\widetilde{\lambda}\left(y_1+\frac{\beta}{2^{\frac{m}{2}+2k}}-\frac{u}{2^{\frac{m}{2}+2k}} \right) \right] \Bigg \rrbracket dydy_1dx. 
\end{eqnarray}
The treatment of \eqref{20241011eq01} follows now the same reasonings as in \cite[\textsf{Steps IV} and \textsf{V}, Proposition 4.7]{HL23}. For reader's convenience we include below a sketch of the argument: firstly, for notational convenience we set $\widetilde{\lambda}_{\beta}(\cdot)=\widetilde{\lambda} \left(\cdot+\frac{\beta}{2^{\frac{m}{2}+2k}} \right)$ and reduce \eqref{20241011eq01} to
\begin{eqnarray*} 
&&\calD^2 \lesssim  \left|I_0^{m, k} \right|^2 \sum_{\beta \sim 2^{\frac{m}{2}}} \sum_{q, u \sim 2^{\frac{m}{2}}} \int_{10 I_0^{0, m+k}} \iint_{\left(4I_0^{m , 2k} \right)^2} \bigg \llbracket \left[\widetilde{\lambda}_{\beta} \left(x+y\right)-\widetilde{\lambda}_{\beta} (y) \right] \nonumber \\
&& \qquad \qquad \qquad   - \frac{\sqrt{2^{\frac{m}{2}}u+q^2}}{q} \left[ \widetilde{\lambda}_{\beta}\left(x+y_1-\frac{u}{2^{\frac{m}{2}+2k}} \right)-\widetilde{\lambda}_{\beta}\left(y_1-\frac{u}{2^{\frac{m}{2}+2k}} \right) \right] \Bigg \rrbracket dydy_1dx. 
\end{eqnarray*}
Secondly, we further decompose the $y$ and $y_1$ intervals to smaller intervals of length $2^{-k-m}$, which gives
\begin{eqnarray} \label{20241011eq30}
&&\calD^2 \lesssim \left|I_0^{m, k} \right|^2 \sum_{\beta \sim 2^{\frac{m}{2}}}  \sum_{q, u \sim 2^{\frac{m}{2}}} \sum_{\gamma, \gamma_1 \sim 2^{\frac{m}{2}-k}} \int_{10 I_0^{0, m+k}} \iint_{\left(I_0^{0 , m+k} \right)^2} \bigg \llbracket \left[\widetilde{\lambda}_{\beta,\gamma} \left(x+y\right)-\widetilde{\lambda}_{\beta,\gamma} (y) \right] \nonumber \\
&& \qquad \qquad \qquad   - \frac{\sqrt{2^{\frac{m}{2}}u+q^2}}{q} \left[ \widetilde{\lambda}_{\beta,\gamma_1}\left(x+y_1-\frac{u}{2^{\frac{m}{2}+2k}} \right)-\widetilde{\lambda}_{\beta,\gamma_1}\left(y_1-\frac{u}{2^{\frac{m}{2}+2k}} \right) \right] \Bigg \rrbracket dydy_1dx, 
\end{eqnarray}
where $\widetilde{\lambda}_{\beta,\gamma}(\cdot)=\widetilde{\lambda}_{\beta} \left(\cdot+\frac{\gamma}{2^{m+k}} \right)$. Next, we apply the change of variables $y_1 \to y_1+y$, $x \to x-y$ to derive 
\begin{eqnarray} \label{20241011eq10}
&&\calD^2 \lesssim  \left|I_0^{m, k} \right|^2 \sum_{\beta \sim 2^{\frac{m}{2}}} \sum_{q, u \sim 2^{\frac{m}{2}}} \sum_{\gamma, \gamma_1 \sim 2^{\frac{m}{2}-k}} \iint_{\left(20 I_0^{0, m+k}\right)^2} \bigg\{ \int_{2I_0^{0 , m+k}} \bigg \llbracket \left[\widetilde{\lambda}_{\beta,\gamma} \left(x\right)-\widetilde{\lambda}_{\beta,\gamma} (y) \right] \nonumber \\
&& \qquad \qquad  - \frac{\sqrt{2^{\frac{m}{2}}u+q^2}}{q} \left[ \widetilde{\lambda}_{\beta,\gamma_1}\left(x+y_1-\frac{u}{2^{\frac{m}{2}+2k}} \right)-\widetilde{\lambda}_{\beta,\gamma_1}\left(y+y_1-\frac{u}{2^{\frac{m}{2}+2k}} \right) \right] \Bigg \rrbracket dy_1 \bigg\} dydx. 
\end{eqnarray}
Fix momentarily $\beta$, $\gamma$ and recall that $\widetilde{\lambda}_{\beta,\gamma}  (\cdot)$ is \emph{light} on each interval of the length $2^{-m-k}$ due to our assumptions - see \eqref{20241011eq40}). Next, we decompose both $x$ and $y$ intervals in the above estimate as follows
$$
20I_0^{0, k+m}=\bigcup_{l \in \widetilde{\calL}_{\beta,\gamma}} \widetilde{\frakM_{l, \beta, \gamma}} \left(20I_0^{0, k+m} \right),
$$
where 
$$
\widetilde{\frakM_{l,\beta, \gamma}} \left(20I_0^{0, k+m} \right):=\left\{x \in 20I_0^{0, k+m}: \widetilde{\lambda}_{\beta,\gamma}(x) \in \left[l2^{\epsilon_0 k}, (l+1)2^{\epsilon_0 k} \right] \right\},
$$
and 
$$
\widetilde{\calL}_{\beta,\gamma}:=\left\{l \sim 2^{\frac{m}{2}-\epsilon_0 k}: \left| \widetilde{\frakM_{l,\beta, \gamma}} \left(20I_0^{0, k+m} \right) \right| \lesssim \left|I_0^{0, k+m} \right| 2^{-\epsilon_0 k} \right\}.
$$
With these, one can write \eqref{20241011eq10} as 
$$
\calD^2 \lesssim \calD_{\textnormal{diag}}^2+\calD_{\textnormal{off}}^2, 
$$
where 
\begin{eqnarray*}
&&\calD_{\textnormal{diag}}^2:= \left|I_0^{m, k} \right|^2 \sum_{\beta \sim 2^{\frac{m}{2}}} \sum_{q, u \sim 2^{\frac{m}{2}}} \sum_{\gamma, \gamma_1 \sim 2^{\frac{m}{2}-k}} \sum_{\substack{l, l_1 \in \widetilde{\calL}_{\beta,\gamma} \\ |l-l_1| \le 10}} \iint_{\widetilde{\frakM_{l,\beta,\gamma}} \left(20I_0^{0, k+m} \right) \times \widetilde{\frakM_{l_1,\beta,\gamma}} \left(20I_0^{0, k+m} \right)} \nonumber \\
&& \quad \left\{ \int_{2I_0^{0 , m+k}} \bigg \llbracket \left[\widetilde{\lambda}_{\beta,\gamma} \left(x\right)-\widetilde{\lambda}_{\beta,\gamma} (y) \right] - \frac{\sqrt{2^{\frac{m}{2}}u+q^2}}{q} \left[ \widetilde{\lambda}_{\beta,\gamma_1}\left(x+y_1-\frac{u}{2^{\frac{m}{2}+2k}} \right)-\widetilde{\lambda}_{\beta,\gamma_1}\left(y+y_1-\frac{u}{2^{\frac{m}{2}+2k}} \right) \right] \Bigg \rrbracket dy_1 \right\} dydx. 
\end{eqnarray*}
and
\begin{eqnarray*}
&&\quad\calD_{\textnormal{off}}^2:=  \left|I_0^{m, k} \right|^2 \sum_{\beta \sim 2^{\frac{m}{2}}} \sum_{q, u \sim 2^{\frac{m}{2}}} \sum_{\gamma, \gamma_1 \sim 2^{\frac{m}{2}-k}} \sum_{\substack{l, l_1 \in \widetilde{\calL}_{\beta,\gamma} \\ |l-l_1|>10}} \iint_{\widetilde{\frakM_{l,\beta, \gamma}} \left(20I_0^{0, k+m} \right) \times \widetilde{\frakM_{l_1, \beta, \gamma}} \left(20I_0^{0, k+m} \right)} \nonumber \\
&&\left\{ \int_{2I_0^{0 , m+k}} \bigg \llbracket \left[\widetilde{\lambda}_{\beta,\gamma} \left(x\right)-\widetilde{\lambda}_{\beta,\gamma} (y) \right] - \frac{\sqrt{2^{\frac{m}{2}}u+q^2}}{q} \left[ \widetilde{\lambda}_{\beta,\gamma_1}\left(x+y_1-\frac{u}{2^{\frac{m}{2}+2k}} \right)-\widetilde{\lambda}_{\beta,\gamma_1}\left(y+y_1-\frac{u}{2^{\frac{m}{2}+2k}} \right) \right] \Bigg \rrbracket dy_1 \right\} dydx. 
\end{eqnarray*}
The analysis now reduces to the estimates of the terms $\calD_{\textnormal{diag}}^2$ and $\calD_{\textnormal{off}}^2$: the estimate of $\calD_{\textnormal{diag}}^2$ yields a decay of the form $2^{\left(\del+4\mu-\frac{\epsilon_0}{4} \right)k} \left\|f_1 \right\|_{L^2(3I^k)}^2 \left\|f_2 \right\|_{L^2(3I^k)}^2$, while in estimating the second term $\calD_{\textnormal{off}}^2$ the key ingredient is the usage of the curvature within the mapping 
$$
\left[2^{\frac{m}{2}}, 2^{\frac{m}{2}+1} \right] \ni q \mapsto  \frac{\sqrt{2^{\frac{m}{2}}u+q^2}}{q} \in \left [\frac{1}{10}, 10 \right]. 
$$
(see, the argument in \cite[(4.60)-(4.66)]{HL23}). We leave the further details to the interested reader. 
\end{proof}

\subsection{Treatment of the uniform component ${\bf L}_{m, k}^{\calU_\mu}(f_1, f_2)$: Case II, Step II.2 in Overview} \label{20250228sec03}

In the previous section, we have employed the constancy propagation method -- a key idea originating in the work \cite{HL23}; specifically, we were able to extend the length of constancy for the measurable function $\widetilde{\lambda}$ from the original scale $2^{-m-2k}$ to the larger scale $2^{-m-k}$. As a consequence, with the language provided by \eqref{20241011eq32}, we are now left with treating the case when all intervals $I_p^{0, m+k}$ are \emph{heavy}. Thus, up to an admissible $2^{2\epsilon_0 k}$--loss (involving some standard extension arguments) we may assume from now on that 
\begin{equation} \label{20241012eq01}
\textnormal{\texttt{$\widetilde{\lambda} (\cdot)$ takes constant values on intervals $\left\{I_\ell^{0, m+k}\right\}_{\ell \sim 2^m}$.}} \tag{{\bf H2}}
\end{equation}
The result below represents the rigorous embodiment of the first part of \ref{KH2}:

\begin{prop}
Let $k, m \in \N$, with $0 \le k \le \frac{m}{2}$, and 
\begin{equation} \label{20241012eq19}
\Lambda_m^k(f_1, f_2, f_3):=2^k\int_{I^k} \int_{I^k} f_1(x-t)f_2(x+t^2)f_3(x)e^{i 2^{\frac{m}{2}+3k} \widetilde{\lambda}(x) t^3} dtdx, 
\end{equation} 
with $\widetilde{\lambda}$ satisfying the assumption \eqref{20241012eq01}. Then there exists an $\epsilon_5>0$, such that 
\begin{equation} \label{20241012eq20}
\left|\Lambda_m^k(f_1, f_2, f_3) \right| \lesssim 2^{-\epsilon_5 k}  \prod_{j=1}^3 \left\|f_j \right\|_{L^{p_j}(3I^k)},
\end{equation} 
whenever $(p_1, p_2, p_3)=(2, 2, \infty), (2, \infty, 2)$ or $(\infty, 2, 2)$. 
\end{prop}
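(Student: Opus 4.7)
The plan is to use hypothesis \eqref{20241012eq01} to \emph{eliminate} the $x$-dependence of the linearizing phase and thereby reduce $\Lambda_m^k$ to a family of model bilinear forms with a fixed cubic phase, each of which falls within the scope of the Rank I LGC method (see \cite{Lie2024}, \cite{GL25}). Concretely, letting $c_\ell := 2^{\frac{m}{2}+3k}\widetilde{\lambda}_\ell \sim 2^{m+3k}$ denote the constant value of the phase coefficient on $I_\ell^{0,m+k}$, we obtain
$$\Lambda_m^k(f_1,f_2,f_3) = 2^k \sum_{\ell \sim 2^m} \int_{I_\ell^{0,m+k}} f_3(x)\left(\int_{I^k} f_1(x-t) f_2(x+t^2) e^{i c_\ell t^3}\, dt\right) dx,$$
so that on each $\ell$-piece the oscillatory phase no longer couples to $x$.

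Next I would apply Fourier inversion to the inner bilinear expression and isolate the multiplier $\mathfrak{M}_\ell(\xi,\eta) := \int_{I^k} e^{i(-\xi t+\eta t^2+c_\ell t^3)}\,dt$, restricted to the frequency support $\xi\sim 2^{m+k}$, $\eta\sim 2^{m+2k}$. A standard stationary-phase analysis -- using that the two real critical points $t_\pm = (-\eta \pm \sqrt{\eta^2+3\xi c_\ell})/(3 c_\ell)$ lie inside $I^k$ in the diagonal regime -- produces the pointwise estimate $|\mathfrak{M}_\ell(\xi,\eta)| \lesssim c_\ell^{-1/2} \sim 2^{-(m+3k)/2}$ on an effective phase-space box, together with fast decay outside.

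With these pointwise bounds in hand, I would perform a local Gabor (wave-packet) decomposition of $f_1$ and $f_2$ at their natural uncertainty scales $2^{-m-k}$ and $2^{-m-2k}$. The resulting discretized model is a sum over Gabor coefficients weighted by a kernel $K_\ell$ that inherits the stationary-phase decay. The essential feature -- and the precise point at which the present setting differs from the one requiring Rank II LGC -- is that because the constancy scale $2^{-m-k}$ of $\widetilde{\lambda}$ now \emph{matches} the natural uncertainty scale of $f_1$, the kernel $K_\ell$ is absolutely summable in all its indices. One can therefore pass to absolute values, apply Plancherel/Bessel at each Gabor scale, and finally sum over $\ell$ using the almost-orthogonality of $\{I_\ell^{0,m+k}\}_{\ell\sim 2^m}$ together with the assumed $L^{p_j}(3I^k)$ control of the $f_j$'s; this yields a bound of the form $2^{-\delta k} \prod_j \|f_j\|_{L^{p_j}(3I^k)}$ for each of the three admissible Hölder triples.

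The principal obstacle I expect lies in the bookkeeping: while the stationary-phase gain is nominally of order $2^{-(m+3k)/2}$, it must be balanced against the $\sim 2^m$ summation over $\ell$ and the Gabor-cell counts at each scale, and in the present regime $0 \le k \le \frac{m}{2}$ only a decay in $k$ -- not merely in $m$ -- is acceptable. One must therefore track carefully each factor $2^{\epsilon_0 k}$ incurred from the heaviness-extension step preceding \eqref{20241012eq01}, as well as the small losses from the sparse/uniform dichotomy of Subsection \ref{20241216subsec01}, in order to ensure that the net exponent on $2^{-\cdot k}$ remains strictly positive. Once this accounting is performed correctly, the existence of a suitable $\epsilon_5>0$ satisfying \eqref{20241012eq20} follows.
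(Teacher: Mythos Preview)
Your proposal has a genuine gap: it does not identify a mechanism for the required $2^{-\epsilon_5 k}$ decay. Stationary phase on the multiplier $\mathfrak{M}_\ell$ yields a gain of order $2^{-m/2}$ (the second derivative of the phase on $I^k$ is $\sim 2^{m+2k}$), which is decay in $m$, not in $k$. In the regime $0\le k\le m/2$ under consideration, $m$--decay is already available from easier arguments and is \emph{not} what the proposition asserts; the whole point of Step~II.2 is to produce decay specifically in $k$. Your belief that ``careful bookkeeping'' will convert the stationary-phase factor $2^{-(m+3k)/2}$ into a net negative power of $2^k$ after balancing against cell counts is unfounded---no such conversion is possible, since the oscillatory gain carries only the parameter $m$.

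The paper obtains the $k$--decay from a completely different source. First, the Gabor discretization is performed at the \emph{linearizing} scales $2^{-m/2-k}$ for $f_1$ and $2^{-m/2-2k}$ for $f_2$ (not the uncertainty scales $2^{-m-k}$, $2^{-m-2k}$ you propose); this leaves $\sim 2^{m/2}$ frequency indices $n_1,n_2$ per function and, crucially, the finer partition \eqref{20250228eq01} of $I_p^{m,2k}$ into intervals of length $2^{-m-k}$ so that the constancy hypothesis \eqref{20241012eq01} can be invoked on the phase. Maximizing the resulting $t$--integral yields the \emph{time-frequency correlation set} $\TFC_{p,\widetilde p,q}$ and the absolutely summable model \eqref{20241013eq21}. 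At this stage one runs a \emph{second} (time-frequency) sparse--uniform dichotomy: in the uniform case the decay $2^{-\nu k/2}$ drops out immediately from Cauchy--Schwarz; in the sparse case the decay comes from a level-set/counting estimate on the correlation set, $\#\TFC_{\widetilde p}\lesssim 2^{m/2+2k-\epsilon_5' k}$, which is reduced to the structural argument in \cite[Section~5.3.2]{HL23}. This combinatorial step---items (1)--(3) of Remark~\ref{Gen1}---is the actual origin of the $k$--decay, and it is entirely absent from your outline.
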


We divide the proof of \eqref{20241012eq20} into four stages. 

\subsubsection{Discretization} We begin by discretizing the model \eqref{20241012eq19} via the Rank-I LGC methodology:

\medskip 

\noindent \textsf{Step I: Linearization.}
\begin{enumerate}
    \item [$\bullet$] for $f_1(x-t)$, we use discretization in spatial intervals of length $2^{-\frac{m}{2}-k}$;
    \item  [$\bullet$] for $f_2(x+t^2)$, we use discretization in spatial intervals of length $2^{-\frac{m}{2}-2k}$. 
\end{enumerate}

\medskip 

\noindent \textsf{Step II: Gabor frame decomposition.}
Guided by the spatial decomposition in \textsf{Step I}, we write
\begin{equation} \label{20240726eq03}
f_j=\sum_{\substack{p_j \sim 2^{\frac{m}{2}+(j-1)k} \\ n_j  \sim 2^{\frac{m}{2}}}} \left \langle f_j, \phi_{p_j, n_j}^{jk} \right \rangle \phi_{p_j, n_j}^{jk}, \quad j=1, 2, 
\end{equation} 
where $\phi_{r, n}^{jk}(x):=2^{\frac{m}{4}+\frac{jk}{2}} \phi \left(2^{\frac{m}{2}+jk}x-r \right) e^{i2^{\frac{m}{2}+jk}xn}$, with $\phi$ being a smooth cut-off function supported away from the origin. 

Plugging \eqref{20240726eq03} to \eqref{20241012eq19}, we see that
\begin{eqnarray*}
&& \Lambda_m^k(f_1, f_2, f_3)=2^k \sum_{\substack{p \sim 2^{\frac{m}{2}+k} \\ q \sim 2^{\frac{m}{2}}}} \sum_{\substack{p_1 \sim 2^{\frac{m}{2}} \\ n_1 \sim 2^{\frac{m}{2}}}} \sum_{\substack{p_2 \sim 2^{\frac{m}{2}+k} \\ n_2 \sim 2^{\frac{m}{2}}}} \left \langle f_1, \phi_{p_1, n_1}^{k} \right \rangle \left \langle f_2, \phi_{p_2, n_2}^{2k} \right \rangle \\
&& \qquad \qquad \cdot \int_{I_p^{m, 2k}} \int_{I_q^{m, k}} \phi_{p_1, n_1}^k(x-t)\phi_{p_2, n_2}^{2k}(x+t^2) f_3(x) e^{i 2^{\frac{m}{2}+3k} \widetilde{\lambda}(x)t^3}dtdx.
\end{eqnarray*}
Note that for $t \in I_q^{m, k}$, one has $2^{\frac{m}{2}+3k}\widetilde{\lambda}(x) t^3=\frac{3q^2t\widetilde{\lambda}(x)}{2^{\frac{m}{2}-k}}-\frac{2q^3\widetilde{\lambda}(x)}{2^m}+O(1)$. With this, we have
\begin{eqnarray*}
&& \Lambda_m^k(f_1, f_2, f_3)=2^k \sum_{\substack{p \sim 2^{\frac{m}{2}+k} \\ q \sim 2^{\frac{m}{2}}}} \sum_{\substack{p_1 \sim 2^{\frac{m}{2}} \\ n_1 \sim 2^{\frac{m}{2}}}} \sum_{\substack{p_2 \sim 2^{\frac{m}{2}+k} \\ n_2 \sim 2^{\frac{m}{2}}}} \left \langle f_1, \phi_{p_1, n_1}^{k} \right \rangle \left \langle f_2, \phi_{p_2, n_2}^{2k} \right \rangle \\
&& \qquad \quad \cdot \int_{I_p^{m, 2k}} \int_{I_q^{m, k}} \phi_{p_1, n_1}^k(x-t)\phi_{p_2, n_2}^{2k}(x+t^2) f_3(x)  e^{-2i \cdot \frac{q^3 \widetilde{\lambda}(x)}{2^m}}  \cdot e^{3i \cdot \frac{q^2 \widetilde{\lambda}(x)}{2^{\frac{m}{2}-k}} t} dtdx.
\end{eqnarray*}
Since $0 \le k \le \frac{m}{2}$, one has $\left| I_p^{m, 2k} \right|=\frac{1}{2^{\frac{m}{2}+2k}} \ge \frac{1}{2^{m+k}}$.
This yields the following decomposition: 
\begin{eqnarray} \label{20250228eq01}
I_p^{m, 2k}%
&=& \left[\frac{p}{2^{\frac{m}{2}+2k}}, \frac{p+1}{2^{\frac{m}{2}+2k}} \right]=\left[\frac{2^{\frac{m}{2}-k}p}{2^{m+k}},  \frac{2^{\frac{m}{2}-k}(p+1)}{2^{m+k}}\right] \nonumber \\
&=& \bigcup_{\widetilde{p}=0}^{2^{\frac{m}{2}-k}-1} \left[ \frac{2^{\frac{m}{2}-k}p+\widetilde{p}}{2^{m+k}}, \frac{2^{\frac{m}{2}-k}p+\widetilde{p}+1}{2^{m+k}} \right]=\bigcup_{\widetilde{p}=0}^{2^{\frac{m}{2}-k}-1} I_{2^{\frac{m}{2}-k}p+\widetilde{p}}^{0, m+k}. 
\end{eqnarray} 
Furthermore, by denoting $\psi_{r, n}^k(x):=2^{\frac{m+k}{2}} \phi\left(2^{m+k}x-r\right) e^{i2^{m+k}xn}$, we have 
\begin{eqnarray*}
&& \Lambda_m^k(f_1, f_2, f_3)=2^k \sum_{\substack{p \sim 2^{\frac{m}{2}+k} \\ q \sim 2^{\frac{m}{2}}}} \sum_{\substack{p_1 \sim 2^{\frac{m}{2}} \\ n_1 \sim 2^{\frac{m}{2}}}} \sum_{\substack{p_2 \sim 2^{\frac{m}{2}+k} \\ n_2 \sim 2^{\frac{m}{2}}}} \left \langle f_1, \phi_{p_1, n_1}^{k} \right \rangle \left \langle f_2, \phi_{p_2, n_2}^{2k} \right \rangle \\
&& \qquad  \cdot \sum_{\widetilde{p}=0}^{2^{\frac{m}{2}-k}-1} \int_{I_{2^{\frac{m}{2}-k}p+\widetilde{p}}^{0, m+k}} \int_{I_q^{m, k}} \phi_{p_1, n_1}^k(x-t)\phi_{p_2, n_2}^{2k}(x+t^2) f_3(x) e^{-2i \cdot \frac{q^3 \widetilde{\lambda}(x)}{2^m}}  \cdot e^{3i \cdot \frac{q^2 \widetilde{\lambda}(x)}{2^{\frac{m}{2}-k}} t} dtdx.
\end{eqnarray*}
With a slight notational abuse, we now write 
$$
\widetilde{\lambda}: \left[2^m, 2^{m+1} \right] \cap \Z \mapsto \left[2^\frac{m}{2}, 2^{\frac{m}{2}+1} \right] \cap \Z , \qquad \textnormal{with} \quad \widetilde{\lambda}(p'):=\widetilde{\lambda} \left(\frac{p'}{2^{m+k}} \right).
$$
Making now use of \eqref{20241012eq01}, we deduce  
\begin{eqnarray*}
&& \Lambda_m^k(f_1, f_2, f_3) \simeq 2^k \sum_{\substack{p \sim 2^{\frac{m}{2}+k} \\ q \sim 2^{\frac{m}{2}}}} \sum_{\substack{p_1 \sim 2^{\frac{m}{2}} \\ n_1 \sim 2^{\frac{m}{2}}}} \sum_{\substack{p_2 \sim 2^{\frac{m}{2}+k} \\ n_2 \sim 2^{\frac{m}{2}}}} \left \langle f_1, \phi_{p_1, n_1}^{k} \right \rangle \left \langle f_2, \phi_{p_2, n_2}^{2k} \right \rangle  \sum_{\widetilde{p}=0}^{2^{\frac{m}{2}-k}-1} e^{-2i \cdot \frac{q^3 \widetilde{\lambda} \left(2^{\frac{m}{2}-k}p+\widetilde{p} \right)}{2^{m}}}  \\
&&\qquad \qquad \qquad \qquad \cdot \int_{I_{2^{\frac{m}{2}-k}p+\widetilde{p}}^{0, m+k}} \int_{I_q^{m, k}} \phi_{p_1, n_1}^k(x-t)\phi_{p_2, n_2}^{2k}(x+t^2) f_3(x)  e^{3i \cdot \frac{q^2 \widetilde{\lambda}\left(2^{\frac{m}{2}-k}p+\widetilde{p} \right)}{2^{\frac{m}{2}-k}} t} dtdx \\
&&  \quad  \simeq 2^{2k} \sum_{\substack{p \sim 2^{\frac{m}{2}+k} \\ q \sim 2^{\frac{m}{2}}}} \sum_{\substack{p_1 \sim 2^{\frac{m}{2}} \\ n_1 \sim 2^{\frac{m}{2}}}} \sum_{\substack{p_2 \sim 2^{\frac{m}{2}+k} \\ n_2 \sim 2^{\frac{m}{2}}}} \left \langle f_1, \phi_{p_1, n_1}^{k} \right \rangle \left \langle f_2, \phi_{p_2, n_2}^{2k} \right \rangle\sum_{\widetilde{p}=0}^{2^{\frac{m}{2}-k}-1}e^{-2i \cdot \frac{q^3 \widetilde{\lambda} \left(2^{\frac{m}{2}-k}p+\widetilde{p} \right)}{2^{m}}}  \\
&&\qquad \qquad \cdot \int_{I_{2^{\frac{m}{2}-k}p+\widetilde{p}}^{0, m+k}} \int_{I_q^{m, k}} 2^{\frac{m+k}{2}} \phi \left(2^{\frac{m}{2}+k}(x-t)-p_1 \right) \phi \left(2^{\frac{m}{2}+2k} \left(x+t^2 \right)-p_2 \right) \\
&& \qquad \qquad \qquad \qquad  \qquad \qquad   \cdot e^{i 2^{\frac{m}{2}+k}(x-t)n_1} e^{i2^{\frac{m}{2}+2k}(x+t^2)n_2} f_3(x)  e^{3i \cdot \frac{q^2 \widetilde{\lambda} \left(2^{\frac{m}{2}-k}p+\widetilde{p} \right)}{2^{\frac{m}{2}-k}} t} dtdx.
\end{eqnarray*}
Since $x \in I_{2^{\frac{m}{2}-k}p+\widetilde{p}}^{0, m+k} \subseteq I_p^{m, 2k}$ and $t \in I_q^{m, k}$, we have 
\begin{equation} 
p_1 \cong \frac{p}{2^k}-q \quad \textrm{and} \quad p_2 \cong p+\frac{q^2}{2^{\frac{m}{2}}}. \tag{\textnormal{\textbf{Time Correlation}}}
\end{equation} 
Hence, 
\begin{eqnarray*}
&& \Lambda_m^k(f_1, f_2, f_3)  \simeq 2^{2k}  \sum_{\substack{p \sim 2^{\frac{m}{2}+k} \\ q \sim 2^{\frac{m}{2}}}} \sum_{n_1, n_2 \sim 2^{\frac{m}{2}}} \left \langle f_1, \phi_{\frac{p}{2^k}-q, n_1}^{k} \right \rangle \left \langle f_2, \phi_{p+\frac{q^2}{2^{\frac{m}{2}}}, n_2}^{2k} \right \rangle \sum_{\widetilde{p}=0}^{2^{\frac{m}{2}-k}-1}  e^{-2i \cdot \frac{q^3 \widetilde{\lambda} \left(2^{\frac{m}{2}-k}p+\widetilde{p} \right)}{2^{m}}}  \\
&& \qquad \quad \cdot \int_{I_{2^{\frac{m}{2}-k}p+\widetilde{p}}^{0, m+k}} 2^{\frac{m+k}{2}}  e^{i\left(2^{\frac{m}{2}+k}n_1+2^{\frac{m}{2}+2k} n_2 \right)x} f_3(x) \cdot \left[ \int_{I_q^{m, k}} e^{it \left(-2^{\frac{m}{2}+k}n_1+ \frac{3q^2 \widetilde{\lambda} \left(2^{\frac{m}{2}-k}p+\widetilde{p} \right)}{2^{\frac{m}{2}-k}} \right)+i2^{\frac{m}{2}+2k}n_2t^2} dt \right] dx
\end{eqnarray*}
Note that for $t \in I_q^{m, k}$, $2^{\frac{m}{2}+2k}n_2t^2 \simeq 2^{\frac{m}{2}+2k} n_2 \left(\frac{2tq}{2^{\frac{m}{2}+k}}-\frac{q^2}{2^{m+2k}} \right)+O(1)$, and therefore,
\begin{eqnarray*}
&& \Lambda_m^k(f_1, f_2, f_3) \simeq 2^{2k}  \sum_{\substack{p \sim 2^{\frac{m}{2}+k} \\ q \sim 2^{\frac{m}{2}}}} \sum_{n_1, n_2 \sim 2^{\frac{m}{2}}} \left \langle f_1, \phi_{\frac{p}{2^k}-q, n_1}^{k} \right \rangle \left \langle f_2, \phi_{p+\frac{q^2}{2^{\frac{m}{2}}}, n_2}^{2k} \right \rangle \sum_{\widetilde{p}=0}^{2^{\frac{m}{2}-k}-1}  e^{-2i \cdot \frac{q^3 \widetilde{\lambda} \left(2^{\frac{m}{2}-k}p+\widetilde{p} \right)}{2^{m}}} \\
&& \quad \cdot e^{-i2^{\frac{m}{2}}n_2 \cdot \frac{q^2}{2^m}} \int_{I_{2^{\frac{m}{2}-k}p+\widetilde{p}}^{0, m+k}} 2^{\frac{m+k}{2}} e^{i 2^{m+k} \left(\frac{n_1}{2^{\frac{m}{2}}}+\frac{2^kn_2}{2^{\frac{m}{2}}} \right)} f_3(x) \cdot \left[\int_{I_q^{m, k}} e^{it \left(-2^{\frac{m}{2}+k}n_1+2^{\frac{m}{2}+k} \cdot \frac{2q}{2^{\frac{m}{2}}}n_2+ \frac{3q^2 \widetilde{\lambda} \left(2^{\frac{m}{2}-k}p+\widetilde{p} \right)}{2^{\frac{m}{2}-k}}  \right)} dt \right] dx \\
&& \quad \simeq 2^{2k}  \sum_{\substack{p \sim 2^{\frac{m}{2}+k} \\ q \sim 2^{\frac{m}{2}}}} \sum_{n_1, n_2 \sim 2^{\frac{m}{2}}} \left \langle f_1, \phi_{\frac{p}{2^k}-q, n_1}^{k} \right \rangle \left \langle f_2, \phi_{p+\frac{q^2}{2^{\frac{m}{2}}}, n_2}^{2k} \right \rangle \cdot \sum_{\widetilde{p}=0}^{2^{\frac{m}{2}-k}-1}  e^{-2i \cdot \frac{q^3 \widetilde{\lambda} \left(2^{\frac{m}{2}-k}p+\widetilde{p} \right)}{2^{m}}} e^{-i2^{\frac{m}{2}}n_2 \cdot \frac{q^2}{2^m}} \\
&& \qquad \qquad \cdot \left\langle f_3, \psi_{2^{\frac{m}{2}-k}p+\widetilde{p}, \frac{n_1}{2^{\frac{m}{2}}}+\frac{2^kn_2}{2^{\frac{m}{2}}}}^k \right \rangle  \int_{I_q^{m, k}} e^{it \left(-2^{\frac{m}{2}+k}n_1+2^{\frac{m}{2}+k} \cdot \frac{2q}{2^{\frac{m}{2}}}n_2+ \frac{3q^2 \widetilde{\lambda} \left(2^{\frac{m}{2}-k}p+\widetilde{p} \right)}{2^{\frac{m}{2}-k}}  \right)} dt.
\end{eqnarray*}
Observe that the above equation gives the following:
\begin{enumerate}
    \item \underline{frequency information of $f_3$}: this follows from the Gabor frame corresponding to $f_3$, which implies
    \begin{equation}
        n_3 \cong \frac{n_1}{2^{\frac{m}{2}}}+\frac{2^k n_2}{2^{\frac{m}{2}}} \cong  \frac{2^k n_2}{2^{\frac{m}{2}}}  \tag{{\bf Frequency Correlation}}
    \end{equation}
    \item \underline{time-frequency correlation between $p, q, n_1$ and $n_2$}: this follows from maximizing the $t$-integral. More precisely, we have 
    \begin{eqnarray*}
    && \int_{I_q^{m, k}} e^{it \left(-2^{\frac{m}{2}+k}n_1+2^{\frac{m}{2}+k} \cdot \frac{2q}{2^{\frac{m}{2}}}n_2+\frac{3q^2 \widetilde{\lambda} \left(2^{\frac{m}{2}-k}p+\widetilde{p} \right)}{2^{\frac{m}{2}-k}} \right)} dt \\
    && \quad =\int_{I_0^{m, k}} e^{i \left(t+\frac{q}{2^{\frac{m}{2}+k}} \right) \left(-2^{\frac{m}{2}+k}n_1+2^{\frac{m}{2}+k} \cdot \frac{2q}{2^{\frac{m}{2}}}n_2+\frac{3q^2 \widetilde{\lambda} \left(2^{\frac{m}{2}-k}p+\widetilde{p} \right)}{2^{\frac{m}{2}-k}} \right)} dt \\
    && \quad  =C_{p, \widetilde{p}, q, n_1, n_2} \cdot \int_{I_0^{m, k}} e^{i t \left(-2^{\frac{m}{2}+k}n_1+2^{\frac{m}{2}+k} \cdot \frac{2q}{2^{\frac{m}{2}}}n_2+\frac{3q^2 \widetilde{\lambda} \left(2^{\frac{m}{2}-k}p+\widetilde{p} \right)}{2^{\frac{m}{2}-k}} \right)} dt \\
    && \quad =\frac{C_{p, \widetilde{p}, q, n_1, n_2}}{2^{\frac{m}{2}+k}} \int_0^1 e^{it \left(-n_1+\frac{2qn_2}{2^{\frac{m}{2}}}+\frac{3q^2 \widetilde{\lambda} \left(2^{\frac{m}{2}-k}p+\widetilde{p} \right)}{2^m} \right)}dt,
    \end{eqnarray*}
    where $C_{p, \widetilde{p}, q, n_1, n_2}:=e^{i\frac{q}{2^{\frac{m}{2}+k}} \left(-2^{\frac{m}{2}+k}n_1+2^{\frac{m}{2}+k} \cdot \frac{2q}{2^{\frac{m}{2}}}n_2+\frac{3q^2 \widetilde{\lambda} \left(2^{\frac{m}{2}-k}p+\widetilde{p} \right)}{2^{\frac{m}{2}-k}} \right) }$. As a consequence, we see that, for fixed $p \sim 2^{\frac{m}{2}+k}$, $\widetilde{p} \sim 2^{\frac{m}{2}-k}$,  and $q \sim 2^{\frac{m}{2}}$, the main contribution of the $t$-integral arises when the pair $(n_1, n_2)$ belongs to the \emph{time-frequency correlation set}
    \begin{equation}
        {\textnormal{\textbf{TFC}}}_{p, \widetilde{p}, q}:=\left\{\substack{(n_1, n_2) \\ n_1, n_2 \sim 2^{\frac{m}{2}}}: n_1-\frac{2qn_2}{2^{\frac{m}{2}}}-\frac{3q^2 \widetilde{\lambda} \left(2^{\frac{m}{2}-k}p+\widetilde{p} \right)}{2^m} \cong 0 \right\}. \tag{\bf{Time Frequency Correlation set}}
    \end{equation}
\end{enumerate}
Utilizing all the time-frequency correlations, we see that
\begin{eqnarray*}
&& \Lambda_m^k(f_1, f_2, f_3) \simeq  2^{k-\frac{m}{2}}  \sum_{\substack{p \sim 2^{\frac{m}{2}+k} \\ q \sim 2^{\frac{m}{2}}}} \sum_{\widetilde{p}=0}^{2^{\frac{m}{2}-k}-1} \sum_{(n_1, n_2) \in {\textnormal{\textbf{TFC}}}_{p, \widetilde{p}, q} }  C_{\phi, p, \widetilde{p}, q, n_1, n_2} \nonumber \\
&& \qquad \qquad \qquad  \cdot \left \langle f_1, \phi_{\frac{p}{2^k}-q, n_1}^{k} \right \rangle \left \langle f_2, \phi_{p+\frac{q^2}{2^{\frac{m}{2}}}, n_2}^{2k} \right \rangle \left\langle f_3, \psi_{2^{\frac{m}{2}-k}p+\widetilde{p}, \frac{2^kn_2}{2^{\frac{m}{2}}}}^k \right \rangle , 
\end{eqnarray*}
where $C_{\phi, p, q, n_1, n_2}= e^{-2i \cdot \frac{q^3 \widetilde{\lambda} \left(2^{\frac{m}{2}-k}p+\widetilde{p} \right)}{2^m}}\cdot e^{i2^{\frac{m}{2}}n_2 \cdot \frac{q^2}{2^{m}}} \cdot C_{p, \widetilde{p}, q, n_1, n_2}$. 

This way we obtain the \emph{wave packet discretized (absolute summable) model} of the form $\Lambda_m^k(f_1, f_2, f_3)$ as 
\begin{eqnarray}  \label{20241013eq21}
&& \left(\Lambda_m^k \right)^*(f_1, f_2, f_3):=2^{k-\frac{m}{2}}  \sum_{\substack{p \sim 2^{\frac{m}{2}+k} \\ q \sim 2^{\frac{m}{2}}}} \sum_{\widetilde{p} \sim 2^{\frac{m}{2}-k}} \sum_{n_2 \sim 2^{\frac{m}{2}}}  \left| \left \langle f_1, \phi_{\frac{p}{2^k}-q, \frac{2qn_2}{2^{\frac{m}{2}}}+\frac{3q^2 \widetilde{\lambda} \left(2^{\frac{m}{2}-k}p+\widetilde{p} \right)}{2^m}}^{k} \right \rangle \right| \nonumber \\
&& \qquad \qquad \qquad \qquad \qquad \qquad  \cdot \left| \left \langle f_2, \phi_{p+\frac{q^2}{2^{\frac{m}{2}}}, n_2}^{2k} \right \rangle \right| \left| \left\langle f_3, \psi_{2^{\frac{m}{2}-k}p+\widetilde{p}, \frac{2^kn_2}{2^{\frac{m}{2}}}}^k \right \rangle \right|.
\end{eqnarray}

\subsubsection{A time-frequency sparse-uniform dichotomy.} 
We move now our focus on the term \eqref{20241013eq21}. First, we decompose the regime of $n_2$ into intervals of length $2^{\frac{m}{2}-k}$:
$$
\left\{n_2 \sim 2^{\frac{m}{2}} \right\}=\bigcup_{\widetilde{n_2} \sim 2^k} \left[\widetilde{n_2}2^{\frac{m}{2}-k}, \left(\widetilde{n_2}+1 \right)2^{\frac{m}{2}-k} \right], 
$$
which yields 
\begin{eqnarray} \label{20241013eq50}
&& \left(\Lambda_m^k \right)^*(f_1, f_2, f_3)=2^{k-\frac{m}{2}}  \sum_{\substack{p \sim 2^{\frac{m}{2}+k} \\ q \sim 2^{\frac{m}{2}}}} \sum_{\widetilde{p} \sim 2^{\frac{m}{2}-k}} \sum_{\substack{\widetilde{n_2} \sim 2^k \\ r \sim 2^{\frac{m}{2}-k}}}  \left| \left \langle f_1, \phi_{\frac{p}{2^k}-q, \frac{2q\left(2^{\frac{m}{2}-k}\widetilde{n_2}+r \right)}{2^{\frac{m}{2}}}+\frac{3q^2 \widetilde{\lambda} \left(2^{\frac{m}{2}-k}p+\widetilde{p} \right)}{2^m}}^{k} \right \rangle \right| \nonumber \\
&& \qquad \qquad \qquad \qquad \qquad \qquad  \cdot \left| \left \langle f_2, \phi_{p+\frac{q^2}{2^{\frac{m}{2}}}, 2^{\frac{m}{2}-k}\widetilde{n_2}+r}^{2k} \right \rangle \right| \left| \left\langle f_3, \psi_{2^{\frac{m}{2}-k}p+\widetilde{p}, \widetilde{n_2}}^k \right \rangle \right| \nonumber  \\
&& \quad \lesssim 2^{k-\frac{m}{2}}  \sum_{\substack{p \sim 2^{\frac{m}{2}+k} \\ q \sim 2^{\frac{m}{2}}}} \sum_{\substack{\widetilde{p} \sim 2^{\frac{m}{2}-k} \\ \widetilde{n_2} \sim 2^k}} \left| \left\langle f_3, \psi_{2^{\frac{m}{2}-k}p+\widetilde{p}, \widetilde{n_2}}^k \right \rangle \right| \cdot \left(\sum_{r \sim 2^{\frac{m}{2}-k}} \left| \left \langle f_2, \phi_{p+\frac{q^2}{2^{\frac{m}{2}}}, 2^{\frac{m}{2}-k}\widetilde{n_2}+r}^{2k} \right \rangle \right|^2 \right)^{\frac{1}{2}} \nonumber \\
&& \qquad \qquad \qquad \qquad \qquad \qquad  \cdot \left( \sum_{r \sim 2^{\frac{m}{2}-k}}  \left| \left \langle f_1, \phi_{\frac{p}{2^k}-q, \frac{2q\left(2^{\frac{m}{2}-k}\widetilde{n_2}+r \right)}{2^{\frac{m}{2}}}+\frac{3q^2 \widetilde{\lambda} \left(2^{\frac{m}{2}-k}p+\widetilde{p} \right)}{2^m}}^{k} \right \rangle \right|^2 \right)^{\frac{1}{2}}. 
\end{eqnarray}
For $L_1, L_2 \sim 2^k$, denote 
$$
\Omega_1^{L_1}:=\left[L_1 2^m, (L_1+1)2^m \right] \quad \textrm{and} \quad \Omega_2^{L_2}:=\left[L_2 2^{m+k}, (L_2+1)2^{m+k} \right], 
$$
and also let $f^{\Omega_j^{L_j}}$ be the frequency projection onto $\Omega_j^{L_j}$:
$$
f^{\Omega_j^{L_j}}(x):=\int_\R \widehat{f}(\xi) \chi \left( \frac{\xi-L_j2^{m+(j-1)k}}{|\Omega_j^{L_j}|} \right)e^{i\xi x}d\xi, \quad j=1, 2,
$$
for $\chi$ being a smooth cut-off with $\supp \chi \subset \left[\frac{1}{2}, 2 \right]$. 

Applying Parseval, we further estimate \eqref{20241013eq50} by 
\begin{eqnarray} \label{20241014eq01}
&& \left(\Lambda_m^k \right)^*(f_1, f_2, f_3) \lesssim 2^{k-\frac{m}{2}}  \sum_{\substack{p \sim 2^{\frac{m}{2}+k} \\ q \sim 2^{\frac{m}{2}}}} \sum_{\substack{\widetilde{p} \sim 2^{\frac{m}{2}-k} \\ \widetilde{n_2} \sim 2^k}} \left| \left\langle f_3, \psi_{2^{\frac{m}{2}-k}p+\widetilde{p}, \widetilde{n_2}}^k \right \rangle \right| \left\|f_2^{\Omega_2^{\widetilde{n_2}}} \right\|_{L^2 \left(I^{m, 2k}_{p+\frac{q^2}{2^{\frac{m}{2}}}} \right)} \nonumber \\
&& \qquad \qquad \qquad \qquad \qquad \qquad \qquad  \cdot \left\| f_1^{\Omega_1^{\frac{2q\widetilde{n_2}}{2^{\frac{m}{2}}}+\frac{3q^2}{2^m} \cdot \frac{\widetilde{\lambda} \left(2^{\frac{m}{2}-k}p+\widetilde{p} \right)}{2^{\frac{m}{2}-k}}}} \right\|_{L^2 \left(I^{m, k}_{\frac{p}{2^k}-q} \right)}.
\end{eqnarray}
Next, we decompose the regime of $q$ into intervals of length $2^{\frac{m}{2}-k}$
$$
\left\{q \sim 2^{\frac{m}{2}} \right\}=\bigcup_{\widetilde{q} \sim 2^k} \left[ \widetilde{q} 2^{\frac{m}{2}-k}, \left(\widetilde{q}+1 \right)2^{\frac{m}{2}-k} \right], 
$$
and notice that via Cauchy-Schwarz, \eqref{20241014eq01} becomes 
\begin{eqnarray} \label{20241014eq56}
&& \left(\Lambda_m^k \right)^*(f_1, f_2, f_3) \lesssim 2^{k-\frac{m}{2}}  \sum_{\substack{p \sim 2^{\frac{m}{2}+k} \\ \widetilde{q} \sim 2^k}} \sum_{\substack{\widetilde{p} \sim 2^{\frac{m}{2}-k} \\ \widetilde{n_2} \sim 2^k}} \sum_{s \sim 2^{\frac{m}{2}-k}}  \left| \left\langle f_3, \psi_{2^{\frac{m}{2}-k}p+\widetilde{p}, \widetilde{n_2}}^k \right \rangle \right| \left\|f_2^{\Omega_2^{\widetilde{n_2}}} \right\|_{L^2 \left(I^{m, 2k}_{p+\frac{2^{\frac{m}{2}}\widetilde{q}^2}{2^{2k}}+\frac{2\widetilde{q}s}{2^{k}}+\frac{s^2}{2^{\frac{m}{2}}}} \right)} \nonumber \\
&& \qquad \qquad \qquad \qquad \qquad \qquad \qquad  \cdot \left\| f_1^{\Omega_1^{\frac{2\widetilde{q}}{2^k}\widetilde{n_2}+\frac{3\widetilde{q}^2}{2^{2k}} \cdot \frac{\widetilde{\lambda} \left(2^{\frac{m}{2}-k}p+\widetilde{p} \right)}{2^{\frac{m}{2}-k}}}} \right\|_{L^2 \left(I^{m, k}_{\frac{p}{2^k}-2^{\frac{m}{2}-k}\widetilde{q}-s} \right)} \nonumber \\
&& \lesssim 2^{k-\frac{m}{2}}  \sum_{\substack{p \sim 2^{\frac{m}{2}+k} \\ \widetilde{p} \sim 2^{\frac{m}{2}-k}}} \sum_{\substack{\widetilde{q} \sim 2^k \\ \widetilde{n_2} \sim 2^k}}  \left| \left\langle f_3, \psi_{2^{\frac{m}{2}-k}p+\widetilde{p}, \widetilde{n_2}}^k \right \rangle \right|  \left( \sum_{s \sim 2^{\frac{m}{2}-k}} \left\|f_2^{\Omega_2^{\widetilde{n_2}}} \right\|_{L^2 \left(I^{m, 2k}_{p+\frac{2^{\frac{m}{2}}\widetilde{q}^2}{2^{2k}}+\frac{2\widetilde{q}s}{2^{k}}+\frac{s^2}{2^{\frac{m}{2}}}} \right)}^2 \right)^{\frac{1}{2}} \nonumber \\
&& \qquad \qquad \qquad \qquad \qquad \qquad \qquad  \cdot \left( \sum_{s \sim 2^{\frac{m}{2}-k}} \left\| f_1^{\Omega_1^{\frac{2\widetilde{q}}{2^k}\widetilde{n_2}+\frac{3\widetilde{q}^2}{2^{2k}} \cdot \frac{\widetilde{\lambda} \left(2^{\frac{m}{2}-k}p+\widetilde{p} \right)}{2^{\frac{m}{2}-k}}}} \right\|_{L^2 \left(I^{m, k}_{\frac{p}{2^k}-2^{\frac{m}{2}-k}\widetilde{q}-s} \right)}^2 \right)^{\frac{1}{2}} \nonumber \\
&& \lesssim  2^{k-\frac{m}{2}}   \sum_{\substack{p \sim 2^{\frac{m}{2}+k} \\ \widetilde{p} \sim 2^{\frac{m}{2}-k}}} \sum_{\substack{\widetilde{q} \sim 2^k \\ \widetilde{n_2} \sim 2^k}}  \left\|f_3^{\Omega_2^{\widetilde{n_2}}} \right\|_{L^2 \left(I^{0, m+k}_{2^{\frac{m}{2}-k}p+\widetilde{p}} \right)} \left\|f_2^{\Omega_2^{\widetilde{n_2}}} \right\|_{L^2 \left(I_{\frac{p}{2^{\frac{m}{2}-k}}+\frac{\widetilde{q}^2}{2^k}}^{0, 3k} \right)} \left\| f_1^{\Omega_1^{\frac{2\widetilde{q}}{2^k}\widetilde{n_2}+\frac{3\widetilde{q}^2}{2^{2k}} \cdot \frac{\widetilde{\lambda} \left(2^{\frac{m}{2}-k}p+\widetilde{p} \right)}{2^{\frac{m}{2}-k}}}} \right\|_{L^2 \left(I^{0, 2k}_{\frac{p}{2^{\frac{m}{2}}}-\widetilde{q}} \right)}.
\end{eqnarray}
Guided by \eqref{20241014eq56}, we define the \emph{time-frequency sparse sets of indices} by
$$
\calS_\nu(f_j):=\left\{ \substack{(Q_j, L_j) \\ Q_j \sim 2^k, L_j \sim 2^k}: \int_{I_{Q_j}^{0, (j+1)k}} \left|f_j^{\Omega_j^{L_j}} \right|^2 \gtrsim 2^{-\nu k} \int_{I_{Q_j}^{0, (j+1)k}}|f_j|^2 \right\}, \quad j\in\{1, 2\}\,,
$$
and
$$
\calS_\nu(f_3):=\left\{ \substack{(Q_3, L_3) \\ Q_3 \sim 2^m, L_3 \sim 2^k}: \int_{I_{Q_3}^{0, m+k}} \left|f_3^{\Omega_2^{L_3}} \right|^2 \gtrsim 2^{-\nu k} \int_{I_{Q_3}^{0, m+k}}|f_3|^2 \right\}\,,
$$
where here $\nu>0$ is a small parameter to be chosen later.

We also define the \emph{time-frequency uniform sets of indices} $\calU_\mu(f_1), \calU_\mu(f_2)$ and $\calU_\mu(f_3)$ as the complements of the corresponding sparse sets. 

Now, we decompose our functions into sparse and uniform components as follows:
\begin{enumerate}
    \item [$\bullet$] for $f_j$ with $j=1, 2$, the \emph{uniform} and \emph{sparse} component are defined as
    $$
    f_j^{\calU_\nu}:=\sum_{(Q_j, L_j) \in \calU_\nu(f_j)} \one_{I_{Q_j}^{0, (j+1)k}} f_j^{\Omega_j^{L_j}} \quad \textrm{and} \quad f_j^{\calS_\nu}:=\sum_{(Q_j, L_j) \in \calS_\nu(f_j)} \one_{I_{Q_j}^{0, (j+1)k}} f_j^{\Omega_j^{L_j}} \quad \textrm{respectively}; 
    $$
    \item [$\bullet$] for $f_3$, the \emph{uniform} and \emph{sparse} component are defined as
    \begin{equation} \label{20241015eq01}
    f_3^{\calU_\nu}:=\sum_{(Q_3, L_3) \in \calU_\nu(f_3)} \one_{I_{Q_3}^{0, m+k}} f_3^{\Omega_2^{L_3}} \quad \textrm{and} \quad f_3^{\calS_\nu}:=\sum_{(Q_3, L_3) \in \calS_\nu(f_3)} \one_{I_{Q_3}^{0, m+k}} f_3^{\Omega_2^{L_3}} \quad \textrm{respectively}.
    \end{equation} 
    \end{enumerate}
With these done, we decompose 
$$
\left(\Lambda_m^k \right)^*(f_1, f_2, f_3):=\left(\Lambda_m^k \right)^{*}_{\calU_\nu}(f_1, f_2, f_3)+\left(\Lambda_m^k \right)^{*}_{\calS_\nu}(f_1, f_2, f_3),
$$
with $\left(\Lambda_m^k \right)^{*}_{\calS_\nu}(f_1, f_2, f_3):=\left(\Lambda_m^k \right)^{*}\left(f^{\calS_\nu}_1, f^{\calS_\nu}_2, f^{\calS_\nu}_3 \right)$. 

\subsubsection{Treatment of $\left(\Lambda_m^k \right)^{*}_{\calU_\nu}(f_1, f_2, f_3)$.} \label{20241126subsubsec01} In this case, at least one of the input functions is uniform. Without loss of generality, we assume that $f_3$ is uniform. All the other cases can be treated in a similar way, and hence we will omit their proofs here. 

To begin with, from \eqref{20241014eq56}, we have 
\begin{eqnarray*}
&& \left(\Lambda_m^k \right)^* \left(f_1, f_3, f_3^{\calU_\nu} \right) \lesssim 2^{k-\frac{m}{2}}   \sum_{\substack{p \sim 2^{\frac{m}{2}+k} \\ \widetilde{p} \sim 2^{\frac{m}{2}-k}}} \sum_{\substack{\widetilde{q} \sim 2^k \\ \widetilde{n_2} \sim 2^k}}  \left\|\left(f^{\calU_\nu}_3 \right)^{\Omega_2^{\widetilde{n_2}}} \right\|_{L^2 \left(I^{0, m+k}_{2^{\frac{m}{2}-k}p+\widetilde{p}} \right)} \\
&& \qquad \qquad \qquad \qquad \qquad \qquad \qquad  
 \cdot \left\|f_2^{\Omega_2^{\widetilde{n_2}}} \right\|_{L^2 \left(I_{\frac{p}{2^{\frac{m}{2}-k}}+\frac{\widetilde{q}^2}{2^k}}^{0, 3k} \right)} \left\| f_1^{\Omega_1^{\frac{2\widetilde{q}}{2^k}\widetilde{n_2}+\frac{3\widetilde{q}^2}{2^{2k}} \cdot \frac{\widetilde{\lambda} \left(2^{\frac{m}{2}-k}p+\widetilde{p} \right)}{2^{\frac{m}{2}-k}}}} \right\|_{L^2 \left(I^{0, 2k}_{\frac{p}{2^{\frac{m}{2}}}-\widetilde{q}} \right)}.
 \end{eqnarray*}
Using the assumption that $f_3$ is uniform (in the sense of \eqref{20241015eq01}), the above expression is bounded above by
\begin{eqnarray*}
&& 2^{k-\frac{m}{2}} \cdot 2^{-\frac{\nu k}{2}}  \sum_{\substack{p \sim 2^{\frac{m}{2}+k} \\ \widetilde{p} \sim 2^{\frac{m}{2}-k}}} \sum_{\substack{\widetilde{q} \sim 2^k \\ \widetilde{n_2} \sim 2^k}}  \left\|f_3 \right\|_{L^2 \left(I^{0, m+k}_{2^{\frac{m}{2}-k}p+\widetilde{p}} \right)} \\
&& \qquad \qquad \qquad \qquad \qquad \qquad \qquad  
 \cdot \left\|f_2^{\Omega_2^{\widetilde{n_2}}} \right\|_{L^2 \left(I_{\frac{p}{2^{\frac{m}{2}-k}}+\frac{\widetilde{q}^2}{2^k}}^{0, 3k} \right)} \left\| f_1^{\Omega_1^{\frac{2\widetilde{q}}{2^k}\widetilde{n_2}+\frac{3\widetilde{q}^2}{2^{2k}} \cdot \frac{\widetilde{\lambda} \left(2^{\frac{m}{2}-k}p+\widetilde{p} \right)}{2^{\frac{m}{2}-k}}}} \right\|_{L^2 \left(I^{0, 2k}_{\frac{p}{2^{\frac{m}{2}}}-\widetilde{q}} \right)}.
 \end{eqnarray*}
 Applying Cauchy-Schwarz in the following order: first in $n_2$, then in $\widetilde{q}$, then in $\tilde{p}$ and finally in $p$, we see that the above expression is bounded above by 
 \begin{eqnarray*}
 && 2^{k-\frac{m}{2}} \cdot 2^{-\frac{\nu k}{2}}  \sum_{\tilde{p} \sim 2^{\frac{m}{2}-k}} \sum_{\substack{p \sim 2^{\frac{m}{2}+k} \\ \widetilde{q} \sim 2^k}} \left\|f_3 \right\|_{L^2 \left(I^{0, m+k}_{2^{\frac{m}{2}-k}p+\widetilde{p}} \right)} \left\|f_2 \right\|_{L^2 \left(I_{\frac{p}{2^{\frac{m}{2}-k}}+\frac{\widetilde{q}^2}{2^k}}^{0, 3k} \right)} \left\| f_1 \right\|_{L^2 \left(I^{0, 2k}_{\frac{p}{2^{\frac{m}{2}}}-\widetilde{q}} \right)} \\
 && \lesssim 2^{k-\frac{m}{2}} \cdot 2^{-\frac{\nu k}{2}}  \sum_{\tilde{p} \sim 2^{\frac{m}{2}-k}} \sum_{p \sim 2^{\frac{m}{2}+k}} \left\|f_3 \right\|_{L^2 \left(I^{0, m+k}_{2^{\frac{m}{2}-k}p+\widetilde{p}} \right)}\left( \sum_{\widetilde{q} \sim 2^k}\left\|f_2 \right\|^2_{L^2 \left(I_{\frac{p}{2^{\frac{m}{2}-k}}+\frac{\widetilde{q}^2}{2^k}}^{0, 3k} \right)} \right)^{\frac{1}{2}} \left( \sum_{\widetilde{q} \sim 2^k} \left\| f_1 \right\|^2_{L^2 \left(I^{0, 2k}_{\frac{p}{2^{\frac{m}{2}}}-\widetilde{q}} \right)} \right)^{\frac{1}{2}} \\
 && \lesssim 2^{\frac{k}{2}-\frac{m}{4}} \cdot 2^{-\frac{\nu k}{2}} \left\|f_1 \right\|_{L^2(3I^k)} \sum_{p \sim 2^{\frac{m}{2}+k}} \left\|f_2 \right\|_{L^2 \left(I^{0, 2k}_{\frac{p}{2^{\frac{m}{2}}}} \right)} \left\|f_3 \right\|_{L^2 \left(I^{0, \frac{m}{2}+2k}_{p} \right)} \\
 && \lesssim 2^{\frac{k}{2}} \cdot  2^{-\frac{\nu k}{2}} \left\|f_1 \right\|_{L^2(3I^k)} \sum_{p \sim 2^{k}} \left\|f_2 \right\|_{L^2 \left(I^{0, 2k}_{p} \right)} \left\|f_3 \right\|_{L^2 \left(I^{0, 2k}_{p} \right)}\\
 && \lesssim 2^{\frac{k}{2}} \cdot  2^{-\frac{\nu k}{2}} \left\|f_1 \right\|_{L^2(3I^k)}\left\|f_2 \right\|_{L^2(3I^k)}\left\|f_3 \right\|_{L^2(3I^k)}, 
\end{eqnarray*}
which gives the desired estimate. 

\subsubsection{Treatment of $\left(\Lambda_m^k \right)^{*}_{\calS_\nu}(f_1, f_2, f_3)$.} \label{20241126subsubsec02} In this case, we have 
\begin{enumerate}
    \item [$\bullet$] if $j \in \{1, 2\}$ then up to a $2^{\nu k}$ factor there is a unique measurable function $L_j: [2^{jk}, 2^{jk+1}] \cap \Z \mapsto \left[2^k, 2^{k+1} \right] \cap \Z$ such that for each $Q_j \sim 2^{jk}$ one has
    \begin{equation} \label{20241015eq10}
    2^{-\nu k} \int_{I_{Q_j}^{0, (j+1)k}} |f_j|^2 \lesssim  \int_{I_{Q_j}^{0, (j+1)k}} \left| f_j^{\Omega_j^{L_j(Q_j)}} \right|^2 \lesssim  \int_{I_{Q_j}^{0, (j+1)k}} |f_j|^2, \quad j \in \{1, 2\};
    \end{equation} 
    \item [$\bullet$] up to a $2^{\nu k}$ factor there is a unique measurable function $L_3: [2^m, 2^{m+1}] \cap \Z \mapsto \left[2^k, 2^{k+1} \right] \cap \Z$ such that for each $Q_3 \sim 2^m$ one has
    \begin{equation} \label{20241015eq10}
    2^{-\nu k} \int_{I_{Q_3}^{0, m+k}} |f_3|^2 \lesssim  \int_{I_{Q_3}^{0, m+k}} \left| f_3^{\Omega_3^{L_3(Q_3)}} \right|^2 \lesssim  \int_{I_{Q_3}^{0, m+k}} |f_3|^2.
    \end{equation} 
\end{enumerate}
Combing this with \eqref{20241014eq56}, we see that
\begin{eqnarray*}
&& \left(\Lambda_m^k \right)^{*}_{\calS_\nu}(f_1, f_2, f_3) \lesssim  2^{k-\frac{m}{2}} \cdot 2^{3\nu k} \sum_{\widetilde{p} \sim 2^{\frac{m}{2}-k}} \sum_{(p, \widetilde{q}) \in \TFC_{\widetilde{p}}}  \left\|f_3 \right\|_{L^2 \left(I^{0, m+k}_{2^{\frac{m}{2}-k}p+\widetilde{p}} \right)} \left\|f_2 \right\|_{L^2 \left(I_{\frac{p}{2^{\frac{m}{2}-k}}+\frac{\widetilde{q}^2}{2^k}}^{0, 3k} \right)} \left\| f_1 \right\|_{L^2 \left(I^{0, 2k}_{\frac{p}{2^{\frac{m}{2}}}-\widetilde{q}} \right)} \\
&& \lesssim 2^{k-\frac{m}{2}} \cdot 2^{3\nu k} \left( \sum_{\widetilde{p} \sim 2^{\frac{m}{2}-k}} \left(\# \TFC_{\widetilde{p}} \right) \right)^{\frac{1}{2}} \left(\sum_{\substack{\widetilde{p} \sim 2^{\frac{m}{2}-k} \\ p \sim 2^{\frac{m}{2}+k} \\ \widetilde{q} \sim 2^k}}  \left\|f_3 \right\|^2_{L^2 \left(I^{0, m+k}_{2^{\frac{m}{2}-k}p+\widetilde{p}} \right)} \left\|f_2 \right\|^2_{L^2 \left(I_{\frac{p}{2^{\frac{m}{2}-k}}+\frac{\widetilde{q}^2}{2^k}}^{0, 3k} \right)} \left\| f_1 \right\|^2_{L^2 \left(I^{0, 2k}_{\frac{p}{2^{\frac{m}{2}}}-\widetilde{q}} \right)}  \right)^{\frac{1}{2}},
\end{eqnarray*}
where for each $\widetilde{p} \sim 2^{\frac{m}{2}-k}$, 
$$
\TFC_{\widetilde{p}}:=\left\{\substack{(p, \widetilde{q}) \\ p \sim 2^{\frac{m}{2}+k}, \widetilde{q} \sim 2^k}: L_1\left( \frac{p}{2^{\frac{m}{2}}}-\widetilde{q} \right) \cong \frac{2 \widetilde{q}}{2^k}L_2 \left(\frac{p}{2^{\frac{m}{2}-k}}+\frac{\widetilde{q}^2}{2^k} \right)+ \frac{3\widetilde{q}^2}{2^{2k}} \cdot \frac{\widetilde{\lambda} \left(2^{\frac{m}{2}-k}p+\widetilde{p} \right)}{2^{\frac{m}{2}-k}}  \right\}.
$$
The desired decay comes now from the claim that there exists a suitable $\epsilon_5'>0$ such that for each $\widetilde{p} \sim 2^{\frac{m}{2}-k}$, 
\begin{equation} \label{20241015eq23}
\#\TFC_{\widetilde{p}} \lesssim 2^{\frac{m}{2}+2k-\epsilon_5'k}. 
\end{equation} 
We claim that in this case, one can reduce the estimate \eqref{20241015eq23} to the situation that was treated in \cite[Section 5.3.2]{HL23}. To see this, we further decompose the regime of $p$ as 
$$
\left\{p \sim 2^{\frac{m}{2}+k} \right\}=\bigcup_{p' \sim 2^{2k}} \left[2^{\frac{m}{2}-k}p', 2^{\frac{m}{2}-k}(p'+1)\right].
$$
Using the above, we further decompose the set $\TFC_{\widetilde{p}}$ as 
$$
\TFC_{\widetilde{p}}=\bigcup_{l \sim 2^{\frac{m}{2}-k}} \TFC_{\widetilde{p}, l}, 
$$
where for each $\widetilde{p}, l \sim 2^{\frac{m}{2}-k}$,
$$
\TFC_{\widetilde{p}, l}:=\left\{ \substack{\left(p', \widetilde{q}\right) \\ p' \sim 2^{2k}, \widetilde{q} \sim 2^k}: L_1\left( \frac{p'}{2^k}-\widetilde{q} \right) \cong \frac{2 \widetilde{q}}{2^k}L_2 \left(p'+\frac{\widetilde{q}^2}{2^k} \right)+ \frac{3\widetilde{q}^2}{2^{2k}} \cdot \frac{\widetilde{\lambda} \left(2^{m-2k}p'+2^{\frac{m}{2}-k}l+\widetilde{p} \right)}{2^{\frac{m}{2}-k}}  \right\}.
$$
Since in this case $\widetilde{p}$ and $l$ are fixed, by denoting
$$
L_{3, \widetilde{p}, l}(p'):=\frac{\widetilde{\lambda} \left(2^{m-2k}p'+2^{\frac{m}{2}-k}l+\widetilde{p} \right)}{2^{\frac{m}{2}-k}},
$$
we notice that the estimate for the size of the set $\TFC_{\widetilde{p}, l}$ reduces to the situation treated in \cite[Page 58]{HL23}. 
Thus, it follows that $\#\TFC_{\widetilde{p}, l} \lesssim 2^{(3-\epsilon_5')k}$ and hence our proof is now complete. 

\medskip 

\subsection{Treatment of the uniform component ${\bf L}_{m, k}^{\calU_\mu}(f_1, f_2)$: Case I in Overview} \label{20241028subsec01}

Following the discussion in Section \ref{20241023overview01}, here we only consider \underline{\textsf{Step I.2.1}}, that is, the situation when $\frac{m}{2} \le k \le m$. At the core of our reasoning lies in the \emph{constancy propagation} of $\widetilde{\lambda}$ from intervals of length  $2^{-m-2k}$ to intervals of length $2^{-m-k}$. This strategy resembles the one presented in the proof of Proposition \ref{20241018prop01}, and hence, in this section, we will only focus on the key modifications required for treating our present case.

With these in mind, we follow the reasoning presented in \eqref{20241004eq21}--\eqref{20241011eq40}, and, with the obvious adaptations, as well as notations---see in particular \ref{KH2}---we reduce matters to the following estimate: for each $\widetilde{p} \sim 2^k$, there exists some $\widetilde{\epsilon_1}>0$, such that 
\begin{equation} \label{20241023eq01}
\left|\widetilde{V}_{m, k}^{\widetilde{p}, \frakL}(f_1, f_2) \right| \lesssim 2^{-\widetilde{\epsilon_1} m} \left\|f_1 \right\|_{L^2(3I^k)}^2 \left\|f_2 \right\|^2_{L^2(3I^k)}, 
\end{equation} 
where
$$
\widetilde{V}_{m, k}^{\widetilde{p}, \frakL}(f_1, f_2):=2^{\frac{m}{2}+2k} \sum_{p \in \left[\widetilde{p} 2^{m-k}, (\widetilde{p}+1)2^{m-k} \right]} \sum_{q \sim 2^{\frac{m}{2}}} \int_{\widetilde{\frakL_p^k}} \left| \int_{I_q^{m, k}} f_1(x-t)f_2(x+t^2) e^{3i 2^k \cdot \frac{q^2 \widetilde{\lambda}(x)}{2^{\frac{m}{2}}} t} dt \right|^2  dx.
$$
Here, for $\widetilde{\epsilon}_0>0$ and $p \sim 2^m$, the \emph{light set of $\ell$-indices} associated to the interval $I_p^{0, k+m}$ is defined as
$$
\widetilde{\frakL_p^k}:=\bigcup_{\ell \in \frakL_p^k} \frakM^\ell \left(I_p^{0, k+m} \right) \quad \textrm{with} \quad 
\frakL_p^k:=\left\{\ell \sim 2^{\frac{m}{2}-\widetilde{\epsilon_0} m}: \left| \frakM^\ell \left(I_p^{0, k+m} \right) \right| \lesssim \left|I_p^{0, k+m} \right| 2^{-\widetilde{\epsilon_0} m} \right\}
$$
and
$$
\frakM^\ell \left(I_p^{0, k+m} \right):=\left\{x \in I_p^{0, k+m}: \widetilde{\lambda}(x) \in \left[ \ell 2^{\widetilde{\epsilon_0} m}, (\ell+1)2^{\widetilde{\epsilon_0} m}  \right] \right\}.
$$
Next, following the argument in \textsf{Step I} in Proposition \ref{20241018prop01}, we may assume $\widetilde{\frakL_p^k}=I_p^{0, m+k}$ for each $p \sim 2^m$. Moreover, another application of Proposition \ref{20241006prop01} (with $2^{\del k}$ and $2^{-(\del-2\mu)k}$ replaced by $2^{\del m}$ and $2^{-(\del-2\mu)m}$, respectively) followed by the change of variable\footnote{Here we make again use of Remark \ref{Simplif}.} $t \to t+\frac{q}{2^{\frac{m}{2}+k}}$ reduces our analysis to 
\begin{eqnarray*}
&& {\bf \widetilde{V}}_{m, k}^{\widetilde{p}, \frakL}(f_1, f_2):=2^{\frac{m}{2}+2k+\del m} \sum_{p \in \left[\widetilde{p}2^{m-k}, (\widetilde{p}+1) 2^{m-k}  \right]} \sum_{q \sim 2^\frac{m}{2}}  \int_{I_0^{0, k+m}} \bigg| \int_{I_0^{m, k}} f_1 \left(\frac{p}{2^{k+m}}-\frac{q}{2^{\frac{m}{2}+k}}-t \right) \\
&& \qquad \qquad \qquad \qquad  \cdot f_2 \left(x+\frac{p}{2^{k+m}}+\frac{2tq}{2^{\frac{m}{2}+k}}+\frac{q^2}{2^{m+2k}} \right)  e^{3i 2^k \frac{q^2 \widetilde{\lambda}\left(x+\frac{p}{2^{k+m}} \right)}{2^{\frac{m}{2}}} t} dt \bigg|^2 dx. 
\end{eqnarray*}
In this case, instead of further subdividing the range of $p$ (see, \eqref{20241024eq01}), we decompose the range of $q$:
$$
\left\{q \sim 2^{\frac{m}{2}} \right\}=\bigcup_{q' \sim 2^{m-k}} \left[q'2^{k-\frac{m}{2}}, (q'+1)2^{k-\frac{m}{2}} \right].
$$
Thus, we have
\begin{eqnarray*}
&& {\bf \widetilde{V}}_{m, k}^{\widetilde{p}, \frakL}(f_1, f_2) \lesssim 2^{\frac{m}{2}+2k+\del m} \sum_{p \in \left[\widetilde{p}2^{m-k}, (\widetilde{p}+1) 2^{m-k}  \right]} \sum_{q' \sim 2^{m-k}} \sum_{\ell \sim 2^{k-\frac{m}{2}}} \int_{I_0^{0, k+m}} \bigg| \int_{I_0^{m, k}} f_1 \left(\frac{p}{2^{k+m}}-\frac{q'}{2^m}-\frac{l}{2^{\frac{m}{2}+k}}-t \right) \\
&& \qquad \cdot f_2 \left(x+\frac{p}{2^{k+m}}+\frac{2tq'}{2^m}+\frac{2t\ell}{2^{\frac{m}{2}+k}}+\frac{(q')^2}{2^{2m}}+\frac{2q'\ell}{2^{\frac{3m}{2}+k}}+\frac{\ell^2}{2^{m+2k}} \right)  e^{3i 2^k \frac{\left(q'2^{k-\frac{m}{2}}+\ell \right)^2 \widetilde{\lambda}\left(x+\frac{p}{2^{k+m}} \right)}{2^{\frac{m}{2}}} t} dt \bigg|^2 dx. 
\end{eqnarray*}
Applying the change of variables $p \to p-\left\lfloor \frac{(q')^2}{2^{m-k}} \right \rfloor$ and $x \to x-\frac{2q'\ell}{2^{\frac{3m}{2}+k}}-\frac{\ell^2}{2^{m+2k}}-\frac{\left\{ \frac{\left(q' \right)^2}{2^{m-k}} \right\}}{2^{m+k}}$, we have
\begin{eqnarray*}
&& {\bf \widetilde{V}}_{m, k}^{\widetilde{p}, \frakL}(f_1, f_2) \\
&& \lesssim 2^{\frac{m}{2}+2k+\del m} \sum_{p \in \left[\widetilde{p}2^{m-k}, (\widetilde{p}+2) 2^{m-k}  \right]} \sum_{q' \sim 2^{m-k}} \sum_{\ell \sim 2^{k-\frac{m}{2}}} \int_{9I_0^{0, k+m}} \bigg| \int_{I_0^{m, k}} f_1 \left(\frac{p}{2^{k+m}}-\frac{q'}{2^m}-\frac{(q')^2}{2^{2m}}-\frac{l}{2^{\frac{m}{2}+k}}-t \right) \\
&& \qquad \cdot f_2 \left(x+\frac{p}{2^{k+m}}+\frac{2tq'}{2^m}+\frac{2t\ell}{2^{\frac{m}{2}+k}}\right)  e^{3i 2^k \frac{\left(q'2^{k-\frac{m}{2}}+\ell \right)^2 \widetilde{\lambda}\left(x+\frac{p}{2^{k+m}}-\frac{(q')^2}{2^{2m}}-\frac{2q'\ell}{2^{\frac{3m}{2}+k}}-\frac{\ell^2}{2^{m+2k}}\right)}{2^{\frac{m}{2}}} t} dt \bigg|^2 dx. 
\end{eqnarray*}
Opening the square and then applying Cauchy-Schwarz, we have
$$
{\bf \widetilde{V}}_{m, k}^{\widetilde{p}, \frakL}(f_1, f_2) \lesssim 2^{\frac{m}{2}+2k+\del m} \widetilde{\calA} \,\widetilde{\calB}, 
$$
where 
\begin{eqnarray*}
&& \widetilde{\calA}^2:= \iint_{\left(I_0^{m, k} \right)^2} \sum_{p \in \left[\widetilde{p}2^{m-k}, (\widetilde{p}+2) 2^{m-k}  \right]} \sum_{q' \sim 2^{m-k}} \sum_{\ell \sim 2^{k-\frac{m}{2}}} \left|f_1 \left(\frac{p}{2^{k+m}}-\frac{q'}{2^m}-\frac{(q')^2}{2^{2m}}-\frac{l}{2^{\frac{m}{2}+k}}-t \right) \right|^2\\
&& \qquad \qquad \qquad \qquad  \cdot \left| f_1 \left(\frac{p}{2^{k+m}}-\frac{q'}{2^m}-\frac{(q')^2}{2^{2m}}-\frac{l}{2^{\frac{m}{2}+k}}-s \right) \right|^2 dtds
\end{eqnarray*}
and
\begin{eqnarray*}
&& \widetilde{\calB}^2:= \iint_{\left(I_0^{m, k} \right)^2} \sum_{p \in \left[\widetilde{p}2^{m-k}, (\widetilde{p}+2) 2^{m-k}  \right]} \sum_{q' \sim 2^{m-k}} \sum_{\ell \sim 2^{k-\frac{m}{2}}} \\
&& \qquad \quad \Bigg | \int_{9I_0^{0, k+m}}  f_2 \left(x+\frac{p}{2^{k+m}}+\frac{2tq'}{2^m}+\frac{2t\ell}{2^{\frac{m}{2}+k}}\right) f_2 \left(x+\frac{p}{2^{k+m}}+\frac{2sq'}{2^m}+\frac{2s\ell}{2^{\frac{m}{2}+k}}\right) \\
&& \qquad \qquad \cdot e^{3i \cdot 2^k \frac{\left(q'2^{k-\frac{m}{2}}+\ell \right)^2 \widetilde{\lambda}\left(x+\frac{p}{2^{k+m}}-\frac{(q')^2}{2^{2m}}-\frac{2q'\ell}{2^{\frac{3m}{2}+k}}-\frac{\ell^2}{2^{m+2k}}\right)}{2^{\frac{m}{2}}} (t-s)} dx \Bigg |^2 dtds. 
\end{eqnarray*}
The analysis of the term $\widetilde{\calA}^2$ follows as usual by the assumption that $f_1$ is uniform (see \eqref{uniformest}), and hence we omit it here. We thus move our focus on $\widetilde{\calB}^2$. We first apply the change of variable back $q=q'2^{k-\frac{m}{2}}+\ell$ followed by the change of variables $t \to \frac{2^{\frac{m}{2}}}{q} t$, $s \to \frac{2^{\frac{m}{2}}}{q}s$, and $s \to t-v$ in order to deduce 
\begin{eqnarray*}
&& \widetilde{\calB}^2 \lesssim \iint_{3I_0^{m, k} \times 7I_0^{m, k}} \sum_{p \in \left[ \widetilde{p}2^{m-k}, (\widetilde{p}+2)2^{m-k} \right]} \sum_{q \sim 2^{\frac{m}{2}}} \bigg | \int_{9I_0^{0, k+m}} f_2 \left(x+\frac{p}{2^{k+m}}+\frac{2t}{2^k} \right) \\
&&\qquad \qquad \qquad  \cdot f_2 \left(x+\frac{p}{2^{k+m}}+\frac{2(t-v)}{2^k} \right) e^{3i \cdot 2^k q \widetilde{\lambda} \left(x+\frac{p}{2^{k+m}}-\frac{q^2}{2^{m+2k}} \right)v} dx \bigg|^2 dtdv.
\end{eqnarray*}
For technical reasoning, letting $\widetilde{\epsilon}_2>0$ being sufficiently small, we divide the $t$-interval as follows: 
$$
7I_0^{m, k}:=\left[0, \frac{7}{2^{\frac{m}{2}+k}} \right]=\bigcup_{\iota=0}^{ 7 \cdot 2^{\widetilde{\epsilon}_2 m}-1}  \left[ \frac{\iota}{2^{\frac{m}{2}+k+\widetilde{\epsilon}_2m }}, \frac{\iota+1}{2^{\frac{m}{2}+k+\widetilde{\epsilon}_2m }} \right]=\bigcup_{\iota=0}^{ 7 \cdot 2^{\widetilde{\epsilon}_2 m}-1} I_{\iota}^{m, k+\widetilde{\epsilon_2}m}, 
$$
which gives (after a change of variable)
\begin{eqnarray*}
&& \widetilde{\calB}^2 \lesssim \sum_{\iota \sim 2^{\widetilde{\epsilon_2} m}}  \iint_{3I_0^{m, k} \times I_0^{m, k+\widetilde{\epsilon}_2m }} \sum_{p \in \left[ \widetilde{p}2^{m-k}, (\widetilde{p}+2)2^{m-k} \right]} \sum_{q \sim 2^{\frac{m}{2}}} \bigg | \int_{9I_0^{0, k+m}} f_2 \left(x+\frac{p}{2^{k+m}}+\frac{2\iota}{2^{\frac{m}{2}+2k+\widetilde{\epsilon_2}m}}+\frac{2t}{2^k} \right) \\
&&\qquad \qquad \cdot f_2 \left(x+\frac{p}{2^{k+m}}+\frac{2\iota}{2^{\frac{m}{2}+2k+\widetilde{\epsilon_2}m}}+\frac{2(t-v)}{2^k} \right) e^{3i \cdot 2^k q \widetilde{\lambda} \left(x+\frac{p}{2^{k+m}}-\frac{q^2}{2^{m+2k}} \right)v} dx \bigg|^2 dtdv.
\end{eqnarray*}
Applying now the change of variable $x \to x-\frac{2t}{2^k}$, then, up to an admissible error $O\left(2^{-(\widetilde{\epsilon_2}-\delta)m} \left\|f_1 \right\|^2_{L^2(3I^k)} \left\|f_2 \right\|^2_{L^2(3I^k)}\right)$ in \eqref{20241023eq01}, we further have
\begin{eqnarray*}
&& \widetilde{\calB}^2 \lesssim \sum_{\iota \sim 2^{\widetilde{\epsilon_2} m}} \iint_{3I_0^{m, k} \times I_0^{m, k+\widetilde{\epsilon_2}m}} \sum_{p \in \left[ \widetilde{p}2^{m-k}, (\widetilde{p}+2)2^{m-k} \right]} \sum_{q \sim 2^{\frac{m}{2}}} \bigg | \int_{9I_0^{0, k+m}} f_2 \left(x+\frac{p}{2^{k+m}}+\frac{2\iota}{2^{\frac{m}{2}+2k+\widetilde{\epsilon_2}m}}\right) \\
&&\qquad \qquad \qquad  \cdot f_2 \left(x+\frac{p}{2^{k+m}}+\frac{2\iota}{2^{\frac{m}{2}+2k+\widetilde{\epsilon_2}m}}-\frac{2v}{2^k} \right) e^{3i \cdot 2^k q \widetilde{\lambda} \left(x-\frac{2t}{2^k}+\frac{p}{2^{k+m}}-\frac{q^2}{2^{m+2k}} \right)v} dx \bigg|^2 dtdv.
\end{eqnarray*}
Fix a choice of $\iota \sim 2^{\widetilde{\epsilon_2} m}$ that maximizes the right-hand side of the above estimate and denote $f_2(\cdot):=f_2 \left(\cdot+\frac{2\iota}{2^{\frac{m}{2}+2k+\widetilde{\epsilon_2}m}} \right)$; with these, we see that  
\begin{eqnarray*}
&& \widetilde{\calB}^2 \lesssim 2^{\widetilde{\epsilon_2}m} \sum_{p \in \left[ \widetilde{p} 2^{m-k}, (\widetilde{p}+2)2^{m-k} \right]} \iint_{\left(9I_0^{0, k+m} \right)^2} \vast | \int_{3 I_0^{m, k}} f_2 \left(x+\frac{p}{2^{k+m}} \right) f_2 \left(y+\frac{p}{2^{k+m}} \right) f_2 \left(x+\frac{p}{2^{k+m}}-\frac{2v}{2^k} \right) \\
&& \quad \cdot f_2 \left(y+\frac{p}{2^{k+m}}-\frac{2v}{2^k} \right) \left[\int_{I_0^{m, k}} \left( \sum_{q \sim 2^{\frac{m}{2}}} e^{3i \cdot 2^k v \left[\widetilde{\lambda} \left(x-\frac{2t}{2^k}+\frac{p}{2^{k+m}}-\frac{q^2}{2^{m+2k}} \right)-\widetilde{\lambda} \left(y-\frac{2t}{2^k}+\frac{p}{2^{k+m}}-\frac{q^2}{2^{m+2k}} \right) \right]}\right) dt \right] dv \vast | dxdy \\
&&\lesssim 2^{\widetilde{\epsilon_2}m}  \cdot 
 \widetilde{\calC}\,\widetilde{\calD}, 
\end{eqnarray*}
where 
\begin{eqnarray*}
&& \widetilde{\calC}^2:=\sum_{p \in \left[ \widetilde{p} 2^{m-k}, (\widetilde{p}+2)2^{m-k} \right]} \iiint_{\left(9I_0^{0, k+m} \right)^2 \times 3 I_0^{m, k}}  \Bigg | f_2 \left(x+\frac{p}{2^{k+m}} \right) f_2 \left(y+\frac{p}{2^{k+m}} \right) \\
&& \qquad \quad \cdot  f_2 \left(x+\frac{p}{2^{k+m}}-\frac{2v}{2^k} \right) f_2 \left(x+\frac{p}{2^{k+m}}-\frac{2v}{2^k} \right) f_2 \left(y+\frac{p}{2^{k+m}}-\frac{2v}{2^k} \right) \Bigg|^2 dxdydv
\end{eqnarray*}
and 
\begin{eqnarray*}
&& \widetilde{\calD}^2:=\sum_{p \in \left[ \widetilde{p} 2^{m-k}, (\widetilde{p}+2)2^{m-k} \right]} \iiint_{\left(9I_0^{0, k+m} \right)^2 \times 3I_0^{m, k}} \\
&& \qquad \left|\int_{I_0^{m,k}} \left( \sum_{q \sim 2^{\frac{m}{2}}} e^{3i \cdot 2^k q v \left[\widetilde{\lambda} \left(x-\frac{2t}{2^k}+\frac{p}{2^{k+m}}-\frac{q^2}{2^{m+2k}} \right)-\widetilde{\lambda} \left(y-\frac{2t}{2^k}+\frac{p}{2^{k+m}}-\frac{q^2}{2^{m+2k}} \right) \right]} \right) dt \right|^2 dvdxdy
\end{eqnarray*}
The estimate of $\widetilde{\calC}^2$ follows again easily from the assumption that $f_2$ is uniform. Turning our attention to $\widetilde{D}^2$, we open the square first and then write $q=q'2^{k-\frac{m}{2}}+\ell$ and $q_1=q_1'2^{k-\frac{m}{2}}+\ell_1$:
{\footnotesize \begin{eqnarray*}
&& \widetilde{\calD}^2 \lesssim \sum_{p \in \left[\widetilde{p}2^{m-k}, (\widetilde{p}+2)2^{m-k} \right]} \sum_{q', q_1' \sim 2^{m-k}}  \sum_{\ell, \ell_1 \sim 2^{k-\frac{m}{2}}} \iint_{\left(9I_0^{0, k+m} \right)^2} \iint_{\left(I_0^{m, k} \right)^2} dtdt_1dxdy \\
&& \quad  \left | \int_{3 I_0^{m, k}} e^{3i \cdot 2^k v\left( \substack{(q'2^{k-\frac{m}{2}}+\ell) \left[ \widetilde{\lambda} \left(x-\frac{2t}{2^k}+\frac{p}{2^{k+m}}-\frac{(q')^2}{2^{2m}}-\frac{2q'\ell}{2^{\frac{3m}{2}+k}}-\frac{\ell^2}{2^{m+2k}} \right)-\widetilde{\lambda} \left(y-\frac{2t}{2^k}+\frac{p}{2^{k+m}}-\frac{(q')^2}{2^{2m}}-\frac{2q'\ell}{2^{\frac{3m}{2}+k}}-\frac{\ell^2}{2^{m+2k}} \right)  \right] \qquad \\ \qquad   -(q_1'2^{k-\frac{m}{2}}+\ell_1) \left[ \widetilde{\lambda} \left(x-\frac{2t_1}{2^k}+\frac{p}{2^{k+m}}-\frac{(q_1')^2}{2^{2m}}-\frac{2q_1'\ell_1}{2^{\frac{3m}{2}+k}}-\frac{\ell_1^2}{2^{m+2k}} \right)-\widetilde{\lambda} \left(y-\frac{2t_1}{2^k}+\frac{p}{2^{k+m}}-\frac{(q_1')^2}{2^{2m}}-\frac{2q_1'\ell_1}{2^{\frac{3m}{2}+k}}-\frac{\ell_1^2}{2^{m+2k}} \right) \right] } \right)} dv \right|. 
\end{eqnarray*}} \\
Applying further the change of variables $x \to x+\frac{2q'\ell}{2^{\frac{3m}{2}+k}}+\frac{\ell^2}{2^{m+2k}}+\frac{2t}{2^k}+\frac{\left\{\frac{(q')^2}{2^{m+k}}\right\}}{2^{m+k}}$, $y \to y+\frac{2q'\ell}{2^{\frac{3m}{2}+k}}+\frac{\ell^2}{2^{m+2k}}+\frac{2t}{2^k}+\frac{\left\{\frac{(q')^2}{2^{m+k}}\right\}}{2^{m+k}}$, and $p \to p+\left\lfloor \frac{(q')^2}{2^{m-k}} \right\rfloor$, we see that
{\footnotesize\begin{eqnarray*}
&& \widetilde{\calD}^2 \lesssim \sum_{p \in \left[\widetilde{p} 2^{m-k}, (\widetilde{p}+3)2^{m-k} \right]} \sum_{q', q_1' \sim 2^{m-k}} \sum_{\ell, \ell' \sim 2^{k-\frac{m}{2}}} \iint_{\left(15I_0^{0, k+m} \right)^2} \iint_{\left(I_0^{m, k} \right)^2} dtdt_1dxdy \\
&& \quad \left| \int_{3 I_0^{m, k}} e^{3i \cdot 2^k v \left( \left(q'2^{k-\frac{m}{2}}+\ell \right)\left[\substack{\widetilde{\lambda} \left(x+\frac{p}{2^{k+m}} \right) \quad \\ \quad -\widetilde{\lambda} \left(y+\frac{p}{2^{k+m}} \right)}\right]-\left(q_1'2^{k-\frac{m}{2}}+\ell_1 \right) \left[\substack{\widetilde{\lambda} \left(x+\frac{2(t-t_1)}{2^k}+\frac{p}{2^{k+m}}+\frac{(q')^2-(q_1')^2}{2^{2m}}+\frac{2(q'\ell-q_1'\ell_1)}{2^{\frac{3m}{2}+k}}+\frac{\ell^2-\ell_1^2}{2^{m+2k}} \right) \qquad \\ \qquad - \widetilde{\lambda} \left(y+\frac{2(t-t_1)}{2^k}+\frac{p}{2^{k+m}}+\frac{(q')^2-(q_1')^2}{2^{2m}}+\frac{2(q'\ell-q_1'\ell_1)}{2^{\frac{3m}{2}+k}}+\frac{\ell^2-\ell_1^2}{2^{m+2k}} \right)} \right]   \right)} dv \right|.
\end{eqnarray*}} \\ 
Finally, we apply the change of variable $t \to t_1+s$ and also change back the variables $q=q'2^{k-\frac{m}{2}}+\ell$ and $q_1=q_1'2^{k-\frac{m}{2}}+\ell'$ in order to deduce
\begin{eqnarray} \label{20241031eq01}
&& \widetilde{\calD}^2 \lesssim |I_0^{m, k}|  \sum_{p \in \left[\widetilde{p} 2^{m-k}, (\widetilde{p}+3)2^{m-k} \right]} \sum_{q, q_1 \sim 2^{\frac{m}{2}}}  \iint_{\left(15I_0^{0, k+m} \right)^2} \int_{3I_0^{m, k}} \nonumber  \\
&& \quad \left|\int_{ 3 I_0^{m, k}} e^{3i \cdot 2^{\frac{m}{2}+k} v \left( \frac{q}{2^{\frac{m}{2}}} \left[\substack{\widetilde{\lambda} \left(x+\frac{p}{2^{k+m}} \right) \quad \\ \quad -\widetilde{\lambda} \left(y+\frac{p}{2^{k+m}} \right)}\right]-\frac{q_1}{2^{\frac{m}{2}}} \left[ \substack{ \widetilde{\lambda} \left(x+\frac{2s}{2^k}+\frac{p}{2^{k+m}}-\frac{q_1^2-q^2}{2^{m+2k}} \right) \qquad \\ \qquad - \widetilde{\lambda} \left(y+\frac{2s}{2^k}+\frac{p}{2^{k+m}}-\frac{q_1^2-q^2}{2^{m+2k}} \right) } \right] \right)} dv \right| dsdxdy \nonumber\\
&&\lesssim  |I_0^{m, k}|^2 \sum_{p \in \left[\widetilde{p} 2^{m-k}, (\widetilde{p}+3)2^{m-k} \right]} \sum_{q, q_1 \sim 2^{\frac{m}{2}}}  \iint_{\left(15I_0^{0, k+m} \right)^2} \int_{3I_0^{m, k}} \left\lfloor \widetilde{\Xi}_{x, y, s, p}(q, q_1) \right\rfloor dsdxdy, 
\end{eqnarray}
where 
$$
\widetilde{\Xi}_{x, y, s, p}(q, q_1):=\left[\substack{\widetilde{\lambda} \left(x+\frac{p}{2^{k+m}} \right) \qquad \\ \qquad  -\widetilde{\lambda} \left(y+\frac{p}{2^{k+m}} \right)}\right]-\frac{q_1}{q} \left[ \substack{ \widetilde{\lambda} \left(x+\frac{2s}{2^k}+\frac{p}{2^{k+m}}-\frac{q_1^2-q^2}{2^{m+2k}} \right) \qquad \quad \\ \qquad \quad - \widetilde{\lambda} \left(y+\frac{2s}{2^k}+\frac{p}{2^{k+m}}-\frac{q_1^2-q^2}{2^{m+2k}} \right) } \right]. 
$$
Once here, we notice that the desired control on the term \eqref{20241031eq01} follows using similar reasonings to the ones employed for treating \eqref{20240110eq21} thus concluding our proof.
\bigskip 

\section{Treatment of the stationary off-diagonal component $BHC^{\not \Delta, S}$} \label{20250308sec01}

In this section we focus our attention on the behavior of the stationary off-diagonal component  $BHC^{\not \Delta, S}$. Recall--see \eqref{offdiag}--that $BHC^{\not \Delta, S}$ addresses the regime where one of the following situations is satisfied:
\begin{enumerate}
    \item [(a)] $j \cong l$ and $j>m+100$;
    \item [(b)] $l \cong m$ and $l>j+100$;
    \item [(c)] $m \cong j$ and $m>l+100$.
\end{enumerate}

As before, we only focus on the case $k \ge 0$. We divide our approach into two major cases:
\begin{enumerate}
    \item [({\bf O1})] $j \simeq l, k \ge 0$, and $0 \le m \le \frac{j}{2}$;
    \item [({\bf O2})] all other stationary off-diagonal cases except ({\bf O1}). 
\end{enumerate}
As a brief preview of our strategy in this section we have: 
\begin{itemize}
\item for the Case ({\bf O1}), the phase function $\lambda(x)$ oscillates at the scale $2^{m+3k}$, which implies that the term $\lambda(x)t^3$ is essentially constant on $t$-intervals of length $2^{-m-k}$, which is larger than $2^{-\frac{j}{2}-k}$ -- the linearization scale for the $t$-variable; as a consequence  Rank-I LGC will be effective in the first case;
\item for the Case ({\bf O2}), the phase $\lambda(x) t^3$ oscillates faster, and thus in this situation we will apply Rank-II LGC by extending the earlier argument that we used to treat the main diagonal term.
\end{itemize}    

\subsection{Treatment of the Case ({\bf O1}): $j \simeq l, k \ge 0$, and $0 \le m \le \frac{j}{2}$}

In this context, without loss of generality, we may assume that $j=l$ and hence our main term becomes
\begin{equation} \label{20241124eq01}
\sup_{\lambda \in \R} \left|BHC_{j, j, m, \lambda(x)}^k (f_1, f_2)(x) \right|=\sup_{\lambda \in \R} \left| \one_{I^k}(x) 2^k \phi \left(\frac{\lambda}{2^{m+3k}} \right) \int_{I^k} f_{1, j+k}(x-t) f_{2, j+2k}(x+t^2)e^{i\lambda t^3} dt \right|, 
\end{equation} 
where we recall here that $f_{i, j+ik}$ is the Fourier projection of the function $f_i$ within the frequency interval $[2^{j+ik},\,2^{j+ik+1}]$,  $i \in \{1, 2\}$. Taking again $\lambda(\cdot)$ the measurable function that maximizes the right-hand side of \eqref{20241124eq01} we consider the dual form
\begin{eqnarray} \label{20250308eq03}
&& \Lambda_{j, j, m, \lambda(\cdot)}^k(f_1, f_2, f_3):=\left \langle BHC_{j, j, m, \lambda(x)}^k(f_1, f_2), f_3 \right \rangle  \nonumber \\
&& =2^k\int_{I^k} \int_{I^k} f_{1, j+k}(x-t)f_{2, j+2k}(x+t^2)f_3(x)e^{i \lambda(x) t^3} dtdx \\
&&=2^k \sum_{\widetilde{q} \sim 2^m} \int_{I^k} \int_{I_{\widetilde{q}}^{0, m+k}} f_{1, j+k}(x-t)f_{2, j+2k}(x+t^2)f_3(x)e^{i\lambda(x)t^3} dtdx. \nonumber 
\end{eqnarray}
Preparing the ground for the Rank-I LGC method, we first observe that for $t \in I_{\widetilde{q}}^{0, m+k}$, one has
\begin{equation}\nonumber
\lambda(x)t^3=\frac{3\lambda(x)\,t\,\widetilde{q}^2}{2^{2m+2k}} - \frac{2\lambda(x) \widetilde{q}^3}{2^{3m+3k}}+O(2^{-m})=\frac{\lambda(x) \widetilde{q}^3}{2^{3m+3k}}+O(1). 
\end{equation}
Thus,
$$
\Lambda_{j, j, m, \lambda(\cdot)}^k(f_1, f_2, f_3)\approx 2^k \sum_{\widetilde{q} \sim 2^m} \int_{I^k} e^{\frac{i\lambda(x) \widetilde{q}^3}{2^{3m+3k}}} \left( \int_{I_{\widetilde{q}}^{0, m+k}} f_{1, j+k}(x-t)f_{2, j+2k}(x+t^2)f_3(x) dt \right) dx. 
$$
We further split of the interval $I_{\widetilde{q}}^{0, m+k}$ into intervals of length $2^{-\frac{j}{2}-k}$:
$$
\left[\frac{\widetilde{q}}{2^{m+k}}, \frac{\widetilde{q}+1}{2^{m+k}} \right]=\left[\frac{2^{\frac{j}{2}-m} \widetilde{q}}{2^{\frac{j}{2}+k}}, \frac{2^{\frac{j}{2}-m}(\widetilde{q}+1)}{2^{\frac{j}{2}+k}} \right]=\bigcup_{\ell=0}^{2^{\frac{j}{2}-m}-1} \left[\frac{2^{\frac{j}{2}-m} \widetilde{q}+\ell}{2^{\frac{j}{2}+k}}, \frac{2^{\frac{j}{2}-m} \widetilde{q}+\ell+1}{2^{\frac{j}{2}+k}} \right].
$$
With these one further has
\begin{equation}\nonumber
\Lambda_{j, j, m, \lambda(\cdot)}^k(f_1, f_2, f_3)\approx2^k \sum_{\widetilde{q} \sim 2^m} \sum_{\substack{p \sim 2^{\frac{j}{2}+k} \\ \ell \sim 2^{\frac{j}{2}-m}}} \int_{I_p^{j, 2k}} e^{\frac{i\lambda(x) \widetilde{q}^3}{2^{3m+3k}}} \left( \int_{I_{2^{\frac{j}{2}-m} \widetilde{q}+\ell}^{j, k}} f_{1, j+k}(x-t)f_{2, j+2k}(x+t^2)f_3(x) dt \right) dx. 
\end{equation}
We are now ready to apply Rank-I LGC: at \textsf{Step 1}, for $f_1(x-t)$ we will use spatial intervals of lengths $2^{-\frac{j}{2}-k}$ while for $f_2(x+t^2)$ we will involve spatial intervals of lengths $2^{-\frac{j}{2}-2k}$, respectively. In accordance with this, at \textsf{Step 2}, we apply a \emph{Gabor frame decomposition} as follows:
$$
f_r=\sum_{\substack{p_r \sim 2^{\frac{j}{2}+(r-1)k} \\ n_r \sim 2^{\frac{j}{2}}}} \left \langle f_r, \phi_{p_r, n_r}^{j, rk} \right \rangle \phi_{p_r, n_r}^{j, rk}, \quad r \in \{1, 2\}, 
$$ 
where $\phi_{p_r, n_r}^{j, rk}(x)=2^{\frac{j}{4}+\frac{rk}{2}} \phi \left(2^{\frac{j}{2}+rk}x-p_r \right) e^{i2^{\frac{j}{2}+rk}xn_r}$. This allows one to write 
\begin{eqnarray*}
\Lambda_{j, j, m, \lambda(\cdot)}^k(f_1, f_2, f_3) &\approx & 2^k \sum_{\widetilde{q} \sim 2^m} \sum_{\substack{p \sim 2^{\frac{j}{2}+k} \\ \ell \sim 2^{\frac{j}{2}-m}}} \sum_{\substack{p_1 \sim 2^{\frac{j}{2}} \\ n_1 \sim 2^{\frac{j}{2}}}} \sum_{\substack{p_2 \sim 2^{\frac{j}{2}+k} \\ n_2 \sim 2^{\frac{j}{2}}}} \left \langle f_1, \phi_{p_1, n_1}^{j, k} \right \rangle \left \langle f_2, \phi_{p_2, n_2}^{j, 2k} \right \rangle \\
&& \quad \cdot \int_{I_p^{j, 2k}} f_3(x) e^{\frac{i\lambda(x) \widetilde{q}^3}{2^{3m+3k}}} \left[\int_{I_{2^{\frac{j}{2}-m} \widetilde{q}+\ell}^{j, k}} \phi_{p_1, n_1}^{j, k}(x-t) \phi_{p_2, n_2}^{j, 2k}(x+t^2) dt \right] dx. 
\end{eqnarray*} 
Since $x \in I_p^{j, 2k}$ and $t \in I_{2^{\frac{j}{2}-m} \widetilde{q}+\ell}^{j, k}$, we see that $p_1 \cong \frac{p}{2^k}-\left(2^{\frac{j}{2}-m}\widetilde{q}+\ell \right)$ and $p_2 \cong p+\frac{\left(2^{\frac{j}{2}-m} \widetilde{q}+\ell \right)^2}{2^{\frac{j}{2}}}$. Therefore,  
\begin{eqnarray*}
\Lambda_{j, j, m, \lambda(\cdot)}^k(f_1, f_2, f_3) 
&\approx & 2^{\frac{j}{4}+\frac{3k}{2}} \sum_{\widetilde{q} \sim 2^m} \sum_{\substack{p \sim 2^{\frac{j}{2}+k} \\ \ell \sim 2^{\frac{j}{2}-m}}} \sum_{\substack{n_1 \sim 2^{\frac{j}{2}} \\ n_2 \sim 2^{\frac{j}{2}}}} \left \langle f_1, \phi_{\frac{p}{2^k}-\left(2^{\frac{j}{2}-m}\widetilde{q}+\ell \right), n_1}^{j, k} \right \rangle \left \langle f_2, \phi_{ p+\frac{\left(2^{\frac{j}{2}-m} \widetilde{q}+\ell \right)^2}{2^{\frac{j}{2}}}, n_2}^{j, 2k} \right \rangle \\
&& \quad \cdot \int_{I_p^{j, 2k}} 2^{\frac{j}{4}+k} f_3(x)  e^{\frac{i\lambda(x) \widetilde{q}^3}{2^{3m+3k}}} \cdot e^{i2^{\frac{j}{2}+2k} \left(n_2+\frac{n_1}{2^k} \right)x} \left[ \int_{I_{2^{\frac{j}{2}-m}\widetilde{q}+\ell}^{j, k}} e^{i \left(-2^{\frac{j}{2}+k} n_1 t+2^{\frac{j}{2}+2k}n_2t^2 \right)}dt \right]dx.
\end{eqnarray*}
Observe that on the $t$-interval $I_{2^{\frac{j}{2}-m} \widetilde{q}+\ell}^{j, k}$, one has 
$$
2^{\frac{j}{2}+2k}n_2t^2 \simeq 2^{\frac{j}{2}+2k} n_2\left(\frac{2t \left(2^{\frac{j}{2}-m} \widetilde{q}+\ell \right)}{2^{\frac{j}{2}+k}}-\frac{\left(2^{\frac{j}{2}-m} \widetilde{q}+\ell \right)^2}{2^{j+2k}}\right)+O(1),
$$
which, together with a change of variables, yields that
\begin{eqnarray*} 
\Lambda_{j, j, m, \lambda(\cdot)}^k(f_1, f_2, f_3) & \approx & 2^{\frac{j}{4}+\frac{3k}{2}} \sum_{\widetilde{q} \sim 2^m} \sum_{\substack{p \sim 2^{\frac{j}{2}+k} \\ \ell \sim 2^{\frac{j}{2}-m}}} \sum_{\substack{n_1 \sim 2^{\frac{j}{2}} \\ n_2 \sim 2^{\frac{j}{2}}}} \left \langle f_1, \phi_{\frac{p}{2^k}-\left(2^{\frac{j}{2}-m}\widetilde{q}+\ell \right), n_1}^{j, k} \right \rangle \left \langle f_2, \phi_{ p+\frac{\left(2^{\frac{j}{2}-m} \widetilde{q}+\ell \right)^2}{2^{\frac{j}{2}}}, n_2}^{j, 2k} \right \rangle \nonumber \\
&& \cdot \int_{I_p^{j, 2k}} 2^{\frac{j}{4}+k} f_3(x)  e^{\frac{i\lambda(x) \widetilde{q}^3}{2^{3m+3k}}} \cdot e^{i2^{\frac{j}{2}+2k} \left(n_2+\frac{n_1}{2^k} \right)x} \cdot \frac{C_{\widetilde{q}, \ell, n_2, j}}{2^{\frac{j}{2}+k}} \left[ \int_0^1  e^{i \left(-n_1+n_2 \cdot \frac{2 \left(2^{\frac{j}{2}-m}\widetilde{q}+\ell \right)}{2^{\frac{j}{2}}} \right)t} dt \right] dx.
\end{eqnarray*}
Now, as usual, we maximize the contribution from the $t$-integral, which further leads to 
\begin{eqnarray} \label{20250209eq01}
\left| \Lambda_{j, j, m, \lambda(\cdot)}^k(f_1, f_2, f_3)\right| &\lesssim& 2^{-\frac{j}{4}+\frac{k}{2}} \sum_{\substack{ \widetilde{q} \sim 2^m \\ p \sim 2^{\frac{j}{2}+k}}} \sum_{\substack{\ell \sim 2^{\frac{j}{2}-m} \\ n_2 \sim 2^{\frac{j}{2}}}} \left| \left \langle f_1, \phi_{\frac{p}{2^k}-\left(2^{\frac{j}{2}-m}\widetilde{q}+\ell \right), 2n_2 \left(\frac{\widetilde{q}}{2^m}+\frac{\ell}{2^{\frac{j}{2}}} \right)}^{j, k} \right \rangle \right| \left|  \left \langle f_2, \phi_{ p+\frac{\left(2^{\frac{j}{2}-m} \widetilde{q}+\ell \right)^2}{2^{\frac{j}{2}}}, n_2}^{j, 2k} \right \rangle \right| \nonumber \\
&&\cdot \left| \left \langle f_3^{\widetilde{q}}, \phi_{p, n_2 \left(1+\frac{2\widetilde{q}}{2^{m+k}}+\frac{2\ell}{2^{\frac{j}{2}+k}} \right)}^{j, 2k} \right \rangle \right|\,,
\end{eqnarray}
where here for each $\widetilde{q} \sim 2^m$ we set $f_3^{\widetilde{q}}(x):=f_3(x)e^{\frac{i\lambda(x) \widetilde{q}^3}{2^{3m+3k}}}$.

Once at this point, we define the \emph{sparse index set} as follows: for $\mu>0$ sufficiently small, 
$$
\textsf{S}_\mu(f_i):=\left\{(p_i, n_i): \left|\left \langle f_i, \phi_{p_i, n_i}^{j, ik} \right \rangle \right|^2 \gtrsim 2^{-\mu j} \int_{I_{p_i}^{j, ik}} |f_i|^2,  \quad  p_i \sim 2^{\frac{j}{2}+(i-1)k}, \; n_i \sim 2^{\frac{j}{2}} \right\}, \quad i \in \{1, 2\}.
$$
Define also the \emph{uniform index set} $\textsf{U}_\mu(f_i)$ to be the complement of the set $\textsf{S}_\mu(f_i), i\in\{1, 2\}$. We now decompose the input functions $f_1$ and $f_2$, as usual, into their \emph{uniform} and \emph{sparse} components:  
$$
f_i^{\textsf{U}_\mu}:=\sum_{(p_i, n_i) \in \textsf{U}_\mu(f_i)} \one_{I_{p_i}^{i, ik}} f_i^{\omega_i^{n_i}} \quad \textrm{and} \quad f_i^{\textsf{S}_\mu}:=\sum_{(p_i, n_i) \in \textsf{S}_\mu(f_i)} \one_{I_{p_i}^{i, ik}} f_i^{\omega_i^{n_i}}, \qquad i\in\{1, 2\}, 
$$
where $\omega_i^{n_i}:=\left[n_i 2^{\frac{j}{2}+ik}, (n_i+1)2^{\frac{j}{2}+ik} \right]$.

With these done, we split $ \Lambda_{j, j, m, \lambda(\cdot)}^k(f_1, f_2, f_3)$ into its uniform and sparse components as follows:
$$
\Lambda_{j, j, m, \lambda(\cdot)}^{k, \textsf{S}_\mu}(f_1, f_2, f_3):=\Lambda_{j, j, m, \lambda(\cdot)}^k (f_1^{\textsf{S}_\mu}, f_2^{\textsf{S}_\mu}, f_3) 
$$
and
$$
\Lambda_{j, j, m, \lambda(\cdot)}^{k, \textsf{U}_\mu}(f_1, f_2, f_3):=\Lambda_{j, j, m, \lambda(\cdot)}^k(f_1, f_2, f_3)-\Lambda_{j, j, m, \lambda(\cdot)}^{k, \textsf{S}_\mu}(f_1, f_2, f_3).
$$
In the uniform case -- \emph{i.e.}, if one of the functions $f_1$ or $f_2$ is uniform--assume without loss of generality that, $f_1=f_1^{\textsf{U}_\mu}$. Departing from \eqref{20250209eq01}, we first apply Cauchy-Schwarz in $n_2$, followed by a (backward) change of variable $q=2^{\frac{j}{2}-m}\widetilde{q}+\ell$, and finally by an application of Cauchy-Schwarz first in $q$ and then in $p$:
\begin{eqnarray*}
&& \left| \Lambda_{j, j, m, \lambda(\cdot)}^k(f^{\textsf{U}_\mu}_1, f_2, f_3)\right| \lesssim 2^{-\frac{j}{4}+\frac{k}{2}-\frac{\mu j}{2}} \sum_{\substack{ \widetilde{q} \sim 2^m \\ p \sim 2^{\frac{j}{2}+k}}} \sum_{\substack{\ell \sim 2^{\frac{j}{2}-m} \\ n_2 \sim 2^{\frac{j}{2}}}} \left\|f_1 \right\|_{L^2 \left(I^{j, k}_{\frac{p}{2^k}-\left(2^{\frac{j}{2}-m}\widetilde{q}+\ell \right)} \right)} \\
&& \qquad \qquad \qquad \qquad \qquad \qquad \qquad \cdot \left|  \left \langle f_2, \phi_{ p+\frac{\left(2^{\frac{j}{2}-m} \widetilde{q}+\ell \right)^2}{2^{\frac{j}{2}}}, n_2}^{j, 2k} \right \rangle \right| \left| \left \langle f_3^{\widetilde{q}}, \phi_{p, n_2 \left(1+\frac{2\widetilde{q}}{2^{m+k}}+\frac{2\ell}{2^{\frac{j}{2}+k}} \right)}^{j, 2k} \right \rangle \right| \\
&& \quad \lesssim 2^{-\frac{j}{4}+\frac{k}{2}-\frac{\mu j}{2}} \sum_{\substack{ \widetilde{q} \sim 2^m \\ p \sim 2^{\frac{j}{2}+k} \\ \ell \sim 2^{\frac{j}{2}-m}}} \left\|f_1 \right\|_{L^2 \left(I^{j, k}_{\frac{p}{2^k}-\left(2^{\frac{j}{2}-m}\widetilde{q}+\ell \right)} \right)} \left\|f_2 \right\|_{L^2 \left(I_{ p+\frac{\left(2^{\frac{j}{2}-m} \widetilde{q}+\ell \right)^2}{2^{\frac{j}{2}}}}^{j, 2k} \right)} \left\| f_3^{\widetilde{q}} \right\|_{L^2 \left(I_p^{j, 2k} \right)} \\
&& \quad \lesssim 2^{-\frac{j}{4}+\frac{k}{2}-\frac{\mu j}{2}} \sum_{\substack{p \sim 2^{\frac{j}{2}+k} \\ q \sim 2^{\frac{j}{2}}}}  \left\|f_1 \right\|_{L^2 \left(I^{j, k}_{\frac{p}{2^k}-q} \right)} \left\|f_2 \right\|_{L^2 \left(I_{ p+\frac{q^2}{2^{\frac{j}{2}}}}^{j, 2k} \right)} \left\| f_3 \right\|_{L^2 \left(I_p^{j, 2k} \right)} \\
&& \quad \lesssim 2^{\frac{k}{2}-\frac{\mu j}{2}} \left\|f_1 \right\|_{L^2(I^k)}\left\|f_2 \right\|_{L^2(I^k)}\left\|f_3 \right\|_{L^2(I^k)}.
\end{eqnarray*}
In the sparse case--\emph{i.e.} if both $f_1$ and $f_2$ are sparse--from \eqref{20250209eq01} we have
\begin{align*} 
& \left| \Lambda_{j, j, m, \lambda(\cdot)}^k(f^{\textsf{S}_\mu}_1, f^{\textsf{S}_\mu}_2, f_3)\right| \lesssim 2^{-\frac{j}{4}+\frac{k}{2}} \cdot 2^{2\mu j} \cdot \left(\# \sTFC \right)^{\frac{1}{2}} \\
& \quad \cdot \left[ \sum_{\substack{ \widetilde{q} \sim 2^m \\ p \sim 2^{\frac{j}{2}+k}}} \sum_{\substack{\ell \sim 2^{\frac{j}{2}-m} \\ n_2 \sim 2^{\frac{j}{2}}}}   \left\|f_1 \right\|^2_{L^2 \left(I^{j, k}_{\frac{p}{2^k}-\left(2^{\frac{j}{2}-m}\widetilde{q}+\ell \right)} \right)} \left\|f_2 \right\|^2_{L^2 \left(I_{ p+\frac{\left(2^{\frac{j}{2}-m} \widetilde{q}+\ell \right)^2}{2^{\frac{j}{2}}}}^{j, 2k} \right)} \left| \left \langle f_3^{\widetilde{q}}, \phi_{p, n_2 \left(1+\frac{2\widetilde{q}}{2^{m+k}}+\frac{2\ell}{2^{\frac{j}{2}+k}} \right)}^{j, 2k} \right \rangle \right|^2 \right]^{\frac{1}{2}} \nonumber \\
& \lesssim 2^{-\frac{j}{4}+\frac{k}{2}} \cdot 2^{2\mu j} \cdot \left(\# \sTFC \right)^{\frac{1}{2}} \nonumber \\
& \quad \cdot \left[ \sum_{\substack{ \widetilde{q} \sim 2^m \\ p \sim 2^{\frac{j}{2}+k} \\ \ell \sim 2^{\frac{j}{2}-m}}}  \left\|f_1 \right\|^2_{L^2 \left(I^{j, k}_{\frac{p}{2^k}-\left(2^{\frac{j}{2}-m}\widetilde{q}+\ell \right)} \right)} \left\|f_2 \right\|^2_{L^2 \left(I_{ p+\frac{\left(2^{\frac{j}{2}-m} \widetilde{q}+\ell \right)^2}{2^{\frac{j}{2}}}}^{j, 2k} \right)} \left\| f_3 \right\|_{L^2 \left(I_p^{j, 2k} \right)}^2 \right]^{\frac{1}{2}} 
\end{align*}
where 
$$
\sTFC:=\left\{ \substack{(p, \widetilde{q}, l) \in \Z \times \Z \times \Z  \\ \\ p \sim 2^{\frac{j}{2}+k}, \; \widetilde{q} \sim 2^m \; \ell \sim 2^{\frac{j}{2}-m}}:  L_1 \left(\frac{p}{2^k}-l-2^{\frac{j}{2}-m}\widetilde{q} \right) \cong 2\left( \frac{\widetilde{q}}{2^m}+\frac{\ell}{2^{\frac{j}{2}}} \right)L_2 \left(p+\frac{\left(2^{\frac{j}{2}-m} \widetilde{q}+\ell \right)^2}{2^{\frac{j}{2}}} \right)   \right\},
$$
with $L_i: \left[2^{\frac{j}{2}+(i-1)k}, 2^{\frac{j}{2}+(i-1)k+1} \right] \cap \Z \mapsto \left[2^{\frac{j}{2}}, 2^{\frac{j}{2}+1} \right] \cap \Z, i \in \{1, 2\}$ being some measurable functions. 

Our desired decay bound follows from the control over the size of $\sTFC$; indeed, one can verify that there exists some $\widetilde{\epsilon}>0$ such that 
$$
\# \sTFC \lesssim 2^{j+k-\widetilde{\epsilon} j}. 
$$
Consequently, the estimates for both uniform and sparse components can be summarized as follows: there exists $\widetilde{\epsilon}>0$, such that
$$
\left| \Lambda_{j, j, m, \lambda(\cdot)}^{k, \textsf{U}_\mu}(f_1, f_2, f_3)\right| \lesssim 2^{-\frac{\mu}{2} j} \prod_{r=1}^3 \left\|f_r \right\|_{L^{p_j} \left(3I^k \right)} \quad \textrm{and} \quad \left| \Lambda_{j, j, m, \lambda(\cdot)}^{k, \textsf{S}_\mu}(f_1, f_2, f_3)\right| \lesssim 2^{(2\mu-\widetilde{\epsilon}) j} \prod_{r=1}^3 \left\|f_r \right\|_{L^{p_j} \left(3I^k \right)},
$$
and therefore, we conclude that
\begin{equation} \label{20250210eq01}
\left|\Lambda_{j, j, m, \lambda(\cdot)}^k(f_1, f_2, f_3) \right| \lesssim 2^{-\frac{\widetilde{\epsilon}}{5} j} \prod_{r=1}^3 \left\|f_r \right\|_{L^{p_j} \left(3I^k \right)},
\end{equation} 
whenever $(p_1, p_2, p_3)=(2, 2, \infty), (2, \infty, 2)$ or $(\infty, 2, 2)$. 

\medskip

\subsection{Treatment of Case ({\bf O2}): Other stationary off-diagonal cases}

Our goal in this section is to extend Theorem \ref{20241018mainthm01} to a slightly more general set-up that also covers the other stationary off-diagonal cases described in Case ({\bf O2}). Indeed, the main result of our section is

\begin{thm} \label{generalizedmain}
For any $k, m \in \N$, consider
$$
\widetilde{\Lambda}^k_m(f_1, f_2, f_3):=2^k \int_{I^k} \int_{I^k} f_1(x-t)f_2(x+t^2)f_3(x)e^{i\lambda(x) t^3} dtdx, 
$$
where
\begin{enumerate}
    \item [$\bullet$] $\lambda(x) \in \left[2^{\frac{m}{2}+\kappa m+3k}, 2^{m+3k+1}\right]$ for some $\kappa>0$;
    \item [$\bullet$] one of the following situations holds:
    \begin{enumerate}
        \item [(1)] $\supp \ \widehat{f_1} \subseteq \left[2^k, 2^{m+k+1} \right]$ and $\supp \ \widehat{f_2} \subseteq \left[2^{m+2k}, 2^{m+2k+1} \right]$;
        \item [(2)] $\supp \ \widehat{f_1} \subseteq \left[2^{m+k}, 2^{m+k+1} \right]$ and $\supp \ \widehat{f_2} \subseteq \left[2^{2k}, 2^{m+2k+1} \right]$.
    \end{enumerate}
\end{enumerate}
Moreover, if either $\supp \ \widehat{f_1}$ is away from $[2^{-100} \cdot 2^{m+k}, 2^{m+k+1}]$ or $\supp \ \widehat{f_2}$ is away from $[2^{-100} \cdot 2^{m+2k}, 2^{m+2k+1}]$, then one may further impose $\lambda \in [2^{m+3k}, 2^{m+3k+1}]$.  

Under the above assumptions, there exists some $\epsilon>0$, such that 
$$
\left| \widetilde{\Lambda}^k_m(f_1, f_2, f_3) \right| \lesssim 2^{-\epsilon \min \{2k, m\}} \prod_{j=1}^3 \left\|f_j \right\|_{L^{p_j}(3I^k)}, 
$$
whenever $(p_1, p_2, p_3)=(2, \infty, 2)$, $(\infty, 2, 2)$ or $(2, 2, \infty)$. 
\end{thm}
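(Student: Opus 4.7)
The plan is to reduce Theorem \ref{generalizedmain} to the diagonal machinery developed in Theorem \ref{20241018mainthm01} and Sections \ref{Mainsect}--\ref{20241028subsec01} via two layers of dyadic decomposition. I focus on Case (1); Case (2) is symmetric after exchanging the roles of $f_1$ and $f_2$. Apply a Littlewood--Paley decomposition $f_1 = \sum_{j=0}^{m} f_{1,j+k}$ with $\supp \widehat{f_{1,j+k}} \subseteq [2^{j+k}, 2^{j+k+1}]$, and decompose the range of $\lambda$ dyadically via the localizers $\one_{\{\lambda(x) \in [2^{m'+3k}, 2^{m'+3k+1}]\}}$ for $\frac{m}{2} + \kappa m \le m' \le m$. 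This yields
\[
\widetilde{\Lambda}^k_m(f_1, f_2, f_3) = \sum_{j=0}^m \sum_{m'} \widetilde{\Lambda}^{k,j,m'}_m(f_{1,j+k}, f_2, f_3),
\]
where each summand has all three frequency scales (for $f_1$, $f_2$, and $\lambda$) held fixed. Under the additional hypothesis that $\supp \widehat{f_1}$ or $\supp \widehat{f_2}$ is strictly away from the diagonal scale, the outer summation collapses to the single value $m'=m$ and only the $j$-summation survives.

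For each fixed pair $(j, m')$ I follow the blueprint of Theorem \ref{20241018mainthm01}: linearize $\lambda$ as in \ref{KH1}, producing a measurable $\widetilde{\lambda}$ constant on intervals of length $2^{-m'-2k}$; apply a sparse-uniform dichotomy in the spirit of Section \ref{20241216subsec01} with the natural scales dictated by $f_{1,j+k}$ (morally constant on intervals of length $2^{-j-k}$) and $f_2$ (morally constant on intervals of length $2^{-m-2k}$); and implement the constancy propagation strategy of Sections \ref{20241217subsec01}--\ref{20241028subsec01} to extend the constancy scale of $\widetilde{\lambda}$ to the coarser scale needed by the Rank-I LGC analysis of Section \ref{20250228sec03}. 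Because $m' \ge \frac{m}{2} + \kappa m$, the intrinsic decay $2^{-\epsilon \min\{2k, m'\}}$ produced by this argument is comparable, up to a $\kappa$-dependent constant in the exponent, to the target $2^{-\epsilon \min\{2k, m\}}$. The total loss from summing $O(m^2)$ dyadic pieces is polynomial in $m$ and is absorbed by shrinking $\epsilon$.

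The main obstacle is verifying that the time-frequency correlation machinery of Section \ref{20250228sec03} -- in particular the Rank-I LGC discretization and the structural counting bound on $\sTFC$ (cf.\ \eqref{20241015eq23}) -- remains effective when $f_{1,j+k}$ has frequency scale $2^{j+k}$ strictly smaller than $2^{m+k}$. Concretely, the Gabor frame decomposition for $f_{1,j+k}$ must be performed with spatial resolution $2^{-j/2-k}$ matching its frequency support, and the resulting \emph{Time Correlation} identities analogous to $p_1 \cong \frac{p}{2^k}-q$ and $p_2 \cong p + \frac{q^2}{2^{m/2}}$ must be re-derived with these modified scales. Fortunately, the stationary-phase analysis of $\varphi^k_{\xi,\eta,\lambda}$ and the time-frequency correlation set retain their curvature-driven structure since $m' \cong m$ up to the $\kappa$-margin: the decisive cancellation exploited in Step IV of Proposition \ref{20241018prop01}, via the curvature of the map $q \mapsto \frac{\sqrt{2^{m/2}u+q^2}}{q}$, is unaffected. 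These adaptations are bookkeeping-heavy but present no new conceptual difficulty beyond those already resolved for the diagonal case.
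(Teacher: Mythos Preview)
Your decomposition strategy is unnecessary and, as written, creates a genuine gap rather than a mere bookkeeping burden. After splitting $f_1=\sum_{j=0}^m f_{1,j+k}$ and the range of $\lambda$ into dyadic shells indexed by $m'$, each piece $\widetilde{\Lambda}^{k,j,m'}_m$ has $f_1$ at frequency scale $2^{j+k}$, $f_2$ at scale $2^{m+2k}$, and $\lambda$ at scale $2^{m'+3k}$. When $j$ is strictly smaller than $m$ these are \emph{not} the hypotheses of Theorem~\ref{20241018mainthm01}, which requires all three parameters to coincide. You therefore cannot invoke the diagonal theorem on the pieces; you would have to rerun the entire Rank-II/Rank-I LGC argument with mismatched scales, which is precisely the content of Theorem~\ref{generalizedmain} itself. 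In particular, for $j\ll m$ the Gabor frame for $f_{1,j+k}$ at spatial resolution $2^{-j/2-k}$ is incompatible with the linearizing $t$-scale $2^{-m/2-k}$ dictated by $f_2$ and $\lambda$; on those $t$-intervals $f_{1,j+k}(x-t)$ is essentially constant, the time-frequency correlation set degenerates, and the counting bound \eqref{20241015eq23} loses its meaning. Calling this ``bookkeeping-heavy but no new conceptual difficulty'' is not a proof.

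The paper's route is much shorter and avoids the decomposition entirely. The observation is that the proof of Theorem~\ref{20241018mainthm01} never uses that $\widehat{f_1}$ is supported in a \emph{single} dyadic shell; it uses only three facts: (1) $f_1$ is morally constant on intervals of length $2^{-m-k}$, (2) $f_2$ is morally constant on intervals of length $2^{-m-2k}$, and (3) the renormalized phase $\widetilde{\lambda}(x)=\lambda(x)/2^{\frac{m}{2}+3k}$ satisfies $\widetilde{\lambda}\in[2^{\kappa m},2^{m/2}]$. In Case~(1) of the theorem, the full $f_1$ (undecomposed) has $\supp\widehat{f_1}\subseteq[2^k,2^{m+k+1}]$, hence is still morally constant at scale $2^{-m-k}$; likewise for $f_2$ in Case~(2). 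The assumption $\lambda(x)\in[2^{\frac{m}{2}+\kappa m+3k},2^{m+3k+1}]$ gives exactly (3). With these three ingredients verified, the entire machinery of Sections~\ref{20241216subsec01}--\ref{20241028subsec01} (sparse--uniform dichotomy, constancy propagation, Rank-I LGC or the reference to \cite[Theorem~4.3]{HL23}) runs verbatim. No summation in $j$ or $m'$, no loss of $O(m^2)$ to absorb.
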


\subsubsection{Deducing Case ({\bf O2}) from Theorem \ref{generalizedmain}.} In this brief subsection we provide the proof sketch for how to obtain Case ({\bf O2}) if one assumes the veridicity of Theorem \ref{generalizedmain}.

\medskip

\noindent \underline{\textsf{Case 1: $j \cong \ell, k \ge 0$, and $\frac{j}{2} \le m \le j-100$.}} We start by noticing that without loss of generality, we may assume $j=l$ and also that in this situation, the role of the parameter $m$ in Theorem \ref{generalizedmain} is now replaced by $j$. Moreover, using estimate \eqref{20250210eq01} derived from the Case ({\bf O1}) and a standard continuity argument  (see, again \cite[Appendix 13.3]{HL23}), we may also assume that 
$$
\frac{j}{2}+\kappa j \le m \le j-100
$$
for some $\kappa>0$ sufficiently small (note that estimate \eqref{20250210eq01} is independent of $k$). Therefore, the main term we have to control in this case is given by
$$
\Lambda_{j, j, m, \lambda(\cdot)}^k(f_1, f_2, f_3)=2^k \int_{I^k} \int_{I^k} f_{1, j+k}(x-t)f_{2, j+2k}(x+t^2)f_3(x)e^{i \lambda(x) t^3} \phi \left(\frac{\lambda(x)}{2^{m+3k}} \right) dtdx. 
$$
Taking now
$$
\widetilde{\Lambda}_j^{k, 1}(f_1, f_2, f_3):=\sum_{\frac{j}{2}+\kappa j \le m \le j-100} \Lambda_{j, j, m, \lambda(\cdot)}^k(f_1, f_2, f_3),
$$
it is easy to notice that Theorem \ref{generalizedmain} applies. 

\medskip 

\noindent \underline{\textsf{Case 2: $l \cong m, k \ge 0$, and $m>j+100$.}} Without loss of generality, we may assume $l=m$, and hence the main term to understand is given by
$$
\Lambda_{j, m, m, \lambda(\cdot)}^k(f_1, f_2, f_3)=2^k \int_{I^k} \int_{I^k} f_{1, j+k}(x-t)f_{2, m+2k}(x+t^2)f_3(x)e^{i \lambda(x) t^3} \phi \left(\frac{\lambda(x)}{2^{m+3k}} \right) dtdx. 
$$
Note that to apply Theorem \ref{generalizedmain}, it suffices to consider
\begin{eqnarray*}
\widetilde{\Lambda}_m^{k, 2}(f_1, f_2, f_3)%
&:=& \sum_{0 \le j \le m-100} \Lambda_{j, m, m, \lambda(\cdot)}^k(f_1, f_2, f_3) \\
&=& 2^k \int_{I^k} \int_{I^k} f_{1, j, k}(x-t)f_{2, m+2k}(x+t^2)f_3(x)e^{i \lambda(x) t^3} \phi \left(\frac{\lambda(x)}{2^{m+3k}} \right) dtdx,
\end{eqnarray*}
where
$$
f_{1, j, k}:= \calF^{-1}\left( \sum_{0 \le j \le m-100} \widehat{f_{1, j+k}} \right).
$$

\medskip 

\noindent \underline{\textsf{Case 3: $m \cong j, k \ge 0$, and $m>l+100$.}} The treatment of this case resembles \textsf{Case 2} above. In this situation, without loss of generality, one considers
\begin{eqnarray} \label{20250210eq20}
\widetilde{\Lambda}_m^{k, 3}(f_1, f_2, f_3)%
&:=& \sum_{0 \le l \le m-100} \Lambda_{m, l, m, \lambda(\cdot)}^k(f_1, f_2, f_3) \nonumber \\
&=& \sum_{0 \le l \le m-100} 2^k \int_{I^k} \int_{I^k} f_{1, m+k}(x-t)f_{2, l+2k}(x+t^2)f_3(x)e^{i \lambda(x) t^3} \phi \left(\frac{\lambda(x)}{2^{m+3k}} \right) dtdx \nonumber \\
&=& 2^k \int_{I^k} \int_{I^k} f_{1, m+k}(x-t)f_{2, m, 2k}(x+t^2)f_3(x)e^{i \lambda(x) t^3} \phi \left(\frac{\lambda(x)}{2^{m+3k}} \right) dtdx,
\end{eqnarray}
where
$$
f_{2, m, 2k}:= \calF^{-1} \left( \sum_{0 \le l \le m-100} \widehat{f_{2, l+2k}} \right).
$$
It is clear that \eqref{20250210eq20} satisfies all the assumptions of Theorem \ref{generalizedmain}.

\subsubsection{Proof of Theorem \ref{generalizedmain}} To this end, we notice that the proof of Theorem \ref{generalizedmain} follows by a straightforward modification of the argument used in the proof of Theorem \ref{20241018mainthm01}. Indeed, carefully  examining the proof of Theorem \ref{20241018mainthm01}, one employs the following three \emph{key} ingredients:
\begin{enumerate}
    \item [(1)] the essential spatial length of constancy for $f_1$ is $2^{-m-k}$;
    \item [(2)] the essential spatial length of constancy of $f_2$ is $2^{-m-2k}$;
    \item [(3)] the height of the re-normalized phase $\widetilde{\lambda}(x):=\frac{\lambda(x)}{2^{\frac{m}{2}+3k}}$ lies in the range $\left[2^{\kappa m}, 2^{\frac{m}{2}} \right]$. 
\end{enumerate}
With these in mind, one can follow the same strategy outlined in Section \ref{20241023overview01}: first, apply the Rank-II LGC to propagate the length of constancy of $\lambda$ to a larger scale; then, use the argument presented in \cite[Theorem 4.3]{HL23} (when $k \ge \frac{m}{2})$) or Rank-I LGC (when $0 \le k \le \frac{m}{2}$) to complete the proof. We leave further details to the interested reader.

\section{Treatment of the low oscillatory component $BHC^{Lo}$} \label{20250228sec02}

In this section, we treat the low oscillatory component $BHC^{Lo}(f_1, f_2):=\sup\limits_{\lambda \in \R} \left| BHC_\lambda^{Lo}(f_1, f_2) \right|$, where we recall that 
 $$
BHC_{\lambda}^{Lo}(f_1, f_2):=\sum_{k \in \Z} \sum_{(j, l, m) \in \Z^3 \backslash \N^3}BHC_{j, l, m, \lambda}^k(f_1, f_2).
$$
We start with the routine linearization procedure by letting $\lambda(\cdot)$ be the measurable function that maximizes the supremum $\sup\limits_{\lambda \in \R} \left| BHC_\lambda^{Lo}(f_1, f_2) \right|$, and therefore, it suffices to bound the operator 
\begin{equation} \label{multdec}
BHC_{\lambda(\cdot)}^{Lo}(f_1, f_2):=\sum_{k \in \Z} \sum_{(j, l, m) \in \Z^3 \backslash \N^3}BHC_{j, l, m, \lambda(\cdot)}^k(f_1, f_2),
\end{equation} 
where we recall that 
\begin{equation} \label{20241104eq01}
BHC_{j, l, m, \lambda(x)}^k(f_1, f_2)(x):=2^k\phi \left(\frac{\lambda(x)}{2^{m+3k}} \right) \int_{\R} f_{1, j+k}(x-t)f_{2, l+2k}(x+t^2)e^{i\lambda(x)t^3} \rho(2^k t) dt.
\end{equation}
Here, $\rho$ is an odd smooth cut-off and $\phi$ is an even and positive smooth cut-off both supported in $\left[-4, 4 \right] \backslash \left[-\frac{1}{4}, \frac{1}{4} \right]$, $f_{1, j+k}=\calF^{-1} \left( \widehat{f_1} \phi \left(\frac{\cdot}{2^{j+k}} \right)\right)$ and $f_{2, l+2k}=\calF^{-1} \left(\widehat{f_2} \phi \left(\frac{\cdot}{2^{l+2k}} \right) \right)$. The main result of this section is given by
\begin{thm}\label{thmin} 
For any $1<p,\,q<\infty$ and $\frac{1}{2}< r<\infty$ with $\frac{1}{p}+\frac{1}{q}=\frac{1}{r}$ the following holds:
\begin{equation}\label{20241106thm01}
\left\|BHC^{Lo}(f_1, f_2)\right\|_r\lesssim_{p,q} \|f_1\|_p\,\|f_2\|_q\,.
\end{equation}
\end{thm}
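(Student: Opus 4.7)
The plan is to exploit the defining feature of the low-oscillatory regime $(j,l,m) \in \Z^3 \setminus \N^3$---namely, that at least one of the three contributions in $\varphi^k_{\xi,\eta,\lambda}(t) = -\tfrac{\xi}{2^k}t + \tfrac{\eta}{2^{2k}}t^2 + \tfrac{\lambda}{2^{3k}}t^3$ is of size $O(1)$ (or smaller) on $|t|\sim 1$---and to use a Taylor expansion of the corresponding exponential factor in order to reduce $BHC^{Lo}$ to operators whose boundedness is already known. Following Proposition \ref{20250317prop01}, I would first linearize the supremum by a measurable $\lambda(\cdot)$, so that it suffices to bound $BHC^{Lo}_{\lambda(\cdot)}$ in \eqref{multdec}. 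I would then split the sum into three overlapping sub-sums according to which of $j,l,m$ is $\le 0$; by symmetry, it is enough to describe the argument for one of them, and the most representative (and most novel) is the case $m \le 0$, in which the Carleson-type phase $\lambda(x)t^3$ is the small one.

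In the case $m \le 0$, on the support of $\phi_2(\lambda(\cdot)/2^{m+3k})$ one has $|\lambda(x)|\lesssim 2^{m+3k}$, hence $|\lambda(x)t^3|\lesssim 2^m$ for $|t|\sim 2^{-k}$. I would then Taylor expand
\[
e^{i\lambda(x)t^3} \;=\; \sum_{n \ge 0} \frac{(i\lambda(x)t^3)^n}{n!},
\]
and substitute into \eqref{20241104eq01}. After the change of variable $t\mapsto 2^{-k} t$, each Taylor term factors (up to a prefactor uniformly bounded in $x$ by $C_n 2^{mn}$) into a frequency-restricted bilinear Hilbert transform along the parabola with the smooth weight $s^{3n}\rho(s)$ replacing the standard singular kernel. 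The $L^{p_1}\times L^{p_2}\to L^r$ boundedness of such objects in the full H\"older range $\frac{1}{p_1}+\frac{1}{p_2}=\frac{1}{r}$, $\frac{1}{2}<r<\infty$, follows from the work of Li \cite{Li13} and Lie \cite{Lie15} (and their refinements in \cite{LX16,GL20}) with bounds growing at most polynomially in $n$ and hence absorbed by $1/n!$. The sum over $n$ therefore converges absolutely, with rate $\sum_n 2^{mn}/n!\lesssim 1$, while for each fixed $n$ the residual $(j,l,m,k)$-summation is handled by Littlewood--Paley orthogonality together with the dyadic partition of unity coming from $\phi_2$.

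The sub-sums corresponding to $j \le 0$ and $l \le 0$ run in parallel: the Taylor expansion is now applied to $e^{-i\xi t/2^k}$ (equivalently, to $f_{1,j+k}(x-t)$ around $x$, using $|\xi t|\lesssim 2^j$) or to $e^{i\eta t^2/2^{2k}}$, producing a geometric prefactor $2^{jn}$ (resp.\ $2^{ln}$) and reducing each Taylor term to a pointwise product of a low-frequency Littlewood--Paley piece of one input with a one-variable oscillatory object acting on the other. The hard part will arise precisely when, in these two sub-cases, the parameter $m$ can still be arbitrarily large and positive, so that the remaining one-variable object is a genuine $\lambda$-maximal oscillatory singular integral. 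The plan is to either invoke the Stein--Wainger framework \cite{swmul} for the polynomial Carleson operator when the remaining factor is fully one-linear, or, when additional bilinear structure persists, to reduce each Taylor term to the analysis already developed in Section \ref{Mainsect} and Section \ref{20250308sec01} (Theorem \ref{20241018mainthm01} and Theorem \ref{generalizedmain}), exploiting the factorial decay $1/n!$ to absorb the polynomial-in-$n$ constants appearing in those bounds and ultimately to sum absolutely in $(j,l,m,n,k)$.
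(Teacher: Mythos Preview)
Your broad strategy---Taylor-expand whichever exponential factor has a non-positive index and thereby reduce to known objects---matches the paper's, and for the easiest cases (e.g.\ $j<0$, $l<0$, $m>0$, the paper's Case~B) your sketch would go through essentially as written. For the block $m\le 0$ the paper takes a shorter route than your full Taylor series: it writes $e^{i\lambda(x)t^3}=1+(e^{i\lambda(x)t^3}-1)$ and observes that summing the $m\le0$ and $k$-pieces of the ``$1$'' term produces a \emph{truncation} in $t$ depending on $\lambda(x)$, hence is dominated pointwise by the maximal truncated bilinear Hilbert transform $\calH^*_{\calC}(f_1,f_2)$, while the remainder is dominated by the bilinear maximal function $\calM_{\calC}(f_1,f_2)$ via $|e^{is}-1|\le|s|$. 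Your infinite series would also work, but you should be aware that the $n=0$ term does not give the full curved bilinear Hilbert transform: the residual $x$-dependent cutoff $\sum_{m\le0}\phi_2(\lambda(x)/2^{m+3k})$ forces the \emph{maximal truncated} version, which is why the paper lands on $\calH^*_{\calC}$ rather than $\calH_{\calC}$.

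There is a genuine gap in your treatment of the sub-sums $j<0,\,l\ge0,\,m>0$ and $j\ge0,\,l<0,\,m>0$ (the paper's Cases~D and~F). After Taylor-expanding away the small factor, say $e^{-i\xi t/2^k}$, the leading term is $(f_1*\check\psi_k)(x)$ times a $t$-integral involving only $f_{2,l+2k}(x+t^2/2^{2k})$ and $e^{i\lambda(x)t^3/2^{3k}}$. This is a \emph{linear} curved Carleson-type object acting on $f_2$, not a bilinear one, so your proposed fallback to Theorems~\ref{20241018mainthm01} and~\ref{generalizedmain} does not apply---those theorems require both inputs to carry genuine $t$-dependence. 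Your other proposed tool, the Stein--Wainger result \cite{swmul}, is also not what is needed: that 1970 paper gives uniform bounds on scalar oscillatory integrals, and even the 2001 paper \cite{sw} only gives global $L^p$ bounds for the polynomial Carleson operator. What the argument actually requires, in the stationary subcase $l\cong m$, is the \emph{scale-localized smoothing} estimate $\|\calC_{\gamma,2k,m}f\|_2\lesssim 2^{-\epsilon m}\|f*\check\phi_{2k+m}\|_2$ for the $\gamma$-Carleson building block with phase $\gamma(x,t)=\lambda(x)t^{3/2}$ (or $\lambda(x)t^3$); this is supplied by \cite[Proposition~48]{Lie2024}. That $2^{-\epsilon m}$ decay, combined with a Cauchy--Schwarz in $k$ against the partition $\sum_k\phi(\lambda(x)/2^{m+3k})^2\lesssim1$ and an interpolation with a trivial shifted-maximal bound, is what allows one to sum in both $k$ and $m$. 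Without this ingredient your sketch does not close in Cases~D and~F.
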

Our analysis will be split into seven distinct cases depending on the sign of each of the parameters $j,\,l,\,m\in\Z$ appearing in \eqref{multdec}. Indeed, for $*_1,\,*_2,\,*_3\in\{-,\,+\}$, we set
\begin{equation}\label{decompnegat}
BHC_{*_1,\,*_2,\,*_3, \lambda(\cdot)}(f_1, f_2):=\sum_{k\in\Z}\sum_{j\in \Z_{*_1}}\sum_{l\in \Z_{*_2}}\sum_{m\in \Z_{*_3}} BHC_{j, l, m, \lambda(\cdot)}^k(f_1, f_2),
\end{equation}
where $\Omega:=\left\{\left(*_1,\,*_2,\,*_3\right)\,|\, \left(*_1,\,*_2,\,*_3 \right)^3 \in \{-,\,+\}^3\setminus\{\left(+,+,+\right)\right\}$ and list below the seven cases that are of interest for us: 
\begin{itemize}
\item \textbf{Case A}: \textsf{$j<0$, $l<0$, and $m \le 0$;}
\item \textbf{Case B}: \textsf{$j<0$, $l<0$, and $m>0$;}
\item \textbf{Case C}: \textsf{$j<0$, $l\geq 0$, and $m \le 0$;}
\item \textbf{Case D}: \textsf{$j< 0$, $l\geq 0$, and $m> 0$;}
\item \textbf{Case E}: \textsf{$j\geq 0$, $l<0$, and $m \le 0$;}
\item \textbf{Case F}: \textsf{$j\geq 0$, $l<0$, and $m>0$;}
\item \textbf{Case G}: \textsf{$j \geq 0, l \ge 0$, and $m \le 0$.}
\end{itemize}
To this end, we introduce the notation $\psi(x):=\sum\limits_{\ell \in \Z \backslash \N} \phi \left(\frac{x}{2^{\ell}} \right)$, and $\check{\psi}_k(x):=2^k \check{\psi}(2^k x)$. 

\subsection{Treatment of the Cases A, C, E, and G: $m \le 0$}  We notice that in all of these cases $m \le 0$ which together with the support conditions on $\phi$ and $\rho$ yields $\lambda(x) t^3 \lesssim 1$. Therefore, one can put together all the cases {\bf A, C, E}, and {\bf G} and encapsulate them into the study of the following operator:
$$
BHC^{Lo, \le}_{\lambda (\cdot)}(f_1, f_2)(x):=\int_{\R} f_1(x-t)f_2(x+t^2)e^{i\lambda(x)t^3} \chi(\lambda(x), t) \frac{dt}{t},
$$
where 
$$
\chi(\lambda, t):=\sum_{\substack{\ell, k \in \Z \\ \ell+k \ge 0}} \rho(2^{3\ell} \lambda) \rho(2^k t).
$$
Note that $\chi$ enjoys the following properties: 1) $\chi(\lambda, t) \lesssim 1$ for al $\lambda, t \neq 0$, and 2) if $\chi(\lambda, t) \neq 0$, then $|\lambda t^3| \lesssim 1$. Our goal is to show that $BHC^{Lo, \le}_{\lambda (\cdot)}(f_1, f_2)$ satisfies estimate \eqref{20241106thm01}, which is a direct consequence of 
\begin{equation} \label{20250216eq21}
\left|BHC^{Lo, \le}_{\lambda (\cdot)}(f_1, f_2) \right| \lesssim \calH^*_{\calC}(f_1, f_2)+\calM_{\calC}(f_1, f_2)\,,
\end{equation}
where
\begin{enumerate}
    \item [$\bullet$] $\calH^*_{\calC}(f_1, f_2)$ is the \emph{maximal (smooth) truncation of the bilinear Hilbert transform along the curve $(-t, t^2)$}
    $$
    \calH^*_{\calC}(f_1, f_2)(x):=\sup_{r>0} \left| \int_\R f_1(x-t)f_2(x+t^2) \theta \left(\frac{t}{r} \right) \frac{dt}{t} \right|,
    $$
    with $\theta$ being a suitable smooth bump function supported near the origin, and
    \item [$\bullet$] $\calM_{\calC}(f_1, f_2)$ is the \emph{bilinear maximal function along the curve $(t, t^2)$}
    \begin{equation} \label{bilinearmax}
    \calM_{\calC}(f_1, f_2)(x):=\sup_{r>0} \frac{1}{2r} \int_{-r}^r \left|f_1(x-t) \right| \left|f_2(x+t^2) \right|dt.
    \end{equation}
\end{enumerate}
Indeed, notice that if \eqref{20250216eq21} holds, then the desired estimate \eqref{20241106thm01} follows from the existing estimates for these operators (see, e.g., \cite{{GL20}}). Now, to see that \eqref{20250216eq21} holds, we split $BHC^{Lo, \le}_{\lambda (\cdot)}(f_1, f_2)(x)$ as follows:
\begin{align*}
BHC^{Lo, \le}_{\lambda (\cdot)}(f_1, f_2)(x) 
&= \int_{\R} f_1(x-t)f_2(x+t^2)\chi(\lambda(x), t) \frac{dt}{t} \\
& \quad +\int_{\R} f_1(x-t)f_2(x+t^2) \left(e^{i\lambda(x)t^3}-1 \right) \chi(\lambda(x), t) \frac{dt}{t} \\
&:= I+II.
\end{align*}
For the first term $I$, we have 
\begin{align*}
|I| 
&= \left| \int_{\R} f_1(x-t)f_2(x+t^2)\chi(\lambda(x), t) \frac{dt}{t} \right| \\
&=\left| \sum_{\ell \in \Z} \rho (2^{3\ell} \lambda(x)) \int_\R f_1(x-t)f_2(x+t^2)  \theta\left(\frac{t}{2^\ell} \right) \frac{dt}{t} \right| \\
& \le \left( \sum_{\ell \in \Z} \rho \left(2^{3\ell} \lambda(x) \right) \right) \calH_{\calC}^*(f_1, f_2)(x) \lesssim \calH_{\calC}^*(f_1, f_2)(x),
\end{align*}
where here we denote $\theta(t):=\sum_{k \ge 0} \rho(2^k t)$ (a smooth bump function supported near the origin). 

For the second term, we use the basic relations $|e^{is}-1| \le |s|$ and $\chi(\lambda, t)\lesssim \one_{[-C, C]} (\lambda t^3)$ in order to deduce 
\begin{align*}
|II|
& \lesssim \int_{t: |\lambda(x)t^3| \lesssim 1} |f_1(x-t)||f_2(x+t^2)||\lambda(x)t^3| \frac{dt}{|t|} \\
& \lesssim  \sum_{\ell \in \N} 2^{-\ell} \cdot \frac{1}{2^{-\frac{\ell}{3}} |\lambda(x)|^{-\frac{1}{3}}} \int_{\{t: |t| \lesssim 2^{-\frac{\ell}{3}}|\lambda(x)|^{-\frac{1}{3}}\}   } |f_1(x+t)||f_2(x+t^2)|dt \\
& \lesssim \calM_{\calC}(f_1, f_2)(x). 
\end{align*}
This concludes the proof of \eqref{20250216eq21}. 

\subsection{Treatment of Cases B:  $j<0$, $l<0$, and $m>0$}

In this situation, for a given $k\in\Z$, the multiplier associated to $BHC_{-,-,+, \lambda(\cdot)}^k$ is given by
$$
\mathfrak{m}_{-, -, +, \lambda(x)}^k(\xi,\eta)=\sum_{m\in\N} \mathfrak{m}_{-, -, m, \lambda(x)}^k(\xi,\eta), 
$$
where 
$$
\mathfrak{m}_{-, -, m, \lambda(x)}^k(\xi,\eta):=\left[\int_\R e^{i \left(-\frac{\xi}{2^k}t+\frac{\eta}{2^{2k}}t^2+\frac{\lambda(x)}{2^{3k}} t^3 \right)}\rho(t)\,dt\right]\,\psi\left(\frac{\xi}{2^{k}}\right)\,
\psi\left(\frac{\eta}{2^{2k}}\right)\,\phi\left(\frac{\lambda(x)}{2^{m+3k}}\right)\,,
$$
Applying a Taylor series argument followed by an integration by parts, we further have
\begin{eqnarray*}\label{multdecB1}
&&\mathfrak{m}_{-, -, m, \lambda(x)}^k(\xi,\eta) \\
&&=\sum_{\ell_1, \ell_2 \ge 0} \frac{C_{\ell_1, \ell_2}}{\ell_1! \ell_2!} \left(\frac{\xi}{2^k} \right)^{\ell_1} \psi \left(\frac{\xi}{2^k} \right) \left(\frac{\eta}{2^{2k}} \right)^{\ell_2} \psi \left(\frac{\eta}{2^{2k}} \right) \phi \left(\frac{\lambda(x)}{2^{m+3k}} \right) \int_{\R} e^{i \frac{\lambda(x)}{2^{3k}}t^3} t^{\ell_1+2\ell_2} \rho(t)dt \nonumber \\
&& \approx \frac{1}{2^m} \sum_{\ell_1, \ell_2 \ge 0} \frac{\tilde{C}_{\ell_1, \ell_2}}{\ell_1! \ell_2!}\,\left(\frac{\xi}{2^k} \right)^{\ell_1} \psi \left(\frac{\xi}{2^k} \right) \left(\frac{\eta}{2^{2k}} \right)^{\ell_2} \psi \left(\frac{\eta}{2^{2k}} \right) \tilde{\phi} \left(\frac{\lambda(x)}{2^{m+3k}} \right)    \int_{\R} e^{i \frac{\lambda(x)}{2^{3k}}t^3} \left(t^{\ell_1+2\ell_2-2} \rho(t)\right)' dt\,,
\end{eqnarray*}
where $\tilde{C}_{\ell_1, \ell_2}$ and $\tilde{\phi}$ are some objects with features similar to the original $C_{\ell_1, \ell_2}$ and $\phi$, respectively. 

This implies
$$
\left| \sum_{k \in \Z} BHC_{-, -, +, \lambda(\cdot)}^k(f_1, f_2) (x)\right| \lesssim Mf_1(x)Mf_2(x),
$$
which gives the desired estimate \eqref{20241106thm01}. Here $Mf$ refers to the standard Hardy-Littlewood maximal operator.

\subsection{Treatment of Case D: $j< 0$, $l\ge 0$, and $m>0$}  
As before, we start with a Taylor series development for the multiplier: 
\begin{equation} \label{multdecE}
\frakm_{-, l, m, \lambda(x)}^k(\xi, \eta)=\sum_{\ell_1 \ge 0} \frac{C_{\ell_1}}{\ell_1!} \left(\frac{\xi}{2^k} \right)^{\ell_1} \psi \left(\frac{\xi}{2^k} \right) \phi \left(\frac{\eta}{2^{2k+l}} \right) \phi \left(\frac{\lambda(x)}{2^{3k+m}} \right) \int_{\R} e^{i \left(\frac{\eta}{2^{2k}}t^2+\frac{\lambda(x)}{2^{3k}}t^3 \right)} t^{\ell_1}\rho(t)dt
\end{equation} 
Due to the fast decay in the parameters $\ell_1$ it suffices to consider $\ell_1=0$.
\medskip

\noindent \textsf{Subcase E.1: $l\ncong m$.} \quad  In this situation the phase in the integral term of \eqref{multdecE} has no stationary points. Thus, applying an integration by parts argument, we have
\begin{equation}\label{multdecE1}
\frakm_{-, l, m, \lambda(x)}^k(\xi, \eta) \approx  \frac{1}{2^{\max\{l, m\}}} \Big(\int_\R e^{i\,\frac{\eta}{2^{2k}}t^2}\, e^{i\,\frac{\lambda(x)}{2^{3k}}t^3}\,\rho(t)\,dt\Big)\, \psi\Big(\frac{\xi}{2^{k}}\Big) \phi\Big(\frac{\eta}{2^{2k+l}}\Big) \phi\Big(\frac{\lambda(x)}{2^{3k+m}}\Big)\,.
\end{equation}
As a consequence by Cauchy-Schwarz, 
\begin{eqnarray*}
&& \left|BHC_{-,\,+,\,+, \lambda(x)}(f_1,f_2)(x)\right|_{\textnormal{\textsf{E.1}}} \\
&& \approx \left|\sum_{k\in\Z}\sum_{{l,m\in\N}\atop{|l-m|\gtrsim 1}}\,
\frac{1}{2^{\max\{l, m\}}}\,(f_1*\check{\psi}_k)(x)\,
\left(\int_{\R} (f_2*\check{\phi}_{2k+l})\left(x+\frac{t^2}{2^{2k}}\right)\,e^{i\,\frac{\lambda(x)}{2^{3k}}t^3}\,\rho(t)\,dt\right)\,
\phi\Big(\frac{\lambda(x)}{2^{3k+m}}\Big)\right| \\
&&\lesssim\sum_{k\in\Z}\sum_{{l,m\in\N}\atop{|l-m|\gtrsim 1}}\,
\frac{1}{2^{\max\{l, m\}}}\,|f_1*\check{\psi}_k|(x)\,
\left(\int_{\R} \left|(f_2*\check{\phi}_{2k+l})\left(x+\frac{t^2}{2^{2k}}\right)\,\rho(t)\right|\,dt\right)\,
\phi\Big(\frac{\lambda(x)}{2^{3k+m}}\Big)\\
&&\lesssim \sum_{{l,m\in\N}\atop{|l-m|\gtrsim 1}}\,
\frac{1}{2^{\max \{l, m\}}}\,Mf_1(x)\,\left(\int_{\R} \calS^{\phi}_{2, 2^{\frac{l}{2}} t} f_2(x)\,|\rho(t)|\,dt\right), 
\end{eqnarray*}
where $\calS_{\iota, t}^{\phi}f$ is the $t$-shifted square maximal operator defined as 
$$
\calS_{\iota, t}^\phi f(x):=\left( \sum_{k \in \Z} \left| \left(f*\check{\phi}_{\iota k} \right) \left(x+\frac{t^{\iota}}{2^{\iota k}} \right) \right|^2 \right)^{\frac{1}{2}}. 
$$
The desired estimate \eqref{20241106thm01} is then a consequence of the following estimate: 
$$
\left\|\calS_{2, 2^{\frac{l}{2}}t}^{\phi}f \right\|_p \lesssim_{\phi, p} (1+l)^{\frac{2}{p^*}-1} \left\|f\right\|_p,
$$ 
where $p^*=\min\{p, p'\}$ (see, \cite[Proposition 42]{Lie18} or \cite[Lemma 3.3]{GL20}).
\medskip

\noindent \textsf{Subcase E.2: $l\cong m$.} \quad In this situation, for each given $k\in\Z$ and $m\in\N$, the phase in the integral term of \eqref{multdecE} may have a stationary point of magnitude $\simeq 1$. This is the context that naturally connects with the topic of the curved $\gamma$-Carleson operators as discussed in \cite{Lie2024}. More precisely, we have 
\begin{eqnarray*} 
&& |BHC_{-,\,+,\,+, \lambda(x)}(f_1,f_2)(x)|_{\textnormal{\textsf{E.2}}} \nonumber \\
&& \approx \left|\sum_{k\in\Z}\sum_{m\in\N}\,
(f_1*\check{\psi}_k)(x)\,
\left(\int_{\R} (f_2*\check{\phi}_{2k+m}) \left(x+\frac{t^2}{2^{2k}} \right)\,e^{i\,\frac{\lambda(x)}{2^{3k}}t^3}\,\rho(t)\,dt\right)\,
\phi\Big(\frac{\lambda(x)}{2^{3k+m}}\Big)\right|  \nonumber \\
&& \approx \left|\sum_{k\in\Z}\sum_{m\in\N}\,
(f_1*\check{\psi}_k)(x)\,
\left(\int_{\R} (f_2*\check{\phi}_{2k+m})(x+t)\,e^{i\,\lambda(x)\,t^{\frac{3}{2}}}\,2^{2k}\,\tilde{\rho}(2^{2k}\,t)\,dt\right)\,
\phi\Big(\frac{\lambda(x)}{2^{3k+m}}\Big)\right|  \nonumber \\
&& \lesssim \sum_{m\in\N} \left(\sum_{k\in\Z} \left|(f_1*\check{\psi}_k)(x)\,\phi\Big(\frac{\lambda(x)}{2^{3k+m}}\Big)\right|^2\right)^{\frac{1}{2}}
\,\left(\sum_{k\in\Z} \left|\calC_{\gamma, 2k, m}f_2(x) \right|^2\right)^{\frac{1}{2}}  \nonumber \\
&& \lesssim Mf_1(x) \cdot \sum_{m\in\N}  \left(\sum_{k\in\Z} \left|\calC_{\gamma, 2k, m}f_2(x) \right|^2\right)^{\frac{1}{2}}\:,
\end{eqnarray*}
where $\widetilde{\rho}(t):=t^{-1/2}\rho\left(t^{1/2}\right)$, and $\calC_{\gamma, 2k, m} f$ is the $\gamma$-Carleson operator for $\gamma(x, t):=\lambda(x) t^{\frac{3}{2}}$: 
\begin{align} \label{20250218eq01}
\calC_{2k,m}f(x) 
&=\left( \int_{\R} (f*\check{\phi}_{2k+m}) \left(x+\frac{t}{2^{2k}} \right) e^{i \gamma\left(x, \frac{t}{2^{2k}} \right)} \tilde{\rho}(t)\,dt \right) \phi \left(\frac{\lambda(x)}{2^{m+3k}}\right) \nonumber \\
&:=\left( \int_{\R} (f*\check{\phi}_{2k+m}) \left(x+\frac{t}{2^{2k}} \right) e^{i \frac{\lambda(x) t^{\frac{3}{2}}}{2^{3k}}} \tilde{\rho}(t)\,dt \right) \phi \left(\frac{\lambda(x)}{2^{m+3k}}\right).
\end{align} 
Finally, estimate \eqref{20241106thm01} can be now achieved by combining  Minkowski, Fefferman-Stein maximal inequality, interpolation and the following features of the operator \eqref{20250218eq01}: 
\begin{enumerate}
    \item [$\bullet$] (\emph{$L^2$--estimate}) Appealing to \cite[Proposition 48]{Lie2024}, there exists some $\epsilon>0$, such that
$$
\|\calC_{2k,m} f\|_2 \lesssim 2^{-\epsilon m}\,\|f*\check{\phi}_{2k+m}\|_2;
$$
\item [$\bullet$] (\emph{$L^p$--estimate}) For fixed $m \in \N$, one has the trivial bound
$$
\left| \calC_{\gamma, 2k, m} f(x) \right| \lesssim \left(\int_{\R} \calM^{(-2^m t)} f(x) \rho(t) dt\right) \cdot \phi \left(\frac{\lambda(x)}{2^{m+3k}} \right),
$$
where 
$$
\calM^{(2^m t)}f(x)=\sup_{k \in \Z} \left|\left(f*\check{\phi}_k \right) \left(x-\frac{2^m t}{2^k} \right) \right|
$$
enjoys---see, e.g., \cite[Lemma 3.4]{GL20}---the $L^p$ estimate
$$
\left\|\calM^{(2^m t)}f \right\|_p \lesssim (1+m)^{\frac{1}{p}} \left\|f\right\|_p\qquad \textrm{for any}\:\:|t| \sim 1\,. 
$$
\end{enumerate}

We leave further details to the interested reader.

\subsection{Treatment of Case F: $j\geq 0$, $l<0$, and $m>0$}  
The approach in this case is similar to the one in Case D above with just some straightforward changes: the roles of $f_1$ and $f_2$ are interchanged, the term $\frac{t^2}{2^{2k}}$ is replaced in the new context by $\frac{t}{2^{k}}$ and one appeals to the $\gamma$-Carleson operator for $\gamma(x,t)=\lambda(x)\,t^{3}$. Thus, one can immediately verify that \eqref{20241106thm01} holds.
\bigskip 

\section{The maximal quasi-Banach boundedness range of $BHC$} \label{20250309sec01}

In this last section we provide the desired maximal\footnote{Up to end-points.} quasi-Banach range for our main operator $BHC$ as stated in {\bf Main Theorem} \ref{mainresult}.

We start by recalling the main decomposition of our operator: 
\begin{eqnarray*}
BHC(f_1, f_2) &\le&  BHC^{Hi}(f_1, f_2)\,+\,BHC^{Lo}(f_1, f_2) \\
&\le&  \begin{matrix} \underbrace{BHC^{\Delta}(f_1, f_2)} \\ \textnormal{{\bf main diagonal}} \end{matrix}\quad+\quad\begin{matrix} \underbrace{BHC^{\not\Delta, S}(f_1, f_2)} \\ \textnormal{{\bf stationary off-diagonal}}  \end{matrix}\\
&+&\begin{matrix} \underbrace{BHC^{\not\Delta, NS}(f_1, f_2)} \\ \textnormal{{\bf non-stationary off-diagonal}} \end{matrix}\quad + \quad \begin{matrix} \underbrace{BHC^{Lo}(f_1, f_2)} \\  \textnormal{{\bf low oscillatory}} \end{matrix}.
\end{eqnarray*}
Since in Section \ref{20250228sec01} and Section \ref{20250228sec02}, we have established the full quasi-Banach range for both the non-stationary off-diagonal and the low-oscillatory components our main focus in this section will be on the other two remaining components.\footnote{Throughout this whole section, we assume that $k \ge 0$. A sketch of the case $k\le 0$ can be found in the Appendix.}

\subsection{Treatment of the main diagonal component $BHC^{\Delta}$: Tame bounds}\label{20250308sec10}

In this first subsection, we provide slowly growing\footnote{\emph{I.e}, at most polynomially in the parameters $k,\,m\in\N$.} restricted weak type bounds for the sub-components of $BHC^{\Delta}$. We start by recalling that in this case, the main term we have to understand takes the form
\begin{equation}\nonumber
\Lambda_m^k(f_1, f_2, f_3)= \int_{\R^2} (f_1*\check{\phi}_{m+k}) \left(x-\frac{t}{2^k} \right) (f_2* \check{\phi}_{m+2k}) \left(x+\frac{t^2}{2^{2k}} \right) f_3(x) e^{i \lambda(x) \frac{t^3}{2^{3k}}} \rho(t) \phi \left(\frac{\lambda(x)}{2^{m+3k}} \right) dtdx,  
\end{equation}
where here  $f_i* \check{\phi}_{m+ik}$ refers to the Fourier projection of $f_i$ on the frequency interval $[2^{m+ik},\,2^{m+ik+1}]$, $\lambda (\cdot)$ is some real measurable function taking values of the size $\sim 2^{m+3k}$ and $\rho$ is an odd function with $\supp \ \rho \subseteq \left[-4, 4 \right] \backslash \left[-\frac{1}{4}, \frac{1}{4} \right]$. Throughout this section we let $|f_i| \le \one_{F_i}$, $i\in\{1,2,3\}$, where $F_1$ and $F_2$ are some arbitrary measurable sets, and $F_3$ is any measurable set with positive finite measure. 

One technical subtlety in providing restricted weak type estimates for the above expression is that this task needs to be treated differently based on both the spatial and frequency information of the input functions. This becomes especially important\footnote{See the proof of Theorem \ref{refinedrefined}.}  when dealing with the stationary off-diagonal component $BHC^{\not\Delta, S}$ in Section \ref{20250309sec02}.

\subsubsection{The cases $0 \le k \le 2m$ ($m$ fixed) and $0 \le m \le 2k$ ($k$ fixed): a single-scale argument} \label{20250415subsec01}  In this situation, we prove restricted weak-type estimate directly for the single term $\Lambda_m^k(f_1, f_2, f_3)$ appealing \emph{only} to the spatial information of the input functions $f_1, f_2$, and $f_3$. Indeed, it is enough to notice that
\begin{align} \label{20250430eq02}
\left| \Lambda_m^k(f_1, f_2, f_3) \right|
& \lesssim \int_{\R} \left(\int_{\R} Mf_1 \left(x-\frac{t}{2^k} \right) Mf_2 \left(x+\frac{t^2}{2^{2k}} \right) |\rho(t)|dt \right) |f_3(x)|dx 
\nonumber \\
& \lesssim \int_{\R} \calM_{\calC} (Mf_1, Mf_2)(x) |f_3(x)|dx, 
\end{align}
where we recall that $\calM_{\calC}(f_1, f_2)$ is the \emph{bilinear maximal function along the curve} $(t, t^2)$ (see \eqref{bilinearmax}). Since $\calM_{\calC}(f_1, f_2)$ maps $L^{p_1} \times L^{p_2} \to L^r$ with $\frac{1}{r}=\frac{1}{p_1}+\frac{1}{p_2}$ satisfying $1 \le p_1, p_2 \le \infty$ and $\frac{1}{2}<r \le \infty$, one further has that for any $\theta>0$, there exists some $F_3' \subseteq F_3$ with $|F_3'| \ge \frac{|F_3|}{2}$ such that for $|f_3| \le \one_{F_3'}$, 
\begin{align} \label{20250415eq30Z}
\left| \Lambda_m^k(f_1, f_2, f_3) \right | 
& \lesssim_{\theta} \left\|\calM_{\calC}(Mf_1, M f_2) \right\|_{L^{\frac{1}{2}+\theta}} |F_3|^{\frac{2\theta-1}{2\theta+1}} \lesssim \left\|Mf_1 \right\|_{L^{1+2\theta}} \left\|M f_2 \right\|_{L^{1+2\theta}} |F_3|^{\frac{2\theta-1}{2\theta+1}}  \nonumber \\
& \lesssim_{\theta} \left\|f_1 \right\|_{L^{1+2\theta}} \left\| f_2 \right\|_{L^{1+2\theta}} |F_3|^{\frac{2\theta-1}{2\theta+1}} \le |F_1|^{\frac{1}{1+2\theta}}|F_2|^{\frac{1}{1+2\theta}}|F_3|^{\frac{2\theta-1}{2\theta+1}},
\end{align}
which proves the desired restricted weak-type estimate in this case.

\subsubsection{The case $ k \ge 2m$ ($m$ fixed): a multi-scale argument} \label{20250415subsec02}  In this situation we provide restricted weak-type estimates for the term
    $$
    \Lambda_m^{\ge}(f_1, f_2, f_3):=\sum_{k \ge 2m} \Lambda_m^k(f_1, f_2, f_3)\,,
    $$
by allowing the implicit bounds to depend polynomially on $m$. First of all, via some standard Fourier analytic arguments (see for example the by-product of Rank I LGC in Section \ref{20250228sec03} and also \cite[Section 4.3]{GL25}), we may, without loss of generality, assume that
\begin{align*}
\Lambda_m^{\ge} (f_1, f_2, f_3)
&=\sum_{k \ge 2m} \int_{\R^2} \left(f_1*\check{\phi}_{m+k} \right) \left(x-\frac{t}{2^k}\right)\left(f_2*\check{\phi}_{m+2k}\right)\left(x+\frac{t^2}{2^{2k}}\right) \\
& \qquad \qquad \qquad \qquad  \cdot \left(f_3*\check{\phi}_{m+2k} \right)(x) e^{i\lambda(x)\frac{t^3}{2^{3k}}} \rho(t) \phi \left(\frac{\lambda(x)}{2^{m+3k}} \right)dtdx. 
\end{align*}
Discretizing the form above first for the $x$ variable and then for the $t$ variable, we have 
\begin{eqnarray} \label{20250301eq01}
&& \left|\Lambda_m^{\ge}(f_1, f_2, f_3) \right| \lesssim \sum_{\substack{ k \ge 2m \\ z \in \Z}}  \frac{1}{2^{m+2k}} \int_{\frac{1}{4}}^4 \left| \left(f_1*\check{\phi}_{m+k} \right) \left(\frac{z}{2^{m+2k}}-\frac{t}{2^k} \right) \right| \left| \left(f_2*\check{\phi}_{m+2k} \right) \left(\frac{z}{2^{m+2k}}+\frac{t^2}{2^{2k}} \right) \right| \nonumber \\
&& \qquad \quad \qquad \qquad \quad \qquad \cdot \left| \left(f_3*\check{\phi}_{m+2k} \right) \left(\frac{z}{2^{m+2k}}  \right) \right|dt \nonumber  \\
&&\simeq \frac{1}{2^m} \sum_{k \ge 2m} \sum_{\substack{r \in \Z \\ p \sim 2^m}} \frac{1}{\left|I_r^{m+k} \right|^{\frac{1}{2}}} \left| \left \langle f_1, \Phi_{P_{m+k}(r-p)} \right \rangle \right| \left[ \sum_{I_u^{m+2k} \subseteq I_r^{m+k}} \left| \left \langle f_2, \Phi_{P_{m+2k}\left(u+\frac{p^2}{2^m} \right)} \right \rangle \right| \left| \left \langle f_3, \Phi_{P_{m+2k}(u)} \right \rangle \right| \right] \nonumber \\
&&\simeq \frac{1}{2^m} \sum_{k \ge 2m} \sum_{\substack{r \in \Z \\ p \sim 2^m}} \frac{1}{\left|I_r^{m+k} \right|^{\frac{1}{2}}} \left| \left \langle f_1, \Phi_{P_{m+k}(r-p)} \right \rangle \right| \left[ \sum_{I_u^{m+2k} \subseteq I_r^{m+k}} \left| \left \langle f_2, \Phi_{P_{m+2k}(u)} \right \rangle \right| \left| \left \langle f_3, \Phi_{P_{m+2k}\left(u-\frac{p^2}{2^m}\right)} \right \rangle \right| \right] \nonumber \\
&&\simeq \frac{1}{2^m} \sum_{k \ge 2m} \sum_{\substack{r \in \Z \\ p \sim 2^m}} \frac{1}{\left|I_r^{m+k} \right|^{\frac{1}{2}}} \left| \left \langle f_1, \Phi_{P_{m+k} \left(r-2^{\frac{m}{2}}p^{\frac{1}{2}} \right)} \right \rangle \right| \nonumber   \\
&& \qquad \quad \qquad \qquad \quad \qquad \cdot \left[ \sum_{I_u^{m+2k} \subseteq I_r^{m+k}} \left| \left \langle f_2, \Phi_{P_{m+2k}(u)} \right \rangle \right| \left| \left \langle f_3, \Phi_{P_{m+2k}\left(u-p\right)} \right \rangle \right| \right], 
\end{eqnarray}
where $I_r^{m+k}:=\left[\frac{r}{2^{m+k}}, \frac{r+1}{2^{m+k}} \right]$, $P_{m+k}(v)$ is the time-frequency tile adapted to $\left[\frac{v}{2^{m+k}}, \frac{v+1}{2^{m+k}} \right] \times \left[2^{m+k}, 2^{m+k+1} \right]$, and $\Phi_{P_{m+k}(v)}$ is the $L^2$-normalized Gabor wave packet adapted to it. Moreover, in the last step of the above estimate, we have used the change of variable $p \to 2^{\frac{m}{2}}p^{\frac{1}{2}}$. 

Now by an $\ell^1$--$\ell^\infty$   H\"older in the $p$ parameter, we further have
\begin{eqnarray*}
&& \left| \Lambda_m^{\ge}(f_1, f_2, f_3) \right| \lesssim \sum_{\substack{r \in \Z \\ k \ge 2m}} \frac{1}{\left|I_r^{m+k}\right|^{\frac{1}{2}}} \left(\frac{1}{2^m} \sum_{p \sim 2^m} \left| \left \langle f_1, \Phi_{P_{m+k} \left(r-2^{\frac{m}{2}}p^{\frac{1}{2}} \right)} \right\rangle \right| \right) \\
&& \qquad \quad \qquad \qquad \quad \qquad \cdot \left[ \sum_{I_u^{m+2k} \subseteq I_r^{m+k}} \left| \left \langle f_2, \Phi_{P_{m+2k}(u)} \right \rangle \right| \left| \left \langle f_3, \Phi_{P_{m+2k}\left(u-p \left(I_r^{m+k} \right)\right)} \right \rangle \right| \right], 
\end{eqnarray*}
where 
$$
p(\cdot): \left\{I_r^{m+k} \right\}_{r \in \Z, \; k \ge 2m} \mapsto [2^m, 2^{m+1}] \cap \N \quad \textrm{is any measurable function}.
$$
Assume now $f_1, f_2$, and $f_3$ verify the standard assumptions involved in a restricted weak-type estimate: 
$$
|f_1| \le \one_{F_1}, \quad |f_2| \le \one_{F_2}, \quad \textrm{and} \quad |f_3| \le \one_{F_3'}, 
$$
where
\begin{equation} \label{20250626eq10}
F_3':=F_3 \backslash \Omega\qquad \textrm{with}\qquad\Omega:=\left\{M\one_{F_1} \ge C \frac{|F_1|}{|F_3|}  \right\}\cup \left\{M\one_{F_3} \ge C\right\}
\end{equation}
for some $C>0$ sufficiently large such that $|F_3'| \ge \frac{|F_3|}{2}$. Note here that $\Omega$ is \emph{independent} of the choice of both $m$ and the measurable function $p(\cdot)$. 

Next, for $\beta \in \N$, set 
\begin{equation} \label{20250626eq01}
\calI_\beta:=\left\{I_{\widetilde{r}}^k: 1+\frac{\dist(I_{\widetilde{r}}^k, \; \Omega^c)}{\left|I_{\widetilde{r}}^k\right|} \simeq 2^\beta, \; \widetilde{r} \in \Z \right\}. 
\end{equation} 
This allows us to write 
\begin{equation} \label{20250626eq02}
\left| \Lambda_m^{\ge} (f_1, f_2, f_3) \right| \lesssim \sum_{\beta \in \N} \Lambda_m^{\ge, \beta} (f_1, f_2, f_3), 
\end{equation} 
where
$$
\Lambda_m^{\ge, \beta} (f_1, f_2, f_3):=\sum_{k \ge 2m}  \sum_{I_{\widetilde{r}}^k \in \calI_\beta} \left(\frac{1}{\left| I_{\widetilde{r}}^k \right|} \int_{I_{\widetilde{r}}^k}|f_1| \right) \left[ \sum_{\substack{I_u^{m+2k} \subseteq I_r^{m+k} \\ I_r^{m+k} \subseteq I_{\widetilde{r}}^k}} \left| \left \langle f_2, \Phi_{P_{m+2k}(u)} \right \rangle \right| \left| \left \langle f_3, \Phi_{P_{m+2k}\left(u-p \left(I_r^{m+k} \right)\right)} \right \rangle \right| \right]. 
$$
Using now \eqref{20250626eq01} and Cauchy-Schwarz, we have for each $\beta \in \N$, 
\begin{align} \label{20250626eq03} 
\Lambda_m^{\ge , \beta}(f_1, f_2, f_3) 
& \lesssim  \frac{2^\beta |F_1|}{|F_3|} \cdot \sum_{k \ge 2m}  \sum_{I_{\widetilde{r}}^k \in \calI_\beta} \sum_{\substack{I_u^{m+2k} \subseteq I_r^{m+k} \\ I_r^{m+k} \subseteq I_{\widetilde{r}}^k}} \left| \left \langle f_2, \Phi_{P_{m+2k}(u)} \right \rangle \right| \left| \left \langle f_3, \Phi_{P_{m+2k}\left(u-p \left(I_r^{m+k} \right)\right)} \right \rangle \right| \nonumber \\
& \lesssim  \frac{2^\beta |F_1|}{|F_3|} \cdot  \int_{\R} \sum_{k \ge 2m} \sum_{\substack{I_u^{m+2k} \subseteq I_r^{m+k} \\ I_r^{m+k} \subseteq I_{\widetilde{r}}^k \\ I_{\widetilde{r}}^k \in \calI_\beta}} \frac{ \left| \left \langle f_2, \Phi_{P_{m+2k}(u)} \right \rangle \right|}{\left|I_u^{m+2k} \right|^{\frac{1}{2}}} \cdot \frac{ \left| \left \langle f_3, \Phi_{P_{m+2k}\left(u-p \left(I_r^{m+k} \right)\right)} \right \rangle \right|}{\left|I_u^{m+2k} \right|^{\frac{1}{2}}} \one_{I_u^{m+2k}}(x)dx \nonumber \\
& \lesssim  \frac{2^\beta |F_1|}{|F_3|} \cdot \int_{\R} \calS f_2(x) \calS_{\calI_\beta, p(\cdot)}^{\max} f_3(x) dx, 
\end{align}
where 
$$
\calS_{\calI_\beta, p(\cdot)}^{\max} f_3(x):=\left( \sum_{k \ge 2m} \sum_{\substack{I_u^{m+2k} \subseteq I_r^{m+k} \\ I_r^{m+k} \subseteq I_{\widetilde{r}}^k \\ I_{\widetilde{r}}^k \in \calI_\beta}} \frac{ \left| \left \langle f_3, \Phi_{P_{m+2k}\left(u-p \left(I_r^{m+k} \right)\right)} \right \rangle \right|^2}{\left|I_u^{m+2k} \right|}  \one_{I_u^{m+2k}}(x) \right)^{\frac{1}{2}}.
$$
is the \emph{maximal $m$-shifted square function} relative to the pair $(\calI_\beta, p(\cdot))$. By \cite[(4.166)]{GL25}, 
\begin{equation} \label{20250626eq04}
\left\| \calS_{\calI_\beta, p(\cdot)}^{\max} f_3 \right\|_{L^p(\R)} \lesssim_n m2^{-n\beta} |F_3|^{\frac{1}{p}} 
\end{equation} 
for any $n \in \N$ and $1 < p<+\infty$. Putting \eqref{20250626eq02} -- \eqref{20250626eq04} together, we deduce that for any $\mu \in (0, 1)$,  
$$
\left|\Lambda_m^{\ge}(f_1, f_2, f_3) \right | \lesssim m|F_1||F_2|^\mu |F_3|^{-\mu}, 
$$
which concludes the desired restricted weak-type estimate in this case. 

\subsubsection{The case $ m \ge 2k$ ($k$ fixed): a multi-scale argument}\label{20250430sec01}  In this situation we provide restricted weak-type estimates for the term
$$
\Lambda_{\ge}^k(f_1, f_2, f_3):=\sum_{m \ge 2k} \Lambda_m^k(f_1, f_2, f_3)\,,
$$
by allowing (again) the implicit bounds to depend polynomially on $k$. Since we want to make use of this argumentation for the treatment of the stationary off-diagonal component we employ a three-case treatment depending on the spatial and frequency localization of each of the input functions.

We start by stating the form of the expression that is of interest for us
\begin{equation} \label{20250415eq01}
\Lambda_{\ge}^k (f_1, f_2, f_3)= \sum_{m \ge 2k} \int_{\R^2} \left(f_1*\check{\phi}_{m+k} \right) \left(x-\frac{t}{2^k}\right)\left(f_2*\check{\phi}_{m+2k}\right)\left(x+\frac{t^2}{2^{2k}}\right) f_3(x) e^{i\lambda(x)\frac{t^3}{2^{3k}}} \rho(t) \phi \left(\frac{\lambda(x)}{2^{m+3k}} \right) dtdx.
\end{equation}

\noindent \underline{{\textsf A. Treatment of $\Lambda_{\ge}^k(f_1, f_2, f_3)$ involving only the spatial information of $f_1$ and $f_3$.}}

\vspace{0.2cm}

 By Cauchy-Schwarz, we estimate \eqref{20250415eq01} by 
\begin{align*}
\left|\Lambda_{\ge}^k(f_1, f_2, f_3) \right| 
& \lesssim  \int_{\R} \int_{\R} M f_1 \left(x-\frac{t}{2^k} \right) \calS f_2 \left(x+\frac{t^2}{2^{2k}} \right) |f_3(x)| \left( \sum_{m \ge 2k} \phi^2 \left(\frac{\lambda(x)}{2^{m+3k}} \right) \right)^{\frac{1}{2}} |\rho(t)| dtdx \\
& \lesssim \int_{\R} \left(\int_{\R} Mf_1 \left(x-\frac{t}{2^k} \right) \calS f_2 \left(x+\frac{t^2}{2^{2k}} \right) |\rho(t)| dt \right) |f_3(x)| dx \\
& \lesssim \int_{\R} \calM_{\calC} \left(Mf_1, \calS f_2 \right)(x) |f_3(x)|dx.
\end{align*}
It now suffices to follow the same argument from Section \ref{20250415subsec01} in order to deduce that for any $\theta>0$
\begin{equation} \label{20250430eq01}
\left| \Lambda_{\ge}^k(f_1, f_2, f_3) \right | \lesssim |F_1|^{\frac{1}{1+2\theta}}|F_2|^{\frac{1}{1+2\theta}}|F_3|^{\frac{2\theta-1}{2\theta+1}}. 
\end{equation}

\vspace{0.2cm}

\noindent \underline{\textsf{B. Treatment of $\Lambda_{\ge}^k(f_1, f_2, f_3)$ involving only the spatial information of $f_2$ and $f_3$.}}

\vspace{0.2cm}

The proof of this case follows from a straightforward modification of the previous case, by interchanging the role of $f_1$ and $f_2$. This yields
$$
\left| \Lambda_{\ge}^k(f_1, f_2, f_3) \right| \lesssim \int_{\R} \calM_{\calC}(\calS f_1, Mf_2)(x) |f_3(x)|dx, 
$$
which further gives the same restricted weak-type estimate \eqref{20250430eq01}. 

\vspace{0.2cm}

\noindent  \underline{\textsf{C. Treatment of $\Lambda_{\ge}^k(f_1, f_2, f_3)$ involving only the spatial information of $f_3$.}}

\vspace{0.2cm}
In this case, we apply Cauchy-Schwarz in \eqref{20250415eq01} to the terms involving $f_1$ and $f_2$ to deduce that 
\begin{align*}
\left| \Lambda_{\ge}^k (f_1, f_2, f_3) \right| %
& \lesssim \int_{\R} \int_{\R} \calS f_1 \left(x-\frac{t}{2^k} \right) \calS f_2 \left(x+\frac{t^2}{2^{2k}} \right) |f_3(x)| |\rho(t)| dtdx \\
& \lesssim \int_{\R} \left(\int_{\R} \calS f_1 \left(x-\frac{t}{2^k} \right) \calS f_2 \left(x+\frac{t^2}{2^{2k}} \right) |\rho(t)| dt \right) |f_3(x)| dx \\
& \lesssim \int_{\R} \calM_{\calC} (\calS f_1, \calS f_2)(x) |f_3(x)| dx. 
\end{align*}
The rest of the argument is similar to the previous cases, so we omit further details here.

\subsection{Treatment of the stationary off-diagonal component $BHC^{\not\Delta, S}$: Tame bounds}\label{20250309sec02}

In this second (sub)section, we provide slowly growing restricted weak-type bounds for the sub-components of $BHC^{\not\Delta, S}$ by properly adapting the approach described in the previous section.

We start by recalling the discretization used in Section \ref{20250308sec01} which reduces the analysis of the stationary off-diagonal component to two main cases: 
\begin{enumerate}
    \item [({\bf O1})] $j \cong l, k \ge 0$ and $0 \le m \le \frac{j}{2}$;
    \item [({\bf O2})] all other remaining stationary off-diagonal cases. 
\end{enumerate}

\vspace{0.1cm}

Assume now that $f_1, f_2$, and $f_3$ obey the standard restricted-weak type assumptions---see the introductory paragraph in Section \ref{20250308sec10}. For the case ({\bf O1}), it is not hard to see that the arguments in Sections \ref {20250415subsec01} and \ref{20250415subsec02} still apply, and hence we have
\begin{enumerate}
    \item [$\bullet$] \textsf{multi-scale estimate---summation in $k$:}
    \begin{equation} \label{20250310eq00}
    \sum_{k \ge 2j} \left| \Lambda_{j, l, m, \lambda(\cdot)}^k (f_1, f_2, f_3) \right| \lesssim (j+1) |F_1||F_2|^{\mu}|F_3|^{-\mu}  \quad \textnormal{for any} \ \mu \in (0, 1);
    \end{equation}
\item [$\bullet$] \textsf{single-scale estimate} for any $k \ge 0$ and $j, l, m$ satisfying assumption ({\bf O1}), we have
\begin{equation} \label{20250310eq01}
\left|\Lambda_{j, l, m, \lambda(\cdot)}^k (f_1, f_2, f_3) \right| \lesssim |F_1|^{\frac{1}{1+2\theta}}|F_2|^{\frac{1}{1+2\theta}}|F_3|^{\frac{2\theta-1}{2\theta+1}},  \quad \textnormal{for any} \ \theta>0,
\end{equation}
\end{enumerate}
where $\Lambda_{j, j, m, \lambda(\cdot)}^k (f_1, f_2, f_3)$ is the \emph{dual form} corresponding to the case ({\bf O1}) (see \emph{e.g.} \eqref{20250308eq03}).  

\vspace{0.1cm}

Next, for the case ({\bf O2}), it is sufficient to prove the desired restricted weak-type estimate for the term\footnote{Here, we may slightly relax the assumption of $\lambda$ in Theorem \ref{generalizedmain} by allowing $\lambda(x) \in \left[2^{\frac{m}{2}+3k}, 2^{m+3k} \right]$.} $\widetilde{\Lambda}_m^k(f_1, f_2, f_3)$ defined as in Theorem \ref{generalizedmain}. More precisely, we have
\begin{enumerate}
    \item [$\bullet$] \textsf{multi-scale estimate---summation in $k$}:
    \begin{equation} \label{20250310eq02}
    \left| \sum_{k \ge 2m} \widetilde{\Lambda}_m^k(f_1, f_2, f_3)\right| \lesssim (m+1)^3 |F_1||F_2|^{\mu}|F_3|^{-\mu}  \quad \textnormal{for any} \ \mu \in (0, 1); 
    \end{equation} 
    \item [$\bullet$] \textsf{multi-scale estimate---summation in $m$}:
    \begin{equation} \label{20250415eq40}
    \left| \sum_{m \ge 2k} \widetilde{\Lambda}_m^k(f_1, f_2, f_3)\right| \lesssim (|k|+1)^3 |F_1|^{\frac{1}{1+2\theta}}|F_2|^{\frac{1}{1+2\theta}}|F_3|^{\frac{2\theta-1}{2\theta+1}}, \quad \textnormal{for any} \ \theta>0, 
    \end{equation}
    \item [$\bullet$] \textsf{single scale estimate}: for any $m, k \ge 0$, one has
    \begin{equation} \label{20250310eq03}
    \left| \widetilde{\Lambda}_m^k(f_1, f_2, f_3) \right| \lesssim |F_1|^{\frac{1}{1+2\theta}}|F_2|^{\frac{1}{1+2\theta}}|F_3|^{\frac{2\theta-1}{2\theta+1}}, \quad \textnormal{for any} \ \theta>0. 
    \end{equation} 
\end{enumerate}

\noindent \textit{Sketch of the proof.} We first note that the argument in Section \ref{20250415subsec01} works for both single-scale estimates \eqref{20250310eq01} and \eqref{20250310eq03}. For the estimate \eqref{20250310eq00}, it is enough to observe that the argument in Section \ref{20250415subsec02} still applies if we switch the roles of $j$ and $m$. So, it remains to handle the two multi-scale estimates in case ({\bf O2}).

For the estimate \eqref{20250310eq02}, recall that $m$ is fixed and the sum is taken over all $k \ge 2m$. We consider two sub-cases: \begin{enumerate} \item[\textit{(a-1)}.] $\supp \ \widehat{f_1} \subseteq [2^k, 2^{m+k+1}]$ and $\supp \ \widehat{f_2} \subseteq [2^{m+2k}, 2^{m+2k+1}]$; \item[\textit{(a-2)}.] $\supp \ \widehat{f_1} \subseteq [2^{m+k}, 2^{m+k+1}]$ and $\supp \ \widehat{f_2} \subseteq [2^{2k}, 2^{m+2k+1}]$. \end{enumerate} In both of these sub-cases, the argument from Section \ref{20250415subsec02} still works. More precisely, in case \textit{(a-1)}, the only change to make is to observe that the input function $f_1$ is supported on about $m$ dyadic frequency intervals, and hence in the case the magnitude of the constant in \eqref{20250310eq02} is $ \lesssim (m+1)^2$. Next, in the case \textit{(a-2)}, since $k \ge 2m$, we observe that $\supp \ \widehat{f_2} \subseteq [2^{2k}, 2^{m+2k+1}]$, and hence both $f_2$ and $f_3$ are supported on about $m$ dyadic frequency intervals, and hence the magnitude of the constant of $ \lesssim (m+1)^3$. 

Next, for the estimate \eqref{20250415eq40}, note that $k$ is fixed and the sum is taken over $m \ge 2k$. Let us break the analysis into four sub-cases: \begin{enumerate} \item[\textit{(b-1)}.] $\supp \ \widehat{f_1} \subseteq [2^k, 2^{-100} \cdot 2^{m+k}]$ and $\supp \ \widehat{f_2} \subseteq [2^{m+2k}, 2^{m+2k+1}]$; \item[\textit{(b-2)}.] $\supp \ \widehat{f_1} \subseteq [2^{m+k}, 2^{m+k+1}]$ and $\supp \ \widehat{f_2} \subseteq [2^{2k}, 2^{-100} \cdot 2^{m+2k}]$; \item[\textit{(b-3)}.] $\supp \ \widehat{f_1} \subseteq [2^{-100} \cdot 2^{m+k}, 2^{m+k+1}]$ and $\supp \ \widehat{f_2} \subseteq [2^{m+2k}, 2^{m+2k+1}]$; \item[\textit{(b-4)}.] $\supp \ \widehat{f_1} \subseteq [2^{m+k}, 2^{m+k+1}]$ and $\supp \ \widehat{f_2} \subseteq [2^{-100} \cdot 2^{m+2k}, 2^{m+2k+1}]$. \end{enumerate}

In case \textit{(b-1)}, the assumption ({\bf O2}) allows us to assume that $\textnormal{range}(\lambda) \subseteq [2^{m+3k}, 2^{m+3k}]$, so the argument from Case \textsf{A} in Section \ref{20250430sec01} applies. For case \textit{(b-2)}, the reasoning in Case \textsf{B} from Section \ref{20250430sec01} is valid. Finally, in cases \textit{(b-3)} and \textit{(b-4)}, we can use the argument from Case \textsf{C} in the same section. \hfill $\square$

\subsection{Putting together all the estimates: A unified treatment of $BHC^{\Delta}$ and $BHC^{\not\Delta, S}$}

In this section, our goal is to prove the full desired quasi-Banach range for both $BHC^{\Delta}$ and $BHC^{\not\Delta, S}$. These bounds will follow from an interpolation argument between the $L^2$ estimates established in Theorem \ref{generalizedmain} and the restricted weak-type estimates proved in Sections
\ref{20250308sec10} and \ref{20250309sec02}.

\subsubsection{Preliminaries} With these settled, we start by noticing that based on the above and with the previous notations the dual forms that correspond to the two major cases to treat are given by 
$$
\Lambda_{\textnormal{(\bf O-1})}(f_1, f_2, f_3):=\left \langle BHC_{\lambda(\cdot)}^{\textnormal{({\bf O1})}}(f_1, f_2), f_3 \right \rangle:=\sum_{j, k \ge 0} \sum_{\substack{l \cong j \\ 0 \le m \le \frac{j}{2}}} \Lambda_{j, l, m, \lambda(\cdot)}^k(f_1, f_2, f_3),
$$
where $\Lambda_{j, l, m, \lambda(\cdot)}^k(f_1, f_2, f_3)$ is a single piece of the dual form with $j, l, m$ and $k$ fixed (see \eqref{20250308eq03}) and
$$
\Lambda_{\textnormal{(\bf O-2})}(f_1, f_2, f_3):=\left \langle BHC_{\lambda(\cdot)}^{\textnormal{({\bf O2})}}(f_1, f_2), f_3 \right \rangle :=\sum_{m, k \ge 0}  \widetilde{\Lambda}_m^k(f_1, f_2, f_3),
$$
where $ \widetilde{\Lambda}_m^k(f_1, f_2, f_3)$ is a single piece of the ``extended" diagonal component  defined in Theorem \ref{generalizedmain}. 

Our final \emph{goal} is to establish the full quasi-Banach range bounds for both these forms. In order to do so, we first collect all the key estimates we have already obtained: 
\begin{enumerate}
    \item [(1)] {\bf $L^2$-estimates}: there exists some absolute $\epsilon>0$ sufficiently small, such that for any $(p_1, p_2, p_3)=(2, \infty, 2), (\infty, 2, 2)$, and $(2, 2, \infty)$, one has
    \begin{enumerate}
        \item [(a)] under assumption ({\bf O1})---see \eqref{20250210eq01}:
        $$
        \left|\Lambda_{j, l, m, \lambda(\cdot)}^k(f_1, f_2, f_3) \right| \lesssim 2^{-\epsilon j} \prod_{i=1}^3 \left\|f_i \right\|_{L^{p_j} \left(3I^k \right)}\,;
        $$
      
        \item [(b)] under assumption ({\bf O2})---see Theorem \ref{generalizedmain}: 
        \begin{equation} \label{20250312eq01}
        \left| \widetilde{\Lambda}_m^k(f_1, f_2, f_3) \right| \lesssim 2^{-\epsilon \min\{2k, m\}} \prod_{i=1}^3 \left\|f_i \right\|_{L^{p_j} \left(3I^k \right)}\,;
        \end{equation} 
       
    \end{enumerate}
    \item [(2)] {\bf Restricted weak-type estimates}: for $F_1, F_2$, and $F_3$ measurable sets with finite (non-zero) Lebesgue measure, there exists some 
    $$
    F_3' \subseteq F_3 \quad \textnormal{measurable with} \quad |F_3'| \ge \frac{|F_3|}{2}\,,
    $$
    such that for any functions $f_1, f_2$, and $f_3$ obeying
    \begin{equation} \label{20250314eq01}
    |f_1| \le \one_{F_1}, \quad |f_2| \le \one_{F_2}, \quad \textrm{and} \quad |f_3| \le \one_{F_3'}, 
    \end{equation} 
    one has 
    \begin{enumerate}
        \item [(c)] under assumption ({\bf O1})
       
            \noindent $\bullet$ \textsf{multi-scale estimate---summation in $k$}, see \eqref{20250310eq00}): 
            \begin{equation} \label{summary001}
            \sum_{k \ge 2j} \left| \Lambda_{j, l, m, \lambda(\cdot)}^k(f_1, f_2, f_3) \right| \lesssim (j+1)^2|F_1||F_2|^{\mu}|F_3|^{-\mu}\, \quad \textrm{for any} \ \mu \in(0,1 )\, ; 
            \end{equation}

            \noindent $\bullet$ \textsf{single-scale estimate}---for any $k \ge 0$, see \eqref{20250310eq01}:
              \begin{equation} \label{summary002}
        \left| \Lambda_{j, l, m, \lambda(\cdot)}^k(f_1, f_2, f_3) \right| \lesssim |F_1|^{\frac{1}{1+2\theta}}|F_2|^{\frac{1}{1+2\theta}}|F_3|^{\frac{2\theta-1}{2\theta+1}},  \quad \textnormal{for any} \ \theta>0\,;
        \end{equation}
        
       \item [(d)] under assumption ({\bf O2}): 
        
         \noindent $\bullet$ \textsf{multi-scale estimate---summation in $k$},  see \eqref{20250310eq02}:
            \begin{equation} \label{summary003}
           \sum_{k \ge 2m}  \left| \widetilde{\Lambda}_m^k(f_1, f_2, f_3) \right| \lesssim (m+1)^3|F_1||F_2|^{\mu}|F_3|^{-\mu}\, \quad \textrm{for any} \ \mu \in (0, 1);
            \end{equation}

             \noindent $\bullet$ \textsf{multi-scale estimate---summation in $m$}, see \eqref{20250415eq40}:
            \begin{equation} \label{summary004}
             \sum_{m \ge 2k} \left|\widetilde{\Lambda}_m^k(f_1, f_2, f_3) \right| \lesssim (|k|+1)^3 |F_1|^{\frac{1}{1+2\theta}}|F_2|^{\frac{1}{1+2\theta}}|F_3|^{\frac{2\theta-1}{2\theta+1}},  \qquad \textnormal{for any} \ \theta>0\,; 
            \end{equation}
            
             \noindent $\bullet$ \textsf{single-scale estimate}---for any $k \ge 0$, see \eqref{20250310eq03}:
            \begin{equation} \label{summary005}
            \left|  \widetilde{\Lambda}_m^k(f_1, f_2, f_3) \right|   \lesssim |F_1|^{\frac{1}{1+2\theta}}|F_2|^{\frac{1}{1+2\theta}}|F_3|^{\frac{2\theta-1}{2\theta+1}},  \quad \textnormal{for any} \ \theta>0\,.
            \end{equation}
           \end{enumerate}
\end{enumerate}

\subsubsection{A refinement of \eqref{20250312eq01}} In this brief section we solve the following issue: estimate \eqref{20250312eq01} is \emph{not} sufficient for summing in both the $k$ and $m$ parameters. The main reason is that Theorem \ref{generalizedmain}, as stated, lacks frequency localization in $f_3$. This inconvenience is addressed in the following refinement of Theorem \ref{generalizedmain}:

\begin{thm} \label{refinedrefined}
For any $k, m \in \N$, let
$$
BHC_{m, \lambda, \textnormal{ref}}^k(f_1, f_2)(x):=\sup_{\lambda \in \left[2^{\frac{m}{2}+\kappa m+3k}, 2^{m+3k} \right]} \left|  \int_{\R} f_1(x-t)f_2(x+t^2)e^{i\lambda t^3} 2^k \rho(2^k t) dt \right|
$$
where $\kappa>0$ is the same constant that we picked in Theorem \ref{generalizedmain}, and further denote its dual form by
$$
\widetilde{\Lambda}_{m, \lambda, \textnormal{ref}}^k(f_1, f_2, f_3):= \left\langle BHC_{m, \lambda, \textnormal{ref}}^k(f_1, f_2), f_3 \right \rangle. 
$$
Assume one of the following situations holds:
    \begin{enumerate}
        \item [(1)] $\supp \ \widehat{f_1} \subseteq \left[2^k, 2^{m+k+1} \right]$ and $\supp \ \widehat{f_2} \subseteq \left[2^{m+2k}, 2^{m+2k+1} \right]$;
        \item [(2)] or $\supp \ \widehat{f_1} \subseteq \left[2^{m+k}, 2^{m+k+1} \right]$ and $\supp \ \widehat{f_2} \subseteq \left[2^{2k}, 2^{m+2k+1} \right]$.
    \end{enumerate}
Moreover, if either $\widehat{f_1}$ is supported away from $[2^{m+k-3}, 2^{m+k+1}]$ or $\widehat{f_2}$ is supported away from $[2^{m+2k-3}, 2^{m+2k+1}]$ then one may further assume that the linearizing supremum phase function obeys $\lambda (x) \in [2^{m+3k}, 2^{m+3k+1}]$.

Under the above assumptions, the following hold:
\begin{enumerate}
\item [(a)] assuming $m \le k+20$, then
\begin{enumerate}
   \item [(a-1)] if $\supp \ \widehat{f_1} \subseteq 
[2^k, 2^{m+k+1} ]$ and $\supp \ \widehat{f_2} \subseteq [2^{m+2k}, 2^{m+2k+1}]$ then one may assume the frequency localization $\supp \ \widehat{f_3} \subseteq [2^{m+2k}, 2^{m+2k+1}]$;
\item [(a-2)] if $\supp \ \widehat{f_1} \subseteq [2^{m+k}, 2^{m+k+1}]$ and $\supp \ \widehat{f_2} \subseteq \left[2^{2k}, 2^{m+2k+1} \right]$ then one may assume the frequency localization $\supp \ \widehat{f_3} \subseteq [0, 2^{m+2k+1}]$;
\end{enumerate}
\item [(b)] assuming $m \ge k+20$, then  
\begin{enumerate}
    \item [(b-1)] if $\supp \ \widehat{f_1} \subseteq 
[2^k, 2^{m+k+1} ]$ and $\supp \ \widehat{f_2} \subseteq [2^{m+2k}, 2^{m+2k+1}]$ then one may assume the frequency localization $\supp \ \widehat{f_3} \subseteq [2^{m+2k}, 2^{m+2k+1}]$;
\item [(b-2)] if $\supp \ \widehat{f_1} \subseteq [2^{m+k}, 2^{m+k+1}]$ and $\supp \ \widehat{f_2} \subseteq \left[2^{2k}, 2^{m+k-10} \right]$ then one may assume the frequency localization $\supp \ \widehat{f_3} \subseteq [2^{m+k-1}, 2^{m+k+1}]$;
\item [(b-3)] if $\supp \ \widehat{f_1} \subseteq [2^{m+k}, 2^{m+k+1}]$ and $\supp \ \widehat{f_2} \subseteq [2^{m+k-10}, 2^{m+2k+1}]$ then one may assume the frequency localization\footnote{The frequency localization of $f_3$ will \emph{not} be relevant here, as we will prove the $L^2$ estimates for $(p_1, p_2, p_3)=(2, 2, \infty)$ in this case.} $\supp \ \widehat{f_3} \subseteq [0, 2^{m+2k+1}]$. 
\end{enumerate}
\end{enumerate}
Moreover, \eqref{20250312eq01} remains valid for $\widetilde{\Lambda}_{m, \lambda, \textnormal{ref}}^k(f_1, f_2, f_3)$ in all the above cases. 
\end{thm}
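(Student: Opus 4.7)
The theorem asserts two things: (i) the frequency support of $f_3$ may be restricted to the specific dyadic range claimed in each of the sub-cases (a-1), (a-2), (b-1), (b-2), and (ii) the $L^2$-smoothing inequality \eqref{20250312eq01} continues to hold in the refined setting. Part (ii) is an immediate corollary of Theorem \ref{generalizedmain}, whose proof makes no use of any frequency localization of $f_3$; moreover, case (b-3) itself reduces directly to Theorem \ref{generalizedmain} because the stated bound there is the $(2,2,\infty)$ one, in which $f_3$ enters through $L^\infty$ and no frequency restriction is required. The substance of the proof therefore lies in part (i).

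To establish (i), the plan is to linearize the supremum via a measurable phase function $\lambda:\R\to[2^{\frac{m}{2}+\kappa m+3k}, 2^{m+3k}]$, decompose $f_3=\sum_\ell f_3^{(\ell)}$ into Littlewood--Paley pieces with $\supp\widehat{f_3^{(\ell)}}\subseteq[2^\ell,2^{\ell+1}]$, and show that pieces with $\ell$ outside the claimed range contribute negligibly to the dual form. After Fourier-inverting the three input functions and integrating in $x$, the selection rule $\zeta\cong\xi+\eta$ emerges, where $\xi\in\supp\widehat{f_1}$, $\eta\in\supp\widehat{f_2}$, and $\zeta\in[2^\ell,2^{\ell+1}]$. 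Computing the Minkowski sum $\supp\widehat{f_1}+\supp\widehat{f_2}$ in each sub-case yields precisely the stated range for $\supp\widehat{f_3}$: in (a-1) and (b-1) one has $\xi\le 2^{m+k+1}\ll 2^{m+2k}\le\eta$, so $\zeta\sim\eta\sim 2^{m+2k}$; in (a-2), $\xi\sim 2^{m+k}$ and $\eta\in[2^{2k},2^{m+2k+1}]$ are comparable, yielding the coarser range $[0,2^{m+2k+1}]$; and in (b-2), $\eta\le 2^{m+k-10}\ll 2^{m+k}\sim\xi$, so $\zeta\sim\xi\sim 2^{m+k}$.

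The principal technical obstacle is that $\lambda(\cdot)$ is only measurable, so the selection rule $\zeta\cong\xi+\eta$ cannot be enforced by a direct integration by parts in $x$. The plan is to overcome this via the constancy-propagation machinery developed in Section \ref{20241217subsec01} (complemented, in the regime $m\ge k$, by its analogue in Section \ref{20241028subsec01}), which reduces $\lambda(\cdot)$ to being constant on spatial intervals of length $2^{-m-k}$. Once $\lambda$ is locally constant, the factor $e^{i\lambda(\cdot)t^3}$ is $x$-independent on each sub-interval, and a Plancherel/integration-by-parts argument localized to these intervals produces the frequency selection rigorously, with error terms matching those already handled in Propositions \ref{20241018prop01} and \ref{20241006prop01}. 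These errors are absorbed into the decay factor $2^{-\epsilon\min\{2k,m\}}$ of \eqref{20250312eq01}, completing the reduction and hence the proof.
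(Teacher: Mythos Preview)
Your proposal is correct and follows essentially the same route as the paper's own proof. The paper's argument is extremely terse: for part (ii) it appeals to a ``careful inspection of the proof of Theorem~\ref{generalizedmain}'' (your observation that Theorem~\ref{generalizedmain} imposes no condition on $\widehat{f_3}$ is exactly the point), and for the frequency localization of $f_3$ it invokes ``a standard Rank-I LGC argument as outlined in Section~\ref{20250228sec03}.'' Your scheme---linearize, use constancy of $\lambda$ on short spatial intervals, then run a localized Plancherel/Gabor argument to read off the selection rule $\zeta\cong\xi+\eta$---is precisely what the Rank-I LGC discretization of Section~\ref{20250228sec03} does; the \textbf{Frequency Correlation} displayed there (forcing $n_3\cong \frac{n_1}{2^{m/2}}+\frac{2^k n_2}{2^{m/2}}$) is the wave-packet version of your $\zeta\cong\xi+\eta$.

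One small comment on economy: you route the argument through the full constancy-propagation machinery of Sections~\ref{20241217subsec01} and~\ref{20241028subsec01} to secure constancy of $\lambda$ at scale $2^{-m-k}$. For the \emph{frequency-localization} claim alone (as opposed to the decay in \eqref{20250312eq01}), this is heavier than needed in several of the sub-cases: the elementary \ref{KH1} already gives constancy at scale $2^{-m-2k}$, and in cases such as (b-2) the support hypothesis on $\widehat{f_2}$ lets one upgrade Key Heuristic~1 directly to the coarser scale without invoking Proposition~\ref{20241018prop01}. The paper's reference to Section~\ref{20250228sec03} implicitly carries the (H2) hypothesis, so your version is not wrong---just slightly over-engineered relative to what the frequency-selection step actually demands.
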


\begin{proof}
The fact that \eqref{20250312eq01} holds in all of the aforementioned cases follows from a careful inspection of the proof of Theorem \ref{generalizedmain}. Regarding the claims on the frequency localization of $f_3$ we notice that these can be verified using a standard Rank-I LGC argument as outlined in Section \ref{20250228sec03}. We leave the further details to the interested reader.
\end{proof}

\subsubsection{$L^r$ estimates for $\Lambda_{\textnormal{(\bf O-1})}$ and $\Lambda_{\textnormal{(\bf O-2)}}$ within the quasi-Banach range}

In the first part of this section we address the Hilbert case, \emph{i.e.}, we establish $L^p$ bounds for $\Lambda_{\textnormal{(\bf O-1)}}(f_1, f_2, f_3)$ and $\Lambda_{\textnormal{(\bf O-2)}}(f_1, f_2, f_3)$ when $(p_1, p_2, p_3)$ is restricted to the triples $(2, \infty, 2), (\infty, 2, 2)$, and $(2, 2, \infty)$. 

First, for case ({\bf O1}), it suffices to consider 
$$
\Lambda_{\textnormal{(\bf O-1)}}(f_1, f_2, f_3) \simeq \sum_{j\geq 0} \Lambda_{\textnormal{(\bf O-1)}}^{j}(f_1, f_2, f_3):=\sum_{j\geq 0} \sum_{k\ge 0} \sum_{0 \le m \le \frac{j}{2}} \Lambda_{j, j, m, \lambda(\cdot)}^k(f_1, f_2, f_3), 
$$
and then, using \eqref{20250210eq01} and Cauchy-Schwarz, we have 
\begin{align} \label{20250415eq50}
\left| \Lambda_{\textnormal{(\bf O-1)}}^{j}(f_1, f_2, f_3) \right| 
& \le\sum_{0 \le m \le \frac{j}{2}} \left| \sum_{k \ge 0} \Lambda_{j, j, m, \lambda(\cdot)}^k(f_1, f_2, f_3) \right| \lesssim j\,2^{-\epsilon j} \prod_{i=1}^3 \left\|f_i \right\|_{L^{p_i}(\R)}, 
\end{align}
where $(p_1, p_2, p_3)=(2, 2, \infty), (2, \infty, 2)$, and $(\infty, 2, 2)$. 

Next, for case ({\bf O2}), guided by the refined Theorem \ref{refinedrefined}, we write
\begin{align} \label{20250503eq43}
\Lambda_{\textnormal{(\bf O-2)}}(f_1, f_2, f_3) 
& = \Lambda_{\textnormal{(\bf O-2-a-1)}}(f_1, f_2, f_3)+\Lambda_{\textnormal{(\bf O-2-a-2)}}(f_1, f_2, f_3)+\Lambda_{\textnormal{(\bf O-2-b-1)}}(f_1, f_2, f_3) \nonumber  \\
& \quad +\Lambda_{\textnormal{(\bf O-2-b-2)}}(f_1, f_2, f_3)+\Lambda_{\textnormal{(\bf O-2-b-3)}}(f_1, f_2, f_3),
\end{align}
where
$$
\Lambda_{\textnormal{(\bf O-2-a-1)}}(f_1, f_2, f_3):=\sum_{m \ge 0} \Lambda^{m}_{\textnormal{(\bf O-2-a-1)}}(f_1, f_2, f_3):= \sum_{m \ge 0} 
\sum_{k \ge m-20} \widetilde{\Lambda}_{m, \lambda, \textnormal{ref}}^k \left( f_1^{[2^k, 2^{m+k+1}]}, f_2^{[2^{m+2k}, 2^{m+2k+1}]}, f_3 \right),
$$
$$
\Lambda_{\textnormal{(\bf O-2-a-2)}}(f_1, f_2, f_3):=\sum_{m \ge 0} \Lambda^{m}_{\textnormal{(\bf O-2-a-2)}}(f_1, f_2, f_3):=\sum_{m\geq 0}\sum_{k \ge m-20} \widetilde{\Lambda}_{m, \lambda, \textnormal{ref}}^k \left( f_1^{[2^{m+k}, 2^{m+k+1}]}, f_2^{[2^{2k}, 2^{m+2k-10}]}, f_3 \right),
$$
$$
    \Lambda_{\textnormal{(\bf O-2-b-1)}}(f_1, f_2, f_3):=\sum_{k \ge 0}  \Lambda_{\textnormal{(\bf O-2-b-1)}}^{k}(f_1, f_2, f_3):=\sum_{k\geq 0}\sum_{m \ge k+20} \widetilde{\Lambda}_{m, \lambda, \textnormal{ref}}^k \left( f_1^{[2^k, 2^{m+k+1}]}, f_2^{[2^{m+2k}, 2^{m+2k+1}]}, f_3 \right),
    $$
$$
    \Lambda_{\textnormal{(\bf O-2-b-2)}}(f_1, f_2, f_3):=\sum_{k \ge 0}  \Lambda_{\textnormal{(\bf O-2-b-2)}}^{k}(f_1, f_2, f_3):=\sum_{k\geq 0}\sum_{m \ge k+20} \widetilde{\Lambda}_{m, \lambda, \textnormal{ref}}^k \left( f_1^{[2^{m+k}, 2^{m+k+1}]}, f_2^{[2^{2k}, 2^{m+k-10}]}, f_3 \right),
    $$
and
$$
\Lambda_{\textnormal{(\bf O-2-b-3)}}(f_1, f_2, f_3):=\sum_{k \ge 0}\Lambda_{\textnormal{(\bf O-2-b-3)}}^{k}(f_1, f_2, f_3):=\sum_{k\geq 0} \sum_{m \ge k+20} \widetilde{\Lambda}_{m, \lambda, \textnormal{ref}}^k \left( f_1^{[2^{m+k}, 2^{m+k+1}]}, f_2^{[2^{m+k-10}, 2^{m+2k+1}]}, f_3 \right).
$$
Here, we recall that $f^{I}$ refers to the Fourier projection of $f$ on the frequency interval $I$. 

For the case ({\bf O-2-a-1}), we have
\begin{equation*}
\left| \Lambda^{m}_{\textnormal{(\bf O-2-a-1)}}(f_1, f_2, f_3)\right| 
\lesssim \sum_{k \ge m-20} \left| \widetilde{\Lambda}_{m, \lambda, \textnormal{ref}}^k \left( f_1^{[2^k, 2^{m+k+1}]}, f_2^{[2^{m+2k}, 2^{m+2k+1}]}, f_3 \right) \right| \lesssim  2^{-\epsilon m}\,\prod_{i=1}^3 \left\|f_i \right\|_{L^{p_i}(\R)}\,,  
\end{equation*} 
where $(p_1, p_2, p_3)=(2, 2, \infty), (2, \infty, 2)$, and $(\infty, 2, 2)$. Note that the same argument applied to the term $\Lambda^{m}_{\textnormal{(\bf O-2-a-2)}}(f_1, f_2, f_3)$, yields the same smoothing estimate. Next, for $\Lambda_{\textnormal{(\bf O-2-b-1)}}^{k}(f_1, f_2, f_3)$, from case (b-1) in Theorem \ref{refinedrefined}, we have 
\begin{align} \label{20250415eq34}
& \left| \Lambda_{\textnormal{(\bf O-2-b-1)}}^{k}(f_1, f_2, f_3) \right|  \le  \sum_{m \ge k+20} \left| \widetilde{\Lambda}_{m, \lambda, \textnormal{ref}}^k \left( f_1^{[2^k, 2^{m+k+1}]}, f_2^{[2^{m+2k}, 2^{m+2k+1}]}, f_3 \right) \right| \nonumber \\
& \lesssim 2^{-2 \epsilon k}  \left(\sum_{m>k+20} \left\|f_1^{[2^k, 2^{m+k+1}]} \right\|_{L^\infty(\R)} \left\|f_2^{[2^{m+2k}, 2^{m+2k+1}]} \right\|_{L^2(\R)} \left\|f^{[2^{m+2k}, 2^{m+2k+1}]}_3 \right\|_{L^2(\R)} \right)  \nonumber \\
& \lesssim  2^{-2\epsilon k} \left\|f_1 \right\|_{L^\infty} \left\|f_2 \right\|_{L^2(\R)} \left\|f_3 \right\|_{L^2(\R)}\,.
\end{align}
Note that in this case, the $L^2$ estimate is \emph{only} valid for $(p_1, p_2, p_3)=(\infty, 2, 2)$. Finally, for the terms $\Lambda_{\textnormal{(\bf O-2-b-2)}}^{k}(f_1, f_2, f_3)$ and $\Lambda_{\textnormal{(\bf O-2-b-3)}}^{k}(f_1, f_2, f_3)$, one can apply a similar argument as in \eqref{20250415eq34}, now utilizing the frequency localizations specified in cases (b-2) and (b-3), respectively, of Theorem~\ref{refinedrefined}.  The corresponding $L^p$ smoothing estimate indices in these cases are $(p_1, p_2, p_3)=(2, \infty, 2)$, and $(2, 2, \infty)$, respectively. 

These conclude the $L^2$-base decay estimates that now can be uses as a base-point for multilinear interpolation with the tamer bounds described below (the proof of the latter require only standard adaptations of the reasonings involved for proving \eqref{summary001}---\eqref{summary005}):

\begin{lem} \label{20250317lem01}
Let $f_1$, $f_2$, $f_3$ satisfy the assumptions \eqref{20250314eq01} with the major set $F_3'$ defined by \eqref{20250626eq10}. Then, for any 
$(p_1, p_2, p_3)$ satisfying $\frac{1}{p_1}+\frac{1}{p_2}=\frac{1}{p_3'}$ with $1 < p_1, p_2< +\infty$ and $\frac{1}{2}<p_3'<\infty$
the following estimates hold:
$$
 \sum_{k \ge 0}  \left|\Lambda_{j, j, m, \lambda(\cdot)}^k(f_1, f_2, f_3) \right| \lesssim (j+1)^2 \prod_{i=1}^3 \left\|f_i \right\|_{L^{p_i}(\R)}, \quad \sum_{k \ge 0}\left| \widetilde{\Lambda}_m^k(f_1, f_2, f_3) \right| \lesssim (m+1)^3 \prod_{i=1}^3 \left\|f_i \right\|_{L^{p_i}(\R)}, 
$$
and
$$
\sum_{m \ge 0} \left| \widetilde{\Lambda}_m^k(f_1, f_2, f_3) \right| \lesssim (|k|+1)^3 \prod_{i=1}^3 \left\|f_i \right\|_{L^{p_i}(\R)}.
$$
\end{lem}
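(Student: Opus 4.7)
The plan is to derive all three estimates in Lemma~\ref{20250317lem01} uniformly by multilinear real interpolation from the restricted weak-type bounds \eqref{summary001}--\eqref{summary005}, after splitting each sum into a short single-scale part and a long multi-scale tail. For the first estimate we write $\sum_{k\geq 0}=\sum_{0\le k\le 2j}+\sum_{k>2j}$; the short sum has $O(j)$ terms, each controlled by \eqref{summary002}, giving an overall factor absorbed into $(j+1)^2$, while the tail is precisely \eqref{summary001}. The analogous splittings $\sum_{k\geq 0}=\sum_{0\le k\le 2m}+\sum_{k>2m}$ and $\sum_{m\geq 0}=\sum_{0\le m\le 2k}+\sum_{m>2k}$, combined respectively with \eqref{summary005} and \eqref{summary003}, resp.\ \eqref{summary005} and \eqref{summary004}, reduce the second and third estimates to restricted weak-type inputs with polynomial prefactors $(m+1)^3$ and $(|k|+1)^3$.

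The multi-scale bounds \eqref{summary001}, \eqref{summary003}, \eqref{summary004} break the $(f_1,f_2)$-symmetry, since their derivations privilege $f_1$ via majorants of the type $\calM_{\calC}(Mf_1,\calS f_2)$ (cf.\ \eqref{20250430eq02} and Case \textsf{A} of Section~\ref{20250430sec01}). Re-running the same argument with $f_1$ and $f_2$ interchanged --- i.e.\ using $\calM_{\calC}(\calS f_1,Mf_2)$ as already done in Case \textsf{B} of Section~\ref{20250430sec01} --- and redefining the exceptional set $\Omega$ in \eqref{20250626eq10} with $F_2$ in place of $F_1$, we obtain companion restricted weak-type bounds of the form $|F_1|^{\mu}|F_2|\,|F_3|^{-\mu}$ carrying the same polynomial prefactors in $j$, $m$, and $|k|$, respectively. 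This swap is routine because the asymmetry in the proofs of Sections~\ref{20250415subsec02} and~\ref{20250430sec01} is purely cosmetic --- neither the frequency localization hypothesis of Theorem~\ref{refinedrefined} nor the $TT^*$-based tile count is sensitive to interchanging the two input functions.

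Together with the $(f_1,f_2)$-symmetric vertices $\bigl(\tfrac{1}{1+2\theta},\tfrac{1}{1+2\theta},\tfrac{2\theta-1}{2\theta+1}\bigr)$, $\theta\in(0,\infty)$, arising from \eqref{summary002} and \eqref{summary005}, the vertices $(1,\mu,-\mu)$ and $(\mu,1,-\mu)$ for $\mu\in(0,1)$ form an open family whose convex hull contains every triple $\bigl(\tfrac{1}{p_1},\tfrac{1}{p_2},\tfrac{1}{p_3}\bigr)$ satisfying $\tfrac{1}{p_1}+\tfrac{1}{p_2}+\tfrac{1}{p_3}=1$ with $1<p_1,p_2<\infty$ and $\tfrac{1}{2}<p_3'<\infty$. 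Multilinear real interpolation in the Grafakos--Tao form adapted to quasi-Banach targets (where the major set $F_3'$ is selected per vertex) then upgrades the restricted weak-type estimates to the strong-type ones asserted in Lemma~\ref{20250317lem01}, the polynomial prefactors being inherited unchanged. The principal subtlety of the argument lies in accommodating the quasi-Banach regime $\tfrac{1}{2}<p_3'<1$, which corresponds to the negative $\alpha_3$-exponents in our vertices; this is precisely the role of the major set $F_3'$ built in \eqref{20250626eq10}, and one must verify that the set $\Omega$ there depends only on $F_1$ and $F_3$ (not on the parameters $j,m,k$ or the linearizing function $\lambda(\cdot)$), so that the same $F_3'$ serves every summand and every interpolation vertex simultaneously --- a fact immediate from inspection of \eqref{20250626eq10}.
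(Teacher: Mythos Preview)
Your plan—split each sum into a short range handled term-by-term via \eqref{summary002}/\eqref{summary005} and a tail handled by the corresponding multi-scale bound, then interpolate the resulting family of restricted weak-type vertices—is exactly what the paper intends by ``standard adaptations of \eqref{summary001}--\eqref{summary005}.'' The polynomial prefactors $(j+1)^2$, $(m+1)^3$, $(|k|+1)^3$ fall out precisely as you say.

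A few points of friction you should clean up. First, the lemma is a \emph{restricted-type} statement (the hypothesis \eqref{20250314eq01} fixes $|f_i|\le \one_{F_i}$, and for $p_3'<1$ the quantity $\|f_3\|_{L^{p_3}}$ only makes sense as $|F_3|^{1/p_3}$); calling the output ``strong-type'' is a misnomer, and what multilinear interpolation delivers here is restricted weak-type at interior points, which is all that is claimed. Second, \eqref{summary004} is already $(f_1,f_2)$-symmetric, so only \eqref{summary001} and \eqref{summary003} break symmetry. Third, and more substantively, you misattribute the source of that asymmetry: \eqref{summary001} and \eqref{summary003} come from the wave-packet/shifted-square-function argument of Section~\ref{20250415subsec02}, \emph{not} from the $\calM_{\calC}(Mf_1,\calS f_2)$ majorant of Section~\ref{20250430sec01}. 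The swap $\calM_{\calC}(Mf_1,\calS f_2)\to\calM_{\calC}(\calS f_1,Mf_2)$ you describe is relevant only to the multi-scale-in-$m$ estimate \eqref{summary004}, which is already symmetric. Producing the companion vertex $(\mu,1,-\mu)$ to \eqref{summary001}/\eqref{summary003} requires reorganizing the tile argument of Section~\ref{20250415subsec02} so that the coarse-scale average (which feeds into the exceptional set) falls on $f_2$ rather than $f_1$; this is routine but not the ``purely cosmetic'' swap you assert, since $f_1$ and $f_2$ live at genuinely different spatial scales $2^{-(m+k)}$ versus $2^{-(m+2k)}$.

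Finally, your last paragraph contains an internal contradiction: you first enlarge $\Omega$ to involve $F_2$, then claim the same $F_3'$ (depending only on $F_1,F_3$) serves all vertices. The honest fix is to take $\Omega$ from the outset to include $\{M\one_{F_1}\ge C|F_1|/|F_3|\}\cup\{M\one_{F_2}\ge C|F_2|/|F_3|\}\cup\{M\one_{F_3}\ge C\}$; this is a harmless enlargement of \eqref{20250626eq10} that the paper tacitly permits, and with it a single major set works for all vertices and all summands, as you need.
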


Combining now the main $L^2$ decay estimates with Lemma \ref{20250317lem01} and using multi-linear interpolation, we conclude the desired (maximal) quasi Banach range estimates: 
$$
\left|\Lambda_{\textnormal{(\bf O-1})}(f_1, f_2, f_3) \right|, \; \left| \Lambda_{\textnormal{(\bf O-2})}(f_1, f_2, f_3) \right| \lesssim \prod_{i=1}^3 \left\|f_i \right\|_{L^{p_i}(\R)}\,
$$
for any triple $(p_1, p_2, p_3)$ satisfying the hypothesis in the statement of Lemma \ref{20250317lem01}.

\section{Appendix: Treatment of the case $k<0$: an outline}

In this section, we provide a brief outline for the treatment of the main diagonal term $BHC_{-}^{\Delta}$ representing, of course, the crux of the entire $k<0$ analysis and which further crucially relies on the Rank-II LGC methodology. To begin with, we recall that the operator we wish to understand is given by 
$$
 BHC_{-}^{\Delta}(f_1, f_2)\approx\sum_{k \in \Z_{-}} \sum_{m \in \N} \sup_{\lambda} \left|BHC_{m, \lambda}^k(f_1, f_2)(x) \right|,
$$
with
\begin{equation} \label{20241215eq10}
BHC_{m, \lambda}^k(f_1, f_2)(x) \approx 2^k \phi \left(\frac{\lambda}{2^{m+3k}} \right) \int_{\R} f_{1, m+k}(x-t) f_{2, m+2k}(x+t^2)e^{i\lambda t^3} \rho(2^k t) dt,
\end{equation} 
where $\phi$ is a smooth even function supported in $[-4, 4] \backslash \left[-\frac{1}{4}, \frac{1}{4} \right]$, and, wlog $\rho\in C_0^{\infty}(\R)$ with $\textrm{supp}\,\rho\subset[\frac{1}{10},10]$. 

In what follows, we focus entirely only on the key differences relative to the case $k \ge 0$:
\begin{enumerate}
    \item [$\bullet$] the dominant frequency regime corresponds to $f_1$, having the magnitude $2^{m+k}$;
    \item [$\bullet$] the dominant spatial interval in the $x$-variable becomes $I^{2k}:=I_0^{0, 2k}$ versus $I^k$ in the $k>0$ regime;
    \item [$\bullet$] the roles of $f_1$ and $f_2$ are interchanged. 
\end{enumerate}

With these being said, we first apply the change of variable $t^2 \to t$ in order to rewrite \eqref{20241215eq10} as
$$
BHC_{m, \lambda}^k(f_1, f_2)(x) \approx 2^{2k} \phi \left(\frac{\lambda}{2^{m+3k}} \right) \int_{\R} f_{2, m+2k}(x+t) f_{1, m+k}\left(x-t^{\frac{1}{2}} \right) e^{i\lambda t^{\frac{3}{2}}} \widetilde{\rho} \left(2^{2k} t \right)dt,
$$
where $\widetilde{\rho}(t):=\frac{t^{-1/2}}{2} \rho(t^{1/2})$. Observe that the above operator is concentrated locally on intervals of length $2^{-2k}$, and hence it suffices to consider 
$$
BHC_{m, \lambda}^{k, I^{2k}}(f_1, f_2)(x) \approx 2^{2k} \one_{I^{2k}}(x) \phi_2 \left(\frac{\lambda}{2^{m+3k}} \right) \int_{\R} f_{2, m+2k}(x+t) f_{1, m+k}\left(x-t^{\frac{1}{2}} \right) e^{i\lambda t^{\frac{3}{2}}} \widetilde{\rho} \left(2^{2k} t \right)dt.
$$
Let again $\lambda (\cdot): I^{2k} \to \left[2^{m+3k}, 2^{m+3k+1} \right]$ be a measurable function such that 
$$
\sup_{\lambda \in \R} \left| BHC_{m, \lambda}^{k, I^{2k}}(f_1, f_2)(x)\right| \le 2 \left|BHC_{m, \lambda(x)}^{k, I^{2k}}(f_1, f_2)(x) \right|.
$$
Next, by using Heisenberg's uncertainty principle, the analogue of {\bf Key Heuristic 1} becomes the following: 

\bigskip 

\noindent\textbf{Key Heuristic 1$^\prime$.}

\begin{center}
\texttt{$\lambda(\cdot)$ takes constant values on intervals of length $2^{-m-k}$ when $k<0$}. 
\end{center}

\bigskip 

It now becomes transparent that the natural analogue of Theorem \ref{20241018mainthm01} becomes:
\bigskip 

\begin{thm} \label{20241217thm01A}
Let $k \in \Z_{<0}$ and $m \in \N$. Let further, 
\begin{equation} \label{20241215eq02}
\underline{\Lambda}_m^k(f_1, f_2, f_3):=2^{2k} \int_{I^{2k}} \int_{I^{2k}} f_2(x+t)f_1(x-t^{\frac{1}{2}}) f_3(x) e^{i\lambda(x) t^{\frac{3}{2}}} \widetilde{\rho} \left(2^{2k} t \right) dt, 
\end{equation}
be the dual form of $BHC_{m, \lambda(\cdot)}^{k, I^{2k}}(f_1, f_2)$, where $\lambda(\cdot)$ takes constant value on intervals of length $2^{-m-k}$.

Then, there exists some $\underline{\epsilon}>0$, such that 
$$
\left |\underline{\Lambda}_m^k(f_1, f_2, f_3) \right| \lesssim 2^{-\underline{\epsilon} \min\{-2k, m\}} \prod_{j=1}^3 \left\|f_j \right\|_{L^{p_j}(3I^{2k})},
$$
where $(p_1, p_2, p_3)=(2, 2, \infty), (2, \infty, 2)$ or $(\infty, 2, 2)$. 
\end{thm}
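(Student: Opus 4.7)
The plan is to mirror the proof of Theorem \ref{20241018mainthm01} applied to the form \eqref{20241215eq02}, in which the change of variable $t^2 \to t$ has already been performed. The dictionary for the adaptation is: the roles of $f_1$ and $f_2$ are interchanged ($f_2$ now carries the higher frequency $2^{m+2k}$, while $f_1$ carries $2^{m+k}$), the physical $x$-interval $I^k$ is replaced by $I^{2k}$, the starting constancy length of $\lambda(\cdot)$ is $2^{-m-k}$ (by \textbf{Key Heuristic 1$^\prime$}), and the $t$-linearization scale---chosen so that the quadratic remainder in the Taylor expansion of $t^{3/2}$ contributes $O(1)$ to the phase---is $2^{-m/2-2k}$. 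After Cauchy--Schwarz peels off $f_3$ as in \eqref{20240429eq01}, one partitions $I^{2k} = \bigcup_{q \sim 2^{m/2}} I_q^{m, 2k}$ with $|I_q^{m, 2k}| = 2^{-m/2-2k}$ and $t_q := q/2^{m/2+2k} \sim 2^{-2k}$. Taylor expanding on $I_q^{m, 2k}$ gives
\begin{equation*}
\lambda(x)\, t^{3/2} \;=\; \lambda(x)\, t_q^{3/2} \;+\; \tfrac{3}{2}\lambda(x)\, t_q^{1/2}(t-t_q) \;+\; O(1),
\end{equation*}
with the linear coefficient $\lambda(x) t_q^{1/2} \sim 2^{m+2k}$ playing the role of $3 q^2 \lambda(x)/2^{m+2k}$ in \eqref{20240429eq01}. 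This reduces the problem to the $t^{3/2}$-analogue of ${\bf L}_{m, k}(f_1, f_2)$ defined by \eqref{20240426eq01}.

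One then performs the spatial sparse--uniform dichotomy of Section \ref{20241216subsec01}, applied to $f_2$ on cells of length $2^{-m-2k}$ and to $f_1$ on cells of length $2^{-m-k}$. The sparse component is dispatched by the Cauchy--Schwarz arguments of the sparse subcases, with the roles of $f_1$ and $f_2$ interchanged, yielding the required $2^{-\mu \min\{m, -2k\}}$ gain. The uniform component is governed by the dichotomy $|k|$ versus $m/2$. When $|k| \geq m/2$, one appeals to the $k<0$ analogue of Proposition \ref{maxjointctrl}---either directly (for $|k| \geq m$, where the starting constancy $2^{-m-k}$ already satisfies the hypothesis) or after a single round of constancy propagation from $2^{-m-k}$ to $2^{-m-2k}$ (for $m/2 \leq |k| \leq m$). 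When $0 < |k| < m/2$, one follows the two-step scheme of Sections \ref{20241217subsec01}--\ref{20250228sec03}: first propagate the constancy of $\lambda$ from $2^{-m-k}$ to $2^{-m-2k}$ via the Rank-II LGC procedure modeled on Proposition \ref{20241018prop01}, then apply Rank-I LGC at the enhanced constancy scale. The target decay $2^{-\underline{\epsilon}\min\{-2k, m\}}$ thus emerges as $2^{-\underline{\epsilon} m}$ in the large-$|k|$ regime and as $2^{-2\underline{\epsilon}|k|}$ in the small-$|k|$ regime, mirroring the $k \geq 0$ analysis.

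The main obstacle is the constancy propagation step in the small-$|k|$ regime (the analogue of Proposition \ref{20241018prop01}). The $TT^*$ computation of Steps II--V of that proof relies, in its concluding moves, on the curvature of the auxiliary map $q \mapsto \sqrt{2^{m/2}u + q^2}/q$, which arose from the quadratic dependence on $t$ within the $f_2$-argument. In the present setting, the fractional power $t^{3/2}$ produces---after the substitution $t \mapsto t + q/2^{m/2+2k}$ and the subsequent rescaling in $q$---an analogous auxiliary map involving $q^{1/2}$ rather than $q^2$. Verifying that this modified map still has non-degenerate derivative on the effective range $q \sim 2^{m/2}$, so that the decay argument of \cite[(4.60)--(4.66)]{HL23} goes through, is the technical crux. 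Apart from this curvature check, the remaining work is bookkeeping: tracking the $t_q^{-1/2}$ factors from higher-order Taylor remainders at each discretization step, and confirming that the sparse--uniform decomposition respects the Heisenberg cell structure attached to $I^{2k}$.
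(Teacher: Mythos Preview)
Your proposal is correct and follows essentially the same route as the paper's outline: Cauchy--Schwarz to peel off $f_3$, Taylor expansion of $\lambda(x)t^{3/2}$ on the $t$-intervals $I_q^{m,2k}$, spatial sparse--uniform dichotomy with the roles of $f_1,f_2$ interchanged, and then the split according to $|k|$ versus $m/2$ for the uniform component. Your identification of the curvature of the auxiliary map (now built from $q^{1/2}$ rather than $q^2$) as the technical crux is accurate, and the paper likewise flags this as the place where ``the hidden cancellation encoded in the curvature'' must be exploited, together with the grouping of $2^{m+k}$ consecutive $x$-intervals of length $2^{-m-2k}$ (the analogue of \eqref{20241217eq10}).

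One refinement: in the intermediate regime $m/2 \le |k| \le m$ the paper's outline performs \emph{two} successive bootstrap rounds---first from $2^{-m-k}$ to $2^{-m-2k}$ (via the analogue of \cite[Proposition~4.7]{HL23}), and then, using $2^{-m-2k} \ge 2^{-m/2-k}$, a second propagation to the $t$-linearization scale $2^{-m/2-2k}$ before the final \cite[Proposition~4.9]{HL23} argument applies. Your phrasing ``a single round of constancy propagation\ldots\ then appeal to the $k<0$ analogue of Proposition~\ref{maxjointctrl}'' is correct only if that analogue is understood to absorb the second bootstrap internally (as the $k\ge 0$ version of Proposition~\ref{maxjointctrl} does). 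This is a packaging issue rather than a gap, but worth making explicit.
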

\bigskip 

Below we only provide a sketch of the proof for the above statement and leave further details to the interested reader. 
\medskip 

Consider now the dual form of $BHC_{m, \lambda(x)}^{k, I^{2k}}(f_1, f_2)(x)$, that is, 
\begin{equation} \label{20241215eq02A}
\underline{\Lambda}_m^k(f_1, f_2, f_3):=2^{2k} \int_{I^{2k}} \int_{I^{2k}} f_2(x+t)f_1(x-t^{\frac{1}{2}}) f_3(x) e^{i\lambda(x) t^{\frac{3}{2}}} \widetilde{\rho} \left(2^{2k} t \right) dt, 
\end{equation}
 By Cauchy-Schwarz and a Taylor series argument, we see that 
\begin{equation} 
\left| \underline{\Lambda}_m^k(f_1, f_2, f_3) \right|^2 \lesssim 2^{2k} \left\|f_3 \right\|_{L^2(3I^{2k})}^2 \left[ \underline{\bf L}_{m, 2k}(f_1, f_2) \right]^2,  \label{20240429eq01A}
\end{equation} 
where 
\begin{equation} \label{20241216eq01A}
\left[ \underline{\bf L}_{m, 2k}(f_1, f_2) \right]^2:=2^{2k} \int_{I^{2k}}\left|\sum_{q \sim 2^{\frac{m}{2}}} \int_{I_q^{m, 2k}} f_2(x+t)f_1(x-t^{\frac{1}{2}})e^{\frac{3i \lambda(x)}{2} \cdot \frac{q^{\frac{1}{2}}}{2^{\frac{m}{4}+k}}t} dt \right|^2 dx. 
\end{equation} 
Following the strategy outlined in Section \ref{20241216subsec01}, one employs a sparse-uniform dichotomy procedure that reduces the analysis of \eqref{20241004eq01} to the following expression
\begin{eqnarray} \label{20241216eq02}
\left[\underline{\calL}_{m, 2k}(f_1, f_2) \right]^2%
&=& 2^{\frac{m}{2}+2k} \int_{I^{2k}} \sum_{q \sim 2^{\frac{m}{2}}} \left| \int_{I_q^{m, 2k}} f_2(x+t)f_1(x-t^{\frac{1}{2}})e^{\frac{3i \cdot 2^{\frac{m}{2}+2k}}{2} \cdot \left(\frac{q}{2^{\frac{m}{2}}}\right)^{\frac{1}{2}} \widetilde{\lambda}(x) t} dt \right|^2 dx
\end{eqnarray}
where both $f_1$ and $f_2$ in \eqref{20241216eq02} are uniform, in the sense that
$$
\left\|f_i \right\|_{L^\infty(3I^{2k})} \lesssim 2^{\frac{\mu \min\{m, 2|k|\}}{2}} \cdot 2^k \left\|f_i \right\|_{L^2(3I^{2k})}, \quad i\in\{1, 2\}, 
$$
for some $\mu>0$ sufficiently small and $\widetilde{\lambda}(x):=\frac{\lambda(x)}{2^{\frac{m}{2}+3k}}$. Note that $\widetilde{\lambda}(\cdot): I^{2k} \mapsto \left[2^{\frac{m}{2}}, 2^{\frac{m}{2}+1} \right]$ takes constant value on intervals of length $2^{-m-k}$. The analysis of \eqref{20241216eq02} is similar to the treatment of the term \eqref{20241004eq01}. More precisely, we again divide the proof into two parts:

\medskip 

 \noindent   \underline{\textsf{Part I: $k \le -\frac{m}{2}$.}} In this case, following the strategy presented in \cite[Theorem 4.3]{HL23}:  

\vspace{0.1cm}

\begin{enumerate}

\item [$\bullet$] If $k \le -m$, then the proof of the uniform component follows from the same argument in \cite[Theorem 11.1]{HL23}. 

\item [$\bullet$] If $-m \le k \le -\frac{m}{2}$, then the whole argument can be divided into two steps.

\noindent \underline{\textsf{Step I.1}:} we apply a bootstrap argument (see, \cite[Proposition 4.7]{HL23}) in order to extend the length of constancy of $\widetilde{\lambda}$ to the level $2^{-m-2k}$.

\vspace{0.1cm}

\noindent \underline{\textsf{Step I.2}:} once we reach the scale $2^{-m-2k}$, note that in this case $2^{-m-2k} \ge 2^{-\frac{m}{2}-k}$, and hence we apply another bootstrap argument to extend the length of constancy of $\widetilde{\lambda}$ to the level $2^{-\frac{m}{2}-2k}$, which matches the scale of $t$-intervals (that is, the maximal scale that $t^{\frac{1}{2}}$ behaves like a linearized function), and hence the argument in \cite[Proposition 4.9]{HL23} applies. 

\end{enumerate}

\vspace{0.1cm}

\noindent $\bullet$ \textit{Part II: $-\frac{m}{2} \le k \le 0$.} Again, this is the case that is not covered by \cite[Theorem 4.3]{HL23} and hence one has to apply a new idea in the spirit of the proof developed in Sections \ref{20241217subsec01} and \ref{20250228sec03}. 

\vspace{0.1cm}

\noindent \underline{\textsf{Step II.1}:} mirroring \underline{\textsf{Step I.1}} above, we need to apply a bootstrap argument in the spirit of Proposition \ref{20241018prop01} in otder to extend the length of constancy of $\widetilde{\lambda}$ to the level $2^{-m-2k}$.

\vspace{0.1cm}

\noindent \underline{\textsf{Step II.2}:} once we reach the scale $2^{-m-2k}$, instead of applying another bootstrap argument, we implement Rank-I LGC methodology to \eqref{20241215eq02}.  

\medskip 

We end our brief outlook with the following remark: in the case $-m \le k \le 0$ in order to fully make use of the hidden cancellation encoded in the curvature, one needs to subdivide the integral expression in $2^{m}$ many $x$-intervals of the length $2^{-m-2k}$ and then group together $2^{m+k}$ such consecutive intervals. More precisely, the bootstrap argument is applied to the operator
\begin{equation} 
\calV_{m, k}^{\widetilde{p}}(f_1, f_2):=2^{-k} \sum_{p \in \left[\widetilde{p} 2^{m+k}, (\widetilde{p}+1)2^{m+k} \right]} \left[ \frac{1}{2^m} \calV_{m, k}^p (f_1, f_2) \right] \label{20241217eq10A},
\end{equation} 
where here $\widetilde{p} \sim 2^{-k}$ and for each $p \sim 2^m$ one has
\begin{equation}
\calV_{m, k}^p (f_1, f_2):=2^{\frac{3m}{2}+2k} \sum_{q \sim 2^{\frac{m}{2}}} \int_{I_p^{0, m+2k}} \left| \int_{I_q^{m, 2k}} f_2(x+t)f_1(x-t^{\frac{1}{2}})e^{\frac{3i \cdot 2^{\frac{m}{2}+2k}}{2} \cdot \left(\frac{q}{2^{\frac{m}{2}}}\right)^{\frac{1}{2}} \widetilde{\lambda}(x) t} dt \right|^2 dx. \label{20241004eq21A}
\end{equation}

\end{document}